\documentclass[10pt]{amsart}

\usepackage[headings]{fullpage}

\usepackage{amsfonts,amssymb,amsthm,amsmath,euscript,xypic,picins,float,graphicx}

\usepackage[colorlinks,citecolor=blue,linkcolor=red]{hyperref}





\newtheorem{thm}{Theorem}[section]
\newtheorem{lem}[thm]{Lemma}
\newtheorem{qst}[thm]{Question}
\newtheorem{prop}[thm]{Proposition}
\newtheorem{cor}[thm]{Corollary}
\theoremstyle{definition}
\newtheorem{df}[thm]{Definition}
\newtheorem{rk}[thm]{Remark}

\newtheorem{conv}[thm]{Convention}

\newtheorem{notation}[thm]{Notation}

\newtheorem{theor}{Theorem}


\newcommand{\R}{\mathbb R}
\newcommand{\Z}{\mathbb Z}

\newcommand{\N}{\mathbb N}




\def\epsilon{\varepsilon}
\def\phi{\varphi}

\def\hat{\widehat}

\newcommand{\Out}{\mbox{Out}}
\newcommand{\Aut}{\mbox{Aut}}

\newcommand{\ind}{\mbox{ind}}

\newcommand{\gind}{\mbox{ind}_{\rm geom}}

\newcommand{\Ind}{\mbox{ind}}

\begin{document}

\title{A train track directed random walk on $Out(F_r)$}
\author{Ilya Kapovich and Catherine Pfaff}

\address{\tt  Department of Mathematics, University of Illinois at Urbana-Champaign,
  1409 West Green Street, Urbana, IL 61801
  \newline \indent  {\url{http://www.math.uiuc.edu/~kapovich, }}} \email{\tt kapovich@math.uiuc.edu}

\address{\tt  Faculty of Mathematics, University of Bielefeld, Postfach 100131
   Universit\"atsstrasse 25, D-33501 Bielefeld, Germany
  \newline \indent {\url{http://www.math.uni-bielefeld.de/~cpfaff/}}}
    \email{\tt cpfaff@math.uni-bielefeld.de}

\begin{abstract}
Several known results, by Rivin, Calegari-Maher and Sisto, show that an element $\phi_n\in\Out(F_r)$, obtained after $n$ steps of a simple random walk on $\Out(F_r)$, is fully irreducible with probability tending to 1 as $n\to\infty$. In this paper we construct a natural ``train track directed'' random walk $\mathcal W$ on $\Out(F_r)$ (where $r\ge 3$). We show that, for the element $\phi_n\in \Out(F_r)$, obtained after $n$ steps of this random walk, with asymptotically positive probability the element $\phi_n$ has the following properties: $\phi_n$ is ageometric fully irreducible, which admits a train track representative with no periodic Nielsen paths and exactly one nondegenerate illegal turn, that $\phi_n$ has ``rotationless index'' $\frac{3}{2}-r$ (so that the geometric index of the attracting tree $T_{\phi_n}$ of $\phi_n$ is $2r-3$), has index list   $\{\frac{3}{2}-r\}$ and the ideal Whitehead graph being the complete graph on $2r-1$ vertices, and that the axis bundle of $\phi_n$ in the Outer space $CV_r$ consists of a single axis. 
\end{abstract}

\thanks{The first author was partially supported by the NSF grant DMS-1405146 and by the Simons Foundation Collaboration grant no. 279836. The second author was supported first by the ARCHIMEDE Labex (ANR-11-LABX- 0033) and the A*MIDEX project (ANR-11-IDEX-0001-02) funded by the ``Investissements d'Avenir'' French government program managed by the ANR. She is secondly supported by the CRC701 grant of the DFG, supporting the projects B1 and C13 in Bielefeld. Both authors acknowledge support from U.S. National Science Foundation grants DMS 1107452, 1107263, 1107367 ``GEAR Network''.}

\subjclass[2010]{Primary 20F65, Secondary 57M}

\date{}
\maketitle

\section{Introduction}

For an integer $r\ge 2$, an element $\phi\in\Out(F_r)$ is called \emph{fully irreducible} (sometimes also referred to as \emph{irreducible with irreducible powers}) if there is no $k\ge 1$ such that $\phi^k$ preserves the conjugacy class of a proper free factor of $F_r$. A fully irreducible $\phi\in\Out(F_r)$ is called \emph{geometric} if there exists a compact connected surface $\Sigma$ with one boundary component such that $\pi_1(\Sigma)\cong F_r$ and such that $\phi$ is induced by a pseudo-Anosov homeomorphism of $\Sigma$; fully irreducibles that are not geometric are called \emph{nongeometric}.  Bestvina and Handel proved~\cite{BH92} that a fully irreducible $\phi\in\Out(F_r)$ is nongeometric if and only if $\phi$ is \emph{atoroidal}, that is, no positive power of $\phi$ preserves the conjugacy class of a nontrivial element of $F_r$. It was later shown, as a consequence of the Bestvina-Feighn Combination Theorem~\cite{bf92}, that a fully irreducible $\phi\in\Out(F_r)$ is nongeometric if and only if the mapping torus group $F_r\rtimes_\phi \Z$ is word-hyperbolic. For this reason nongeometric fully irreducibles are also called \emph{hyperbolic}. See Section~\ref{ss:FullyIrreducibles} below for more details.

Fully irreducible elements of $\Out(F_r)$ provide a free group analog of pseudo-Anosov elements of the mapping class group $Mod(\Sigma)$ of a closed hyperbolic surface $\Sigma$. Fully irreducibles play a key role in the study of algebraic, geometric, and dynamical properties of $\Out(F_r)$. In particular, every fully irreducible $\phi\in\Out(F_r)$ admits a \emph{train track representative} (see Section~\ref{subsect:tr} below for precise definitions), and this fact was, in a sense, the starting point in the development of train track and relative train track theory for free group automorphisms.  In the structure theory of subgroups of $Out(F_r)$, subgroups containing fully irreducible elements provide basic building blocks of the theory.
For example, the Tits Alternative for $\Out(F_r)$, established in full generality in~\cite{bfh00,bfh05}, was first proved in \cite{bfh97} for subgroups of $\Out(F_r)$ containing a fully irreducible element. A result of Handel and Mosher~\cite{hm09}, with a recent different proof by Horbez~\cite{Horbez1}, shows that if $H\le \Out(F_r)$ is a finitely generated subgroup, then either $H$ contains a fully irreducible element or $H$ contains a subgroup $H_1$ of finite index in $H$ such that $H_1$ preserves the conjugacy class of a proper free factor of $F_r$. Also, fully irreducible elements are known to have particularly nice properties for the natural actions of $\Out(F_r)$ on various spaces. In particular, a fully irreducible element $\phi\in\Out(F_r)$ acts with ``North-South'' dynamics on the compactified Outer space $\overline{CV}_r$ (see \cite{ll03}) and with generalized ``North-South'' dynamics on the projectivized space of geodesic currents $\mathbb P Curr(F_r)$, \cite{m95,uyanik1,uyanik2}. For $r\ge 2$, the ``free factor complex'' $\mathcal{FF}_r$, endowed with a natural $\Out(F_r)$ action by isometries, is a free group analog of the curve complex of a finite type surface. It is known that $\mathcal{FF}_r$ is Gromov-hyperbolic, and that $\phi\in\Out(F_r)$ acts as a loxodromic isometry of $\mathcal{FF}_r$ if and only if $\phi$ is fully irreducible~\cite{bf11}.

There are several known results showing that ``random'' or ``generic'' elements of $\Out(F_r)$ are fully irreducible. The first of these results is due to Rivin~\cite{Rivin08}. He showed that if $Q=Q^{-1}$ is a finite generating set of $\Out(F_r)$ (where $r\ge 3$), then for the simple random walk $q_1,q_2,\dots $ on  $\Out(F_r)$ with respect to $Q$ (where $q_i\in Q$), the probability that $\phi_n=q_1\dots q_n\in \Out(F_n)$ is fully irreducible goes to $1$ as $n\to\infty$. Rivin later improved this result to show~\cite{Rivin10} that, with probability tending to $1$ as $n\to\infty$, the element $\phi_n$ is in fact a nongeometric fully irreducible. Rivin's approach was homological: he studied the properties of the matrices in $GL(r,\Z)$ coming from the action of $\phi_n$ on the abelianization $\Z^r$ of $F_r$. From the algebraic properties of the characteristic polynomials of these matrices, Rivin was able to derive conclusions about $\phi_n$ being a nongeometric fully irreducible with probability  tending to $1$ as $n\to\infty$. Rivin applied the same method to show~\cite{Rivin08} that ``random'' (in the same sense) elements of mapping class groups are pseudo-Anosov.

A different, geometric, approach was then explored by Maher~\cite{m11} in the context of mapping class groups (using the action of the mapping class group on the Teichmuller space), and later by Calegari and Maher~\cite{cm10} in the context of group actions of Gromov-hyperbolic spaces. 
Calegari and Maher considered the following general situation. Let $G$ be a finitely generated group acting isometrically on a Gromov-hyperbolic space $X$ and let $\mu$ be a probability measure on $G$ with finite support such that this support generates a non-elementary subgroup of $Isom(X)$. Then Calegari and Maher proved that, for the random walk on $G$ determined by $\mu$, the probability that, for a random trajectory $q_1,q_2,\dots $ of this walk, the element $g_n=q_1\dots q_n\in G$ acts as a loxodromic isometry of $X$ tends to $1$ exponentially fast as $n\to \infty$. They established this fact by showing that there exists an $L>0$ such that, in the above situation, with probability tending to $1$ exponentially fast as $n\to \infty$, the translation length of $g_n$ on $X$ is $\ge Ln$. This result applies to many natural situations, such as the action of the mapping class group (or of its ``large'' subgroup) on the curve complex, and the action of $\Out(F_r)$ (or of suitably ``large'' subgroups of $\Out(F_r)$) on the free factor complex $\mathcal{FF}_r$. Since an element of $\Out(F_r)$ acts loxodromically on $\mathcal{FF}_r$ if and only if this element is fully irreducible, the result of Calegari and Maher implies the result of Rivin if we take $Q=Q^{-1}$ to be a finite generating set of $\Out(F_r)$ and take $\mu$ to be the uniform probability measure on $Q$. Recently Mann constructed~\cite{m14} a new Gromov-hyperbolic space $\mathcal P_r$ (quasi-isometric to the main connected component of the ``intersection graph'' $I_r$ defined in~\cite{kl09}), obtained as a quotient of $\mathcal{FF}_r$ and endowed with a natural isometric action of $\Out(F_r)$ by isometries. Mann showed~\cite{m14} that $\phi\in\Out(F_r)$ acts as a loxodromic isometry of $\mathcal P_r$ if and only if $\phi$ is a nongeometric fully irreducible. The result of Calegari-Maher applies to the action of $\Out(F_r)$ on $\mathcal P_r$ and thus implies that, for a finitely supported measure $\mu$ on $\Out(F_r)$ generating a subgroup containing at least two independent nongeometric fully irreducibles, an  element $\phi_n\in\Out(F_r)$, obtained by a random walk of length $n$ defined by $\mu$, is nongeometric fully irreducible with probability tending to $1$ exponentially fast, as $n\to\infty$. Finally, Sisto~\cite{Sisto}, using a different geometric approach, introduced the notion of a ``weakly contracting element'' in a group $G$, and showed that weakly contracting elements of $\Out(F_r)$ are exactly the fully irreducibles. He showed that for any simple random walk on $\Out(F_r)$, the element $\phi_n\in\Out(F_r)$ obtained after $n$ steps is weakly contracting (and hence fully irreducible) with probability tending to $1$ exponentially fast as $n\to\infty$. 

None of the above results yield more precise structural information about ``random'' elements of $\Out(F_r)$, other than the fact that these elements are (nongeometric) fully irreducibles.

There is a considerably more detailed stratification of the set of nongeometric fully irreducibles in terms of their index, their index list, and their ideal Whitehead graph, which we discuss below. The goal of this paper is to derive such detailed structural information for ``random'' elements of $\Out(F_r)$ obtained by a certain natural random walk on $\Out(F_r)$.

The index theory for elements of $\Out(F_r)$ is motivated by surface theory. If $\phi\in Mod(\Sigma)$ is a pseudo-Anosov element (where $\Sigma$ is a closed oriented hyperbolic surface), let $\mathcal F$ be the stable measured foliation for $\phi$. Then $\mathcal F$ has singularities $p_1,\dots, p_m$, where $p_i$ is a $k_i$-prong singularity with $k_i\ge 3$. In this case it is known that the ``index sum'' $\sum_{i=1}^m (1-\frac{k_i}{2})$ equals exactly $\chi(\Sigma)$. Thus the index sum is a constant independent of $\phi$, but the ``index list'' $\{1-\frac{k_1}{2},\dots, 1-\frac{k_m}{2}\}$ is a nontrivial invariant of the conjugacy class of $\phi$ in $Mod(S)$. 

The original notion of an index for an element $\phi$ of $\Out(F_r)$, introduced in \cite{GJLL}, was formulated in terms of the dynamics of the action on the hyperbolic boundary of $F_r$. This notion of index, in general,  is not invariant under replacing $\phi$ by its positive power. Subsequently, more invariant notions of index were developed using $\R$-tree technology. We discuss the various notions of index for free group automorphisms in Section~\ref{sect:index} below.

If $\phi\in \Out(F_r)$ (where $r\ge 2$) is fully irreducible, there is a naturally associated ``attracting $\R$-tree,'' endowed with a natural isometric action of $F_r$ (this tree is similar in spirit to the ``dual tree'' obtained by lifting the stable measured foliation of a pseudo-Anosov element of $Mod(\Sigma)$  to the universal cover $\tilde\Sigma$ and then collapsing the leaves). See Section~\ref{sect:index} for the explanation of the construction of $T_\phi$ from a train track representative of $\phi$.
If $\phi$  is a nongeometric fully irreducible, the action of $F_r$ on $T_\phi$ is free but highly non-discrete (in fact, every $F_r$-orbit is dense in $T_\phi$). However, it is known that every branch point in $T_\phi$ has finite degree, and that there are only finitely many $F_r$-orbits of branch points in $T_\phi$. Thus one can still informally view the quotient $T_\phi/F_r$ as a ``graph'' and, using a formula for what the Euler characteristic of this graph should be, define the notion of a ``geometric index'' $\gind(T_\phi)=\sum_{[P]} (\deg(P)-2)$ of $T_\phi$, where the summation is taken over $F_r$-orbits $[P]$ of branch-points in $T_\phi$;  see Definition~\ref{d:gind} below. If $\phi\in\Out(F_r)$ is a geometric fully irreducible, the action of $F_r$ on $T_\phi$ is not free, but there is a natural definition of $\gind(T_\phi)$ in this case too.   Unlike in the surface case,  $\gind(T_\phi)$ is not a constant in terms of $r$ and does depend on the choice of a fully irreducible $\phi$.  For a fully irreducible $\phi\in\Out(F_r)$, the attracting tree $T_\phi$ depends only on the conjugacy class of $\phi$ in $\Out(F_r)$, and in fact $T_{\phi^k}=T_\phi$ for all $k\ge 1$. Hence $\gind(T_\phi)$ is an invariant of the conjugacy class of $\phi$ in $\Out(F_r)$, which is also preserved by taking positive powers of $\phi$.
As a consequence of more general results, it is known that, for a fully irreducible $\phi\in\Out(F_r)$, one has $1\le \gind(T_\phi)\le 2r-2$ and that, for a geometric fully irreducible $\phi\in\Out(F_r)$, one has $\gind(T_\phi)=2r-2$. Surprisingly, it turns out that for $r\ge 3$ there exist nongeometric fully irreducibles $\phi\in\Out(F_r)$ with $\gind(T_\phi)=2r-2$~\cite{bf94,bf95,GJLL,g05,hm07,JL08}; such $\phi$ are called \emph{parageometric}. A nongeometric fully irreducible $\phi\in\Out(F_r)$ with $\gind(T_\phi)<2r-2$ is said to be \emph{ageometric}. 

As we have seen, for a nongeometric fully irreducible $\phi\in\Out(F_r)$, the geometric index $\gind(T_\phi)$ arises from an ``index sum'' over representatives of $F_r$-orbits of branch points in $T_\phi$. The terms of this sum provide an ``index list,'' which is also an invariant of the conjugacy class of $\phi$,  preserved by taking positive powers. In \cite{hm11}, Handel and Mosher formalized this fact by introducing the notion of an \emph{index list} and of \emph{rotationless index} $i(\phi)$ (the latter is called ``index sum'' in \cite{hm11}) for a nongeometric fully irreducible $\phi$. The most invariant definition of these notions involves  looking at the structure of branch-points of $T_\phi$, which also shows that $i(\phi)=-\frac{1}{2}\gind(T_\phi)$ for every non geometric fully irreducible $\phi$. Handel and Mosher also gave an equivalent description of the index list and rotationless index in terms of a train track representative of $\phi$. We give this description in Definition~\ref{d:RotationlessIndex} below. 

For a nongeometric fully irreducible $\phi$, Handel and Mosher also introduced another combinatorial object, called the \emph{ideal Whitehead graph} $\mathcal{IW}(\phi)$ of $\phi$, which encodes further, more detailed, information than the index list in a single finite graph. They also provided an equivalent description of $\mathcal{IW}(\phi)$ in terms of a train track representative of $\phi$; see Definition~\ref{d:IWG} below. For a pseudo-Anosov, the component of the ideal Whitehead graph coming from a foliation singularity is a polygon with edges corresponding to the lamination leaf lifts bounding a principal region in the universal cover \cite{n86}.
Since the number of vertices of each polygonal ideal Whitehead graph component is determined by the number of prongs of the singularity, the index list and the ideal Whitehead graph record the same data. In the $Out(F_r)$ setting, not only is the ideal Whitehead graph $\mathcal{IW}(\phi)$ a finer invariant (c.f. \cite{pII,pIII}), but it provides further information about the behavior of lamination leaves at a singularity. It is again an invariant of the conjugacy class of $\phi$, also invariant under taking positive powers of $\phi$. Moreover, while $\mathcal{IW}(\phi)$ is a more detailed structural invariant than $i(\phi)$ or the index list of $\phi$, both of these invariants can be ``read-off'' from $\mathcal{IW}(\phi)$.

We will now describe the main result of the present paper. Let $r\ge 3$ and let the free group $F_r=F(a_1,\dots, a_r)$ be equipped with a fixed free basis $A=\{a_1,\dots, a_r\}$. 
We denote by $R_r$ the \emph{$r$-rose}, which is a wedge of $r$ directed loop-edges, wedged at a single vertex $v$ and labelled $a_1,\dots, a_r$. Thus we have a natural identification $F_r=F(a_1,\dots, a_r)=\pi_1(R_r,v)$.

An \emph{elementary Nielsen automorphism} of $F_r$ is an element $\theta\in\Aut(F_r)$ such that there exist $x,y\in A^{\pm 1}$, $y\ne x^{\pm1}$, with the property that $\theta(x)=yx$, $\theta(x^{-1})=x^{-1}y^{-1}$, and $\theta(z)=z$ for each $z\in A^{\pm 1}-\{x,x^{-1}\}$. We denote such $\theta$ by $\theta=[x\mapsto yx]$. We say that an ordered pair $(\theta=[x\mapsto yx], \theta'=[x'\mapsto y'x'])$ is \emph{admissible} if either $x' = x$ and $y' \neq y^{-1}$ or $y' = x$ and $x' \neq y^{-1}$. A sequence $\theta_1,\dots, \theta_n$ (where $n\ge 1$) of standard Nielsen automorphisms of $F_r$ is called \emph{admissible} if, for each $1\le i<n$, the pair $(\theta_i,\theta_{i+1})$ is admissible. A sequence $\theta_1,\dots, \theta_n$ of standard Nielsen automorphisms of $F_r$ is called \emph{cyclically admissible} if it is admissible and if the pair $(\theta_n,\theta_1)$ is also admissible. We denote by $S$ the set of all elementary Nielsen automorphisms of $F_r$ (so that $S$ is a finite set with exactly $4r(r-1)$ elements, see Section~\ref{sect:ttdrw}); we also verify in Lemma~\ref{lem:4r-6} that for every $\theta\in S$ there are exactly $4r-6$ elements $\theta'\in S$ such that the pair $(\theta,\theta')$ is admissible. It is well-known that $S$ generates a subgroup of finite index in $\Out(F_r)$.

We define a finite-state Markov chain with the state set $S$ as follows.  For $\theta,\theta'$ we set the transition probability $P(\theta'|\theta)$ from $\theta$ to $\theta'$ to be $1/(4r-6)$ if the pair $(\theta,\theta')$ is admissible and $0$ otherwise. We show in Lemma~\ref{lem:irr-aper} that this is an irreducible aperiodic finite state Markov chain and that the uniform distribution $\mu_r$ on $S$ is stationary for this chain.
We then consider a random process $\mathcal W$ defined by this chain starting with the uniform distribution $\mu_r$ on $S$. Thus $\mathcal W$ can be viewed as a random walk, where we first choose an element $\theta_1\in S$ uniformly at random and then, if at step $n\ge 1$ we have chosen $\theta_n\in S$, we choose $\theta_{n+1}\in S$ according to the distribution $P(-|\theta_n)$ defined above. The sample space of $\mathcal W$ is the set $S^\N$ of all sequences $\theta_1,\theta_2,\dots $ of elements of $S$ and the random walk $\mathcal W$ defines a probability measure $\mu_\mathcal W$ on $S^\N$ whose support consists of all infinite admissible sequences of $S$. 
To each trajectory $\omega=\theta_1,\theta_2,\dots $ of $\mathcal W$ we associate a sequence $\phi_n\in\Out(F_r)$, where $\phi_n=\theta_n\circ \dots \circ \theta_1$.

The random walk $\mathcal W$ can be viewed  as an $\Out(F_r)$ version of the simple non-backtracking random walk on the free group itself. The reason is the following crucial property of admissible sequences: if $\theta_1,\dots,\theta_n$ is an admissible sequence of elements of $S$, then, for every letter $a\in A^{\pm 1}$, computing the image $(\theta_n\circ\dots \circ \theta_1)(a)$ by performing letter-wise substitutions produces a freely reduced word in $A^{\pm 1}$. This fact, established in Lemma~\ref{L:reg} below, implies that for any cyclically admissible sequence $\theta_1,\dots,\theta_n$, the element $\phi_n=\theta_n\circ\dots \circ \theta_1\in \Out(F_r)$ admits a train track representative $g_n:R_r\to R_r$ on the rose $R_r$, and, moreover, this train track map has exactly one nondegenerate illegal turn; see Theorem~\ref{T:tt}. That is why we also think of $\mathcal W$ as a ``train track directed'' random walk on $\Out(F_r)$.  

In addition, we show in Theorem~\ref{t:decomp} that for each train track map $g \colon R_r\to R_r$ with exactly one nondegenerate illegal turn with $g_\#=\phi\in \Out(F_r)$, for some positive power $g^p$ of $g$ there exists a cyclically admissible sequence $\theta_1,\dots,\theta_n$ such that $\phi^p=\theta_n\circ\cdots \circ \theta_1$, and so that our walk $\mathcal W$ reaches $\phi^p$ (and, moreover, $p$ only depends on $r$).

\begin{df}[Property $(\mathcal G)$]
Let $r\ge 3$ be an integer. We say that $\phi\in\Out(F_r)$ has \emph{property $(\mathcal G)$} if all of the following hold:
\begin{enumerate}
\item The outer automorphism $\phi$ is ageometric fully irreducible;
\item We have $i(\phi)=\frac{3}{2}-r$ (so that $\gind(T_\phi)=2r-3$), and $\phi$ has single-element index list $\{\frac{3}{2}-r\}$.
\item There exists a train track representative $f:R_r\to R_r$ of $\phi$ such that $f$ has no pINPs and such that $f$ has exactly one nondegenerate illegal turn.
\item  The ideal Whitehead graph $\mathcal{IW}(\phi)$ of $\phi$  is the complete graph on $2r-1$ vertices.
\item The axis bundle for $\phi$ in $CV_r$ consists of a single axis. 
\end{enumerate}

\end{df}

(The terms appearing in this definition that have not yet been defined are explained later in the paper).

Our main result (c.f. Theorem~\ref{T:main} below) is:

\begin{theor}\label{T:A}
Let $r\ge 3$.  For $n\ge 1$ let $E_n$ be the event that for a trajectory $\omega=\theta_1,\theta_2,\dots $ of $\mathcal W$ the sequence $\theta_1,\dots,\theta_n$  is cyclically admissible.
Also, for $n\ge 1$ let  $B_n$ be the event that for a trajectory $\omega=\theta_1,\theta_2,\dots $ of $\mathcal W$ the outer automorphism  $\phi_n=\theta_n \circ \cdots \circ \theta_1\in\Out(F_r)$ has property $(\mathcal G)$.

Then the following hold:

\begin{enumerate} 
\item For the conditional probability $Pr(B_n|E_n)$ we have \[\lim_{n\to\infty} Pr(B_n|E_n)=1.\]
\item We have $Pr(E_n)\to_{n\to\infty} \frac{2r-3}{2r(r-1)}$ and $\liminf_{n\to\infty} Pr(B_n)\ge \frac{2r-3}{2r(r-1)}>0$.
\item For $\mu_\mathcal W$-a.e. trajectory $\omega=\theta_1,\theta_2,\dots $ of $\mathcal W$, there exists an $n_\omega\ge 1$ such that for every $n\ge n_\omega$ such that $\mathfrak t_n=\theta_1,\dots, \theta_{n}$ is cyclically admissible, we have that the outer automorphism $\phi_n=\theta_n \circ \cdots \circ \theta_1\in\Out(F_r)$ has property $(\mathcal G)$.
\end{enumerate}
\end{theor}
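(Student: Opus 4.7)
Since the Markov chain assigns zero weight to non-admissible pairs, consecutive admissibility of $(\theta_i,\theta_{i+1})$ for $1\le i<n$ holds automatically on every trajectory in the support of $\mu_\mathcal W$. Hence $E_n$ reduces to the single event that the pair $(\theta_n,\theta_1)$ is admissible. By Lemma~\ref{lem:irr-aper} the chain is irreducible, aperiodic and has $\mu_r$ as stationary distribution, so the Perron--Frobenius theorem for finite Markov chains yields exponential convergence of the joint law of $(\theta_1,\theta_n)$ to $\mu_r\otimes\mu_r$. Counting admissible ordered pairs via Lemma~\ref{lem:4r-6} gives $|S|(4r-6)$ admissible pairs out of $|S|^2$, whence
\[
Pr(E_n)\to\frac{4r-6}{|S|}=\frac{4r-6}{4r(r-1)}=\frac{2r-3}{2r(r-1)}.
\]
The bound $\liminf Pr(B_n)\ge (2r-3)/(2r(r-1))$ then follows from $Pr(B_n)\ge Pr(B_n|E_n)Pr(E_n)$ once claim (1) is established.

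\textbf{Claim (1).} Fix a trajectory whose initial segment $\theta_1,\dots,\theta_n$ is cyclically admissible. Theorem~\ref{T:tt} already provides a train track representative $g_n\colon R_r\to R_r$ of $\phi_n$ having exactly one nondegenerate illegal turn at the vertex of $R_r$, so item (3) of property $(\mathcal G)$ is in hand as soon as one also knows that $g_n$ carries no periodic Nielsen path. The structural machinery of the paper's preparatory sections, namely Handel--Mosher index theory applied to a rose train track map whose single illegal turn identifies two of the $2r$ directions at the vertex together with the axis-bundle-uniqueness criterion for such maps, is set up precisely to imply that a pINP-free rose train track map with one illegal turn is automatically ageometric fully irreducible, has $2r-1$ gates at the vertex and hence rotationless index $1-\tfrac{2r-1}{2}=\tfrac{3}{2}-r$, has singleton index list $\{\tfrac{3}{2}-r\}$, ideal Whitehead graph equal to the complete graph $K_{2r-1}$, and axis bundle reduced to a single axis. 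Granting this implication, claim (1) reduces to the estimate
\[
Pr\bigl(g_n\text{ has a pINP}\bigm|E_n\bigr)\to 0.
\]

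\textbf{Claims (1) and (3), pINP control.} To handle the pINP estimate I would translate the existence of a pINP for $g_n$ into a local combinatorial pattern in $\theta_1,\dots,\theta_n$: by bounded cancellation, a pINP is a bounded-length legal pair of paths across the illegal turn whose $g_n$-iterates tighten back onto it, and such invariance forces specific coincidence conditions on the Nielsen substitutions inside a window whose size is controlled only by $r$. There are only finitely many candidate windows, and each of them corresponds to confining the chain to a proper subset of $S$ at a sequence of indices tied together by an arithmetic relation; the exponential mixing of the irreducible aperiodic chain then delivers a bound of the form $Pr(E_n\cap B_n^c)=O(n\lambda^n)$ for some $\lambda<1$. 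Combined with $Pr(E_n)\to(2r-3)/(2r(r-1))>0$ this yields claim (1). It also gives $\sum_n Pr(E_n\cap B_n^c)<\infty$, so by the first Borel--Cantelli lemma the event ``$\mathfrak t_n$ is cyclically admissible and $\phi_n$ fails $(\mathcal G)$'' occurs for only finitely many $n$ almost surely. Taking $n_\omega$ one larger than the largest such index proves claim (3).

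The main obstacle will be the pINP-elimination step: extracting the correct bounded-window characterisation of a pINP from the iterated Nielsen data, and combining it with admissibility-constrained mixing estimates for $\mathcal W$, is the point where the combinatorial geometry of train track maps on $R_r$ must be delicately aligned with the probabilistic structure of the walk, and essentially all of the technical difficulty of the theorem resides here; by contrast, once the pINP-free conclusion is available, the passage to ageometric full irreducibility together with the full list of invariants in property $(\mathcal G)$ is a direct application of the train-track index and axis-bundle technology already catalogued earlier in the paper.
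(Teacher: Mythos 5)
Your treatment of part (2) (the limit of $Pr(E_n)$ via stationarity and asymptotic independence of $\theta_1$ and $\theta_n$, plus the count from Lemma~\ref{lem:4r-6}) matches the paper's Lemma~\ref{lem:cycl}, and the deduction $\liminf Pr(B_n)\ge \lim Pr(E_n)$ from part (1) is fine. The core of the argument, however, has a genuine gap, in fact two. First, it is not true that a pINP-free train track map on $R_r$ with exactly one nondegenerate illegal turn ``automatically'' yields all of property $(\mathcal G)$. The full irreducibility criterion actually used (Proposition~\ref{P:FIC}) requires, in addition to the absence of pINPs, that the transition matrix be Perron--Frobenius and that the local Whitehead graph be connected; and the statement that $\mathcal{IW}(\phi_n)$ is the \emph{complete} graph on $2r-1$ vertices (equivalently, index list $\{\frac{3}{2}-r\}$, which feeds the lone-axis criterion of Theorem~\ref{t:AxisBundle}) needs the much stronger fact that every turn between periodic directions is eventually taken, i.e.\ that $Wh(g_n)$ has the $\Upsilon_r$ form. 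None of this follows from ``one illegal turn and no pINPs'': with one illegal turn there are indeed $2r-1$ gates, but the stable Whitehead graph could a priori be disconnected or incomplete, giving a different index list or even failing full irreducibility. In the paper these properties are not automatic but are \emph{imported} from the specific seed block $\mathfrak s$ of Proposition~\ref{P:CP} (Pfaff's construction), which simultaneously has $M(g_{\mathfrak s})>0$, $Wh(g_{\mathfrak s})=\Upsilon_r[x_q',y_q']$, and starts with a pINP prevention sequence; these are then propagated through any admissible continuation by Lemma~\ref{L:>0}, Corollary~\ref{C:Wh}(4), and Definition~\ref{D:block}, yielding Theorem~\ref{T:tech}.

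Second, your probabilistic pINP-control step is both unsubstantiated and not the mechanism that makes the theorem work. The claim that a pINP of $g_n$ forces a coincidence pattern inside a window of size depending only on $r$ is doubtful: the length of an INP is controlled by the bounded cancellation constant of $g_n$, which grows with $n$, and the endpoints need not be vertices, so no fixed-size window in $\theta_1,\dots,\theta_n$ presents itself; the asserted bound $Pr(E_n\cap B_n^c)=O(n\lambda^n)$ is never justified, and parts (1) and (3) of your argument (via Borel--Cantelli) collapse without it. The paper's route is entirely different and deterministic at this point: with probability tending to $1$ (and a.s.\ eventually) the block $\mathfrak s$ occurs somewhere in $\theta_1,\dots,\theta_n$; since $\mathfrak t_n$ is cyclically admissible, a cyclic permutation $\mathfrak t_n'$ of $\mathfrak t_n$ begins with $\mathfrak s$, Theorem~\ref{T:tech} applies to $g_{\mathfrak t_n'}$ (no pINPs by Lemmas~\ref{L:NPKilling} and~\ref{L:NPKilling3}, positivity and the complete Whitehead graph as above), and $\phi_n$ inherits property $(\mathcal G)$ because it is conjugate to $\phi_n'$ and $g_{\mathfrak t_n'}$ serves, after changing the marking, as a train track representative of $\phi_n$ (Remark~\ref{R:permut}); part (3) follows from the a.s.\ finite hitting time of $\mathfrak s$ rather than from a summable error bound. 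This occurrence-of-a-seed-block plus cyclic-permutation device is the missing idea in your proposal, and without it (or a genuinely new pINP and Whitehead-graph control argument) the proof does not go through.
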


We then project the random walk $\mathcal W$ to a random walk on $SL(r,\Z)$ by sending each $\theta\in S$ to its transition matrix in $SL(r,\Z)$, when $\theta$ is viewed as a graph map $R_r\to R_r$. We analyze the spectral properties of this projected walk and show that it has positive first Lyapunov exponent, see Proposition~\ref{P:pos}. We then conclude that for $\mu_\mathcal W$-a.e. trajectory $\theta_1,\theta_2,\dots, $, the stretch factor $\lambda(\theta_n\circ \dots \circ\theta_1)$ grows exponentially in $n$ for any increasing sequence of indices $n$ such that  $\theta_1,\theta_2,\dots, \theta_n$ is cyclically admissible. See Theorem~\ref{T:growth} below for the precise statement, and see Section~\ref{sect:stretch} for the definition and properties of stretch factor for an element of $\Out(F_r)$.

As a consequence, we show that our random walk $\mathcal W$ has positive linear rate of escape with respect to the word metric defined by any finite generating set of $\Out(F_r)$ (c.f. Theorem~\ref{T:LRE}):

\begin{theor}\label{T:B}
Let $r\ge 3$ and let $Q$ be a finite generating set of $\Out(F_r)$ such that $Q=Q^{-1}$. Then there exists a constant $c>0$ such that, for $\mu_\mathcal W$-a.e. trajectory $\omega=\theta_1,\theta_2,\dots $ of $\mathcal W$,
\[
\lim_{n \to \infty} \frac{1}{n} |\theta_n \dots \theta_1|_Q =c.
\]
\end{theor}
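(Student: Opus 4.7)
The plan is to apply Kingman's subadditive ergodic theorem to the cocycle $f_n(\omega)=|\theta_n\circ\cdots\circ\theta_1|_Q$, and then to use the exponential stretch-factor growth of Theorem~\ref{T:growth} to verify that the resulting almost-sure constant is strictly positive. Because the Markov chain driving $\mathcal W$ is irreducible and aperiodic (Lemma~\ref{lem:irr-aper}) and the initial distribution is the stationary distribution $\mu_r$, the shift $\sigma\colon S^\N\to S^\N$ preserves $\mu_{\mathcal W}$ and is ergodic. The decomposition $\phi_{n+m}=(\theta_{n+m}\circ\cdots\circ\theta_{n+1})\circ\phi_n$ together with $Q=Q^{-1}$ yields the cocycle inequality
\[
f_{n+m}(\omega)\le f_n(\omega)+f_m(\sigma^n\omega),
\]
and the trivial bound $f_n(\omega)\le nM$, with $M=\max_{\theta\in S}|\theta|_Q$, gives the required integrability. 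Thus Kingman's theorem, together with ergodicity, yields a deterministic constant $c\ge 0$ with $\lim_{n\to\infty}f_n(\omega)/n=c$ for $\mu_{\mathcal W}$-a.e.\ $\omega$; this already proves the limit half of the theorem.

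To upgrade $c\ge 0$ to $c>0$, I would use the (asymmetric) Lipschitz metric $d_{CV_r}$ on Outer space (see Section~\ref{sect:stretch}). Fix any $x_0\in CV_r$ and set $K_Q=\max_{q\in Q}d_{CV_r}(x_0,q\cdot x_0)<\infty$. Since $\Out(F_r)$ acts on $CV_r$ by isometries of $d_{CV_r}$, a telescoping estimate yields $d_{CV_r}(x_0,\phi\cdot x_0)\le K_Q\,|\phi|_Q$ for every $\phi\in\Out(F_r)$. For a fully irreducible $\phi$, the translation length of $\phi$ on $CV_r$ equals $\log\lambda(\phi)$, so in particular
\[
\log\lambda(\phi)\le d_{CV_r}(x_0,\phi\cdot x_0)\le K_Q\,|\phi|_Q.
\]
By Theorem~\ref{T:growth}, there is a constant $\Lambda>0$ such that for $\mu_{\mathcal W}$-a.e.\ $\omega$ and every increasing sequence of indices $n_k\to\infty$ with $(\theta_1,\dots,\theta_{n_k})$ cyclically admissible, $\log\lambda(\phi_{n_k}(\omega))\ge\Lambda n_k$ for all large $k$; such sequences exist almost surely by parts~(2)--(3) of Theorem~\ref{T:A}. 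Hence $f_{n_k}(\omega)/n_k\ge\Lambda/K_Q$ along this subsequence. Because the full limit $c=\lim_n f_n(\omega)/n$ already exists almost surely, any subsequential lower bound is inherited by $c$, so $c\ge\Lambda/K_Q>0$.

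The principal obstacle is the positivity assertion $c>0$; the existence of the limit and the upper bound $f_n/n\le M$ are routine. Positivity rests on transferring the exponential growth of $\lambda(\phi_n)$ along cyclically admissible subsequences, supplied by Theorem~\ref{T:growth} and ultimately by the positivity of the Lyapunov exponent from Proposition~\ref{P:pos}, into a lower bound on word length. The Lipschitz estimate $\log\lambda(\phi)\le K_Q|\phi|_Q$ is the essential bridge between these two kinds of growth, and once it is installed, the almost-sure existence of the Kingman constant $c$ forces it to dominate the subsequential lower bound $\Lambda/K_Q$.
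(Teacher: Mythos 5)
Your argument is correct, and its first half (the Kingman subadditive ergodic theorem applied to $Z_n(\omega)=|\theta_n\dots\theta_1|_Q$, using ergodicity of the shift and the bound $Z_n\le nM$) is exactly the paper's argument. Where you diverge is the bridge from exponential growth of the stretch factor to linear growth of word length: you pass through Outer space, using that $\Out(F_r)$ acts by isometries of the asymmetric Lipschitz metric, that $d_{CV_r}(x_0,\phi\cdot x_0)\le K_Q|\phi|_Q$ by a telescoping estimate, and that the translation length of a fully irreducible equals $\log\lambda(\phi)$, so $\log\lambda(\phi)\le K_Q|\phi|_Q$; you then feed in Theorem~\ref{T:growth} along cyclically admissible prefixes and let the already-established limit $c$ absorb the subsequential bound $\Lambda/K_Q$. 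The paper instead proves the same inequality $\log\lambda(\phi)\le |\phi|_Q\log|Q|_A$ by the completely elementary Lemma~\ref{L:ST} (each generator stretches cyclic word length by at most $|Q|_A$, so a $Q$-word of length $n$ stretches by at most $(|Q|_A)^n$), and phrases positivity as a contradiction with $c=0$; it also does not need $\phi_{n_i}$ to be fully irreducible for this step, only Theorem~\ref{T:growth}. Your route buys a clean geometric interpretation (displacement in $CV_r$ controls $\log\lambda$) at the cost of importing the Francaviglia--Martino Lipschitz-metric machinery cited in \cite{fm11}, plus the full-irreducibility input from Theorem~\ref{T:main}; the paper's route is self-contained and works for arbitrary $\phi$. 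One small point of attribution: the almost-sure existence of infinitely many cyclically admissible prefixes does not quite follow from parts (2)--(3) of Theorem~\ref{T:A} as stated, but rather from recurrence of the irreducible finite-state chain $\mathcal Y$ (every state, in particular every state in $S_-(\theta_1)$, is visited infinitely often almost surely); this is the fact the paper invokes at the corresponding step, so it is a matter of citation rather than a gap.
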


Here for $\phi\in \Out(F_r)$, $|\phi|_Q$ denotes the distance from $1$ to $\phi$ in $\Out(F_r)$ with respect to the word metric on $\Out(F_r)$ corresponding to $Q$.

Note that our random walk $\mathcal W$ is a ``left'' random walk on $\Out(F_r)$, since with a random trajectory $\theta_1,\theta_2,\dots $ of $\mathcal W$ we associate the sequence $\phi_n=\theta_n \circ \cdots \circ  \theta_1\in\Out(F_n)$ (rather than $\theta_1 \circ \cdots \circ \theta_n$). We explain in Remark~\ref{rk:left-right} how one can convert our random walk into a more traditional ``right'' random walk on $\Out(F_r)$, although after such a conversion the statements of our main results become less natural.

The proof of Theorem~\ref{T:A} is based on completely different methods from all the previous results about the properties of ``random'' elements of $\Out(F_r)$ (see above the discussion of the work of Rivin, Calegari-Maher, and Sisto). Instead of using the action of $\Out(F_r)$ on the free factor complex or on the abelianization of $F_r$, we analyze the properties of train track representatives of elements $\phi_n\in\Out(F_r)$ obtained by our walk $\mathcal W$. The main payoff is that, apart from concluding that $\phi_n$ is fully irreducible, we obtain a great deal of extra detailed structural information about the properties of $\phi_n$, where such information does not seem to be obtainable by prior methods.
A key tool in establishing that $\phi_n$ is fully irreducible is the train track criterion of full irreducibility obtained in \cite{pII} (see Proposition~\ref{P:FIC} below); we also discuss a related criterion obtained in \cite{K14} (see Proposition~\ref{P:K14} below). We substantially rely on ideas and results of \cite{pI,pII,pIII}, although the exposition given in the present paper is almost completely self-contained.

Finally, we pose several open problems naturally arising from our work:

\begin{qst}
In the context of our Theorem~\ref{T:A}, for a $\mu_\mathcal W$-a.e. trajectory $\theta_1,\theta_2,\dots$ and $n>>1$ such that $\theta_1,\dots,\theta_n$ is cyclically admissible, what can be said about the rotationless index, index list, and Ideal Whitehead graph of $(\theta_n\circ \dots \circ\theta_1)^{-1}$?  
\end{qst}
In \cite{JL08}, J\"aeger and Lustig, for each $r\ge 3$, constructed a positive automorphism $\phi$ such that $\phi$ is ageometric fully irreducible with $i(\phi)=\frac{3}{2}-r$ and such that $i(\phi^{-1})=1-r$, so that $\phi^{-1}$ is parageometric. In their construction $\phi$ arises as a rather special composition of positive elementary Nielsen automorphisms, where this composition is cyclically admissible in our sense. However, experimental evidence appears to indicate that for $\phi\in \Out(F_r)$ produced by our walk $\mathcal W$ for long ``random'' cyclically admissible compositions, the absolute value of $i(\phi^{-1})$ is much smaller than the maximum value of $r-1$ achieved by parageometrics. 

\begin{qst}\label{q:2}
Again in the context of Theorem~\ref{T:A}, is it true that for $\mu_\mathcal W$-a.e. trajectory $\theta_1,\theta_2,\dots$ of $\mathcal W$, projecting this trajectory to the free factor complex $\mathcal{FF}_r$ as $\theta_1\dots \theta_n p$, where $p$ is a vertex of $\mathcal{FF}_r$ (or perhaps as $\theta_1^{-1}\dots \theta_n^{-1} p$), gives a sequence that converges to a point of the hyperbolic boundary $\partial \mathcal{FF}_r$?
\end{qst}

Note that by the recent work of Horbez~\cite{Horbez2} on describing the Poisson boundary of $\Out(F_r)$, the answer to the similar question for a simple random walk on $\Out(F_r)$ is positive. In several personal conversations, Camille Horbez indicated to the second author a plausible approach for getting a positive answer to Question~\ref{q:2}. 

\begin{qst}
Let $r\ge 3$ and let $Q=Q^{-1}$ be a finite generating set of $\Out(F_r)$. If $q_1,q_2,\dots $ is a random trajectory of the simple random walk on $\Out(F_r)$, what can be said about the properties of $\phi_n=q_1\dots q_n\in \Out(F_r)$, apart from the fact that, with probability tending to $1$ as $n\to\infty$, the automorphism $\phi_n$ is a nongeometric fully irreducible? In particular, is $\phi_n$ ageometric? What can be said about $i(\phi_n)=-\frac{1}{2}\gind(T_{\phi_n})$, and about the index list and the Ideal Whitehead graph of $\phi_n$?
\end{qst}

\begin{qst}
Let $\Sigma$ be a closed oriented hyperbolic surface. What can be said about the index/singularity list for the stable foliation of a ``random'' element $\phi_n\in Mod(S)$ obtained by a simple random walk of length $n$ on $Mod(S)$? (Note that by the results of Rivin, Maher, and Calegari-Maher, discussed above, we do know that $\phi_n$ is pseudo-Anosov with probability tending to $1$ as $n\to\infty$).
\end{qst}

It would also be interesting to understand the index properties of generic automorphisms $\phi_n\in \Out(F_r)$ (where $r\ge 3$) produced by a simple random walk on $Out(F_r)$ with respect to some finite generating set of $Out(F_r)$. As noted above, it is already known that in this situation $\phi_n$ is atoroidal and fully irreducible with probability tending to $1$ as $n\to\infty$. Computer experiments, conducted by us using Thierry Coulbois' computer package for free group automorphisms\footnote{The package is available at {\url{http://www.cmi.univ-mrs.fr/~coulbois/train-track/ }}} appear to indicate that generically both $\phi_n$ and $\phi_n^{-1}$  are ageometric fully irreducible, with a very small value of $|i(\phi_n)|$ (in contrast with an almost maximal value $|i(\phi_n)|=r-\frac{2}{3}$ in Theorem~\ref{T:A}).  These experiments also appear to indicate that several possible index lists for $\phi_n$ occur with asymptotically positive probability each, with the single-entry list $\{-\frac{1}{2}\}$ occurring with the highest probability. However, the maximal values of the length $n$ of a simple random walk on $Out(F_r)$ (with $r=3,4,5,6$), that our experiments were able to handle, were around $n\approx 80-85$, and longer experiments are needed to get more conclusive empirical data.

A plausible conjecture here would be that all singularities of the stable foliation of a random $\phi_n$ are 3-prong singularities. Note that a result of Eskin, Mirzakhani, and Rafi~\cite{emr12} shows that for ``most'' (in a different sense) closed geodesics in the moduli space of $\Sigma$, the pseudo-Anosov element of $Mod(\Sigma)$ corresponding to such a closed geodesic has all singularities of its stable foliation being  3-prong.

The first author thanks Terence Tao for supplying a proof of Proposition~\ref{P:Tao} and to Jayadev Athreya, Vadim Kaimanovich, Camille Horbez and Igor Rivin for helpful discussions about Lyapunov exponents and random walks.  
We are also grateful to Lee Mosher for useful conversations regarding the proof of Theorem~\ref{t:decomp}. 

\section{Preliminaries}\label{s:prelims}

\subsection{Graphs, paths and graph maps}\label{ss:graphmaps}

\begin{df}[Graphs]\label{d:graphs}
A \emph{graph} $\Gamma$ is a 1-dimensional cell-complex. We call the 0-cells of $\Gamma$ \emph{vertices} and denote the set of all vertices of $\Gamma$ by $V\Gamma$. We refer to open 1-cells of $\Gamma$ as \emph{topological edges} of $\Gamma$ and denote the set of all topological edges of $\Gamma$ by $E_{top}\Gamma$.

Each topological edge $\Gamma$ is homeomorphic to the open interval $(0,1)$ and thus, when viewed as a 1-manifold, admits two possible orientations.  An \emph{oriented edge} of $\Gamma$ is a topological edge with a choice of an orientation on it. We denote by $E\Gamma$ the set of all oriented edges of $\Gamma$. If $e\in E\Gamma$ is an oriented edge, we denote by $\overline e$ the same underlying edge with the opposite orientation. Note that for each $e\in E\Gamma$ we have $\overline{\overline e} =e$ and $\overline e\ne e$; thus $-: E\Gamma\to E\Gamma$ is a fixed-point-free involution. 

Since $\Gamma$ is a cell-complex, every oriented edge $e$ of $\Gamma$ comes equipped with the orientation-preserving attaching map $j_e:[0,1]\to \Gamma$ such that $j_e$ maps $(0,1)$ homeomorphically to $e$ and such that $j_e(0),j_e(1)\in V\Gamma$.  By convention we choose the attaching maps so that, for each $e\in E\Gamma$ and each $s\in (0,1)$, we have $(j_{\overline e}^{-1}\circ j_e)(s)=1-s$.
For $e\in E\Gamma$ we call $j_e(0)$ the \emph{initial vertex} of $e$, denoted $o(e)$, and we call $j_e(1)$ the \emph{terminal vertex} of $e$, denoted $t(e)$. Thus, by definition, $o(\overline e)=t(e)$ and $t(\overline e)=o(e)$.

If $\Gamma$ is a graph and $v\in V\Gamma$, a \emph{direction at $v$} in $\Gamma$ is an edge $e\in E\Gamma$ such that $o(e)=v$. We denote the set of all directions at $v$ in $\Gamma$ by $Lk\Gamma(v)$ and call it the \emph{link} of $v$ in $\Gamma$. Then the \emph{degree} of $v$ in $\Gamma$, denoted $\deg(v)$ or $\deg_\Gamma(v)$, is the cardinality of the set $Lk\Gamma(v)$.

An \emph{orientation} on a graph $\Gamma$ is a partition $E\Gamma=E_+\Gamma\sqcup E_-\Gamma$ such that for an edge $e\in E\Gamma$ we have $e\in E_+\Gamma$ if and only if $\overline e\in E_-\Gamma$.
\end{df}

Note that both topological edges and oriented edges are, by definition, open subsets of $\Gamma$ and they don't contain their endpoints.

\begin{df}[Combinatorial and topological paths]\label{d:ctpaths}
 A \emph{combinatorial edge-path} $\gamma$ of \emph{length} $n\ge 1$ is a sequence $\gamma=e_1,\dots, e_n$ such that $e_i\in E\Gamma$ for $i=1,\dots, n$ and such that $t(e_i)=o(e_{i+1})$ for all $1\le i<n$. We put $o(\gamma):=o(e_1)$, $t(\gamma):=t(e_n)$, and $\gamma^{-1}:=\overline{e_n},\dots, \overline{e_1}$. Thus $\gamma^{-1}$ is again a combinatorial edge-path of length $n$. For $v\in V\Gamma$ we also view $\gamma=v$ as a combinatorial edge-path of length $0$ with $o(\gamma)=t(\gamma)=v$ and $\gamma^{-1}=\gamma$.
For a combinatorial edge-path $\gamma$ of length $n\ge 0$ we denote $|\gamma|=n$.

A combinatorial edge-path $\gamma$ is \emph{reduced} or \emph{tight} if $\gamma$ does not contain subpaths of the form $e, \overline e$, where $e\in E\Gamma$.

A \emph{topological edge-path} in $\Gamma$ is a continuous map $f \colon [a,b]\to \Gamma$ such that either $a=b$ and $f(a)=f(b)\in V\Gamma$ or $a<b$ and there exists a subdivision $a=a_0<a_1<\dots < a_n=b$ and a combinatorial edge-path $\gamma=e_1,\dots, e_n$ in $\Gamma$ such that:

(1) We have $f(a_i)\in V\Gamma$ for $i=0,\dots, n$.

(2) We have $f(a_i)=o(e_i)$ for $i=0,\dots, n-1$ and $f(a_i)=t(e_{i-1})$ for $i=1,\dots, n$.

(3) $f|_{(a_{i-1},a_{i})}$ is an orientation-preserving homeomorphism mapping $(a_{i-1},a_{i})$ onto $e_i$. 

\noindent Sometimes we drop the commas and just write $\gamma=e_1\dots e_n$.

Note that, for a topological edge-path  $f:[a,b]\to \Gamma$, where $a<b$, the combinatorial edge-path $\gamma=e_1,\dots, e_n$ with the above properties is unique; we say that $\gamma$ is the combinatorial edge-path \emph{associated} to $f$; we also call $a=a_0<\dots <a_n=b$ the \emph{associated subdivision} for $f$.
If $a=b$ and $f(a)=f(b)=v\in V\Gamma$, we say the path $\gamma=v$ is associated to $f$. 

Let $f:[a,b]\to \Gamma$ be a topological edge-path (where $a<b$), let $\gamma=e_1,\dots, e_n$ be the associated combinatorial edge-path, and let $a=a_0<a_1<\dots < a_n=b$ be the corresponding subdivision. We say that a topological edge-path $f$ is \emph{tame} if for every $i=1,\dots, n$ the map $j_{e_i}^{-1}\circ f: (a_{i-1},a_i)\to (0,1)$ is a (necessarily unique) orientation-preserving affine homeomorphism.
By convention, if $f:[a,b]\to \Gamma$ is a topological edge-path with $a=b$, we also consider $f$ to be tame.

A \emph{topological path} $f:[a,b]\to \Gamma$ (where $a<b$) is defined similarly to as in the definition of a topological edge-path above, except that we no longer require $f(a)=o(e_1)$ and $f(b)=t(e_n)$, but instead allow $f(a)\in e_1\cup o(e_1)$ and $f(b)\in e_n\cup t(e_n)$.  For $i=1$ and $i=n$, condition (3) in the above definition is relaxed accordingly.
For $a=b$ we view any map $f:\{a\}\to \Gamma$ as a topological path in $\Gamma$.

For a topological path $f:[a,b]\to \Gamma$ with $a<b$ there is still a canonically \emph{associated} combinatorial edge-path $\gamma=e_1,\dots, e_n$ and a canonically \emph{associated} subdivision $a=a_0<a_1<\dots < a_n=b$.

We define what it means for a topological path $f:[a,b]\to \Gamma$ to be \emph{tame}, similarly to the notion of a tame topological edge-path above, by requiring all the maps $j_{e_i}^{-1}\circ f|_{(a_{i-1},a_i)}$ to be injective \textbf{affine} orientation-preserving maps from $(a_{i-1},a_i)$ to subintervals of $(0,1)$. For $1<i<n$ it is still the case that $j_{e_i}^{-1}\circ f\left((a_{i-1},a_i)\right)=(0,1)$. However, we now allow for the possibility that $j_{e_1}^{-1}\circ f\left((a_{0},a_1)\right)=(s,1)$ with $s>0$ (in the case where $f(a)\in e_1$ rather than $f(a)=o(e_1)$) and that  $j_{e_n}^{-1}\circ f\left((a_{n-1},a_n)\right)=(0,s)$ with $s<1$ (in the case where $f(a)\in e_n$ rather than $f(a)=t(e_n)$). Also, if $a=b$, we consider any map   $f:\{a\}\to \Gamma$ to be a tame path in $\Gamma$.
\end{df}

Note that if $f:[a,b]\to \Gamma$ is a topological path (respectively, tame topological path), then for any $a\le a'\le b'\le b$ the restriction $f|_{[a',b']}:[a',b']\to\Gamma$ is again a topological path (respectively, tame topological path) in $\Gamma$.

Also notice that, if $n\ge 1$ and $\gamma=e_1,\dots, e_n$ is a combinatorial edge-path and $a\in \mathbb R$, then there exists a unique tame topological edge-path $f:[a,a+n]\to \Gamma$ with associated  combinatorial path $\gamma$ and associated subdivision $a_i=a+i$, $i=0,\dots, n$. By contrast, given $\gamma=e_1,\dots, e_n$ and $a\in \mathbb R$, there exist uncountably many topological edge-paths $f:[a,b]\to \Gamma$ with associated  combinatorial path $\gamma$ and associated subdivision $a_i=a+i$, $i=0,\dots, n$.  The distinction between topological edge-paths and tame topological edge-paths is often ignored in the literature, but this distinction is important when considering fixed points and dynamics of graph maps, as we will see later.

\begin{df}[Paths]\label{d:paths}
Let $\Gamma$ be a graph. By a \emph{path} in $\Gamma$ we will mean a tame topological path $f:[a,b]\to\Gamma$.  A  path $f:[a,b]\to \Gamma$ is \emph{trivial} if $a=b$ and \emph{nontrivial} if $a<b$.
A path $f$ is \emph{tight} or \emph{reduced} if the map $f:[a,b]\to \Gamma$ is locally injective. Thus a trivial path is always tight, and a nontrivial path is tight if and only if the combinatorial edge-path associated to $f$ is reduced.
\end{df}

\subsection{Graph maps}

\begin{df}[Graph maps]
Let $\Gamma$ and $\Delta$ be graphs. A \emph{topological graph map} $f:\Gamma\to\Delta$ is a continuous map such that $f(V\Gamma)\subseteq V\Delta$ and such that the restriction of $f$ to each edge of $\Gamma$ is a topological edge-path in $\Delta$. More precisely, this means that for each $e\in E\Gamma$ the map $f\circ j_e: [0,1]\to \Delta$ is a topological edge-path in $\Delta$.

A \emph{graph map} is a topological graph map $f:\Gamma\to\Delta$ such that the restriction of $f$ to each edge of $\Gamma$ is a path in $\Gamma$ (in the sense of Definition~\ref{d:paths}), that is, such that for every $e\in E\Gamma$ the map $f\circ j_e: [0,1]\to \Delta$ is a tame topological edge-path in $\Delta$.
\end{df}

\begin{conv}
By convention, if $f:[a,b]\to \Gamma$ is a tame topological edge-path with the associated combinatorial edge-path $\gamma=e_1,\dots, e_n$, we will usually suppress the distinction between $f$ and $\gamma$.  In particular, if $f:\Gamma\to\Delta$ is a graph map and $e\in E\Gamma$, we will usually suppress the distinction between the tame topological path $f\circ j_e: [0,1]\to \Delta$ and the associated combinatorial edge-path $\gamma=e_1,\dots, e_n$ in $\Delta$. Moreover, in this situation we will often write $f(e)=e_1,\dots, e_n$ or even $f(e)=e_1\dots e_n$.
\end{conv}

Note that our definition implies that if $f:\Gamma\to\Delta$ is a topological graph map, then for each edge $e\in E\Gamma$ we have $f(e)=e_1,\dots, e_n$ with $n\ge 1$.  It is sometimes useful to allow a topological graph map to send an edge to a vertex (rather than to an edge-path of positive combinatorial length), but we will not need this level of generality in the present paper.

A topological graph-map $f:\Gamma\to\Delta$ is said to be \emph{expanding} if for each edge $e\in E\Gamma$, we have $|f^n(e)|\to\infty$ as $n\to\infty$.

\begin{rk}
The distinction between the notions of a graph map and of a topological graph map is important when considering the fixed points and the dynamics of a (topological) graph map $f:\Gamma\to\Gamma$. Indeed, suppose that $f:\Gamma\to\Gamma$ is a topological graph map such that for some edge $e\in E\Gamma$ the combinatorial edge-path associated with $f\circ j_e:[0,1]\to\Gamma$ is $e_1,\dots, e_n$, such that $n\ge 3$, and such that for some $2\le i_0\le n-1$ we have $e_{i_0}=e$. Then there exists a subdivision $0=a_0<a_1< \dots <a_n$ such that $f\circ j_i$ maps $(a_{i-1},a_i)$ homeomorphically and preserving orientation to $e_i$, for $i=1,\dots, n$. Denote $x_i=j_e(a_i)$ and denote by $(x_{i-1},x_i)$ the open segment in $e$ between $x_{i-1}$ and $x_i$; so that  $(x_{i-1},x_i)=j_e\left((a_{i-1},a_i)\right)$. Thus, for $i=1,\dots, n$, $f$ maps the open segment $(x_{i-1},x_i)$ homeomorphically and preserving orientation to $e_i$.
Our assumption that $e_{i_0}=e$ with $1<i_0<n$ implies that the map $h:=j_{e}^{-1}\circ f \circ j_e$ maps the subsegment $[a_{i_0-1},a_{i_0}]$ of $[0,1]$ by an orientation preserving homeomorphism to the interval $[0,1]$. The intermediate value theorem then implies that there exists $a_{i_0-1}<s<a_i$ such that the point $x=j_e(s)\in (x_{i_0-1},x_{i_0})$ is fixed by $f$, that it satisfies $f(x)=x$. However, the orientation-preserving homeomorphism $h:[a_{i_0-1},a_{i_0}]\to [0,1]$ can, in principle, have uncountably many fixed points; e.g. $h$ could coincide with the identity map on some nondegenerate subsegment of $[a_{i_0-1},a_{i_0}]$. Thus, $f$ may have uncountably many fixed points in the interval $(x_{i_0-1},x_{i_0})$ of $e$. 
On the other hand, if in the above situation $f$ is a graph map (so that the path $f\circ j_e:[0,1]\to\Gamma$  is tame), then $h:=j_{e}^{-1}\circ f \circ j_e$ maps the subinterval $[a_{i_0-1},a_{i_0}]$ of $[0,1]$ to the interval $[0,1]$ by an orientation preserving \textbf{affine} homeomorphism. It then follows that there exists a \textbf{unique} $x\in  (x_{i_0-1},x_{i_0})$ such that $f(x)=x$.

Thus, if $\Gamma$ is a finite graph and $f:\Gamma\to\Gamma$ is an expanding (in the combinatorial sense defined above) topological graph-map, then $f$ may have uncountably many fixed points in $\Gamma$. By contrast, if $\Gamma$ is finite and $f:\Gamma\to\Gamma$ is an expanding graph-map, then $f$ has only finitely many fixed points and only countably many periodic points in $\Gamma$.
 
Allowing $f:\Gamma\to\Gamma$ to be a topological graph map, rather than a graph map, may result in some additional pathologies of the dynamics of $f$ under iterations; e.g. an expanding  topological graph-map $f:\Gamma\to\Gamma$ may turn out to act as a ``contraction'' on a nondegenerate subsegment of an edge of $\Gamma$. Restricting our consideration to graph maps in this paper rules out these kinds of pathologies. 
\end{rk}

For a square matrix $M$ with real coefficients we denote by $\lambda(M)$  the spectral radius of the matrix $M$, that is, the maximum of $|\lambda_i|$ where $\lambda_i\in \mathbb C$ varies over all eigenvalues of $M$.

\begin{df}[Transition matrix of a graph map]\label{d:transition-matrix}
Let $\Gamma$ be a finite graph with $m=\#(E_{top}\Gamma)\ge 1$ topological edges. Choose an orientation $E\Gamma=E_+\Gamma\sqcup E_-\Gamma$ and an ordering $E_+\Gamma=\{e_1,\dots, e_m\}$. 
Let $f:\Gamma\to\Gamma$ be a graph map. The \emph{transition matrix} $M(f)$ of $f$ is an $m\times m$ matrix with nonnegative integer entries, where, for $1\le i,j\le m$, the entry $m_{ij}$ in the position $ij$ in $M$ is equal to the number of times $e_i$ and $\overline{e_i}$ appear in the combinatorial edge-path $f(e_j)$. 

We denote $\lambda(f):=\lambda(M(f))$, the spectral radius of the matrix $M(f)$.
\end{df}

It is not hard to see for the above definition that if $f,g:\Gamma\to\Gamma$ are graph-maps, then $M(g\circ f)=M(g)M(f)$. In particular, we have $M(f^k)=[M(f)]^k$ for each integer $k\ge 1$.

\begin{lem}\label{L:>0}
Let $\Gamma$ be a finite connected graph and let $f,g:\Gamma\to\Gamma$ be such that $M(f)>0$ and $g:\Gamma\to\Gamma$ is surjective.
Then $M(g\circ f)>0$.
\end{lem}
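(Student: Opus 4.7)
My plan is to use the composition identity $M(g\circ f)=M(g)M(f)$ stated just after Definition~\ref{d:transition-matrix}, and reduce the claim to showing that every row of $M(g)$ has at least one positive entry, which in turn will follow from the surjectivity of $g$.

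Write $E_+\Gamma=\{e_1,\dots,e_m\}$. The $(i,j)$-entry of $M(g\circ f)$ equals
\[
\left(M(g)M(f)\right)_{ij}=\sum_{k=1}^m M(g)_{ik}\,M(f)_{kj}.
\]
Because $M(f)>0$, every factor $M(f)_{kj}$ is $\ge 1$, so this sum is bounded below by $\sum_{k=1}^m M(g)_{ik}$, which is the sum of the $i$-th row of $M(g)$. Therefore it suffices to prove that for every index $i$ there exists some $k$ with $M(g)_{ik}\ge 1$; equivalently, that for every topological edge $e_i$ of $\Gamma$ there is some edge $e_k$ such that the combinatorial edge-path $g(e_k)$ contains $e_i$ or $\overline{e_i}$ as one of its letters.

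To verify this, pick any interior point $x$ of the open edge $e_i$. Since $g:\Gamma\to\Gamma$ is surjective, there exists $y\in\Gamma$ with $g(y)=x$. The point $y$ cannot be a vertex of $\Gamma$, because $g$ sends $V\Gamma$ into $V\Gamma$ and $x\notin V\Gamma$. Hence $y$ lies in the interior of some (unoriented) edge; after choosing an orientation we may assume $y\in e_k$ for some $k\in\{1,\dots,m\}$, with $y=j_{e_k}(s)$ for a unique $s\in(0,1)$. By the definition of a graph map, the composition $g\circ j_{e_k}:[0,1]\to\Gamma$ is a tame topological edge-path, with an associated subdivision $0=a_0<a_1<\cdots<a_n=1$ and an associated combinatorial edge-path $e'_1,\dots,e'_n$ equal to $g(e_k)$. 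The point $s$ lies in some open subinterval $(a_{\ell-1},a_\ell)$ (it cannot be one of the subdivision points, again because $g(a_\ell)\in V\Gamma$ whereas $g(y)=x\notin V\Gamma$), and $g\circ j_{e_k}$ maps $(a_{\ell-1},a_\ell)$ homeomorphically onto the open edge $e'_\ell$. Hence $x\in e'_\ell$, which forces $e'_\ell\in\{e_i,\overline{e_i}\}$. Therefore $e_i$ or $\overline{e_i}$ appears in $g(e_k)$, giving $M(g)_{ik}\ge 1$ as required.

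The argument is short and mostly bookkeeping; the only real subtlety is the careful use of the tameness of $g$ (ensuring $y$ is not a vertex and $s$ is not a subdivision point), so that surjectivity at the level of point sets translates cleanly into a combinatorial statement about the edge-path $g(e_k)$.
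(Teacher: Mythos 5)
Your proof is correct and is essentially the paper's argument: both rest on the observation that surjectivity of $g$ forces every topological edge of $\Gamma$ to appear in the $g$-image of some edge, combined with $M(f)>0$. The only difference is packaging — you route through the identity $M(g\circ f)=M(g)M(f)$ and the absence of zero rows in $M(g)$, while the paper argues directly that the path $g(f(e))$ passes through every topological edge; your careful point-set-to-combinatorial translation just makes explicit what the paper leaves implicit.
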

\begin{proof}
Let $e\in E\Gamma$ be arbitrary. Since $M(f)>0$, the path $f(e)$ passes through every topological edge of $\Gamma$. Since $g\colon\Gamma\to\Gamma$ is surjective, it follows that the path $g(f(e))$ also passes through every topological edge of $\Gamma$. Hence $M(g\circ f)>0$, as required.
\end{proof}

\begin{df}[Regular map]
A graph map $f\colon\Gamma\to\Delta$ is \emph{regular} if for each $e\in E\Gamma$ the combinatorial edge-path $f(e)=e_1,\dots, e_n$ is reduced. Note that $f$ is reduced if and only if the path $f\circ j_e\colon [0,1]\to \Delta$ is locally injective.
\end{df}

Note that, if $f\colon \Gamma\to\Gamma$ is a graph map, then for each $k\ge 1$, we have that $f^k\colon \Gamma\to\Gamma$ is also a graph map. However, if $f\colon \Gamma\to\Gamma$ is a regular graph map, then, in general, the map $f^k\colon \Gamma\to\Gamma$ may fail to be regular for some $k\ge 1$.

\subsection{Perron-Frobenius theory}

We say that a $d\times d$ matrix $M$ with real coefficients is \emph{nonnegative}, denoted $M\ge 0$, if all coefficients of $M$ are $\ge 0$.
Recall that a nonnegative $d\times d$ matrix $M$ is called \emph{irreducible} if for each $1\le i,j\le d$ there exists a $k\ge 1$ such that $(M^k)_{ij}>0$.
It is not hard to check that, in the context of Definition~\ref{d:transition-matrix}, the matrix $M(f)$ is irreducible if and only if for each $e,e'\in E\Gamma$ there exists a $k\ge 1$ such that the path $f^k(e)$ contains an occurrence of either $e'$ or of $\overline{e'}$.

For a $d\times d$ matrix $M=(m_{ij})_{i,j=1}^m$ we write $M>0$ if $m_{ij}>0$ for all $1\le i,j\le d$. Note that if $M>0$, then $M$ is irreducible. 

Recall that  for a square matrix $M$ with real coefficients we denote by $\lambda(M)$  the spectral radius of the matrix $M$.

A key basic result of Perron-Frobenius theory says that if $M\ge 0$ is a $d\times d$ irreducible matrix then $\lambda(M)>0$ and, moreover, $\lambda(M)$ is an eigenvalue for $M$, called the \emph{Perron-Frobenius (PF) eigenvalue}. 
Moreover, in this case there exists an eigenvector $v\in \mathbb R^d$ with $Mv=\lambda(M)v$ such that all coefficients of $v$ are $> 0$. See \cite{Seneta} for background on Perron-Frobenius theory.

\subsection{Train track maps}

\begin{df}[Train track map]
Let $\Gamma$ be a finite connected graph without degree-1 or degree-2 vertices.

A graph-map $f\colon\Gamma\to\Gamma$ is called a \emph{train track map} if the following hold:
\begin{enumerate}
\item $f$ is a homotopy equivalence and
\item for each $k\ge 1$ the graph-map $f^k\colon\Gamma\to\Gamma$ is regular (that, is, for every $k\ge 1$ and every $e\in E\Gamma$ the edge-path $f^k(e)$ is reduced).
\end{enumerate}
\end{df}

The definition above implies that if $f\colon\Gamma\to\Gamma$ is a train track map, then for each $n\ge 1$ $f^n\colon\Gamma\to\Gamma$ is also a train track map.

A train track map $f\colon\Gamma\to\Gamma$ is said to be \emph{irreducible} if its transition matrix $M(f)$ is irreducible.
A train track map $f\colon\Gamma\to\Gamma$ is said to be \emph{expanding} if for every edge $e\in E\Gamma$ we have $|f^n(e)|\to\infty$ as $n\to\infty$. Thus a train track map $f$ is expanding if and only if for each $e\in E\Gamma$ there exist $n\ge 1$ such that $|f^n(e)|\ge 2$.

\begin{df}[Derivative map]
Let $f\colon\Gamma\to\Gamma$ be a graph map. The \emph{derivative map} $Df: E\Gamma\to E\Gamma$ is defined as follows. 
For an edge $e\in E\Gamma$ with $f(e)=e_1,\dots, e_n$ we have $Df(e):=e_1$.
\end{df}

Note that the derivative map $Df$ is well-defined, even without the assumption that the graph-map $f$ be regular.
Note also that if $f,g\colon\Gamma\to\Gamma$ are graph-maps, then $D(f\circ g)=Df \circ Dg$. In particular, for each $k\ge 1$, we have that $D(f^k)=(Df)^k$.

An edge $e\in E\Gamma$ is called \emph{$f$-periodic} if for some $k\ge 1$ we have $(Df)^k(e)=e$, that is, if for some $k\ge 1$ the edge-path $f^k(e)$ starts with $e$.
A vertex $v\in V\Gamma$ is \emph{$f$-periodic} if for some $k\ge 1$ we have $f^k(v)=v$.

\begin{df}[Turns]
Let $\Gamma$ be a graph. For a vertex $v\in V\Gamma$ a \emph{turn} in $\Gamma$ at $v$ is an unordered pair $e,e'$ of (not necessarily distinct) oriented edges of $\Gamma$ such that $o(e)=o(e')=v$. A turn $e,e'$ is called \emph{degenerate} if $e=e'$ and is called \emph{non-degenerate} if $e\ne e'$.
For a graph $\Gamma$ we denote by $\mathcal T(\Gamma)$ the set of all turns in $\Gamma$ and denote by $\mathcal T_\times(\Gamma)$ the set of all non-degenerate turns in $\Gamma$.

For an edge-path $\gamma=e_1,\dots, e_n$ in $\Gamma$ we say that a turn $e,e'$ \emph{occurs} in $\gamma$ if there exists an $i$ such that  $\{e,e'\}=\{e_i^{-1},e_{i+1}\}$. We denote the set of all turns that occur in $\gamma$ by $\mathcal T(\gamma)$. Note that, by definition, $\mathcal T(\gamma)=\mathcal T(\gamma^{-1})$. Similarly, if $\alpha$ is a non-degenerate path in $\Gamma$ with associated combinatorial edge-path $\gamma$, we set $\mathcal T(\alpha):=\mathcal T(\gamma)$. If $\alpha$ is a degenerate path in $\Gamma$, we set $T(\alpha):=\emptyset$.

Note that if $f\colon\Gamma\to\Gamma$ is a graph-map, then the derivative map $Df\colon E\Gamma\to E\Gamma$ naturally extends to the map $Df\colon\mathcal T(\Gamma)\to \mathcal T(\Gamma)$ defined as $D(\{e,e'\}):=\{Df(e), Df(e')\}$, where $\{e,e'\}\in \mathcal T(\Gamma)$.
\end{df}

\begin{df}[Taken and legal turns]\label{d:T}
Let $f:\Gamma\to\Gamma$ be a graph-map.

We denote $\mathcal T(f):=\cup_{e\in E\Gamma} \mathcal T(f(e))$ and $\mathcal T_\infty(f):=\cup_{k\ge 1} \mathcal T(f^k)$. We refer to elements of $\mathcal T(f)$ as turns \emph{immediately taken by $f$} and to elements of $\mathcal T_\infty(f)$ as turns \emph{eventually taken by $f$}.
 
We also say that a non-degenerate turn $\{e,e'\}$ is \emph{$f$-legal} if the turn $\{Df^k(e), Df^k(e')\}$ is non-degenerate for each $k\ge 1$.
A turn $\{e,e'\}$ is  \emph{$f$-illegal} if it is not illegal. In particular, degenerate turns are always illegal.
\end{df}

We collect some basic elementary facts regarding turns and train track maps in the following proposition, whose proof is left to the reader:

\begin{prop}
Let $\Gamma$ be a finite connected graph without degree-1 and degree-2 vertices and let $f:\Gamma\to\Gamma$ be a graph-map which is a homotopy equivalence.

Then:
\begin{enumerate}
\item We have $Df\left(\mathcal T_\infty(f)\right)=\mathcal T_\infty(f)$.
\item If $f$ is a train track map, then every eventually taken turn by $f$ is legal.
\item The map $f$ is a train track map if and only if $f$ is regular and every turn in $\mathcal T(f)$ is legal.
\end{enumerate}
\end{prop}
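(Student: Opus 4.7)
My plan is to verify the three parts in sequence. For part (1), the inclusion $Df(\mathcal{T}_\infty(f)) \subseteq \mathcal{T}_\infty(f)$ is direct: if $\tau = \{\overline{b_i}, b_{i+1}\}$ occurs at a junction of the edge-path $f^k(e) = b_1 \dots b_n$, then $f^{k+1}(e) = f(b_1) \dots f(b_n)$ exhibits $Df(\tau)$ as the junction turn between $f(b_i)$ and $f(b_{i+1})$, placing it in $\mathcal{T}(f^{k+1}) \subseteq \mathcal{T}_\infty(f)$. For the reverse inclusion, I would exploit the finiteness of $\mathcal{T}(\Gamma)$ to see that the family $\mathcal{T}(f^k)$ eventually stabilizes, and then decompose an arbitrary turn occurring in $f^k(e)$ (for $k \geq 2$) into either a junction turn, which is automatically $Df(\sigma)$ for some $\sigma \in \mathcal{T}(f^{k-1}) \subseteq \mathcal{T}_\infty(f)$, or an internal turn of some $f(b_i)$ lying in $\mathcal{T}(f)$. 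For the internal case, one must further trace back, using that $f$ is a homotopy equivalence of a finite graph, to realize such turns as $Df$-images of elements of $\mathcal{T}_\infty(f)$.

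For part (2), fix $\tau \in \mathcal{T}_\infty(f)$ appearing in $f^k(e)$ for some $k, e$. For any $j \geq 1$, since $f$ is a train track map, the path $f^{k+j}(e)$ is reduced. On the other hand, $f^{k+j}(e) = f^j \circ f^k(e)$ may be computed as the concatenation of the $f^j$-images of the edges of $f^k(e)$, and the junction turn at the position corresponding to where $\tau$ appeared is precisely $Df^j(\tau)$. Reducedness forces every such junction turn to be non-degenerate, so $Df^j(\tau)$ is non-degenerate for every $j \geq 1$, which is exactly the statement that $\tau$ is $f$-legal.

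For part (3), the forward direction is immediate from the definition of a train track map (which demands $f = f^1$ be regular) combined with part (2) applied to $\mathcal{T}(f) \subseteq \mathcal{T}_\infty(f)$. For the reverse direction, I would induct on $k \geq 1$, simultaneously proving that $f^k$ is regular and that every turn in $\mathcal{T}(f^k)$ is $f$-legal. In the inductive step, each turn in $f^{k+1}(e) = f(b_1) \dots f(b_n)$ is either internal, lying in some $\mathcal{T}(f(b_i)) \subseteq \mathcal{T}(f)$ and hence legal by hypothesis, or a junction turn $Df(\tau_i)$ with $\tau_i \in \mathcal{T}(f^k)$ legal by inductive hypothesis; the observation that the $Df$-image of a legal turn is again legal (simply re-index $j \mapsto j+1$ in the definition of legality) closes the legality clause, and non-degeneracy of the junction turns together with regularity of $f$ yields that $f^{k+1}(e)$ is reduced. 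I expect the subtlest step to be the reverse inclusion in part (1), since $Df$ need not be surjective on $E\Gamma$ individually, so the argument must use the global structure of $\mathcal{T}_\infty(f)$ as a finite $Df$-invariant set together with the homotopy equivalence hypothesis to locate the required preimages.
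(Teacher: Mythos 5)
Parts (2) and (3) of your plan are correct, and so is the inclusion $Df(\mathcal T_\infty(f))\subseteq \mathcal T_\infty(f)$ via the junction-turn computation. The genuine gap is exactly at the step you yourself flagged: the reverse inclusion in part (1). You correctly reduce it to showing that every turn in $\mathcal T(f)$ is a $Df$-image of an eventually taken turn, but then only gesture at ``homotopy equivalence of a finite graph'' to ``locate the required preimages.'' No such argument can be supplied, because the required preimages need not exist: a direction appearing in a taken turn need not lie in the image of $Df$ at all. Concretely, on the rose $R_2$ with petals $a,b$ take the standard representative of $a\mapsto ab$, $b\mapsto b$. This is a graph map and a homotopy equivalence (in fact a train track map), and $f^k(a)=ab^k$, so $\mathcal T_\infty(f)=\bigl\{\{\bar a,b\},\{\bar b,b\}\bigr\}$. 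But $Df(a)=a$, $Df(\bar a)=\bar b$, $Df(b)=b$, $Df(\bar b)=\bar b$, so $\bar a$ is not a $Df$-image of any direction; hence $Df(\mathcal T_\infty(f))=\bigl\{\{\bar b,b\}\bigr\}\subsetneq \mathcal T_\infty(f)$. So the step you deferred would fail: the asserted equality in part (1) is false under the stated hypotheses, and only the forward inclusion ($Df$-invariance of $\mathcal T_\infty(f)$) can be proved. The failure is not an artifact of a degenerate example either: for the maps $g_{\mathfrak t}$ central to this paper one has $Dg_{\mathfrak t}(A^{\pm 1})=A^{\pm 1}-\{x_n\}$, while the taken turn $\{x_n,y_n^{-1}\}$ (the extra edge of $\Upsilon_r[x_n,y_n]$) lies in $\mathcal T(g_{\mathfrak t})\subseteq \mathcal T_\infty(g_{\mathfrak t})$ and has no $Dg_{\mathfrak t}$-preimage.

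Since the paper leaves the proof to the reader, there is no argument of the authors to compare with; the right course is to prove only the inclusion in (1) (your first computation already does this) and to keep your arguments for (2) and (3), which are sound. Two small points worth making explicit there: in (2), legality requires that $\tau$ itself be nondegenerate, which follows from reducedness of $f^k(e)$ (the $j=0$ case of your argument); and in (3), the reason the junction turns of $f^{k+1}(e)$ are nondegenerate is that legality of $Df(\tau_i)$ (obtained by your re-indexing observation) includes nondegeneracy by definition -- which is what closes the regularity half of the induction.
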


Recall that if $v\in\Gamma$, we denote by $Lk_\Gamma(v)$ the set of all $e\in E\Gamma$ with $o(e)=v$ and refer to elements of $Lk_\Gamma(v)$ as \emph{directions} at $v$ in $\Gamma$.

\begin{df}[Local and limited Whitehead graphs]\label{d:W}
Let $f:\Gamma\to\Gamma$ be a graph-map. For a vertex $v\in V\Gamma$ we define the \emph{limited Whitehead graph} of $f$ at $v$, denoted $Wh_L(f,v)$, to be a graph with vertex set  $Lk_\Gamma(v)$ and with the set of topological edges defined as follows. To every turn $\{e,e'\}\in \mathcal T(f)$ such that $e,e'\in Lk_\Gamma(v)$, we associate a topological edge in  $Wh_L(f,v)$ with endpoints $e,e'\in Lk(v)$.

For a vertex $v\in V\Gamma$, define the \emph{local Whitehead graph} of $f$ at $v$ (also sometimes called the \emph{Whitehead graph} of $f$ at $v$), denoted $Wh(f,v)$, to be a graph with vertex set  $Lk_\Gamma(v)$ and with a topological edge with endpoints $e,e'\in Lk_\Gamma(v)$ whenever $\{e,e'\}\in \mathcal T_\infty(f)$. 
\end{df}

Thus, by definition, $Wh_L(f,v)$ is a subgraph of $Wh(f,v)$ and these graphs have the same vertex set, namely $Lk_\Gamma(v)$.

Note that if $f\colon\Gamma\to\Gamma$ is a regular graph-map, then $\mathcal T(f)$ contains no degenerate turns and hence $Wh_L(f,v)$ has no loop-edges.
In particular, if $f\colon\Gamma\to\Gamma$ is a train track map, then $Wh(f,v)$ has no loop-edges.

\begin{df}[Legal paths]
Let $f\colon\Gamma\to\Gamma$ be a train track map. A combinatorial edge-path $\gamma$ in $\Gamma$ is \emph{legal} if every turn in $\mathcal T(\gamma)$ is legal.
Similarly, a path $\alpha$ in $\Gamma$ is \emph{legal} if every turn in $\mathcal T(\alpha)$ is legal.
\end{df}

\subsection{Topological representatives}\label{subsect:tr}

For an integer $r\ge 2$ we fix a free basis $A=\{a_1,\dots, a_r\}$ of $F_r$. Let $R_r$ be the \emph{$r$-rose}, that is, a graph with a single vertex $v$ and $r$ topological loop-edges at $v$.
We choose an orientation on $R_r$ and an ordering $E_+R_r=\{e_1,\dots, e_r\}$ of $E_+R_r$. We identify $F_r=F(a_1,\dots, a_r)$ with $\pi_1(R_r,v)$ by sending $a_i\in F_r$ to the loop $e_i\in \pi_1(R_r,v)$.

\begin{df}[Marking]
Let $r\ge 2$ be an integer. A \emph{marking} for $F_r$ is a graph map $h \colon R_r\to \Gamma$, such that $\Gamma$ is a finite connected graph without degree-one and degree-two vertices, and such that $h \colon R_r\to \Gamma$ is a homotopy equivalence. 
\end{df} 

Note that if $h \colon R_r\to \Gamma$ is a marking, then $h$ naturally determines an isomorphism $h_\ast\colon\pi_1(R_r,v)\to \pi_1(\Gamma, h(v))$, which we can use to identify $F_r=\pi_1(R_r,v)$ with $\pi_1(\Gamma, h(v))$.

\begin{df}[Topological representative]
Let $\phi\in\Out(F_r)$, where $r\ge 2$. A \emph{topological representative} of $\phi$ is a marking $h \colon R_r\to\Gamma$ together with a graph-map $f:\Gamma\to\Gamma$ such that $f$ is a homotopy equivalence and such that the outer automorphism of $\pi_1(\Gamma)$, induced by $f$, is equal to $\phi$, modulo the identification of $F_r=\pi_1(R_r)$ and $\pi_1(\Gamma)$ via $h_\ast$.
More precisely, denote $v_0=h(v)\in V\Gamma$ and choose a path $\alpha$ in $\Gamma$ from $v_0$ to $f(v_0)$. Define $f_\ast\colon \pi_1(\Gamma,v_0)\to \pi_1(\Gamma,v_0)$ by sending (the homotopy class of) a closed path $\gamma$ at $v_0$ to the (homotopy class of) the closed path $\alpha f(\gamma) \alpha^{-1}$ at $v_0$. The fact that $f\colon\Gamma\to\Gamma$ is a homotopy equivalence implies that $f_\ast\colon \pi_1(\Gamma,v_0)\to \pi_1(\Gamma,v_0)$ is an isomorphism. Changing the choice of $\alpha$ results in modifying $f_\ast$ by a composition with an inner automorphism of $\pi_1(\Gamma,v_0)$, so that $f_\ast$ is well-defined as an outer automorphism of $\pi_1(\Gamma,v_0)$. Saying that the automorphism of $\pi_1(\Gamma)$, induced by $f$, is equal to $\phi$ modulo the identification of $F_r=\pi_1(R_r)$ and $\pi_1(\Gamma)$ via $h_\ast$ means that $h_\ast^{-1} \circ f_\ast \circ h_\ast\colon \pi_1(R_r,v)\to\pi_1(R_r,v)$ is an automorphism whose outer automorphism class is $\phi$.
\end{df}

Although in the above definition a topological representative of $\phi\in\Out(F_r)$ consists of a marking $h\colon R_r \to \Gamma$ and a graph-map $f\colon\Gamma\to\Gamma$, we usually will suppress the mention of the marking when talking about topological representatives and will refer to $f\colon\Gamma\to\Gamma$ as a topological representative of $\phi$. In the applications considered in this paper we will always work with the markings $h\colon R_r \to \Gamma$ where $\Gamma=R_r$ and $h=Id_{R_r}$, which makes explicitly mentioning the marking particularly redundant. 
If $f\colon \Gamma\to\Gamma$ is a topological representative of some $\phi\in\Out(F_r)$, then for any $\Phi\in\Aut(F_r)$ whose outer automorphism class is $\phi$ we also say that  $f\colon\Gamma\to\Gamma$  is a \emph{topological representative} of $\Phi$.

For $\phi\in\Out(F_r)$ a \emph{train track representative} of $\phi$ is a topological representative $f \colon \Gamma\to\Gamma$ such that $f$ is a train track map.

\begin{df}[Standard representative]\label{d:standard-rep}
Let $\Phi\in\Aut(F_r)$ and $\Phi(a_i)=x_{i,1}\dots x_{i,n_i}$ be a freely reduced word over $A^{\pm 1}$ of length $n_i\ge 1$, for $i=1,\dots, r$.

The \emph{standard representative} $g_\Phi$ of $\Phi$ is then defined as follows:
Use $\Gamma=R_r$ and $h=Id_{R_r}$ as the marking, so that $g_\Phi:R_r\to R_r$.  For each $i=1,\dots, r$, at the combinatorial edge-path level, we have $f(e_i)=e_{i,1}\dots e_{i,n_i}$, where $e_{i,k}\in ER_r$ is the edge corresponding to $x_{i,k}\in A^{\pm 1}$ under the identification $F(a_1,\dots,a_r)=\pi_1(R_r,v)$.
The subdivision of $[0,1]$ corresponding to the path $f\circ j_{e_i}:[0,1]\to R_r$ is chosen so that each subdivision interval of $[0,1]$ mapping to the edge $e_{i,k}$, $k=1,\dots, n_i$, has length $1/n_i$.
\end{df}

Note that if $\Phi,\Psi\in \Aut(F_r)$ are arbitrary, then $g_\Psi\circ g_\Phi\colon R_r\to R_r$ satisfies all the requirements of being a topological representative of $\Psi\circ \Phi$ except that the map $g_\Psi\circ g_\Phi$ may, in general, fail to be regular, since for an edge $e\in R_r$ the path $g_\Psi(g_\Phi(e))$ may fail to be reduced.
 
If for every $e\in ER_r$ the path $g_\Psi(g_\Phi(e))$  is reduced (that is, if the graph map $g_\Psi\circ g_\Phi \colon R_r\to R_r$ is regular), then $g_\Psi\circ g_\Phi\colon R_r\to R_r$ is indeed a topological representative of $\Psi\circ \Phi$. Moreover, in this case  $g_\Psi\circ g_\Phi$ and $g_{\Psi\circ \Phi}$ are isotopic rel $VR_r=\{v\}$.

\subsection{Stretch factors}\label{sect:stretch}

Let $A$ be a free basis of $F_r$, where $r\ge 2$.  For $w\in F_r$ we denote by $||w||_A$ the cyclically reduced length of $w$ with respect to $A$.

\begin{df}[Stretch factors]\label{D:stretch}
For $\phi\in\Out(F_r)$  and $w\in F_r$ put
\[
\lambda_A(\phi,w):=\limsup_{n\to\infty} \sqrt[n]{||\phi^n(w)||_A}.
\]
It is known that the actual limit in the above formula always exists, and  it is also known that $\lambda_A(\phi,w)$ depends only on $\phi$ and $w$, but not on the choice of a free basis $A$ of $F_r$. 
For this reason we denote $\lambda(\phi,w):=\lambda_A(\phi,w)$ where $A$ is any free basis of $F_r$. 
Now put $\lambda(\phi):=\sup_{w\in F_r\setminus\{1\}} \lambda(\phi,w)$. We call $\lambda(\phi)$ the \emph{stretch factor} or the \emph{growth rate}  of $\phi$.
\end{df}

It is known that for every $\phi\in \Out(F_r)$ there exists $w\in F_r$ with $\lambda(\phi)=\lambda(\phi,w)$, and moreover, that $\lambda(\phi)$ can be ``read-off'' from a relative train track representative of $\phi$. See \cite{Levitt} for details.
We will need only the simplest case of this fact here:

\begin{prop}\cite{BH92,Levitt}\label{P:stretch}
Let $r\ge 2$ and let $\phi\in \Out(F_r)$ be such that $\phi$ can be represented by an expanding train track map $f\colon\Gamma\to\Gamma$ with $M(f)$ irreducible.
Then
\[
\lambda(\phi)=\lambda(M(f)) >1.
\]
\end{prop}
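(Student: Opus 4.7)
The plan is to reduce the statement to the Perron--Frobenius theory for $M := M(f)$. Since $M$ is a nonnegative irreducible integer matrix, the PF theorem gives a positive eigenvector $v = (v_1, \dots, v_m) > 0$ with $Mv = \lambda v$, where $\lambda := \lambda(M)$. Use $v$ to equip $\Gamma$ with the \emph{PF metric}, assigning each edge $e_i \in E_+\Gamma$ the length $v_i$. The relation $Mv = \lambda v$ translates to the crucial geometric fact: for every $e \in E\Gamma$, the combinatorial edge-path $f(e)$ has total PF length exactly $\lambda \cdot v_i = \lambda \cdot |e|_{PF}$. Since legal combinatorial edge-paths are reduced by definition of a train track map, and since $f^n$ of a legal path is again legal (hence reduced), $f^n$ stretches PF length of legal paths by exactly $\lambda^n$. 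First, I would verify $\lambda > 1$: the expanding hypothesis says $|f^n(e_j)| = \sum_i (M^n)_{ij} \to \infty$ for each $j$, so $\|M^n\|$ is unbounded, forcing $\lambda > 1$ by the spectral radius formula $\|M^n\|^{1/n} \to \lambda$.

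Next I would establish the upper bound $\lambda(\phi) \le \lambda$. Given $w \in F_r \setminus \{1\}$, use the marking $h\colon R_r \to \Gamma$ (together with a path from $h(v)$ to $f^n(h(v))$) to realize $\phi^n(w)$ as a closed (not necessarily reduced) edge-path of the form $\alpha \cdot f^n(\gamma_w) \cdot \alpha^{-1}$, where $\gamma_w$ is any closed edge-path representing $w$ with occurrence vector $c \in \mathbb Z_{\ge 0}^m$. The unreduced combinatorial length of $f^n(\gamma_w)$ is $\langle \mathbf 1, M^n c\rangle$, which grows as $O(\lambda^n)$. Tightening to a cyclically reduced form only decreases length, and since the marking $h$ is a quasi-isometry on the level of Cayley graphs, we obtain $\|\phi^n(w)\|_A \le C_w \lambda^n$ for a constant $C_w > 0$, so $\lambda(\phi, w) \le \lambda$.

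For the lower bound $\lambda(\phi) \ge \lambda$, I would produce a \emph{cyclically legal} closed edge-path $\gamma$ in $\Gamma$, i.e., a legal closed edge-path whose ``wrap-around'' turn at the basepoint is also legal. Passing to an appropriate power of $f$, one may assume there is a periodic vertex $v_0$ and a $Df$-periodic direction $e_0 \in Lk_\Gamma(v_0)$; then since $M$ is irreducible, some power $f^k$ produces a legal edge-path from $v_0$ back to $v_0$ beginning with $e_0$ and ending with some edge $e'$ such that $\{\bar{e'}, e_0\}$ is a legal turn (for instance, by concatenating legal pieces along appropriately chosen periodic directions). For such a $\gamma$, the paths $f^n(\gamma)$ are legal \emph{and} cyclically legal, hence cyclically reduced combinatorial closed paths; they therefore represent $[\phi^n(w)]$ in cyclically reduced form (where $[w]$ is the conjugacy class corresponding to $\gamma$). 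The PF-length identity $|f^n(\gamma)|_{PF} = \lambda^n |\gamma|_{PF}$ combined with quasi-comparability of PF length and combinatorial length (both bounded by a positive multiple of each other on $\Gamma$) and quasi-comparability of combinatorial length in $\Gamma$ with $A$-length under the marking give $\|\phi^n(w)\|_A \ge C'\lambda^n$, hence $\lambda(\phi) \ge \lambda(\phi, w) \ge \lambda$.

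The main obstacle is the lower-bound step, specifically the construction of a cyclically legal closed edge-path and the careful translation between the PF length of $f^n(\gamma)$ and the cyclically reduced word length $\|\phi^n(w)\|_A$. Both rely essentially on three facts: (i) irreducibility of $M(f)$, which ensures periodic directions and the ability to stitch legal segments together; (ii) the absence of degree-$1$ and degree-$2$ vertices in $\Gamma$, which guarantees that the ``wrap-around'' turn can be chosen to be non-degenerate and legal; and (iii) the fact that legality plus cyclical legality of $\gamma$ is preserved under $f^n$ because $f$ is a train track map. Once these are in place, the PF metric argument makes the two-sided bound $\lambda(\phi) = \lambda$ immediate.
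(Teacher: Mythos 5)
The paper itself gives no proof of Proposition~\ref{P:stretch}; it is quoted from \cite{BH92,Levitt}, so there is no in-paper argument to compare against. Your proposal is, in outline, the standard train-track argument behind those citations (eigenmetric from Perron--Frobenius theory, upper bound by counting edge crossings, lower bound from a loop whose $f$-iterates stay cyclically reduced), and the overall structure is sound.

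Three points need repair, the last being a genuine gap. First, with the paper's convention that $m_{ij}$ counts occurrences of $e_i^{\pm 1}$ in $f(e_j)$, the eigenmetric length of $f(e_j)$ is $\sum_i m_{ij}v_i$, so the identity $|f(e)|_{\mathrm{PF}}=\lambda\,|e|_{\mathrm{PF}}$ requires a positive \emph{left} eigenvector of $M(f)$ (equivalently a right eigenvector of $M(f)^{T}$), not $Mv=\lambda v$; this is harmless, since $M$ and $M^{T}$ have the same Perron--Frobenius data, but as written the crucial length identity does not follow. Second, unboundedness of $\|M^n\|$ does not by itself force $\lambda>1$: Gelfand's formula is compatible with polynomial growth at spectral radius $1$. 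You need irreducibility and integrality; e.g., if $\lambda=1$, pairing $Mv=v$ (with $v>0$) against column sums gives $\sum_j v_j\left(|f(e_j)|-1\right)=0$, hence $|f(e_j)|=1$ for every $j$, contradicting the expanding hypothesis. Third, and most substantively, producing a cyclically legal loop is exactly the technical heart of the lower bound, and your sketch (``concatenating legal pieces along appropriately chosen periodic directions'') leaves it open; irreducibility alone does not obviously hand you a legal closed path whose wrap-around turn is legal. One clean way to finish is to choose a power $f^k$ and an edge $e$ such that $f^k(e)$ contains two coherently oriented occurrences of some edge, and to close up the subpath between consecutive occurrences: the wrap-around turn is then a turn taken by $f^k$, hence legal, and legality of the loop and of its closing turn persist under iteration exactly as you argue. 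Alternatively, as in \cite{BH92,Levitt}, one can avoid constructing a legal loop altogether and instead apply the bounded cancellation lemma to an arbitrary loop, showing that the tightened length of $f^n(\gamma)$ still grows like $\lambda^n$ up to controlled error. With these repairs the two-sided bound $\lambda(\phi)=\lambda(M(f))>1$ goes through as you describe.
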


\subsection{Nielsen paths}\label{ss:NielsenPaths}

\begin{df}[Nielsen paths]\label{d:NielsenPaths}
Let $f:\Gamma\to\Gamma$ be a train track representative of some $\phi\in\Out(F_r)$.

A \emph{Nielsen path} for $f$ is a nondegenerate tight path $\gamma$ in $\Gamma$ with endpoints $x,y\in \Gamma$ (where $x,y$ are not required to be vertices) such that $f(x)=x$, $f(y)=y$, and $f(\gamma)$ is homotopic to $\gamma$ rel endpoints. A \emph{periodic Nielsen path} for $f$ is a nondegenerate path $\gamma$ in $\Gamma$ such that for some $n\ge 1$ the path $\gamma$ is  Nielsen for $f^n$.

An \emph{indivisible Nielsen path} for $f$, abbreviated as INP, is a Nielsen path $\gamma$ for $f$ such that $\gamma$ cannot be written as a concatenation $\gamma=\gamma'\gamma''$, where $\gamma'$ and $\gamma'$ are Nielsen paths for $f$. Similarly, an \emph{indivisible periodic Nielsen path} for $f$, abbreviated as pINP, is a  periodic Nielsen path $\gamma$ for $f$ such that $\gamma$ cannot be written as a concatenation $\gamma=\gamma'\gamma''$, where $\gamma'$ and $\gamma'$ are periodic Nielsen paths for $f$.
\end{df}

It is known that pINPs have a specific structure, see \cite{BH92} Lemma 3.4:

\begin{prop}{\label{P:PINP}}
Let $f:\Gamma\to\Gamma$ be an expanding irreducible train track map. Then every pINP $\eta$ in $\Gamma$ has the form $\eta=\rho_1^{-1}\rho_2$, where $\rho_1$ and $\rho_2$ are nondegenerate legal paths with $o(\rho_1)=o(\rho_2)=v\in V\Gamma$ and such that the turn at $v$ between $\rho_1$ and $\rho_2$ is an illegal nondegenerate turn for $f$.  
\end{prop}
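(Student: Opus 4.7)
The plan is to proceed in three stages: reduce to a convenient positive power of $f$, show $\eta$ has at least one illegal turn, then show it has at most one.

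First, I would replace $f$ by a suitable positive power $f^N$ to arrange that $\eta$ is actually an INP for $f$ itself. Since $\eta$ is a pINP, some $f^{N_0}(\eta)$ tightens to $\eta$; by taking $N$ large enough and sufficiently divisible I can also arrange that every vertex of $\Gamma$ carrying an illegal turn of $\eta$ is $f^N$-fixed, and that every such illegal turn becomes degenerate under a single application of $Df^N$. This is possible because, by Definition \ref{d:T}, each illegal turn yields a degenerate turn under some iterate of $Df$, and because the finitely many vertices involved have finite $f$-periodic orbits. The map $f^N$ remains an expanding irreducible train track map, so without loss of generality $N=1$. Note that by the definition of a turn, all illegal turns of $\eta$ automatically occur at vertices of $\Gamma$.

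Second, I would show that $\eta$ must have at least one illegal turn. If $\eta$ were entirely legal, then $f(\eta)$ would already be tight, because $f$ is a train track map and the image under $Df$ of every legal turn is nondegenerate. Combined with the Nielsen condition that $f(\eta)$ is homotopic rel endpoints to $\eta$, uniqueness of tight representatives forces $f(\eta)=\eta$ as tame paths, and iterating gives $f^n(\eta)=\eta$ for every $n\ge 1$. But $\eta$ is nondegenerate, so it contains at least one edge or sub-edge; irreducibility of $M(f)$ and the expanding hypothesis give $|f^n(\eta)|\to\infty$, contradicting constancy of $|\eta|$.

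Third, I would show $\eta$ has at most one illegal turn. Suppose for contradiction that $\eta$ has illegal turns at vertices $v_1,\dots,v_k$ (in order along $\eta$) with $k\ge 2$. Split $\eta=\eta_1\eta_2$ at $v=v_1$, with the illegal turn between the last edge of $\eta_1$ and the first edge of $\eta_2$. By the normalization above, $Df$ sends this turn to a degenerate turn at $v$, so the untightened concatenation $f(\eta_1)\cdot f(\eta_2)$, read outward from the split, has both sides starting with a common initial edge at $v$. Tightening cancels a maximal common initial segment $\sigma$ issuing from $v$. The key technical claim is that $\sigma$ is confined to the legal portions of $\eta_1,\eta_2$ adjacent to $v$ and cannot reach past the neighboring illegal turn (on either side). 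Once this is established, matching the Nielsen identity $[f(\eta)]_{\mathrm{tight}}=\eta_1\eta_2$ with the preservation of the split point $v$ forces $[f(\eta_i)]_{\mathrm{tight}}=\eta_i$ for $i=1,2$, so each $\eta_i$ is itself a periodic Nielsen path with fixed endpoint at $v$, contradicting the indivisibility of $\eta$. The asserted form $\eta=\rho_1^{-1}\rho_2$ with $\rho_1,\rho_2$ legal and a common origin at $v\in V\Gamma$ then follows immediately.

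The main obstacle is the containment assertion about $\sigma$ in step three. Concretely, I must rule out that the cancelled common segment swallows a legal subsegment all the way to the next illegal turn. This should follow from a length comparison: between consecutive illegal turns $\eta$ is legal, hence expanded by $f$ with factor bounded below (by Perron-Frobenius applied to $M(f)$, using Proposition \ref{P:stretch}); the resulting surplus must be consumed by the local folds at the $v_i$ separately rather than collectively, so $\sigma$ cannot exceed the legal neighborhood of $v$ within $\eta$.
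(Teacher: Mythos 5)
The paper does not reprove this proposition; it simply cites \cite{BH92}, Lemma 3.4, so your attempt has to stand on its own. Your skeleton (pass to a power so $\eta$ is an honest INP, rule out zero illegal turns, rule out two or more) is the standard one, and step two is essentially fine. The genuine gap is the final deduction in step three. Splitting $\eta=\eta_1\eta_2$ at the illegal-turn vertex $v$ and knowing that the cancellation at $v$ is confined to the adjacent maximal legal pieces does \emph{not} give $[f(\eta_i)]_{\mathrm{tight}}=\eta_i$. What it gives is $[f(\eta_1)]_{\mathrm{tight}}=\eta_1\beta$ and $[f(\eta_2)]_{\mathrm{tight}}=\bar\beta\,\eta_2$, where $\beta$ is the nontrivial segment cancelled at $v$ when the two images are concatenated and tightened: each half overshoots (respectively undershoots) the junction by exactly the cancelled amount, and the Nielsen identity for $\eta$ only says that these two errors cancel each other, not that each half is Nielsen. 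Relatedly, the normalization you invoke in step one is not available: vertices of $\Gamma$ are only \emph{eventually} periodic under $f$, not periodic, so you cannot arrange the illegal-turn vertices to be $f^N$-fixed; indeed the junction of an INP is typically not a periodic point at all, which is exactly why your ``preservation of the split point'' fails. A sanity check that the deduction proves too much: it nowhere uses $k\ge 2$, so applied to the single illegal turn produced by step two it would show every INP splits into two Nielsen subpaths, i.e.\ that no INP exists, which is false (geometric and parageometric fully irreducibles have train track representatives with INPs).

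The correct mechanism, which is how Lemma 3.4 of \cite{BH92} runs, is to split not at the illegal-turn vertex but at an $f$-fixed point in the interior of an adjacent maximal legal segment. Write $\eta=\mu_1\mu_2\cdots\mu_{k+1}$ into maximal legal subpaths with $k\ge 2$ illegal turns. Your ``key technical claim'' is true, but the clean proof is by counting illegal turns rather than the Perron--Frobenius length comparison you sketch (which, as stated, does not force the cancellations to be absorbed ``separately''): tightening cannot increase the number of illegal turns and $f_\#(\eta)=\eta$, so no $f(\mu_i)$ is entirely swallowed, each junction survives, and matching positions along $\eta$ gives $f(\mu_1)=\mu_1\beta_1$, $f(\mu_2)=\bar\beta_1\mu_2\beta_2$, \dots, with $\beta_1,\beta_2$ nontrivial. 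The middle equation says $f$ maps a subsegment of $\mu_2$ over all of $\mu_2$, so by the intermediate value theorem (using tameness, or the eigenmetric) there is a point $p$ in the interior of $\mu_2$ with $f(p)=p$, located where the overshoot $\beta_1$ is exactly compensated. Splitting $\eta$ at $p$ yields $\mu_1[v_1,p]$ and $[p,v_2]\mu_3\cdots\mu_{k+1}$, and a direct computation with the displayed equations shows each is a Nielsen path (the pieces $\beta_1$, respectively $\beta_2$, now cancel internally), contradicting indivisibility. With this replacement for your step three — and dropping the unneeded and false claim that the illegal-turn vertices can be made fixed — the argument goes through and yields exactly the stated form $\eta=\rho_1^{-1}\rho_2$.
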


Note that in the context of Proposition~\ref{P:PINP}, there exist $k\ge 1$ such that $f^k$ fixes the points $t(\rho_1)$, $t(\rho_2)$. Note also that the points $t(\rho_1)$, $t(\rho_2)$ need not be vertices of $\Gamma$.

\subsection{Fully irreducible outer automorphisms}{\label{ss:FullyIrreducibles}}

If $G$ is a group and $w\in G$ is a group element, we denote by $[w]$ the conjugacy class of $w$ in $G$. Similarly, if $H\le G$ is a subgroup of $G$, we denote by $[H]$ the conjugacy class of $H$ in $G$. 

\begin{df}{\label{d:FullyIrreducibles}}
Let $\phi\in \Out(F_r)$ where $r\ge 3$. The element $\phi$ of $\Out(F_r)$ is said to be \emph{fully irreducible} if there does not exist an integer $k\ge 1$ and a nontrivial proper free factor $B$ of $F_r$, such that $\phi^k([B])=[B]$.
\end{df}

Recall also that an element $\phi\in \Out(F_r)$ is called \emph{atoroidal} if there does not exist $1\ne w\in F_r$ and an integer $k\ge 1$ such that $\phi^k([w])=[w]$. For $\phi\in\Out(F_r)$, a conjugacy class $[w]$, where $w\in F_r, w\ne 1$, is called \emph{$\phi$-periodic} if there exists a $k\ge 1$ such that $\phi^k([w])=[w]$. Thus $\phi\in \Out(F_r)$ is atoroidal if and only if $\phi$ has no periodic conjugacy classes.

A special case of an important general result of Bestvina and Handel \cite{BH92} shows that if $\phi\in\Out(F_r)$ (where $r\ge 2$) is fully irreducible, then there exists an expanding irreducible train track representative $f \colon \Gamma\to\Gamma$ of $\phi$.

Another key result of Bestvina and Handel \cite{BH92} provides a complete characterization of non-atoroidal fully irreducible elements of $\Out(F_r)$:

\begin{prop}\label{p:BH92}\cite{BH92}
Let $\phi\in\Out(F_r)$, where $r\ge 2$ and suppose that $\phi$ is non-atoroidal. Then $\phi$ is fully irreducible if and only if there exists a compact connected surface $\Sigma$ with a single boundary component such that $b_1(\Sigma)=r$ (so that the fundamental group of $\Sigma$ is free of rank $r$), an identification $\pi_1(\Sigma)=F_r$, and a pseudo-Anosov homeomorphism $g:\Sigma\to \Sigma$ such that the outer automorphism of $\pi_1(\Sigma)$ induced by $g$ is equal to $\phi$.
\end{prop}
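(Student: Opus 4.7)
The plan is to prove the two implications separately, using train track theory for one direction and surface topology for the other.

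For the ``if'' direction, assume $\phi$ is induced by a pseudo-Anosov homeomorphism $g \colon \Sigma \to \Sigma$. Non-atoroidality is immediate: the boundary curve $\partial \Sigma$ represents a nontrivial conjugacy class $[c]$ in $F_r = \pi_1(\Sigma)$ that is preserved by $g_\ast$, and hence by $\phi$. For full irreducibility, I would argue by contradiction. Suppose $\phi^k([B]) = [B]$ for some $k \ge 1$ and some nontrivial proper free factor $B < F_r$. A standard fact (which I would invoke rather than reprove) identifies conjugacy classes of proper free factors of $\pi_1(\Sigma)$ with isotopy classes of essential $\pi_1$-injective proper subsurfaces $\Sigma_B \subseteq \Sigma$ whose complement deformation retracts onto a graph. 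Via the Dehn--Nielsen--Baer theorem and the fact that $g$ is a homeomorphism, invariance of $[B]$ under $\phi^k$ translates into $g^k$ preserving the isotopy class of $\Sigma_B$, and so also the isotopy class of $\partial \Sigma_B \smsm \partial \Sigma$. This contradicts the defining property of pseudo-Anosov maps, which preserve no isotopy class of an essential simple closed curve, even up to taking powers.

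For the ``only if'' direction, start by invoking the Bestvina--Handel existence result to obtain an expanding irreducible train track representative $f \colon \Gamma \to \Gamma$ of $\phi$. Non-atoroidality gives $1 \ne w \in F_r$ with $\phi^k([w]) = [w]$ for some $k \ge 1$; after passing to a positive power of $\phi$ (and to the corresponding power of $f$, which is still an expanding irreducible train track map), we may assume $\phi([w]) = [w]$. Realize $[w]$ by a tight loop $\gamma$ in $\Gamma$; the assumption means $f(\gamma)$ is freely homotopic to $\gamma$, so the iterates $f^n(\gamma)$ all have the same cyclically reduced length in $\Gamma$. Combined with the Perron--Frobenius growth given by Proposition~\ref{P:stretch}, which forces any purely legal subpath to grow exponentially under $f$, the boundedness of lengths means $\gamma$ contains illegal turns, and iterating $f$ with tightening after each step produces only finitely many combinatorial types of subpaths around the surviving illegal turns. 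A pigeonhole argument on these types then yields a nondegenerate periodic Nielsen path, and hence, by Proposition~\ref{P:PINP}, an indivisible pINP $\eta = \rho_1^{-1} \rho_2$ for $f$ (after passing to a further power).

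The heart of the argument, and the main obstacle, is to promote the existence of $\eta$, together with full irreducibility, into the surface $\Sigma$. The first step is to use full irreducibility to prove the rigidity statement that, up to inversion and translation by $f$, there is essentially a unique pINP and it accounts for every illegal turn of $f$; this is a delicate combinatorial argument that leverages the fact that iterating $f$ on any pINP-free legal path cannot converge in $\Gamma$. Given this rigidity, one thickens $\Gamma$ into a branched $2$-complex by attaching a rectangle along each orbit of $\eta$, with the widths of rectangles and edges given by a left Perron--Frobenius eigenvector of $M(f)$. The absence of further pINPs and the irreducibility of $M(f)$ together force the branched surface to be a genuine compact surface $\Sigma$ whose complementary regions encode the boundary, and a count using $\chi(\Sigma) = 1 - r$ shows $\Sigma$ has exactly one boundary component and $b_1(\Sigma) = r$. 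The map $f$ extends across the rectangles to a homeomorphism $g \colon \Sigma \to \Sigma$, with the horizontal foliation (from the train track edges) and vertical foliation (from the rectangles) furnishing transverse measured foliations that $g$ stretches and contracts by $\lambda(f)$ and $1/\lambda(f)$ respectively. Hence $g$ is pseudo-Anosov, and by construction its induced outer automorphism of $\pi_1(\Sigma) = F_r$ is $\phi$.
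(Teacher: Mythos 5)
The paper does not actually prove this statement: Proposition~\ref{p:BH92} is quoted directly from Bestvina--Handel \cite{BH92}, so the only possible comparison is with their argument, which your outline does follow in broad strokes (train track representative, existence of a Nielsen path from the periodic conjugacy class, uniqueness/stability, thickening the graph to a surface carrying transverse measured foliations). As a proof, however, the sketch has concrete gaps beyond the parts you openly label as black boxes.

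First, in the ``only if'' direction you pass to a power so that $\phi([w])=[w]$, and everything you build afterwards (the Nielsen path, the thickened surface, the homeomorphism) is a realization of $\phi^k$, not of $\phi$. The proposition asserts a pseudo-Anosov inducing $\phi$ itself, and getting from ``$\phi^k$ is geometric'' back to ``$\phi$ is geometric'' is not a formality --- it is essentially the statement being proved, and \cite{BH92} avoid the issue by working with \emph{periodic} Nielsen paths for a stable train track representative of $\phi$ itself rather than replacing $\phi$ by a power. Second, the step where the thickened object is identified as a compact surface with a single boundary component is the heart of the matter, and your justification is wrong as stated: $\chi(\Sigma)=1-r$ holds for \emph{every} compact surface with spine a rank-$r$ graph and carries no information about the number of boundary components. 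What is actually needed (Section~4 of \cite{BH92}) is that the stable representative has a unique INP, that this INP is a closed path representing the $\phi$-invariant conjugacy class, and that it crosses each topological edge of $\Gamma$ exactly twice; only with that combinatorial fact does attaching a single band along the INP yield a surface whose one boundary circle is the invariant class and across which $f$ extends to a homeomorphism. You flag the uniqueness as ``delicate'' but the crosses-each-edge-twice property, without which the construction simply does not produce the claimed surface, never appears. (The ``if'' direction is essentially fine as a sketch, though the dictionary between proper free factors and essential subsurfaces is a realization statement, not a bijection, and should be invoked in that form.)
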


In view of the above result of Bestvina-Handel, fully irreducible elements of $\Out(F_r)$ are divided into two main classes: non-atoroidal fully irreducible elements of $\Out(F_r)$ are said to be \emph{geometric} and atoroidal fully irreducible elements of $\Out(F_r)$ are said to be \emph{nongeometric}.

It is well-known that $\Out(F_2)$ contains no atoroidal elements. Therefore, if $\phi\in\Out(F_r)$ is a nongeometric (i.e. atoroidal) fully irreducible, then $r\ge 3$.

A recent result of Kapovich~\cite{K14} (see also \cite{DKL}), building on the work of Pfaff~\cite{pII}, gives a train track characterization of nongeometric fully irreducibles. For completeness, and to give context, we state the result in the form it is stated in in \cite{K14}:

\begin{prop}\label{P:K14}\cite{K14}
Let $r\ge 3$ and let $\phi\in \Out(F_r)$ be an arbitrary atoroidal element.

Then the following are equivalent:

\begin{enumerate}
\item The automorphism $\phi$ is fully irreducible.

\item There exists a train track representative $f\colon\Gamma\to\Gamma$ of $\phi$ such that the matrix $M(f)$ is irreducible and such that for each $v\in V\Gamma$ the local Whitehead graph $Wh(f,v)$ is connected.

\item There exists a train track representative $f\colon\Gamma\to\Gamma$ of $\phi$ such that for some $k\ge 1$,  $M(f^k)>0$ and such that for each $v\in V\Gamma$, the local Whitehead graph $Wh(f,v)$ is connected.

\item For each train track representative $f\colon\Gamma\to\Gamma$ of $\phi$ there exists a $k\ge 1$ such that $M(f^k)>0$  and the local Whitehead graph $Wh(f,v)$ is connected for each $v\in V\Gamma$.

\end{enumerate}

\end{prop}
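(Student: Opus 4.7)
The plan is to prove the cycle of implications $(1) \Rightarrow (4) \Rightarrow (3) \Rightarrow (2) \Rightarrow (1)$. Of these, $(4) \Rightarrow (3)$ reduces to the existence of at least one train track representative of $\phi$ (provided by Bestvina-Handel once $\phi$ is known to be fully irreducible), and $(3) \Rightarrow (2)$ follows because $M(f)^k = M(f^k) > 0$ forces every pair of edges to be connected by $f^k$-iterated paths, hence $M(f)$ is irreducible. The real content lies in the two remaining implications.

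For $(2) \Rightarrow (1)$ I would invoke the train track criterion of full irreducibility from \cite{pII}, referenced in this paper as Proposition~\ref{P:FIC}: an atoroidal $\phi\in\Out(F_r)$ admitting a train track representative $f\colon\Gamma\to\Gamma$ with $M(f)$ irreducible and every local Whitehead graph $Wh(f,v)$ connected is automatically fully irreducible. The underlying idea is that any $\phi^k$-invariant proper free factor of $F_r$ would manifest either as a proper $f^k$-invariant subgraph of $\Gamma$ (ruled out by irreducibility of $M(f)$) or as a Whitehead-type separation of the directions at some vertex $v$ (ruled out by connectedness of $Wh(f,v)$). Atoroidality is used to exclude the possibility that the surviving structure comes from a surface boundary curve.

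For $(1) \Rightarrow (4)$, fix any train track representative $f\colon\Gamma\to\Gamma$ of $\phi$. First, $M(f)$ must be irreducible, since otherwise a proper $f$-invariant subgraph would carry a proper $\phi$-invariant free factor system, contradicting full irreducibility. Second, $M(f)$ must be primitive: a nontrivial Perron-Frobenius period $p > 1$ would partition $E\Gamma$ into classes cyclically permuted by $f$, yielding $f^p$-invariant proper subgraphs and hence a $\phi^p$-invariant proper free factor system. Standard Perron-Frobenius theory then delivers some $k$ with $M(f^k) = M(f)^k > 0$. Third, each $Wh(f,v)$ must be connected: if $Wh(f,v) = W_1 \sqcup W_2$ were disconnected at some $v$, then no turn eventually taken by $f$ crosses the partition, and a vertex blow-up / splitting argument at $v$ produces an $f$-equivariant refinement of $\Gamma$ yielding a $\phi$-invariant nontrivial free splitting of $F_r$, hence a $\phi$-invariant proper free factor system, again contradicting full irreducibility.

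The principal obstacle is the geometric step in $(1) \Rightarrow (4)$ converting a disconnection of a local Whitehead graph into a $\phi$-invariant free splitting; this requires a careful Bass-Serre-theoretic blow-up argument together with analysis of how $f$ lifts to the blown-up graph (and passing to a suitable power to make the blow-up $f$-equivariant). The other nontrivial step, $(2) \Rightarrow (1)$, is the Pfaff criterion itself (Proposition~\ref{P:FIC}), whose proof combines combinatorial, topological, and $\R$-tree considerations; once both endpoints of the cycle are in place, the remaining implications are matrix-theoretic bookkeeping.
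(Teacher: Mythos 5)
The paper does not prove Proposition~\ref{P:K14} at all; it quotes it verbatim from \cite{K14}, so the standard your argument has to meet is a self-contained proof, and at the crucial step it is circular. Your implication $(2)\Rightarrow(1)$ rests on a misquotation of Proposition~\ref{P:FIC}: Pfaff's criterion requires (I) that the representative have \emph{no periodic Nielsen paths} and (II) that the transition matrix be Perron--Frobenius (primitive, not merely irreducible), and it makes no mention of atoroidality. The statement you attribute to \cite{pII} --- ``atoroidal $+$ irreducible $M(f)$ $+$ connected local Whitehead graphs $\Rightarrow$ fully irreducible'' --- is precisely the implication $(2)\Rightarrow(1)$ of the proposition you are trying to prove, i.e.\ the content of \cite{K14} itself, whose whole point is to replace the unverifiable pNP-freeness hypothesis by atoroidality. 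Nor can the gap be closed by passing to a better representative: by Proposition~\ref{p:BF94}, any $\phi$ admitting a pNP-free train track representative is ageometric, so for an atoroidal \emph{parageometric} fully irreducible (which satisfies (2) because $(1)\Rightarrow(2)$) no train track representative ever satisfies hypothesis (I) of Proposition~\ref{P:FIC}. Hence $(2)\Rightarrow(1)$ genuinely needs a different argument (in \cite{K14} and \cite{DKL} it goes through the attracting lamination and invariant free factors), and your proposal does not supply it.

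There is also a logical-ordering flaw in the cycle: to prove $(4)\Rightarrow(3)$ you invoke Bestvina--Handel to produce a train track representative ``once $\phi$ is known to be fully irreducible,'' but (1) is not available at that point of the cycle; atoroidality alone is not known to guarantee the existence of a (non-relative) train track representative, so as written this step either is circular or silently assumes what must be proved. Your sketch of $(1)\Rightarrow(4)$ --- irreducibility and primitivity of $M(f)$ from the absence of invariant subgraphs, and a vertex blow-up turning a disconnected $Wh(f,v)$ into an invariant free splitting --- is the right kind of argument and is where the genuine geometric work of \cite{K14} lies, as you yourself note; but with the $(2)\Rightarrow(1)$ step resting on a mis-stated citation, the proposal as a whole does not establish the proposition.
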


However, the form we in fact use it in is that of \cite{pII}:

\begin{prop}\label{P:FIC}\cite{pII} 
Let $g:\Gamma\to\Gamma$ be a train track representative of an outer automorphism $\phi \in Out(F_r)$ such that
~\\
\vspace{-6mm}
\begin{itemize}
\item [(I)] $g$ has no periodic Nielsen paths,
\item [(II)] the transition matrix for $g$ is Perron-Frobenius, and 
\item [(III)] all local Whitehead graphs $Wh(g,v)$ (where $v$ varies over the vertices of $\Gamma$) for $g$ are connected.
\end{itemize}
\indent Then $\phi$ is fully irreducible. Moreover, Proposition~\ref{p:BF94} implies that $\phi$ is ageometric (see the definition of ageometric fully irreducibles in Section~\ref{sect:index} below).
\end{prop}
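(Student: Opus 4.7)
The strategy is to first show that $\phi$ is atoroidal, then invoke Kapovich's train track criterion (Proposition~\ref{P:K14}) to conclude full irreducibility, and finally combine atoroidality with hypothesis (I) and the cited Bestvina--Feighn result (Proposition~\ref{p:BF94}) to upgrade this to ageometricity.

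\emph{Atoroidality.} Suppose, for contradiction, that $\phi^j([w])=[w]$ for some $w\in F_r\setminus\{1\}$ and some $j\ge 1$. After replacing $g$ by $g^j$ (which still satisfies (I)--(III)), we may realize $[w]$ by a tight loop $\sigma$ in $\Gamma$ with $g(\sigma)$ freely homotopic --- hence, after tightening, cyclically equal --- to $\sigma$. Decompose $\sigma$ cyclically into maximal legal sub-arcs separated by illegal turns, and let $N(\sigma)$ denote the number of illegal turns. A standard Bestvina--Handel argument shows that $N(g^n(\sigma))$ is non-increasing in $n$ and eventually constant; once stable, each maximal legal sub-arc between consecutive illegal turns is either stretched in length under iteration or else is trapped as part of a pINP of the form $\rho_1^{-1}\rho_2$ supplied by Proposition~\ref{P:PINP}. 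Because $|g^n(\sigma)|=|\sigma|$ stays bounded while legal paths grow in length without bound at rate $\lambda(g)>1$ by hypothesis (II) and Proposition~\ref{P:stretch}, unbounded stretching is impossible, so $\sigma$ must contain a pINP for some power of $g$, contradicting hypothesis (I).

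\emph{Full irreducibility and ageometricity.} With atoroidality in hand, hypothesis (II) yields some $k\ge 1$ with $M(g^k)>0$, and hypothesis (III) yields connected local Whitehead graphs at every vertex of $\Gamma$. Clause~(3) of Proposition~\ref{P:K14} then applies to $g^k$ and gives that $\phi$ is fully irreducible. Since $\phi$ is atoroidal, Proposition~\ref{p:BH92} forces $\phi$ to be nongeometric. The Bestvina--Feighn characterization (cited Proposition~\ref{p:BF94}) asserts that a nongeometric fully irreducible is parageometric precisely when every expanding irreducible train track representative carries a pINP; hypothesis (I) rules this out, so $\phi$ is ageometric.

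\emph{Main obstacle.} The delicate step is the atoroidality argument, where one must translate a purely combinatorial assumption about $g$ (no pINPs) into the algebraic assertion that $\phi$ has no periodic conjugacy classes. The key mechanism is the Bestvina--Handel decomposition of a $g$-invariant tight loop into legal arcs joined at illegal turns, together with the observation --- powered by the Perron--Frobenius hypothesis on $M(g)$ --- that a bounded-length invariant loop cannot accommodate the unbounded stretching of legal arcs once the illegal-turn count has stabilized, so that each legal sub-arc must lie inside a periodic Nielsen path.
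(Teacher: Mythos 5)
The paper never proves this proposition internally: the full-irreducibility statement is quoted from Pfaff \cite{pII}, and only the ageometricity clause is justified in-house, via Proposition~\ref{p:BF94} exactly as you do. So your argument is a genuinely different route. Pfaff establishes full irreducibility directly from the train track data, without knowing atoroidality in advance; you instead first extract atoroidality from (I) and (II) and then feed it into Kapovich's criterion, Proposition~\ref{P:K14}(3), which does take atoroidality as a hypothesis. Logically this is a clean reduction, and your treatment of the ageometric clause matches the paper's (your paraphrase of Proposition~\ref{p:BF94} is stronger than what is stated, but the stated implication applies verbatim once full irreducibility is known).

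Two caveats keep this from being a complete independent proof. First, the atoroidality step is the real content, and its crucial sub-step is asserted rather than proved: Proposition~\ref{P:PINP} only describes the shape of a pINP, it does not ``supply'' one. After the illegal-turn count of the invariant loop stabilizes, concluding that each legal sub-arc of the bounded-length loop is ``trapped as part of a pINP'' is exactly the point that needs the bounded-cancellation/critical-constant argument together with an extraction of $g$-fixed endpoints, as in \cite{BH92}; the fact you need --- that for an expanding irreducible train track map every periodic conjugacy class is carried by a concatenation of periodic Nielsen paths --- is standard and true, but your sketch gestures at it rather than proving it or citing it precisely. (Similarly, the appeal to Proposition~\ref{P:stretch} silently uses that (II) forces $g$ to be expanding; true, but worth a line.) Second, there is a circularity concern: the paper itself describes the criterion of \cite{K14} as building on Pfaff's work \cite{pII}, i.e., on the very proposition at stake. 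Within this paper Proposition~\ref{P:K14} is available as a black box, so your deduction is formally valid here, but it cannot stand as an independent proof of Pfaff's criterion unless you verify that the argument in \cite{K14} does not rely on it.
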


\subsection{Index and geometric index}\label{sect:index}

The notion of an \emph{index} $\Ind(\phi)$  of an element of $\phi\in\Out(F_r)$ was originally introduced in~\cite{GJLL} and formulated in terms of the dynamics of the action of representatives $\Phi\in \Aut(F_r)$ of $\phi$ on the hyperbolic boundary $\partial F_r$ of $F_r$.  Since we are not going to work with the $\Ind(\phi)$ directly, we omit the precise definition here and refer the reader to \cite{GJLL,ch12} for details. Note, however, that the index $\Ind(\phi)$  is not, in general, invariant under taking positive powers of $\phi$. There is a natural notion (again see \cite{GJLL,ch12}) of a \emph{rotationless} element $\phi\in\Out(F_r)$, also defined in terms of the action on $\partial F_r$.  It is known that every element of $\Out(F_r)$ has a positive rotationless power and that if $\phi\in\Out(F_r)$ is rotationless, then $\Ind(\phi)=\Ind(\phi^k)$ for all $k\ge 1$.

Recall that the \emph{unprojectivized Culler-Vogtmann Outer space} $cv_r$ consists of all minimal free discrete isometric actions on $F_r$ on $\R$-trees, considered up to an $F_r$-equivariant isometry. Points of $cv_r$ can also be described in terms of ``marked metric graph structures'' on $F_r$. There is a natural action of $\R_{>0}$ on $cv_r$ by multiplying the metric on $T\in cv_r$ by a positive scalar. The quotient space $cv_r/\R_{>0}$ is called the \emph{projectivized Culler-Vogmtann Outer space} and denoted $CV_r$. The space $CV_r$ can also be canonically identified with the set of all $T\in cv_r$ such that the quotient metric graph $T/F_r$ has volume 1. 

It is known~\cite{bf94,CL95,g00} that the closure $\overline{cv}_r$ of $cv_r$, with respect to equivariant Gromov-Hausdorff convergence topology, consists of all ``very small'' isometric actions of $F_r$ on $\R$-trees, again considered up to an $F_r$-equivariant isometry.
The projectivization $\overline{CV}_r:=\overline{cv}_r/\R_{>0}$ of  $\overline{cv}_r$ is compact and provides the standard compactification of $CV_r$. 

For an $\R$-tree $T$ and a point $P\in T$, we denote by $\deg_T(P)$ the number of connected components of $T-\{P\}$. A point $P\in T$ is called a \emph{branch point} if $\deg_T(P)\ge 3$.

In \cite{GL}, Gaboriau and Levitt introduced the notion of a ``geometric
index'' or ``branching index'' for any $T\in \overline{cv}_r$. In the same paper they proved that for any free $F_r$-tree $T\in \overline{cv}_r$ the number of $F_r$-orbits of branch-points of $T$ is finite and is bounded above by $2r-2$. For a point $P\in T$ we will denote by $[P]$ the
$F_r$-orbit of $P$. For simplicity, we will only define the geometric index for free $F_r$-trees (as noted below, for every nongeometric fully irreducible $\phi\in\Out(F_r)$, the action of $F_r$ on the ``attracting tree'' $T_\phi$ is free). See \cite{GL,ch12} for the definition in the case of an arbitrary $T\in \overline{cv}_r$.

\begin{df}[Geometric index]\label{d:gind}
Let $T\in \overline{cv}_r$ be a free $F_r$-tree. Define the \emph{geometric
  index} $\gind(T)$ as
\[
\gind(T):=\sum_{[P]: \deg(P)\ge 3} [\deg_T(P)-2].
\]
\end{df}

In particular, Gaboriau and Levitt proved in \cite{GL} that for
any $T\in \overline{cv}_r$ we have $\gind(T)\le 2r-2$.

For every fully irreducible $\phi\in F_r$ there is an associated \emph{attracting tree} $T_\phi\in  \overline{cv}_r$, which is unique up to projectivization, that is up to multiplying the metric on $T_\phi$ by a positive scalar. We recall an explicit construction of $T_\phi$ in terms of train tracks; see \cite{GJLL} for details. Let $\phi\in\Out(F_r)$ be fully irreducible and let $f:\Gamma\to\Gamma$ be a train track representative of $\phi$. 
Let $\lambda:=\lambda(f)>1$ be the Perron-Frobenius eigenvalue of $M(f)$. We consider the universal cover $\tilde\Gamma$ of $\Gamma$ with the simplicial metric (where every edge has length $1$) and with the free discrete isometric action of $F_r$ by covering transformations. Choose a lift $\tilde f: \tilde\Gamma\to\tilde\Gamma$ of $f$. For any points $x,y\in \tilde \Gamma$ put
\[
d_\infty(x,y)=\lim_{n\to\infty} \frac{d((\tilde f)^n(x), (\tilde f)^n(y))}{\lambda^n}
\]
(it is know that this limit exists). Then $d_\infty$ is a pseudo-metric on $\tilde\Gamma$ and we put $T_\phi:=\tilde \Gamma/\sim$, where for $x,y\in \tilde\Gamma$ we have $x\sim y$ whenever $d_\infty(x,y)=0$. The pseudo-metric $d_\infty$ descends to a metric, still denoted $d_\infty$, on $T_\phi$. Equipped with the metric $d_\infty$, the set $T_\phi$ is an $\R$-tree, which also inherits a natural action of $F_r$ by isometries (coming from the action of $F_r$ on $\widetilde\Gamma$). The $\R$-tree $T_\phi$, endowed with this action of $F_r$, is called the \emph{attracting tree} of $\phi$. It is known that for every fully irreducible $\phi\in \Out(F_r)$ we have $T_\phi\in  \overline{cv}_r$, and, moreover, that in this case the action of $F_r$ on $T_\phi$ is free if and only if $\phi$ is nongeometric. Also, it is easy to see that $T_{\phi}=T_{\phi^k}$, for each $k\ge 1$. In fact, it is known~\cite{ll03} that for any fully irreducible $\phi\in\Out(F_r)$ the projective class $[T_\phi]$ is the unique attracting fixed point for the action of $\phi$ on the projectivization  $\overline{CV}_r$ of  $\overline{cv}_r$, that $[T_{\phi^{-1}}]$ is the unique repelling fixed point for the action of $\phi$ on $\overline{CV}_r$, and that $\phi$ acts on $\overline{CV}_r$  with ``uniform North-South'' dynamics.

The following proposition summarizes key known facts about the relationship between the index of a fully irreducible $\phi\in \Out(F_r)$ and the geometric index of the tree $T_\phi$.
Most of these facts are originally proved in earlier work by various authors~\cite{GJLL,GL,bf94,g05,hm07}, and others. All parts of Proposition~\ref{p:class} are explicitly stated in \cite{ch12}, and we refer the reader to \cite{ch12} for more detailed background information and references.

\begin{prop}\label{p:class}
Let $\phi\in\Out(F_r)$ be fully irreducible.
Then:
\begin{enumerate}
\item We have $\gind(T_\phi)\le 2r-2$.
\item We have $\gind(T_\phi)= \gind(T_{\phi^{-1}})=2r-2$ if and only if $\phi$ is geometric.
\item If $\phi^{-1}$ is rotationless, then $2\Ind(\phi)=\gind(T_\phi)$.  
\item The tree $T_\phi$ is ``geometric'' in the sense of~\cite{bf94,LP97} if and only if $\gind(T_\phi)=2r-2$.
\end{enumerate}
\end{prop}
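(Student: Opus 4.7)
The plan is to handle each of the four items by reducing to a known general statement about very small $F_r$-trees and then verifying that the extra hypotheses provided by full irreducibility (together with the rotationless hypothesis in item (3)) are exactly what is needed to apply it. The only part that requires genuine work beyond citation is item (3); the others are almost immediate specializations.

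For (1) I would invoke the Gaboriau--Levitt branching index inequality from \cite{GL}: for every minimal very small $F_r$-tree $T$ one has $\gind(T)\le 2r-2$. Since $\phi$ is fully irreducible, $T_\phi\in\overline{cv}_r$, so the inequality applies directly. For (4) I would invoke the classification of geometric very small $F_r$-trees from \cite{bf94,LP97}: a minimal very small $F_r$-tree $T$ is dual to a measured foliation on a $2$-complex (i.e.\ ``geometric'' in the sense of those papers) if and only if equality $\gind(T)=2r-2$ holds. Applied to $T=T_\phi$ this is exactly the assertion.

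For (2), one direction is built from (4) together with Proposition~\ref{p:BH92}: if $\phi$ is geometric, then $\phi$ is induced by a pseudo-Anosov homeomorphism $g\colon\Sigma\to\Sigma$ of a once-punctured surface with $\pi_1(\Sigma)\cong F_r$, and $T_\phi$, $T_{\phi^{-1}}$ are (up to scaling) the dual $\R$-trees to the lifts of the stable and unstable measured foliations of $g$ to $\widetilde\Sigma$. Both trees are thus geometric, so by (4) their geometric indices are equal to $2r-2$; concretely $\gind(T_\phi)=\sum_{p}(k_p-2)=-2\chi(\Sigma)=2r-2$, where the sum runs over singularities $p$ of the stable foliation with $k_p$ prongs. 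For the converse, if $\gind(T_\phi)=\gind(T_{\phi^{-1}})=2r-2$, then by (4) both trees are geometric, which rules out the parageometric possibility (one geometric index maximal but the other strictly smaller, as in \cite{g05,hm07,JL08}). Then the joint geometric structure of $T_\phi$ and $T_{\phi^{-1}}$ forces $\phi$ to be surface induced, hence geometric.

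The main obstacle is (3), where I would carefully transport the boundary-dynamical definition of $\Ind(\phi)$ from \cite{GJLL,ch12} to the branch-point side via the Levitt--Lustig $Q$-map. The dictionary is as follows: for each automorphism $\Phi\in\Aut(F_r)$ representing $\phi$, the attracting fixed points of $\Phi$ on $\partial F_r$ are mapped by $Q$ to a single point $P_\Phi\in\widehat{T_\phi}$, and when this point is a branch point the number of attracting fixed points of $\Phi$ equals $\deg_{T_\phi}(P_\Phi)$. The isogredience class of $\Phi$ determines the $F_r$-orbit $[P_\Phi]$. Under the assumption that $\phi^{-1}$ is rotationless, every branch orbit $[P]$ with $\deg_{T_\phi}(P)\ge 3$ is realized by some such $\Phi$ (with no loss when passing from powers back to $\phi$), so summing the local contributions
\[
\deg_{T_\phi}(P_\Phi)-2 \;=\; 2\bigl(|\mathrm{Fix}_+(\Phi)|-1\bigr)
\]
over a set of isogredience representatives gives $\gind(T_\phi)=2\,\Ind(\phi)$. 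The delicate point, and where I expect to spend the most effort, is verifying that rotationlessness of $\phi^{-1}$ precisely eliminates the indexing ambiguities that would otherwise force the weaker identity $\gind(T_\phi)=2\,\Ind(\phi^k)$ only for large $k$; this verification amounts to the ``no-periodic-orbit-swap'' check for lifts and is spelled out in \cite{ch12}.
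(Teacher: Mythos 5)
The paper gives no proof of this proposition at all: it is presented as a summary of known facts, with all four parts attributed to \cite{ch12} and the original sources \cite{GJLL,GL,bf94,g05,hm07}. Your outline assembles exactly those same results (Gaboriau--Levitt for (1), the geometricity criterion of \cite{bf94,LP97} for (4), the pseudo-Anosov duality plus the parageometric dichotomy for (2), and the $\mathcal Q$-map/fixed-point dictionary of \cite{GJLL,ch12} for (3)), so it is essentially the same approach as the paper, and correct at the level of detail the paper itself operates.
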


\begin{df}[Parageometric and ageometric fully irreducibles]
Let $\phi\in\Out(F_r)$ be a nongeometric fully irreducible. We say that $\phi$ is \emph{parageometric} if $\gind(T_\phi)=2r-2$ (which by Proposition~\ref{p:class} implies that $\gind(T_{\phi^{-1}})<2r-2$). We say that $\phi$ is \emph{ageometric} if $\gind(T_\phi)<2r-2$.
\end{df}

\cite[Theorem 3.2]{bf94} shows that for a fully irreducible $\phi\in\Out(F_r)$ the tree $T_\phi$ is geometric (again in the sense of~\cite{bf94,LP97}) if and only if the ``stable'' train track representative of $\phi$ contains a pINP. Since a train track with no pINPs is stable, this implies the following well-known fact: 
\begin{prop}\label{p:BF94}
Let $\phi\in\Out(F_r)$ be fully irreducible and such that $\phi$ admits a train track representative $f\colon\Gamma\to\Gamma$ with no pINPs.
Then $\phi$ is ageometric.
\end{prop}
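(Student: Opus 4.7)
The plan is to deduce this almost directly from the ingredients already collected in the excerpt. The key observation is that the hypothesis ``$f\colon\Gamma\to\Gamma$ has no periodic Nielsen paths'' is strictly stronger than $f$ being a stable train track representative of $\phi$ in the sense of Bestvina--Feighn--Handel: stability refers to the combinatorial structure of pINPs (there are finitely many, they persist under power), and in particular any train track with no pINPs at all is automatically stable. So the hypothesis lets us apply \cite[Theorem 3.2]{bf94} to this specific $f$.

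First I would record the stability remark: since $f$ has no pINPs, neither does any positive iterate $f^k$ (a pINP for $f^k$ would give a periodic Nielsen path for $f$), so $f$ is a stable train track representative of $\phi$. Next I would invoke \cite[Theorem 3.2]{bf94} (recalled in the paragraph preceding the proposition) in its contrapositive form: since the stable train track $f$ contains no pINP, the attracting tree $T_\phi$ is \emph{not} geometric in the sense of \cite{bf94,LP97}.

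Now apply Proposition~\ref{p:class}(4), which says $T_\phi$ is geometric in this sense if and only if $\gind(T_\phi)=2r-2$. Combined with Proposition~\ref{p:class}(1), which gives the universal bound $\gind(T_\phi)\le 2r-2$, the previous step yields the strict inequality $\gind(T_\phi)<2r-2$. Finally, Proposition~\ref{p:class}(2) asserts that a fully irreducible $\phi$ is geometric if and only if $\gind(T_\phi)=2r-2$, so the strict inequality $\gind(T_\phi)<2r-2$ forces $\phi$ to be nongeometric. By the definition of ageometric fully irreducibles given just before the statement, $\phi$ is ageometric, as claimed.

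There is essentially no obstacle here beyond correctly threading together the cited results; the only thing to be a bit careful about is the distinction between ``geometric'' in the sense of $T_\phi$ being a dual tree to a measured foliation on a 2-complex (the \cite{bf94,LP97} sense used in Proposition~\ref{p:class}(4)) and ``geometric'' as applied to $\phi$ itself in Definition~\ref{d:FullyIrreducibles} and Proposition~\ref{p:BH92}. Proposition~\ref{p:class}(2) is precisely what bridges these two notions, so invoking it in the last step is what makes the argument go through cleanly.
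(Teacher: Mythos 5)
Your argument is correct and is essentially the paper's own: the paper deduces the proposition from the observation that a train track representative with no pINPs is automatically stable, then applies \cite[Theorem 3.2]{bf94} together with Proposition~\ref{p:class} to get $\gind(T_\phi)<2r-2$ and hence ageometricity. Your extra care in separating the two senses of ``geometric'' (for $T_\phi$ versus for $\phi$) and bridging them via Proposition~\ref{p:class}(2) just makes explicit what the paper leaves implicit.
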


There is a more recent notion of an index, namely ``$\mathcal Q$-index'' $\ind_\mathcal Q(T)$ defined for a tree $T\in\overline{cv_r}$ with dense $F_r$-orbits (such as the attracting tree $T_\phi$ of a fully irreducible $\phi\in\Out(F_r)$, see \cite{ch12,ch14} for details). In particular, it is known~\cite[Theorem 5.1]{ch12} that for every fully irreducible $\phi\in\Out(F_r)$ one has $\ind_\mathcal Q(T_\phi)=\gind(T_{\phi^{-1}})$. 

\subsection{Ideal Whitehead graphs and the rotationless index}{\label{ss:IWGs}}

Note that the index $\Ind(\phi)$ of a fully irreducible $\phi\in\Out(F_r)$, discussed above, in general is not invariant under taking positive powers of $\phi$. The geometric index $\gind(T_\phi)$ is invariant under taking positive powers, but the definition of $\gind(T_\phi)$ in terms of branch points in $T_\phi$ makes it unclear how to actually compute $\gind(T_\phi)$.

In \cite{hm11} Handel and Mosher introduced the notion of a rotationless index (there just called the index sum) $i(\phi)$ of a fully irreducible $\phi\in\Out(F_r)$, which coincides with $-\frac{1}{2}\gind(T_\phi)$. The rotationless index is defined in terms of train track representatives of $\phi$, which makes it easy to compute in practice. Proposition~\ref{p:class} thus implies that  for a fully irreducible $\phi \in Out(F_r)$ the rotationless index $i(\phi$) satisfies the inequality $0 > i(\phi) \geq 1-r$. 

To define the rotationless index, we first need to define the ideal Whitehead graph (introduced by Handel and Mosher in ~\cite{hm11}) for a nongeometric fully irreducible.

\begin{df}[Ideal Whitehead graph]{\label{d:IWG}}
Let $g\colon \Gamma \to \Gamma$ be a train track representative of a nongeometric fully irreducible $\phi \in Out(F_r)$. A point $v \in \Gamma$ is called a \emph{singularity} if $v$ is either the endpoint of a periodic Nielsen path or has at least three gates. The \emph{local stable Whitehead graph} $\mathcal{SW}(g; v)$ for $g$ at a singularity $v$ has:

(1) a vertex for each periodic direction $d \in \mathcal{D}(v)$ and

(2) edges connecting vertices for $d_1, d_2 \in \mathcal{D}(v)$ when some $g^k(e)$, with $e \in \mathcal{E}(\Gamma)$, traverses $\{d_1, d_2 \}$.

For a pINP-free $g$, the ideal Whitehead graph $\mathcal{IW}(\phi)$ of $\phi$ is defined as 
$$\bigsqcup\limits_{\text{singularities v} \in \Gamma} \mathcal{SW}(g;v).$$
In general, one needs to make the following additional identifications. For each pINP $\rho$ for $g$, one needs to identify the vertex for the initial direction of $\rho$ with the vertex for the initial direction of $\bar{\rho}$.
\end{df}

\begin{df}[Rotationless index and index list]{\label{d:RotationlessIndex}}
Let $\phi\in\Out(F_r)$ be a nongeometric fully irreducible outer automorphism and let $C_1, \dots, C_l$ be the connected components of the ideal Whitehead graph $\mathcal{IW}(\phi)$. For each $j$, let $k_j$ denote the number of vertices of $C_j$. The \emph{index list} for $\phi$ is

\begin{equation}{\label{E:IndexList}}
\{i_1, \dots, i_j, \dots, i_l\} = \{1-\frac{k_1}{2}, \dots, 1-\frac{k_j}{2}, \dots, 1-\frac{k_l}{2}\}.
\end{equation} 

\noindent The \emph{rotationless index} is then the sum $i(\phi) = \sum\limits_{j=1}^{l} i_j$.
\end{df}

From this definition one observes that it is possible to obtain the index list (hence index sum) directly from any pINP-free train track representative $g \colon \Gamma \to \Gamma$. The $k_i$ in Equation \ref{E:IndexList} are replaced by the number of gates $k_i$ at the singular vertices $v_i \in \Gamma$. The rotationless index is then computed as follows (where singularities here will precisely mean vertices with at least three gates):

\begin{equation} 
i(\phi) = \sum\limits_{\text{singularities v}}(1-\frac{\#(\text{gates at v})}{2}).
\end{equation}

If there are pINPs, the situation is somewhat more complicated. However, \cite{p13} provides a method for computing the index list directly from any train track representative $g\colon \Gamma \to \Gamma$ of a nongeometric fully irreducible. Let $v_1, \dots, v_k$ be the periodic vertices for $g$ and, for each $1 \leq i \leq k$, let $n_i$ denote the number of gates at the vertex $v_i$. We define an equivalence relation on the set of all periodic points by $x_i \sim x_j$ when there exists a pINP with endpoints $x_i$ and $x_j$ and call an equivalence class a \emph{Nielsen class}. Given a Nielsen class $N_i = \{x_1, \dots, x_n \}$, we let $g_j$ denote the number of gates at $x_j$. Then, letting 
\begin{center}
$n_i = (\sum g_i) - \# \{\text{iPNPs}$ $\rho$ $\text{such that both endpoints of}$ $\rho$ $\text{ are in } N_i \},$
\end{center}
the index list becomes 
$$\{1-\frac{n_1}{2}, \dots, 1-\frac{n_t}{2} \},$$
where we only include nonzero entries. The rotationless index is thus the sum $i(\phi) = \sum\limits_{j=1}^{t} 1-\frac{n_j}{2}$.

\begin{rk}
An explanation of why there are only finitely many nonzero entries and how this computation is finite can be found in \cite{p13}
\end{rk}

The following key fact relates the rotationless index with the other notions of index described above (while the conclusion of Proposition~\ref{p:hm11} does not appear to have been stated by Handel and Mosher in \cite{hm11} explicitly, it follows directly from the definitions of $\gind$ and $i(\phi)$ and from Lemma~3.4 in \cite{hm11}, which provides an identification between $F_r$-orbits of branch points in $T_\phi$ and components of the ideal Whitehead graph of $\phi$):
\begin{prop}\label{p:hm11}\cite{hm11}
Let $\phi\in\Out(F_r)$ be a nongeometric fully irreducible.
Then $i(\phi)=-\frac{1}{2}\gind(T_\phi)$.
\end{prop}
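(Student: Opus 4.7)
The plan is to deduce the identity purely formally from the Handel--Mosher correspondence between $F_r$-orbits of branch points of $T_\phi$ and connected components of $\mathcal{IW}(\phi)$, together with the fact that under this correspondence the degree of the branch point equals the number of vertices of the associated component. So the main content is to invoke the bijection of \cite{hm11} (Lemma 3.4 there) and then perform a two-line arithmetic manipulation.

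First I would fix a train track representative $f\colon\Gamma\to\Gamma$ of $\phi$, let $v_1,\ldots,v_l\in\Gamma$ be the (Nielsen-class representatives of) singular points, and let $C_1,\ldots,C_l$ be the components of $\mathcal{IW}(\phi)$, with $k_j$ the vertex count of $C_j$. The correspondence of \cite{hm11} produces, for each $j$, a branch point $P_j\in T_\phi$ whose $F_r$-orbit $[P_j]$ depends only on $C_j$, such that the map $C_j\mapsto [P_j]$ is a bijection between components of $\mathcal{IW}(\phi)$ and $F_r$-orbits of branch points of $T_\phi$. The essential structural input is the identification $\deg_{T_\phi}(P_j)=k_j$: the directions at $P_j$ in $T_\phi$ biject with the vertices of $C_j$ (these are exactly the equivalence classes of lifts of principal directions at the corresponding singularity under the asymptotic relation coming from $d_\infty$, respecting the identifications along pINPs).

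Granting this, the computation is immediate. By Definition~\ref{d:gind} and the bijection,
\[
\gind(T_\phi)=\sum_{j=1}^{l}\bigl(\deg_{T_\phi}(P_j)-2\bigr)=\sum_{j=1}^{l}(k_j-2),
\]
while by Definition~\ref{d:RotationlessIndex},
\[
i(\phi)=\sum_{j=1}^{l}\Bigl(1-\frac{k_j}{2}\Bigr)=-\frac{1}{2}\sum_{j=1}^{l}(k_j-2).
\]
Comparing the two displays gives $i(\phi)=-\tfrac{1}{2}\gind(T_\phi)$, as required.

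The only nontrivial step is justifying the bijection and the degree equality; everything else is arithmetic. The main obstacle is therefore to unpack the Handel--Mosher construction enough to see both that components of $\mathcal{IW}(\phi)$ correspond to $F_r$-orbits of branch points in $T_\phi$ and that the vertex count of a component agrees with the $T_\phi$-degree of the corresponding branch point; in particular, one must verify that the pINP identifications in Definition~\ref{d:IWG} are exactly those needed to merge vertices that become the same direction at a branch point of $T_\phi$ (so that the vertex count is not over-counted relative to $\deg_{T_\phi}$). Since this is precisely the content of \cite[Lemma 3.4]{hm11}, I would quote it directly rather than reprove it.
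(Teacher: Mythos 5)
Your proposal matches the paper's own treatment: the paper likewise observes that the identity follows directly from the definitions of $\gind(T_\phi)$ and $i(\phi)$ once one invokes \cite[Lemma 3.4]{hm11}, which identifies $F_r$-orbits of branch points of $T_\phi$ with components of $\mathcal{IW}(\phi)$ (with the degree of the branch point equal to the vertex count of the corresponding component), and then the computation $\sum_j (k_j-2) = -2\sum_j(1-\tfrac{k_j}{2})$ finishes the argument. Your write-up is correct and takes essentially the same route, so nothing further is needed.
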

Moreover, Handel and Mosher (again see \cite[Lemma 3.4]{hm11}) also show that the index list of a nongeometric fully irreducible $\phi$ can be interpreted directly in terms of the tree $T_\phi$. Therefore, the index list (and not just the index sum $\phi$) depends only on $\phi$ and not on the choice of a train track representative of $\phi$.

In view of Proposition~\ref{p:hm11}, we immediately obtain:

\begin{cor}
Let $\phi\in \Out(F_r)$ be a nongeometric fully irreducible. Then:
\begin{enumerate}
\item $\phi$ is parageometric if and only if $i(\phi)=1-r$.
\item $\phi$ is ageometric if and only if $0>i(\phi)>1-r$.
\end{enumerate} 
\end{cor}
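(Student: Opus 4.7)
The plan is to derive both statements directly from Proposition~\ref{p:hm11}, which gives the key identity $i(\phi)=-\tfrac{1}{2}\gind(T_\phi)$, together with the definitions of parageometric/ageometric and the bounds on $\gind(T_\phi)$ from Proposition~\ref{p:class}.

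First, recall that for a nongeometric fully irreducible $\phi\in\Out(F_r)$ the definition says that $\phi$ is parageometric precisely when $\gind(T_\phi)=2r-2$, and ageometric precisely when $\gind(T_\phi)<2r-2$. Moreover, from the earlier cited bounds we have $1\le \gind(T_\phi)\le 2r-2$ for any fully irreducible $\phi$.

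For part (1), substitute into $i(\phi)=-\tfrac{1}{2}\gind(T_\phi)$: the equation $\gind(T_\phi)=2r-2$ is equivalent to $i(\phi)=-\tfrac{1}{2}(2r-2)=1-r$. Hence $\phi$ is parageometric iff $i(\phi)=1-r$. For part (2), a nongeometric fully irreducible is ageometric iff $\gind(T_\phi)<2r-2$, which by the same substitution is equivalent to $i(\phi)>1-r$. The strict lower bound $0>i(\phi)$ then follows from $\gind(T_\phi)\ge 1>0$, which gives $i(\phi)=-\tfrac{1}{2}\gind(T_\phi)<0$.

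There is no real obstacle here: the corollary is a routine translation of the definitions through the identity of Proposition~\ref{p:hm11}. The only thing to keep in mind is that parageometric and ageometric together exhaust the class of nongeometric fully irreducibles (by their definitions as the cases $\gind(T_\phi)=2r-2$ and $\gind(T_\phi)<2r-2$ respectively), so that the two conditions in (1) and (2) are mutually exclusive and jointly exhaustive, matching the dichotomy $i(\phi)=1-r$ versus $0>i(\phi)>1-r$ on the index side.
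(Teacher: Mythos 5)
Your proposal is correct and matches the paper's reasoning: the corollary is stated there as an immediate consequence of Proposition~\ref{p:hm11} (the identity $i(\phi)=-\tfrac{1}{2}\gind(T_\phi)$), combined with the definitions of parageometric/ageometric and the known bounds $1\le\gind(T_\phi)\le 2r-2$, exactly as you argue. Nothing further is needed.
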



\subsection{The axis bundle for a fully irreducible}\label{s:AxisBundle}

We call a point $\Gamma\in CV_r$ in Outer space a \emph{train track graph}  for $\phi$ if there exists an \emph{affine} train track representative $g \colon \Gamma \to \Gamma$, i.e. a train track representative on $\Gamma$ such that each open interval in the interior of each edge is stretched by a constant factor equal to the dilitation $\lambda(\phi)$ of $\phi$.

In \cite{hm11}, Handel and Mosher define the axis bundle for a nongeometric fully irreducible to answer the question posed by Vogtmann as to whether the set of train tracks for an irreducible automorphism contractible. The axis bundle $\mathcal{A}_{\phi}$ is a closed subset of $CV_r$, proper homotopy equivalent to a line, invariant under $\phi$, and such that the two ends limit on the repeller and attractor of the source-sink action of $\phi$ on $\overline{CV_r}$.

\cite{hm11} gives three equivalent definitions of the axis bundle, one of which we include here: 

\begin{df}[Axis bundle $\mathcal{A}_{\phi}$]{\label{d:AxisBundle}}
For a nongeometric fully irreducible $\phi \in Out(F_r)$, the \emph{axis bundle} $\mathcal{A}_{\phi}$ for $\phi$ is defined as $\mathcal{A}_{\phi} = \overline{\cup_{k=1}^{\infty} TT(\phi^k)}$, where $TT(\phi^k)$ is the set of all train track graphs for $\phi^k$, where $k\ge 1$. 
\end{df}

If $\phi\in\Out(F_r)$ is a nongeometric fully irreducible and $f \colon \Gamma\to\Gamma$ is a train track representative of $\phi$, we can equip $\Gamma$ with a volume-1 ``eigenmetric'' (see \cite{DKL} for a detailed explanation),  so that, viewed as a marked metric graph, $\Gamma$ becomes a train track graph for $\phi$ in the above sense. Then taking an ``isometric'' folding path, determined by $f$, from $\Gamma$ to $\Gamma\cdot \phi$ in $CV_r$, and translating this path by all integer powers of $\phi$, gives a $\phi$-invariant bi-infinite folding line $A_f\subseteq CV_r$.  This line $A_f$ is contained in the axis bundle $\mathcal{A}_{\phi}$ for $\phi$ and is called an \emph{axis} for $\phi$. Moreover, the line $A_f$ is a geodesic in $CV_r$ with respect to the asymmetric Lipschtz metric on $CV_r$, see \cite{fm11}.

While the axis bundle generally contains more than a single axis, Mosher and Pfaff prove in \cite{mp13}:

\begin{thm}{\label{t:AxisBundle}}The axis bundle of an ageometric, fully irreducible outer automorphism $\phi \in Out(F_r)$ is a unique axis precisely if both of the following two conditions hold:
\begin{enumerate}
\item the index sum satisfies $i(\phi) = \frac{3}{2}-r$ and 
\item no component of the ideal Whitehead graph $\mathcal{IW}(\phi)$ has a cut vertex.
\end{enumerate}
\end{thm}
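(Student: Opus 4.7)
My plan is to leverage Handel-Mosher's parametrization of the axis bundle from \cite{hm11} together with a local analysis of possible train track representatives, controlled by the ideal Whitehead graph. Recall from Section~\ref{s:AxisBundle} that every train track graph in $\mathcal{A}_\phi$ comes equipped with an affine train track representative $f\colon \Gamma \to \Gamma$ of some power of $\phi$, and each such representative determines a bi-infinite isometric folding axis $A_f \subseteq \mathcal{A}_\phi$. Since $\mathcal{A}_\phi$ is the closure of the union of these axes, proving that $\mathcal{A}_\phi$ is a single axis amounts to showing that all such $A_f$ coincide, i.e., that the train track representative of $\phi^k$ is essentially unique modulo the canonical fold structure encoded by $f$ itself.

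For the sufficient direction, assume both conditions hold. From $i(\phi) = -\tfrac{1}{2}\gind(T_\phi) = \tfrac{3}{2}-r$ one obtains $\gind(T_\phi) = 2r-3$, and the identity $\gind(T_\phi) = \sum_j (k_j - 2)$ (where $k_j \ge 3$ is the vertex count of the $j$-th component of $\mathcal{IW}(\phi)$), combined with the bound $\sum_j k_j \le 2r$ coming from a rose representative, forces $\mathcal{IW}(\phi)$ to consist of a single component with $2r-1$ vertices. Given any train track representative $f$, I would show that any alternative representative $f'$ of a common power of $\phi$ must arise from $f$ via a sequence of local ``splittings'' at singular vertices; each such splitting is induced either by a cut vertex in the local component of $\mathcal{IW}(\phi)$ at that singular vertex, or by a partition of the local Whitehead graph into distinct components of $\mathcal{IW}(\phi)$. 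The no-cut-vertex hypothesis rules out the first, and the single-component hypothesis rules out the second, so $A_{f'} = A_f$ and $\mathcal{A}_\phi$ is a single axis.

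For the necessary direction I would argue by contrapositive, exhibiting a second axis whenever either condition fails. If a component of $\mathcal{IW}(\phi)$ has a cut vertex $d$ at a singular vertex $v \in \Gamma$, removing $d$ partitions the local stable Whitehead graph into subgraphs; I would use this partition to construct an alternative train track representative by performing a small local peel-off at $v$, subdividing along the edge initiating in direction $d$ and folding a short initial segment of an edge on one side of the partition with the corresponding edge on the other, producing a new affine train track representative of a suitable power of $\phi$ on a graph $\Gamma'$ not lying on $A_f$. If instead $i(\phi) \neq \tfrac{3}{2} - r$, then since $\phi$ is ageometric we have $\gind(T_\phi) < 2r-3$, forcing either $\mathcal{IW}(\phi)$ to have at least two components (allowing independent peel-offs at distinct components) or else a single component with fewer than $2r-1$ vertices and hence extra non-singular vertices with only two gates, which again permits additional subdivisions that produce axes distinct from $A_f$.

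The main technical obstacle is the peel-off construction: verifying that the modified graph map is a genuine train track representative (so every turn and every iterate remain legal, no pINPs are created, and the iterates $(f')^n(e)$ remain reduced) and that the resulting axis $A_{f'}$ is genuinely distinct from $A_f$ rather than a reparametrization on the canonical folding line determined by $f$. This requires a careful check that the local modification is compatible with the global iteration structure of $\phi$ and that the combinatorial data distinguishing $A_{f'}$ from $A_f$ persists under passing to further powers.
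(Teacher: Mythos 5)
This theorem is not proved in the paper at all: it is the main result of Mosher--Pfaff \cite{mp13} and is simply quoted here (the paper only \emph{applies} it, via Theorem~\ref{T:tech}(6), to automorphisms whose ideal Whitehead graph is the complete graph on $2r-1$ vertices). So there is no internal proof to compare your argument against; what you have written is an attempt to reprove the Lone Axes theorem itself, and as such it has genuine gaps.

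The most concrete one is in your sufficient direction: the claim that $i(\phi)=\frac{3}{2}-r$ together with the bound $\sum_j k_j\le 2r$ ``coming from a rose representative'' forces $\mathcal{IW}(\phi)$ to be a single component on $2r-1$ vertices. That bound is unjustified --- the hypothesis of the theorem does not supply a rose representative, and for a representative on a general graph $\Gamma$ the total number of gates at singular vertices can far exceed $2r$ --- and the conclusion is in fact false as a consequence of the hypotheses: the index equation $\sum_j(1-\tfrac{k_j}{2})=\frac{3}{2}-r$ admits multi-component solutions (e.g.\ for $r=4$ the list $\{-\tfrac12,-2\}$), and the theorem of \cite{mp13} is meant to cover exactly such cases, requiring only that no component have a cut vertex. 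So your reduction to the single-component, $(2r-1)$-vertex situation proves at best a special case (which happens to be the one used in this paper), not the stated equivalence. Beyond that, the heart of both directions is deferred: the assertion that any two affine train track representatives of a common power differ by ``local splittings'' governed by cut vertices or components of $\mathcal{IW}(\phi)$, and the ``peel-off'' construction of a second axis when a condition fails, are precisely the hard content of \cite{mp13} (carried out there in the language of Handel--Mosher's weak train tracks and fold lines, where the index condition is translated into uniqueness of the illegal turn / fold direction and the cut-vertex condition into uniqueness of the unfolding), and you acknowledge rather than supply these verifications. As it stands the proposal is a plan whose key reduction is incorrect in general and whose key constructions are missing.
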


It can be noted that $\mathcal{A}_{\phi}$ and $\mathcal{A}_{\psi}$ differ by a translation by an element of $Out(F_r)$ on $CV_r$ if and only if there exist integers $k,l \geq 1$ such that $\phi^k$ and $\psi^l$ are conjugate in $Out(F_r)$. Also, \cite{mp13} provides a method for computing the axis bundle in the case of Theorem \ref{t:AxisBundle}. Thus, identifying when two fully irreducible outer automorphisms satisfy the conditions of Theorem \ref{t:AxisBundle}, allows one to identify if they fall into a setting where one can ``by hand'' compute whether they have conjugate powers.

\section{Admissible compositions of Nielsen automorphisms}\label{sect:adm}

\begin{conv}
Recall that, for $r\ge 2$, for the free group $F_r$ we have chosen a distinguished free basis $A=\{a_1,\dots, a_r\}$ for $F_r$. The $r$-rose $R_r$ was defined in Section~\ref{subsect:tr} using as a wedge of $r$ loop-edges $e_1,\dots, e_r$ corresponding to $a_1,\dots, a_n$ at a single vertex $v$, giving an identification $F_r=F(a_1,\dots,a_r)=\pi_1(R_r,v)$.
Thus, $ER_r=\{e_1,\dots, e_r, \overline{e_1},\dots, \overline{e_r}\}$, with the identification $F(a_1,\dots,a_r)=\pi_1(R_r,v)$ sending $a_i$ to $e_i$ and, correspondingly, sending $a_i^{-1}$ to $\overline{e_i}$.

Note that in this case (the case where $\Gamma = R_r$ and $v$ is the vertex of $\Gamma$), for the set of directions $Lk(v)$ at $v$, we have $Lk(v)=ER_r$. By convention, using the above identification $ER_r=A^{\pm 1}$, we will sometimes use the identification $ER_r=Lk(v)=A^{\pm 1}$ and view elements of $A^{\pm 1}$ as directions at $v$ in $R_r$. In particular, we will use this convention when working with local Whitehead graphs and limited Whitehead graphs of graph-maps $R_r\to R_r$ and with turns taken by such maps.

Also, since $R_r$ has a single vertex $v$, when dealing with local Whitehead graphs and limited Whitehead graphs of graph-maps $g \colon R_r\to R_r$, we will usually use the abbreviated notations $Wh(g):=Wh(g,v)$ and $Wh_L(g):=Wh_L(g,v)$.

\end{conv}

\subsection{Standard Nielsen automorphisms and admissible sequences.}

\begin{df}[Standard Nielsen automorphism]
Let $r\ge 2$. By a \emph{standard Nielsen automorphism}, we will mean an automorphism $\theta$ of $F_r$ such that there exist $x,y\in A^{\pm 1}$ with $\theta(x)=yx$ and $\theta(z)=z$ for each $z\in A^{\pm 1}$ with $z\ne x^{\pm 1}$.
In this case we say that the ordered pair $(x,y)$ is the \emph{characteristic pair} for $\theta$ and we specify such $\theta$ using notation $\theta=[x\mapsto yx]$.
\end{df}
Note that if $\theta=[x\mapsto yx]$, then the fact that $\theta$ is an automorphism of $F_r$ implies that $y\ne x^{\pm 1}$.

The following notion is based on the work of Pfaff~\cite{pI,pII},
although the terminology that we use here is slightly different.

\begin{df}[Admissible compositions]

Let $\theta=[x\mapsto yx]$ and $\theta'=[x'\mapsto y'x']$ be standard Nielsen automorphisms of $F_r$.
The ordered pair $(\theta,\theta')$ is called \emph{admissible} if either $x' = x$ and $y' \neq y^{-1}$ or $y' = x$ and $x' \neq y^{-1}$.

A sequence $\theta_1,\dots, \theta_n$ (where $n\ge 1$) of standard Nielsen automorphisms of $F_r$ is called \emph{admissible} if for each $1\le i<n$ the pair $(\theta_i,\theta_{i+1})$ is admissible. 
In this case we also say that the composition $\theta_n\circ \dots \circ \theta_1$ is \emph{admissible}.

A sequence $\theta_1,\dots, \theta_n$ of standard Nielsen automorphisms of $F_r$ is called \emph{cyclically admissible}
if it is admissible and if the pair $(\theta_n,\theta_1)$ is also admissible (that is, if $\theta_{n-1}\circ \dots \circ \theta_1\circ \theta_n$ is also admissible).  In this case we also say that the corresponding composition $\theta_n\circ \dots \circ \theta_1$ is \emph{cyclically admissible}.

If $\mathfrak t=\theta_1,\dots, \theta_n$ is a sequence of  standard
Nielsen automorphisms of $F_r$, and if $k\ge 1$ is an integer, we
denote by $\mathfrak t^k$ the sequence \[
\underbrace{\theta_1,\dots, \theta_n,
\theta_1,\dots, \theta_n, \dots, \theta_1,\dots, \theta_n}_{k \text{
  copies of } \mathfrak t}.
\]
Note that if $\mathfrak t$ is cyclically admissible then for every
$k\ge 1$ the sequence $\mathfrak t^k$ is also cyclically admissible
(and, in particular, admissible).
\end{df}

Recall that in Definition~\ref{d:standard-rep} to every $\Phi\in\Aut(F_r)$ we have associated its standard topological representative $g_\Phi:R_r\to R_r$.
Given $\Phi \in \Aut(F_r)$, denote by $\mathcal T(\Phi):=\mathcal T(g_\Phi)$ the set of all turns in $R_r$  that occur in $g_\Phi(e_i)$, where $i=1,\dots, r$ (see Definition~\ref{d:T}).

The following basic lemma is a direct corollary of the definitions:

\begin{lem}\label{L:basic}
Let $\theta=[x\mapsto yx]$ be a standard Nielsen automorphism of $F_r$ and let $g_\theta:R_r\to R_r$ be its topological representative.

Then:
\begin{enumerate}
\item The set $\mathcal T(g_\theta)$ consists of a single turn $\{y^{-1},x\}$.
\item $g_\theta:R_r\to R_r$ is a train track map with exactly one nondegenerate illegal turn in $R_r$, namely the turn $\{x,y\}$.
\item We have $Dg_\theta(x)=Dg_\theta(y)=y$, and we have $Dg_\theta(a)=a$ for every $a\in A^{\pm 1}$, $a\ne x$.
\item We have $Dg_\theta(A^{\pm 1})=A^{\pm 1}-\{x\}$. 
\end{enumerate}
\end{lem}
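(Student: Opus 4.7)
The plan is to verify all four claims by a direct case analysis, unpacking the recipe for the standard representative $g_\theta$ from Definition~\ref{d:standard-rep}. The only real subtlety is that the characteristic letter $x$ may be either a positive basis element or the inverse of one, so the formula for $g_\theta$ on the single affected loop-edge takes two different shapes; I would handle the two cases in parallel and observe that the conclusions are uniform.

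First write $x = a_k^{\epsilon}$ with $\epsilon \in \{\pm 1\}$. When $\epsilon = 1$, the equation $\theta(a_k) = y a_k$ translates into the length-two edge-path $g_\theta(e_k) = e_y \, e_k$, where $e_y \in ER_r$ is the oriented edge corresponding to $y \in A^{\pm 1}$; when $\epsilon = -1$, from $\theta(a_k) = a_k y^{-1}$ we obtain $g_\theta(e_k) = e_k \, \overline{e_y}$. Since $\theta$ fixes every other basis letter, $g_\theta(e_i) = e_i$ for $i \neq k$, and these length-one paths contribute no turns. The unique turn inside $g_\theta(e_k)$ is $\{e_y^{-1}, e_k\}$ in the first case and $\{e_k^{-1}, \overline{e_y}\}$ in the second; both rewrite, under the identification $ER_r = A^{\pm 1}$, to $\{y^{-1}, x\}$, which gives (1).

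From the same two formulas I would read off the derivative map on every oriented edge. In either case one checks that $Dg_\theta(x) = y$ and $Dg_\theta(a) = a$ for every $a \in A^{\pm 1} \setminus \{x\}$; since $y \neq x^{\pm 1}$ is built into the definition of a standard Nielsen automorphism, in particular $y \neq x$, this gives $Dg_\theta(y) = y$ and hence (3). Claim~(4) is then immediate: $Dg_\theta$ fixes $A^{\pm 1}\setminus\{x\}$ pointwise and sends $x$ to $y \in A^{\pm 1}\setminus\{x\}$, so the image is exactly $A^{\pm 1}\setminus\{x\}$.

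For (2), regularity of $g_\theta$ is the statement that $g_\theta(e_k)$ is tight, which in the first case reduces to $e_y \neq \overline{e_k}$, i.e.\ $y \neq x^{-1}$, and similarly in the second; both hold by the definition of a standard Nielsen automorphism. By the proposition characterising train track maps in terms of $\mathcal T(f)$, it then suffices to check that the single turn $\{y^{-1}, x\}$ in $\mathcal T(g_\theta)$ is legal. Iterating (3) one sees that $(Dg_\theta)^k(y^{-1}) = y^{-1}$ and $(Dg_\theta)^k(x) = y$ for every $k \geq 1$, so the image of the turn under every iterate is $\{y^{-1}, y\}$, which is non-degenerate; hence $g_\theta$ is a train track map. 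Finally, to isolate the non-degenerate illegal turns, (3) shows $Dg_\theta(x) = y = Dg_\theta(y)$, so $\{x,y\}$ is illegal, while any other non-degenerate turn $\{a,b\}$ with $a,b \neq x$ has both directions fixed by $Dg_\theta$ and therefore remains non-degenerate under all iterates; thus $\{x,y\}$ is the unique non-degenerate illegal turn, completing (2). As the authors indicate, the whole lemma is an immediate unpacking of definitions; the only ``obstacle'' is the clerical care needed to keep the $\epsilon = \pm 1$ cases straight and to read off turns and derivatives correctly.
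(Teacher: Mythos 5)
Your verification is essentially the intended one: the paper gives no argument for Lemma~\ref{L:basic}, stating it as a direct corollary of the definitions, and your two-case unpacking of the standard representative ($x=a_k$ versus $x=a_k^{-1}$), the computation of $Dg_\theta$, and the legality check for the single taken turn $\{y^{-1},x\}$ are exactly the expected direct verification.

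One small slip in the uniqueness part of (2): the nondegenerate turns other than $\{x,y\}$ are not only those with both directions different from $x$; you must also dispose of the turns $\{x,b\}$ with $b\in A^{\pm 1}\setminus\{x,y\}$ (including $b=x^{-1}$). For such a turn $Dg_\theta(\{x,b\})=\{y,b\}$ is nondegenerate since $b\ne y$, and both $y$ and $b$ are fixed by $Dg_\theta$, so all further iterates remain nondegenerate and the turn is legal. With this extra case added, your argument for (2) — and hence the whole proof — is complete.
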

Because of part (3) of Lemma~\ref{L:basic} we refer to $x$ as \emph{the unachieved direction} for $\theta=[x\mapsto yx]$.

Let $\mathfrak t=\theta_1,\dots, \theta_n$ be a sequence of standard Nielsen automorphisms of $F_r$, where $g_{\theta_i}:R_r\to R_r$ is the standard representative of $\theta_i$ (see Definition~\ref{d:standard-rep}.)
In this case we denote $g_{\mathfrak t}:=g_{\theta_ n}\circ \dots\circ g_{\theta_1}:R_r\to R_r$.
Note that the map $g_{\mathfrak t}$ may, in general, fail to be a regular graph-map, since for some edge $e_k\in ER_r$ the path $g_{\theta_ n}\circ \dots\circ g_{\theta_1}(e_k)$ may fail to be reduced. However, if $g_{\mathfrak t}$ is regular, then $g_{\mathfrak t}$  is a topological representative of $\Phi=\theta_n\circ \dots \circ \theta_1\in \Aut(F_r)$ and in this case $g_{\mathfrak t}$ is isotopic to $g_\Phi$ rel $VR_r=\{v\}$. 
We will see below that if the sequence $\mathfrak t=\theta_1,\dots, \theta_n$ is admissible, then indeed  $g_{\mathfrak t}$ is regular and, moreover, $g_{\mathfrak t}$ is a train track representative of $\Phi$ with some additional nice properties.

\begin{conv}\label{conv:main}
Unless specified otherwise, for the remainder of Section~\ref{sect:adm} we fix an admissible sequence 
\[
\mathfrak t=\theta_1,\dots, \theta_n \tag{$\dag$}
\]
of standard Nielsen automorphisms $\theta_i=[x_i\mapsto y_ix_i]$ of $F_r$ (where $n\ge 1$ and $i=1,\dots, n$) and fix the corresponding composition automorphism $\Phi=\theta_n\circ \dots \circ \theta_1\in \Aut(F_r)$.

For $1\le k\le m\le n$ we denote $\mathfrak t_{k,m}=\theta_k,\dots, \theta_m$, $g_{m,k}:=g_{\mathfrak t_{k,m}}=g_{\theta_m}\circ \dots \circ g_{\theta_k} \colon R_r\to R_r$ and $\Phi_{m,k}=\theta_m\circ \dots  \circ  \theta_k\in \Aut(F_r)$. 
Also, for $1\le m\le n$ denote $g_m:=g_{m,1}$ and $\Phi_m:=\Phi_{m,1}$. Thus $\Phi=\Phi_n=\Phi_{n,1}$ and $g_\mathfrak t=g_n=g_{n,1}$.
\end{conv}

Note that since $\mathfrak t_n=\theta_1,\dots, \theta_n$ is an admissible sequence, for every $1\le k\le m\le n$ the sequence $\mathfrak t_{k,m}=\theta_k,\dots, \theta_m$ is also admissible.

\subsection{Properties of admissible sequences}

\begin{lem}\label{L:T}
For $1\le k \le m\le n$ we have 
\[
\mathcal{T}(g_{m,k}) \subseteq \{D(g_{m,j+1})(\{y_j^{-1}, x_j\}) \mid j=k,\dots, m\},
\]
where in the case $k=m$ we interpret $g_{m,m+1}$ as the identity map $Id \colon R_r\to R_r$.
\end{lem}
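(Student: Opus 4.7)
The plan is to prove the statement by induction on $m-k$. For the base case $m=k$, we have $g_{m,k}=g_{\theta_k}$, so by Lemma~\ref{L:basic}(1) we have $\mathcal{T}(g_{\theta_k})=\{\{y_k^{-1},x_k\}\}$. With the convention $g_{m,m+1}=\mathrm{Id}$, the right-hand side also equals $\{\{y_k^{-1},x_k\}\}$, so the claim holds.

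For the inductive step with $k<m$, I will factor $g_{m,k}=g_{m,k+1}\circ g_{\theta_k}$ and examine, for each $e\in ER_r$, the combinatorial edge-path $g_{m,k}(e)$. Writing $g_{\theta_k}(e)=f_1\cdots f_p$, Lemma~\ref{L:basic} tells us that $p\le 2$, that this path is reduced, and that if $p=2$ then the unique turn $\{\bar f_1,f_2\}$ in $g_{\theta_k}(e)$ is exactly $\{y_k^{-1},x_k\}$. Now the concatenated path $g_{m,k}(e)=g_{m,k+1}(f_1)\cdots g_{m,k+1}(f_p)$ produces two types of turns: (i) turns internal to some $g_{m,k+1}(f_i)$, and (ii) junction turns at the seams between $g_{m,k+1}(f_i)$ and $g_{m,k+1}(f_{i+1})$.

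For type (i), each such turn lies in $\mathcal{T}(g_{m,k+1})$, which by the inductive hypothesis is contained in the set $\{D(g_{m,j+1})(\{y_j^{-1},x_j\})\mid j=k+1,\dots,m\}$. For type (ii), the junction turn between $g_{m,k+1}(f_i)$ and $g_{m,k+1}(f_{i+1})$ is $\{\overline{L(g_{m,k+1}(f_i))},\,D(g_{m,k+1})(f_{i+1})\}$, where $L$ denotes the last edge of a path. Using the identity $\overline{L(g(\alpha))}=D(g)(\bar\alpha)$, which follows from $\overline{g(\alpha)}=g(\bar\alpha)$ applied to the first edge, this junction turn equals $D(g_{m,k+1})(\{\bar f_i,f_{i+1}\})=D(g_{m,k+1})(\{y_k^{-1},x_k\})$, giving the $j=k$ element of the right-hand side.

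Combining the two types and taking the union over $e\in ER_r$ yields $\mathcal{T}(g_{m,k})\subseteq\{D(g_{m,j+1})(\{y_j^{-1},x_j\})\mid j=k,\dots,m\}$. The only minor subtlety is handling the degenerate case $p=1$ of $g_{\theta_k}(e)$, in which no type~(ii) turn arises and only the inductively controlled type~(i) turns appear; this case poses no obstacle. Admissibility of the sequence $\mathfrak t$ is not actually needed for this particular lemma, though it will be essential in subsequent results to ensure that $g_{m,k}(e)$ is reduced so that this analysis captures all turns without accounting for cancellation.
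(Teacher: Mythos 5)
Your proof is correct and follows essentially the same route as the paper's: induction on $m-k$, peeling off $g_{\theta_k}$ and splitting the turns of $g_{m,k}(e)=g_{m,k+1}(g_{\theta_k}(e))$ into those already in $\mathcal T(g_{m,k+1})$ (handled by the inductive hypothesis) and the junction turn, which both arguments identify as $Dg_{m,k+1}(\{y_k^{-1},x_k\})$. The paper phrases the case split in terms of whether the source edge $c$ equals $x_k^{\pm1}$ rather than in terms of internal versus seam turns, but this is only a difference of presentation; your closing observation that admissibility is not needed here is also consistent with the paper's proof.
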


\begin{proof}
We argue by induction on $m-k$.

If $m-k=0$ and $m=k$ then $g_{m,k}=\theta_m=[x_m\to y_mx_m]$ and the statement of the lemma holds.
Suppose now that $m-k\ge 1$ and that the conclusion of the lemma has been established for all smaller values of $m-k$.

Let $\{a,b\}$ (where $a,b\in A^{\pm 1}$) be a turn in $\mathcal{T}(g_{m,k})$. Then the turn $\{a,b\}$ occurs in $g_{m,k}(c)$ for some $c\in A^{\pm 1}$.

Suppose first that $c\ne x_k^{\pm 1}$. Then $\theta_k(c)=c$ and 
\[
g_{m,k}(c)=g_{\theta_m}\circ \dots  \circ g_{\theta_k}(c)=g_{\theta_m}\circ \dots \circ g_{\theta_{k+1}}(c)=g_{m,k+1}(c).
\]
By the inductive hypothesis applied to $g_{m,k+1}$ we have 
\[
\mathcal T(g_{m,k+1})\subseteq   \{D(g_{m,j+1})(\{y_j^{-1}, x_j\})| j=k+1,\dots, m\}
\]
and hence $\{a,b\}\in  \{D(g_{m,j+1})(\{y_j^{-1}, x_j\}) \mid j=k,\dots, m\}$.

Suppose now that $c=x_k^{\pm 1}$. Since $\mathcal T (g_{m,k}(c))=\mathcal T (g_{m,k}(c^{-1}))$, without loss of generality we may assume that $c=x_k$.
Then $\theta_k(c)=y_kx_k$ and hence $g_{m,k}(c)=g_{m,k+1}(y_k)g_{m,k+1}(x_k)$.

Since the turn $\{a,b\}$ occurs in $g_{m,k}(c)$, then one of the following happens:

\begin{enumerate}
\item the turn $\{a,b\}$ occurs in $g_{m,k+1}(y_k)$; 
\item the turn $\{a,b\}$ occurs in $g_{m,k+1}(x_k)$; 
\item $\{a,b\}=Dg_{m,k+1}(\{y_k^{-1}, x_k\})$.
\end{enumerate}

If (3) happens, then by using $j=k$ we see that \[\{a,b\}\in   \{D(g_{m,j+1})(\{y_j^{-1}, x_j\})\} _{j=k+1}^m  \subseteq \{D(g_{m,j+1})(\{y_j^{-1}, x_j\})\}_{j=k}^m,\] as required. If (1) or (2) happens, we have $\{a,b\}\in \mathcal T (g_{m,k+1})$ and  by the inductive hypothesis applied to $g_{m,k+1}$ it follows that
\[
\{a,b\}\in \mathcal T(g_{m,k+1})\subseteq \{D(g_{m,j+1})(\{y_j^{-1}, x_j\})\} _{j=k+1}^m  \subseteq \{D(g_{m,j+1})(\{y_j^{-1}, x_j\})\}_{j=k}^m.
\]

Thus in all cases we have $\{a,b\}\in \{D(g_{m,j+1})(\{y_j^{-1}, x_j\})\}_{j=k}^m$. Since $\{a,b\}\in \mathcal{T}(g_{m,k})$ was arbitrary, it follows that \[
\mathcal T(g_{m,k+1})\subseteq \{D(g_{m,j+1})(\{y_j^{-1}, x_j\}) \mid j=k,\dots, m\}.
\]
This completes the proof of the inductive step.
\end{proof}

\begin{rm}
One can also show that the inclusion in the statement of Lemma~\ref{L:T} is actually an equality, but we will not need this fact here.
\end{rm}

\begin{lem}\label{L:T'}
For any $1\le m\le n$ we have
\[
\mathcal T(g_m)=\mathcal T(\theta_{m})\cup D\theta_{m}(\mathcal T(g_{m-1})),
\]
where for $m=1$ we interpret $g_{m-1}=g_0$ as the identity map of $R_r$.
\end{lem}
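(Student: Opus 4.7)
The approach is a direct analysis of the concatenation structure of $g_m(c) = g_{\theta_m}(g_{m-1}(c))$ for each edge $c \in ER_r$; no induction on $m$ is needed. The case $m=1$ is handled separately by convention: $g_0 = \mathrm{Id}$ gives $\mathcal T(g_0) = \emptyset$, so the claimed equality reduces to $\mathcal T(g_1) = \mathcal T(\theta_1)$, which is immediate since $g_1 = g_{\theta_1}$.

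For $m \ge 2$, I would fix an edge $c$ and write the combinatorial edge-path $g_{m-1}(c) = e'_1 e'_2 \cdots e'_s$, so that $g_m(c)$ is the concatenation $g_{\theta_m}(e'_1) \cdot g_{\theta_m}(e'_2) \cdots g_{\theta_m}(e'_s)$. Every turn in this concatenation is either an \emph{interior} turn lying inside a single $g_{\theta_m}(e'_i)$, contributing to $\mathcal T(g_{\theta_m}) = \mathcal T(\theta_m)$, or a \emph{junction} turn between $g_{\theta_m}(e'_i)$ and $g_{\theta_m}(e'_{i+1})$. Using that $g_{\theta_m}$ is a graph map and hence commutes with edge reversal, the first edge of $g_{\theta_m}(e'_{i+1})$ is $D\theta_m(e'_{i+1})$ while the inverse of the last edge of $g_{\theta_m}(e'_i)$ is the first edge of $g_{\theta_m}(\overline{e'_i})$, namely $D\theta_m(\overline{e'_i})$; so the junction turn equals $D\theta_m(\{\overline{e'_i}, e'_{i+1}\})$, and $\{\overline{e'_i}, e'_{i+1}\} \in \mathcal T(g_{m-1})$ by definition. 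Taking the union over all $c$ yields the forward inclusion $\mathcal T(g_m) \subseteq \mathcal T(\theta_m) \cup D\theta_m(\mathcal T(g_{m-1}))$. The same junction analysis gives $D\theta_m(\mathcal T(g_{m-1})) \subseteq \mathcal T(g_m)$ directly, so only the containment $\mathcal T(\theta_m) \subseteq \mathcal T(g_m)$ remains. By Lemma~\ref{L:basic}, $\mathcal T(\theta_m)$ consists of the single turn $\{y_m^{-1}, x_m\}$, and this turn occurs in $g_m(c)$ precisely when $g_{m-1}(c)$ traverses $x_m^{\pm 1}$.

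To guarantee such a $c$ exists, I would invoke the fact that $g_{m-1}$ represents an element of $\mathrm{Aut}(F_r)$ and hence is a homotopy equivalence; consequently its transition matrix $M(g_{m-1})$ is invertible over $\mathbb Z$. Since that matrix is also nonnegative integer, every row must contain a positive entry, and in particular the row indexed by $x_m$ does, producing the required $c$. This last step, namely showing that the single turn in $\mathcal T(\theta_m)$ is actually realized in the image, is the main obstacle: the concatenation analysis alone does not force it to appear, and one must supply the auxiliary observation about invertibility (equivalently, that a homotopy-equivalence graph map on the rose must hit every edge in the image of some edge). The remainder of the argument is routine bookkeeping about how turns decompose under composition of graph maps, and in fact the admissibility hypothesis from Convention~\ref{conv:main} is not used here.
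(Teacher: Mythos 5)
Your argument is correct and is essentially the paper's proof: the paper nominally phrases it as an induction on $m$, but its inductive step is exactly your direct concatenation analysis (interior turns give $\mathcal T(\theta_m)$, junction turns give $D\theta_m(\mathcal T(g_{m-1}))$), and the final point is settled there by observing that $g_{m-1}$, being a homotopy equivalence, is surjective, so some $g_{m-1}(c)$ traverses $x_m^{\pm 1}$. The one imprecision is your primary justification of that last step: for the transition matrix as defined in the paper (unsigned occurrence counts), being a homotopy equivalence does not give invertibility over $\mathbb Z$ — only the abelianized (signed) matrix is unimodular. What you actually need is just that no row of $M(g_{m-1})$ is zero, which does hold, e.g.\ because $M(g_{m-1})$ dominates the unimodular abelianization matrix entrywise in absolute value (or, more simply, via the surjectivity observation you mention parenthetically, which is precisely the paper's argument); with that substitution your proof matches the paper's.
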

\begin{proof}

We argue by induction on $m$.
For $m=1$ we have $g_1=g_{\theta_1}$ and  $\mathcal T(g_{0})=\emptyset$. The conclusion of the lemma clearly holds in this case.

Suppose now that $m\ge 2$ and that the statement of the lemma has been proved for $g_{m-1}$.

If $e$ is an edge of $R_r$ and $g_{m-1}(e)=e_1\dots e_q$ then $\mathcal T(g_{m-1})=\{ \{ \overline{e_1},e_2\}, \dots, \{\overline{e_{q-1}},e_q\}\}$ and
$g_{m}(e)=g_{\theta_{m}}(g_{m-1}(e))=g_{\theta_{m}}(e_1)\dots g_{\theta_{m}}(e_q)$.

Therefore every turn in $\mathcal T(g_{m})$ arises either as a turn contained in the image of an edge under $g_{\theta_{m}}$ (so that it belongs to $\mathcal T(\theta_{m})$) or as the image under $D\theta_{m}=Dg_{\theta_{m}}$ of a turn in $\mathcal T(g_{m-1})$. The map $g_{m-1}:R_r\to R_r$ is a homotopy equivalence and hence is surjective. Thus every element of $\mathcal T(\theta_{m})$ will in fact occur in $\mathcal T(g_{m})$.

Therefore $\mathcal T(g_{m})=\mathcal T(\theta_{m})\cup D\theta_{m}(\mathcal T(g_{m-1}))$, as claimed.
\end{proof}

Recall that the limited Whitehead graph (see Definition~\ref{d:W}) of a graph-map $f:R_r\to R_r$ is a graph $Wh_L(f)$ with the vertex set
$Lk(v)=A^{\pm 1}$ and a topological edge joining vertices $d$ and $d'$
whenever $\{d,d'\}\in \mathcal T(f)$. In particular, if the map $f$ is
regular, then $Wh_L(f)$ has no loop-edges.

\begin{notation}\label{n:Upsilon}
Denote by $\Delta_r$ the graph with the vertex set $A^{\pm 1}$ where for every unordered pair $a,b$ of (possibly equal) elements of $A^{\pm 1}$ there is a topological edge with endpoints $a,b$. Thus $\Delta_r$ is the complete graph on the vertex set $A^{\pm 1}$ together with a loop-edge at each vertex.

If $\theta=[x\mapsto yx]$ is an standard Nielsen automorphism, we can extend $D\theta$ to a graph-map $\hat D\theta: \Delta_r\to\Delta_r$ defined as follows. For each vertex $a$ of $\Delta_r$, put $\hat D\theta(a):=D\theta(a)$. For an edge $e$ of $\Delta_r$ joining vertices $a,b$ the map $\hat D\theta$ sends $e$ to the edge joining the vertices $D\theta(a)$ and $D\theta(b)$ in $\Delta_r$. 

For $r\ge 3$  denote by $\Upsilon_r$ the complete graph on $2r-1$
vertices together with an edge joining a vertex of that graph to one
new vertex. (So that $\Upsilon_r$ is a connected graph with
$2r$ vertices and no loop-edges).

Also, for $r\ge 3$ and $x,y\in A^{\pm 1}$ such that $x\ne y^{\pm 1}$, denote by $\Upsilon_r[x,y]$ the complete graph
on vertices $A^{\pm 1}\setminus \{x\}$ together with a single edge joining $x$
with the vertex $y^{-1}$. 

Thus $\Upsilon_r[x,y]$ is a
connected graph with $2r$ vertices and with no loop-edges, and
$\Upsilon_r[x,y]$ is a subgraph of $\Delta_r$. 
\end{notation}

\begin{cor}\label{C:Wh}
The following hold:
\begin{enumerate}
\item For any $2\le m\le n$, the edge-set of the graph $Wh_L(g_m)$ consists of the edge joining the vertices $y_m^{-1}$ and $x_m$ and of the edges of the graph $\hat D\theta_m\left(Wh_L(g_{m-1},v)\right)$.
\item For $2\le m\le n$, if the graph $Wh_L(g_{m-1})$ is connected, then the graph $Wh_L(g_m,v)$ is also connected.
\item If for some $1\le i\le n$ the graph $Wh_L(g_{i})$ is isomorphic
  to $\Upsilon_r$ as an unlabelled graph, then
  $Wh_L(g_{i})=\Upsilon_r[x_i,y_i]$. (Recall that
  $\theta_i=[x_i\mapsto y_ix_i]$).
\item For $2\le m\le n$, if the graph $Wh_L(g_{m-1})$ is equal to $\Upsilon_r[x_{m-1},y_{m-1}]$, then the graph $Wh_L(g_m,v)$ is equal to $\Upsilon_r[x_m,y_m]$.
\end{enumerate}
\end{cor}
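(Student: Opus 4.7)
My plan is to treat the four parts in order, with part (1) providing the structural foundation that drives everything else.

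For part (1), I would apply Lemma~\ref{L:T'} to get $\mathcal T(g_m) = \mathcal T(\theta_m) \cup D\theta_m(\mathcal T(g_{m-1}))$, and then invoke Lemma~\ref{L:basic}(1) to identify $\mathcal T(\theta_m)$ with the single turn $\{y_m^{-1}, x_m\}$. Translating from turns to edges of the limited Whitehead graph is then immediate, noting that the image edges under $D\theta_m$ correspond precisely to the edges of $\hat D\theta_m(Wh_L(g_{m-1}))$ by the definition of the extended map $\hat D\theta_m$. For part (2), I would combine part (1) with the observation that $\hat D\theta_m$ is a graph morphism, so $\hat D\theta_m(Wh_L(g_{m-1}))$ is a connected subgraph whose vertex set is $D\theta_m(A^{\pm1}) = A^{\pm1} \smsm \{x_m\}$ by Lemma~\ref{L:basic}(4); since $y_m \ne x_m^{\pm1}$, the vertex $y_m^{-1}$ lies in this image, so adjoining the edge $\{y_m^{-1}, x_m\}$ connects the remaining isolated vertex $x_m$.

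For part (3), the key observation is a degree computation at the vertex $x_i$. By part (1) and the fact that $D\theta_i$ never takes the value $x_i$ (Lemma~\ref{L:basic}(4)), no edge arising from $\hat D\theta_i(Wh_L(g_{i-1}))$ is incident to $x_i$, so the unique edge at $x_i$ in $Wh_L(g_i)$ is $\{y_i^{-1}, x_i\}$. Thus $x_i$ has degree exactly $1$ (if $Wh_L(g_i) \cong \Upsilon_r$) and its unique neighbor is $y_i^{-1}$. Since in $\Upsilon_r$ the degree-one vertex and its neighbor determine the entire labeling, we conclude $Wh_L(g_i) = \Upsilon_r[x_i, y_i]$.

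Part (4) is the main substance of the corollary, and its proof splits along the two cases of admissibility of $(\theta_{m-1}, \theta_m)$. In Case A, where $x_m = x_{m-1}$ and $y_m \ne y_{m-1}^{-1}$, the derivative $D\theta_m$ is the identity on $A^{\pm1} \smsm \{x_{m-1}\}$, so $\hat D\theta_m$ fixes the complete graph on $A^{\pm1}\smsm\{x_{m-1}\}=A^{\pm 1}\smsm\{x_m\}$ pointwise and sends the extra edge $\{x_{m-1}, y_{m-1}^{-1}\}$ to the edge $\{y_m, y_{m-1}^{-1}\}$; the hypothesis $y_m \ne y_{m-1}^{-1}$ guarantees that this is a nondegenerate edge, and it already belongs to the complete graph on $A^{\pm 1}\smsm\{x_m\}$. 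In Case B, where $y_m = x_{m-1}$ and $x_m \ne y_{m-1}^{-1}$, the map $D\theta_m$ sends $x_m$ to $x_{m-1}$ and fixes all other elements, so one checks that the edges of the complete graph on $A^{\pm 1}\smsm\{x_{m-1}\}$ map onto all edges of the complete graph on $A^{\pm 1}\smsm\{x_m\}$ (edges avoiding $x_m$ are fixed, edges from $x_m$ to some $b$ get sent to edges from $x_{m-1}$ to $b$); and the condition $x_m \ne y_{m-1}^{-1}$ ensures the extra edge again maps to a nondegenerate edge inside the complete graph. In either case, by part (1), adding the edge $\{y_m^{-1}, x_m\}$ to the image produces exactly $\Upsilon_r[x_m, y_m]$.

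The main obstacle I anticipate is bookkeeping in Case B of part (4): one must verify carefully that every possible pair of distinct vertices in $A^{\pm 1}\smsm\{x_m\}$ is the image of some edge in $Wh_L(g_{m-1})$, using both that $x_m, x_{m-1}$ are distinct (forced by $y_m = x_{m-1}$ and $y_m \ne x_m^{\pm 1}$) and that $y_{m-1}^{-1} \ne x_m$ rules out the only edge that could otherwise collapse to a loop. The rest of the argument is largely mechanical once part (1) is in hand.
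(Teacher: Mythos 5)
Your proposal is correct and follows essentially the same route as the paper's proof: part (1) from Lemma~\ref{L:T'} together with Lemma~\ref{L:basic}, part (2) by taking the $\hat D\theta_m$-image of a connected graph on vertex set $A^{\pm 1}\setminus\{x_m\}$ and attaching the edge $\{y_m^{-1},x_m\}$, part (3) by observing that the only possible edge at $x_i$ is $\{x_i,y_i^{-1}\}$, and part (4) by the same two-case analysis of admissibility (the paper writes out only the case $y_m=x_{m-1}$, $x_m\ne y_{m-1}^{-1}$, declaring the other similar). The only cosmetic difference is that in part (3) the paper explicitly notes $i\ge 2$ (so that part (1) applies) because $Wh_L(g_1)$ has a single edge and cannot be isomorphic to $\Upsilon_r$; your argument tacitly assumes this, but the case $i=1$ is vacuous anyway.
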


\begin{proof}

Part (1)  follows directly from the definitions and Lemma~\ref{L:T'}.

For (2), assume that the graph $Wh_L(g_{m-1})$ is connected. Then every one of the $2r$ elements of $A^{\pm 1}$ occurs as as endpoint of an edge of $Wh_L(g_{m-1})$.
Since the image of the set of directions $A^{\pm 1}$ under $D\theta_m$ is $A^{\pm 1}-\{x_m\}$, it follows that the connected graph $\hat D\theta_m(Wh_L(g_{m-1}))$ has as its vertex set the set $A^{\pm 1}-\{x_m\}$.  By (1), we have that the edge-set of $Wh_L(g_m)$  consists of the edges of $\hat D\theta_m(Wh_L(g_{m-1}))$ and of the edge $\tilde e$ joining $y_m^{-1}$ and $x_m$. Since $y_m^{-1}\ne x_m$, it follows that $y_m^{-1}$ is a vertex of the connected graph $\hat D\theta_m(Wh_L(g_{m-1}))$. Thus the edge $\tilde e$ joins the vertex $x_m$ to a vertex of the connected graph $\hat D\theta_m(Wh_L(g_{m-1}))$ whose vertex set is $A^{\pm 1}-\{x_m\}$.  Therefore the graph $Wh_L(g_m)$ is connected, as claimed. Thus (2) is verified. 

For part (3), by Lemma~\ref{L:T'} we have 
\[\mathcal T(g_i)=\mathcal
T(\theta_{i})\cup D\theta_{i}(\mathcal T(g_{i-1})).
\] Note that we have $i\ge 2$ since $Wh_L(g_{1})=Wh_L(\theta_{1})$ is not graph-isomorphic to $\Upsilon_r$. The direction $x_i$ does not belong to the image of the map $D\theta_{i}$. Therefore, from the above formula for $T(\theta_{i})$ and since $\mathcal
T(\theta_{i})=\left\{\{y_i^{-1},x_i\}\right\}$, the only edge incident to vertex $x_i$ in $Wh_L(g_i)$ is the edge joining $x_i$ and $y_i^{-1}$. The assumption that $Wh_L(g_{i})$ is isomorphic
  to $\Upsilon_r$ as an unlabelled graph now implies that
  $Wh_L(g_{i})=\Upsilon_r[x_i,y_i]$. Thus, part (3) is verified.

The proof of part (4) is similar to the proof of part (2), although it requires slightly more detailed analysis. Since the pair $(\theta_{m-1}=[x_{m-1}\mapsto y_{m-1}x_{m-1}], \theta_m=[x_m\mapsto y_mx_m])$ is admissible, we have either $x_m=x_{m-1}$, $y_m\ne y_{m-1}^{-1}$, or else $y_m=x_{m-1}, x_m\ne y_{m-1}^{-1}$. 

We assume that $y_m=x_{m-1}, x_m\ne y_{m-1}^{-1}$, as the other case is similar. Note that $x_m\ne x_{m-1}$ since otherwise $\theta_m$ would not be an automorphism of $F_r$.

By part (1) we know that $Wh_L(g_m)$ consists of the edge
joining the vertices $y_m^{-1}$ and $x_m$ and of the edges of the
graph $\hat D\theta_m Wh_L(g_{m-1})$. Thus we need to
show that $\hat D\theta_m Wh_L(g_{m-1})$ is the complete
graph on the vertex set $A^{\pm 1}\setminus \{x_m\}$. 

Since $D\theta_m(x)=x$ for every $x\in A^{\pm 1}, x\ne x_m$, the map $\hat D\theta_m$ fixes all vertices of $Wh_L(g_{m-1})$ different from $x_m$ and it fixes all
edges of $Wh_L(g_{m-1})$ that are incident to neither $x_{m-1}=y_m$
nor to $x_m$. Every edge of $Wh_L(g_{m-1})$ joining $x_m$ to some
$x\in A^{\pm 1}\setminus\{x_{m-1},x_m\}$ is mapped by $\hat
D\theta_m$ to an edge joining $y_m=x_{m-1}$ to $x$. Since
$Wh_L(g_{m-1})$ is equal to $\Upsilon_r[x_{m-1},y_{m-1}]$, it follows
that $\hat D\theta_m Wh_L(g_{m-1}$ contains all the edges between
distinct elements of $A^{\pm 1}\setminus \{x_m\}$. By Lemma~\ref{L:T'}
it follows that $Wh_L(g_m,v)=\Upsilon_r[x_m,y_m]$, as required.

\end{proof}

\subsection{Admissible sequences and train track maps}

\begin{lem}\label{L:reg}
Let $1\le m\le n$. Then:
\begin{enumerate}
\item We have $Dg_m(A^{\pm 1})=A^{\pm 1}-\{x_m\}$.
\item The map $g_m:R_r\to R_r$ is a regular graph-map (that is, the image of every edge is a reduced edge-path). 
\end{enumerate}
\end{lem}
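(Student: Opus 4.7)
The plan is to prove (1) and (2) simultaneously by induction on $m$. The base case $m=1$ is immediate from Lemma~\ref{L:basic}: part (1) is exactly Lemma~\ref{L:basic}(4) applied to $\theta_1$, and part (2) holds because the standard representative $g_{\theta_1}$ sends each edge to a freely reduced edge-path of length $1$ or $2$ (the defining property of a Nielsen automorphism).

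For the inductive step with $m \ge 2$, I would first dispatch (1) via the chain rule $Dg_m = D\theta_m \circ Dg_{m-1}$, which reduces the problem to computing $D\theta_m\bigl(A^{\pm 1} \setminus \{x_{m-1}\}\bigr)$. Admissibility of $(\theta_{m-1},\theta_m)$ splits into two cases. If $x_m = x_{m-1}$, then by Lemma~\ref{L:basic}(3) the map $D\theta_m$ fixes $A^{\pm 1} \setminus \{x_m\}$ pointwise, so the image is $A^{\pm 1} \setminus \{x_m\}$. If $y_m = x_{m-1}$, then $D\theta_m$ sends $x_m$ to $y_m$ and fixes every other element of $A^{\pm 1} \setminus \{y_m\}$, so the image is $\{y_m\} \cup (A^{\pm 1} \setminus \{x_m, y_m\}) = A^{\pm 1} \setminus \{x_m\}$. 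Either way (1) follows.

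For (2), the key observation is Lemma~\ref{L:basic}(2)--(3): $g_{\theta_m}$ is a train track map whose unique nondegenerate illegal turn is $\{x_m, y_m\}$, and $D\theta_m$ is injective on $A^{\pm 1}$ off this pair. Combined with the inductive regularity of $g_{m-1}$, the composition $g_m = g_{\theta_m} \circ g_{m-1}$ fails to be regular precisely when $D\theta_m$ collapses some nondegenerate turn of $\mathcal T(g_{m-1})$, which can only happen if $\{x_m, y_m\} \in \mathcal T(g_{m-1})$ (since any cancellation in the concatenated image must occur at an edge junction of $g_{m-1}(e)$ for some edge $e$). Ruling this out is the real content of the lemma, and it is where admissibility does all the work.

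To carry this out, I would invoke Lemma~\ref{L:T'} to write
\[
\mathcal T(g_{m-1}) = \bigl\{\{y_{m-1}^{-1}, x_{m-1}\}\bigr\} \cup D\theta_{m-1}\bigl(\mathcal T(g_{m-2})\bigr),
\]
and examine each piece. By Lemma~\ref{L:basic}(4), every turn in the second piece has both endpoints in $A^{\pm 1} \setminus \{x_{m-1}\}$, whereas admissibility forces $x_{m-1} \in \{x_m, y_m\}$, ruling out that piece. For the single explicit turn $\{y_{m-1}^{-1}, x_{m-1}\}$, equality with $\{x_m, y_m\}$ would force $y_m = y_{m-1}^{-1}$ (in the case $x_m = x_{m-1}$) or $x_m = y_{m-1}^{-1}$ (in the case $y_m = x_{m-1}$), and both conclusions are precisely what admissibility forbids. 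Thus admissibility of $(\theta_{m-1}, \theta_m)$ is exactly the hypothesis needed to prevent $g_{\theta_m}$ from collapsing any turn already taken by $g_{m-1}$, completing the induction.
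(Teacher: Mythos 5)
Your proof is correct and takes essentially the same route as the paper: the same simultaneous induction on $m$, the same image computation for (1), and for (2) the same reduction via Lemma~\ref{L:T'} to showing $\{x_m,y_m\}\notin\mathcal T(g_{m-1})$, which is then ruled out by the two halves of the admissibility condition on $(\theta_{m-1},\theta_m)$. The only cosmetic difference is that for this last claim the paper invokes Lemma~\ref{L:T} (unrolling $\mathcal T(g_{m-1})$ as images of the turns $\{y_j^{-1},x_j\}$ for $j\le m-1$) while you apply Lemma~\ref{L:T'} once more at level $m-1$; your two cases (the explicit turn $\{y_{m-1}^{-1},x_{m-1}\}$ versus $\hat D\theta_{m-1}$-images of older turns) correspond exactly to the paper's cases $j=m-1$ versus $j\le m-2$.
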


\begin{proof}

Recall that $\theta_i=[x_i\mapsto y_ix_i]$.

We establish (1) by induction on $m$. For $m=1$, (1) follows from Lemma~\ref{L:basic}.
Thus assume that $2\le m\le n$ and that (1) has been established for $g_{m-1}$.

Since $Dg_m(A^{\pm 1})$ is contained in $D\theta_m(A^{\pm 1})=A^{\pm 1}-\{x_m\}$ and since the set  $A^{\pm 1}-\{x_m\}$ has cardinality $2r-1$, to establish (1) it suffices to show that the set $Dg_m(A^{\pm 1})$ has cardinality $2r-1$.

By the inductive hypothesis we have $Dg_{m-1}(A^{\pm 1})=A^{\pm 1}-\{x_{m-1}\}$.

By Lemma~\ref{L:basic},  the restriction of $D\theta_{m}$ to $A^{\pm 1} - \{x_{m-1} x_{m}\}$ is the identity. Recall that since $(\theta_{m-1},\theta_m)$ is an admissible pair, either $x_{m-1}= y_{m}$ or $x_{m-1} = x_{m}$. If $x_{m-1} = y_{m}$, then $y_{m} \notin A^{\pm 1} - \{x_{m-1}, x_{m}\}$, and specifically $y_{m}$ is not in the image of $Dg_{m-1}$. So $D\theta_{m}$ acts as a bijection of the image $A^{\pm 1} - \{x_{m-1}\}$ of $Dg_{m-1}$ onto $A^{\pm 1} - \{x_{m}\}$ by sending $x_{m}$ to $x_{m-1} = y_{m}$ and fixing all other directions. If $x_{m-1} = x_{m}$, then $D\theta_{m}$ acts as the identity on $A^{\pm 1} - \{x_{m-1}\}$. So $D\theta_{m}$ acts as the identity on the image of $Dg_{m-1}$. Thus, in either case, the image of $Dg_m$ also has precisely $2r-1$ directions in its image.
Hence $Dg_m(A^{\pm 1})=A^{\pm 1}-\{x_m\}$, as required.
This completes the inductive step, so that (1) is verified.

We prove (2) also by induction on $m$.
For $m=1$ the statement is obvious. Thus we assume that  $2\le m\le n$ and that (2) has been established for all admissible compositions of $\le m-1$ standard Nielsen automorphisms.

To show that $g_m$ is regular we need to verify that $\mathcal T(g_m)$ contains no degenerate turns.
Let $\{a',b'\}$ be a turn in $\mathcal T(g_m)$, where $a',b'\in A^{\pm 1}$. 
By Lemma~\ref{L:T'} we have
\[
\mathcal T(g_m)=\mathcal T(\theta_{m})\cup D\theta_{m}(\mathcal T(g_{m-1})).
\]
The map $g_{\theta_m}$ is regular by definition so that $\mathcal T(\theta_{m})$ contains no degenerate turns. Thus, if $\{a',b'\}\in \mathcal T(\theta_{m})$ then $\{a',b'\}$ is a nondegenerate turn, as required.

Suppose now that  $\{a',b'\}\in D\theta_{m}(\mathcal T(g_{m-1}))$, so that $\{a',b'\}=D\theta_m(\{a,b\})$ for some turn $\{a,b\}\in \mathcal T(g_{m-1})$.
By the inductive hypothesis applied to $g_{m-1}$, the map $g_{m-1}$ is regular and hence the turn $\{a,b\}$ is nondegenerate. 
The only nondegenerate illegal turn for $g_{\theta_m}$ is $\{x_m,y_m\}$. Thus, to conclude that $D\theta_m(\{a,b\})$ is a nondegenerate turn, it suffices to establish:

{\bf Claim.} We have $\{a,b\}\ne \{x_m,y_m\}$.

By Lemma~\ref{L:T} we have
\[
\{a,b\}\in \mathcal T(g_{m-1})\subseteq \{D(g_{m-1,j+1})(\{y_j^{-1}, x_j\})\}_{j=1}^m. 
\]

Hence 
\[
\{a,b\}=D(g_{m-1,j+1})(\{y_j^{-1}, x_j\}) \tag{$\ast$}
\]
for some $1\le j\le m-1$.

Consider first the case that $(\ast)$ happens for $j=m-1$. Thus $\{a,b\}= D(g_{m,m-1})(\{y_{m-1}^{-1}, x_{m-1}\})$. Recall that by convention $g_{m,m-1}=Id_{R_r}$ and hence $\{a,b\}=\{y_{m-1}^{-1}, x_{m-1}\}$.
Assume for the sake of contradiction that $\{a,b\}= \{x_m,y_m\}$. Then $\{a,b\}= \{x_m,y_m\}=\{y_{m-1}^{-1},x_{m-1}\}$. 
Since the pair $(\theta_{m-1},\theta_m)$ is admissible, we have either $x_{m-1}=x_m$ and $y_m\ne y_{m-1}^{-1}$, or $x_{m-1}=y_m$ and $x_m\ne y_{m-1}^{-1}$, each of which yields a contradiction.

If $x_{m-1}=x_m$ and $y_m\ne y_{m-1}^{-1}$ then $\{x_{m-1},y_{m-1}^{-1}\}\ne \{x_m,y_m\}$, yielding a contradiction.
If $x_{m-1}=y_m$ and $x_m\ne y_{m-1}^{-1}$ then $\{x_{m-1},y_{m-1}^{-1}\}\ne \{y_m,x_m\}$, again yielding a contradiction.
Thus, for $j=m-1$ we get $\{a,b\}\ne  \{x_m,y_m\}$, as required.

Consider now the case where $(\ast)$ happens for $1\le j\le m-2$. Then $j+1\le m-1$ and $g_{m-1,j+1}=g_{m-1}\circ \dots \circ g_{j+1}$.
Since $g_{m-1,j+1}=g_{m-1}\circ \dots \circ g_{j+1}$, the image of the set of directions $A^{\pm 1}$ under $D(g_{j+1,m-1})$ is contained in $Dg_{m-1}(A^{\pm 1})=A^{\pm 1}-\{x_{m-1}\}$.

Therefore, neither of $a$ nor $b$ is equal to $x_{m-1}$. 
Since the pair $(\theta_{m-1},\theta_m)$ is admissible, we have either $x_{m-1}=x_m$ and $y_m\ne y_{m-1}^{-1}$ or $x_{m-1}=y_m$ and $x_m\ne y_{m-1}^{-1}$.

Suppose, for the sake of contradiction, that in fact  $\{a,b\}=\{x_m,y_m\}$, so that either $a=x_m$ and $b=y_m$ or $a=y_m$ and $b=x_m$. Assume that $a=x_m$ and $b=y_m$, as the other case is symmetric.
If $x_{m-1}=x_m$, then $a=x_m=x_{m-1}$, contradicting the fact that neither $a$ nor $b$ equals $x_{m-1}$. 

If $x_{m-1}=y_m$, then $b=y_m=x_{m-1}$, again contradicting the fact that neither $a$ nor $b$ equals $x_{m-1}$. 

Thus $\{a,b\}\ne \{x_m,y_m\}$, and the Claim is verified.

As noted above, this implies that the turn $\{a',b'\}=D\theta_m(\{a,b\})$ is nondegenerate.

We have shown that every turn in $\mathcal T(g_m)$ is nondegenerate, and hence $g_m$ is a regular graph map. This completes the inductive step, so that (2) is established.

\end{proof}

Lemma~\ref{L:reg}  has the following important consequence:

\begin{thm}\label{T:tt}
Let $n\ge 1$ and let $\mathfrak t=\theta_1,\dots,\theta_n$ be a cyclically admissible sequence of standard Nielsen automorphisms $\theta_i=[x_i\mapsto y_ix_i]$ of $F_r$. 
Then $g_\mathfrak t=g_{\theta_n}\circ \dots \circ g_{\theta_1}:R_r\to R_r$ is a train track map with exactly one nondegenerate illegal turn, namely the turn $\{x_1, y_1\}$. 
\end{thm}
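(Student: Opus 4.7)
The statement has two parts: (a) that $g_\mathfrak{t}$ is a train track map, and (b) that its unique nondegenerate illegal turn is $\{x_1, y_1\}$. The plan is to reduce both parts to applications of Lemma~\ref{L:reg}, exploiting the fact that, because $\mathfrak{t}$ is cyclically admissible, the $k$-fold concatenation $\mathfrak{t}^k = \theta_1, \dots, \theta_n, \dots, \theta_1, \dots, \theta_n$ is itself admissible for every $k \geq 1$, and the corresponding graph maps satisfy $g_\mathfrak{t}^k = g_{\mathfrak{t}^k}$ (both being the same composition of the $g_{\theta_i}$'s).

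For (a), $g_\mathfrak{t}$ is a homotopy equivalence since each $g_{\theta_i}$ is, and $R_r$ has a single vertex of degree $2r \geq 4$, so has no degree-$1$ or degree-$2$ vertices. It remains only to verify that $g_\mathfrak{t}^k = g_{\mathfrak{t}^k}$ is a regular graph map for every $k \geq 1$, and this is precisely what Lemma~\ref{L:reg}(2) gives once applied to the admissible sequence $\mathfrak{t}^k$.

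For (b), set $\sigma := Dg_\mathfrak{t} = D\theta_n \circ \dots \circ D\theta_1$. By Lemma~\ref{L:basic}(3), $D\theta_1(x_1) = D\theta_1(y_1) = y_1$, so $\sigma(x_1) = \sigma(y_1)$ and thus $\{x_1, y_1\}$ is a nondegenerate illegal turn. To show it is the only one, I would apply Lemma~\ref{L:reg}(1) to the admissible sequence $\mathfrak{t}^k$, which yields $\sigma^k(A^{\pm 1}) = Dg_{\mathfrak{t}^k}(A^{\pm 1}) = A^{\pm 1} \setminus \{x_n\}$, a set of cardinality $2r - 1$. Since $|A^{\pm 1}| = 2r$, the map $\sigma^k \colon A^{\pm 1} \to A^{\pm 1}$ must collapse exactly one pair of distinct elements to a common image, and iterating the identity $\sigma(x_1) = \sigma(y_1)$ forces that collapsed pair to be $\{x_1, y_1\}$. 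Consequently, no nondegenerate turn other than $\{x_1, y_1\}$ can become degenerate under any power of $Dg_\mathfrak{t}$, giving the desired uniqueness.

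The main step is the recognition that $g_\mathfrak{t}^k = g_{\mathfrak{t}^k}$ and that the cyclic admissibility of $\mathfrak{t}$ is exactly what is needed to make $\mathfrak{t}^k$ admissible for all $k$. Once this reduction is set up, Lemma~\ref{L:reg} does all of the work: part (2) supplies the train-track property, while part (1) controls the image-cardinality of every iterate of $Dg_\mathfrak{t}$, pinning down the unique collapsed pair. There is no need to analyze iterated turns independently or to track which intermediate elements of $A^{\pm 1}$ each application of $D\theta_i$ identifies.
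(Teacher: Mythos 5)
Your proposal is correct and follows essentially the same route as the paper: both parts reduce to Lemma~\ref{L:reg} applied to the admissible sequences $\mathfrak t^k$, with part (2) giving regularity of all powers (hence the train track property) and part (1) giving $\#\,Dg_{\mathfrak t}^k(A^{\pm 1})=2r-1$, which together with $Dg_{\theta_1}(x_1)=Dg_{\theta_1}(y_1)$ pins down $\{x_1,y_1\}$ as the unique nondegenerate illegal turn. The only difference is cosmetic: you count the single collapsed pair directly, while the paper phrases the same cardinality argument as a contradiction.
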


\begin{proof}
Since $\theta_n\circ\dots\dots \circ \theta_1$ is a cyclically admissible composition, it follows that, for each $k\ge 1$, the composition  $(\theta_n\circ\dots\dots \circ \theta_1)^k$ is admissible.
Hence, by Lemma~\ref{L:reg} applied to $(\theta_n\circ\dots\dots \circ \theta_1)^k$, it follows that for each $k\ge 1$ the map $g_\mathfrak t^k:R_r\to R_r$ is regular.
Thus for every edge $e\in ER_r$ the path $g_\mathfrak t^k(e)$ is reduced. Hence, $g_\mathfrak t$ is a train track map, as required.

We have $Dg_\mathfrak t=Dg_{\theta_n}\circ \dots \circ D_{g_{\theta_1}}$. Since $Dg_{\theta_1}(x_1)=Dg_{\theta_1}(y_1)=y_1$, it follows that  $Dg_\mathfrak t(x_1)=Dg_\mathfrak t(y_1)$. Hence the turn $\{x_1,y_1\}$ is illegal for $g_\mathfrak t$.

Suppose that $g_\mathfrak t$ had $\ge 2$ nondegenerate illegal turns. It would follow that for some $k\ge 1$ the image of the set of directions $A^{\pm 1}$ under $Dg_\mathfrak t^k$ has $\le 2r-2$ elements. However, by part (1) of Lemma~\ref{L:reg} applied to $g_\mathfrak t^k$ we know that the set $Dg_\mathfrak t^k(A^{\pm 1})$ has exactly $2r-1$ elements, yielding a contradiction.
Thus, the train track map $g:R_r\to R_r$  has exactly one illegal turn, namely $\{x_1, y_1\}$.

\end{proof}

\begin{lem}\label{L:complete}
Let $n\ge 1$ and let $\mathfrak t=\theta_1,\dots,\theta_n$ be a
cyclically admissible sequence of standard Nielsen automorphisms
$\theta_i=[x_i\mapsto y_ix_i]$ of $F_r$. Suppose that $Wh_L(g_\mathfrak
t)$ is isomorphic, as an unlabelled graph, to $\Upsilon_r$. Then: 
\begin{enumerate}
\item We have $Wh_L(g_\mathfrak
t)=Wh(g_\mathfrak
t)=\Upsilon[x_n,y_n]$.
\item For every integer $p\ge 1$ we have $Wh_L(g_\mathfrak
t^p)=Wh(g_\mathfrak
t^p)=\Upsilon[x_n,y_n]$.
\end{enumerate}
\end{lem}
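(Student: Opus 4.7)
The plan is to deduce part (1) essentially immediately from Corollary~\ref{C:Wh}(3) and then bootstrap to part (2) by iterating Corollary~\ref{C:Wh}(4) along the sequence $\mathfrak{t}^p$. The equality $Wh = Wh_L$ will fall out of part (2) via a cross-argument: once I know $Wh_L(g_\mathfrak{t}^p) = \Upsilon_r[x_n,y_n]$ for every $p\geq 1$, the sets of taken turns $\mathcal{T}(g_\mathfrak{t}^p) = \mathcal{T}(g_\mathfrak{t})$ all coincide, and so $\mathcal{T}_\infty(g_\mathfrak{t}) = \mathcal{T}(g_\mathfrak{t})$.

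For part (1), applying Corollary~\ref{C:Wh}(3) with $i=n$ to the hypothesis that $Wh_L(g_\mathfrak{t}) = Wh_L(g_n)$ is isomorphic as an unlabelled graph to $\Upsilon_r$ yields $Wh_L(g_\mathfrak{t}) = \Upsilon_r[x_n,y_n]$. The identification $Wh(g_\mathfrak{t}) = Wh_L(g_\mathfrak{t})$ is then deferred to be extracted from part (2).

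For part (2), I would exploit cyclic admissibility to view $\mathfrak{t}^p$ as a single admissible sequence of length $np$ whose composition standard representative is precisely $g_\mathfrak{t}^p$. Starting from the base case $Wh_L(g_\mathfrak{t}) = \Upsilon_r[x_n,y_n]$ supplied by part (1), the admissibility of the pair $(\theta_n,\theta_1)$ (which holds because $\mathfrak{t}$ is cyclically admissible) lets me invoke Corollary~\ref{C:Wh}(4) to conclude that $Wh_L(g_{\theta_1}\circ g_\mathfrak{t}) = \Upsilon_r[x_1,y_1]$. Iterating Corollary~\ref{C:Wh}(4) along the remaining $np - 1$ positions of $\mathfrak{t}^p$ (each adjacent pair being admissible by construction), the limited Whitehead graph at the $j$-th step of $\mathfrak{t}^p$ is always of the form $\Upsilon_r[x_{\bar j},y_{\bar j}]$ where $\bar j$ denotes $j$ reduced modulo $n$ (with residue class $n$ rather than $0$). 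At the final step $j=np$ the residue is again $n$, giving $Wh_L(g_\mathfrak{t}^p) = \Upsilon_r[x_n,y_n]$.

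Finally, since $Wh_L(g_\mathfrak{t}^p)$ and $Wh_L(g_\mathfrak{t})$ are the \emph{same} labelled graph, the sets of taken turns $\mathcal{T}(g_\mathfrak{t}^p)$ and $\mathcal{T}(g_\mathfrak{t})$ coincide for every $p\geq 1$. Therefore $\mathcal{T}_\infty(g_\mathfrak{t}) = \bigcup_{p\geq 1}\mathcal{T}(g_\mathfrak{t}^p) = \mathcal{T}(g_\mathfrak{t})$, which gives $Wh(g_\mathfrak{t}) = Wh_L(g_\mathfrak{t}) = \Upsilon_r[x_n,y_n]$, completing part (1). The same reasoning applied to powers of $g_\mathfrak{t}^p$, using $(g_\mathfrak{t}^p)^k = g_\mathfrak{t}^{pk}$, yields $Wh(g_\mathfrak{t}^p) = Wh_L(g_\mathfrak{t}^p) = \Upsilon_r[x_n,y_n]$, completing part (2). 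I do not anticipate a substantial obstacle here; the content is essentially bookkeeping on top of Corollary~\ref{C:Wh}, and the only delicate point is correctly tracking the indices $(x_i,y_i)$ as one traverses the cyclic concatenation $\mathfrak{t}^p$.
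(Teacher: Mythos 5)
Your proof is correct and follows essentially the same route as the paper: identify $Wh_L(g_\mathfrak t)=\Upsilon_r[x_n,y_n]$ via Corollary~\ref{C:Wh}(3), then iterate Corollary~\ref{C:Wh}(4) along the cyclically admissible concatenation $\mathfrak t^p$ to get $Wh_L(g_{\mathfrak t}^p)=\Upsilon_r[x_n,y_n]$ for all $p$. The only (harmless) difference is at the step $Wh=Wh_L$: you deduce it by noting that all the sets $\mathcal T(g_{\mathfrak t}^p)$ coincide, so $\mathcal T_\infty(g_\mathfrak t)=\mathcal T(g_\mathfrak t)$, which is if anything a slightly more careful version of the paper's appeal to some $k$ with $Wh(g_\mathfrak t)=Wh_L(g_\mathfrak t^k)$.
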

\begin{proof}
The fact that $Wh_L(g_\mathfrak
t)=\Upsilon[x_n,y_n]$ follows from part (4) of Lemma~\ref{C:Wh}. There
exists some $k\ge 1$ such that $Wh(g_\mathfrak
t)=Wh_L(g_\mathfrak
t^k)$. We have $g_\mathfrak
t^k=g_{\mathfrak t^k}$, and $\mathfrak t^k$ is a cyclically admissible
sequence ending in $\theta_n$. Iteratively applying part (4) of
Lemma~\ref{C:Wh}, we see that $Wh_L(g_{\mathfrak
t^k})=\Upsilon[x_n,y_n]$. Thus $Wh_L(g_\mathfrak
t)=Wh(g_\mathfrak
t)=\Upsilon[x_n,y_n]$, as claimed, and part (1) is verified.

If $p\ge 1$ is an integer, then $\mathfrak t^p$ is a cyclically admissible
sequence with initial segment $\mathfrak t$, ending with $\theta_n$ and having $Wh_L(g_{\mathfrak t})=\Upsilon[x_n,y_n]$. Hence, by part (4) of  Lemma~\ref{C:Wh}, we get $Wh_L(g_\mathfrak
t^p)=\Upsilon[x_n,y_n]$.  Now part (1) of the present lemma implies that $Wh_L(g_\mathfrak
t^p)=Wh(g_\mathfrak
t^p)=\Upsilon[x_n,y_n]$.
\end{proof}

\section{Periodic Nielsen path prevention}

Recall that $r\ge 2$ and that $F_r=F(A)$ where $A=\{a_1,\dots, a_r\}$ is a fixed free basis of $F_r$.

\begin{df}\label{D:block}
Let $r \geq 3$. A \emph{periodic indivisible Nielsen path prevention
  sequence} or \emph{pINP prevention sequence} is an admissible
sequence $\mathfrak p = \theta_1,\dots,\theta_k$ of standard Nielsen
automorphisms of $F_r$ such that, whenever we have a cyclically
admissible sequence $\mathfrak t = \theta_1',\dots,\theta_n'$ such
that $n \geq k$ and $\theta_i = \theta_i'$ for each $1 \leq i \leq k$
and such that  $g_\mathfrak t=g_{\theta_n'}\circ \dots \circ
g_{\theta_1'}:R_r\to R_r$ is a n expanding irreducible train track map, then $g_\mathfrak t$ has no pINP's. 
\end{df}

\begin{lem}\label{L:NPKilling} Let $r\ge 4$ and let $\{x,w,y,z\} \subset A^{\pm 1}$ be a subset of four distinct elements no two of which are inverses of each other. Let $\mathfrak p = \theta_1,\dots,\theta_6$, where $\theta_1=[z \mapsto xz], \theta_2=[w \mapsto zw], \theta_3=[y \mapsto y \bar w], \theta_4=[y \mapsto y \bar x], \theta_5=[y \mapsto y \bar w], \theta_6=[y \mapsto y \bar x]$. Then $\mathfrak p$ is a pINP prevention sequence. 
\end{lem}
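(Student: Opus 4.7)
My plan is to proceed by contradiction after first verifying admissibility of $\mathfrak p$. Writing each $\theta_i=[x_i\mapsto y_ix_i]$, the characteristic pairs are $(z,x)$, $(w,z)$, $(\bar y,w)$, $(\bar y,x)$, $(\bar y,w)$, $(\bar y,x)$. A direct check of the admissibility condition on each consecutive pair, using the hypothesis that no two of $x,w,y,z$ are inverses, shows that $\mathfrak p$ is admissible: e.g.\ $(\theta_1,\theta_2)$ satisfies $y_2=z=x_1$ with $x_2=w\ne\bar x=y_1^{-1}$; $(\theta_2,\theta_3)$ satisfies $y_3=w=x_2$ with $x_3=\bar y\ne\bar z=y_2^{-1}$; and each of the remaining pairs shares $x_i=\bar y$ with $y_{i+1}\ne y_i^{-1}$.

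Now let $\mathfrak t=\theta_1',\dots,\theta_n'$ be any cyclically admissible extension with $n\ge 6$ and $\theta_i'=\theta_i$ for $1\le i\le 6$, and assume $f:=g_{\mathfrak t}\colon R_r\to R_r$ is an expanding irreducible train track representative. By Theorem~\ref{T:tt}, $f$ has exactly one nondegenerate illegal turn, namely $\{x_1,y_1\}=\{z,x\}$. Suppose for contradiction that $f$ has a pINP $\eta$. By Proposition~\ref{P:PINP}, $\eta=\rho_1^{-1}\rho_2$, where $\rho_1,\rho_2$ are nondegenerate legal paths based at $v$ whose initial directions are, after relabeling, $z$ and $x$. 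Unfolding the periodicity condition yields an integer $N\ge 1$ and a closed legal loop $\mu$ at $v$ such that, as reduced tame paths,
\[
f^N(\rho_1)=\mu\rho_1,\qquad f^N(\rho_2)=\mu\rho_2,
\]
where $\mu$ is the maximal common initial segment of $f^N(\rho_1)$ and $f^N(\rho_2)$.

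The next step is to extract the combinatorial data built up by $\mathfrak p$. Direct iterative computation gives
\[
g_6(e_z)=e_xe_z,\quad g_6(e_w)=e_ze_w,\quad g_6(e_y)=e_y\,\bar e_x\,\bar e_w\,\bar e_x\,\bar e_w,
\]
while $g_6$ fixes every other directed edge of $R_r$. The decisive feature is the length-four doubled block $\bar e_x\bar e_w\bar e_x\bar e_w$ in $g_6(e_y)$, produced by the alternating insertions of $\theta_3,\theta_4,\theta_5,\theta_6$, which introduces two adjacent copies of the subpath $\bar e_x\bar e_w$. Because $f=g_{\theta_n'}\circ\cdots\circ g_{\theta_7'}\circ g_6$ and every consecutive pair in $\mathfrak t$ is admissible, Lemma~\ref{L:reg} guarantees that no cancellation occurs when the tail compositions are applied; consequently the three marker words above, and in particular the doubled block, propagate intact into every iterate $f^N$, yielding a controlled inventory of the positions at which $e_z$ and $e_x$ can appear inside $f^N(e_z)$ and $f^N(e_x)$.

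The heart of the argument, and its main technical obstacle, is then to combine this marker structure with the equations $f^N(\rho_1)=\mu\rho_1$ and $f^N(\rho_2)=\mu\rho_2$. Writing $\rho_1=e_z\gamma_1$ and $\rho_2=e_x\gamma_2$, these equations force the edge $e_z$ (respectively $e_x$) to appear at combinatorial position $|\mu|+1$ inside the reduced edge path $f^N(\rho_1)$ (respectively $f^N(\rho_2)$). I would analyze the possible positions of such $e_z$ and $e_x$ occurrences by tracking, through the tail $g_{\theta_7'},\dots,g_{\theta_n'}$, the propagation of $e_xe_z$, $e_ze_w$, and especially the two copies of $\bar e_w\bar e_x$ inside $g_6(e_y)$; admissibility of consecutive pairs in $\mathfrak t$ ensures that none of these markers cancel or merge. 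A case analysis on the second directions of $\rho_1$ and $\rho_2$ (each a direction giving a legal turn with the preceding edge) then shows that the shift-by-$\mu$ coincidence demanded by the pINP equations cannot simultaneously hold for both $\rho_1$ and $\rho_2$: the doubled block contributes an extra, incompatible, positional constraint. This contradiction shows that $f$ has no pINPs, so $\mathfrak p$ is a pINP prevention sequence.
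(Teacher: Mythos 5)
Your set-up is fine: the admissibility check is correct, the computation $g_6(e_y)=e_y\bar e_x\bar e_w\bar e_x\bar e_w$ is right, and the reduction of the pINP condition to $f^N(\rho_1)=\mu\rho_1$, $f^N(\rho_2)=\mu\rho_2$ with $\mu$ the maximal common initial segment is a correct (and standard) consequence of Proposition~\ref{P:PINP} together with the fact that $\sigma_1,\sigma_2$ are legal while $\rho_1^{-1}\rho_2$ has its unique illegal turn at $v$. But the heart of the lemma is exactly the step you leave as a plan: ``a case analysis \dots then shows that the shift-by-$\mu$ coincidence cannot hold.'' That is not a proof, and the mechanism you propose for it is doubtful. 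The tail $\theta_7',\dots,\theta_n'$ is an arbitrary admissible continuation and $N$ is large, so there is no a priori control over ``the positions at which $e_z$ and $e_x$ can appear inside $f^N(e_z)$ and $f^N(e_x)$''; moreover the doubled block lives in $g_6(e_y)$, while the pINP equations only constrain $f^N(\rho_1)$ and $f^N(\rho_2)$, whose edges after the first need not involve $e_y$ at any controlled position, so it is not explained how the marker in the $y$-image ever enters the equations at all. Regularity (Lemma~\ref{L:reg}) only says single-edge images are reduced; it does not give the positional bookkeeping your argument needs.

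The actual argument (the paper simply cites \cite[Lemma 5.5]{pI}, reducing the rotationless hypothesis there by passing to a power $\mathfrak t^k$, since a pINP for $g_{\mathfrak t}$ is an INP for $g_{\mathfrak t^k}$) works in the opposite direction: one tracks the cancellation of $g_{k,1}(\rho_1^{-1}\rho_2)$ forward through $\theta_1,\dots,\theta_6$, using the key observation (the ``Claim'' spelled out in the paper's rank-3 Lemma~\ref{L:NPKilling3}) that at each stage the two un-cancelled legal pieces are nontrivial and the turn between them must be exactly the illegal turn $\{x_{k+1},y_{k+1}\}$ of the next Nielsen map --- otherwise $Dg_{n,k+1}$ would carry that turn to a nondegenerate turn different from $\{x_1,y_1\}$ (since the unachieved direction $x_n$ is not in the image of $Dg_{n,k+1}$ and cyclic admissibility forces $x_1=x_n$ or $y_1=x_n$), making the image legal and contradicting that it reduces to $\rho_1^{-1}\rho_2$. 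This forces, edge by edge, what $\rho_1$ and $\rho_2$ must begin with, and after the six specific maps of $\mathfrak p$ one reaches a stage where the surviving turn cannot be the next illegal turn, a contradiction. Your proposal never formulates this ``forced illegal turn'' principle, which is the ingredient that makes a finite case analysis possible; without it (or the citation to \cite{pI}), the proof has a genuine gap.
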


\begin{proof}
Let $\mathfrak t$ be a cyclically admissible sequence starting with $\mathfrak p = \theta_1,\dots,\theta_6$. 

We need to show that $g_{\mathfrak t}$ has no pINPs. This conclusion
follows from~\cite[Lemma 5.5]{pI}. The only difference between the
terminology used here and that used in ~\cite{pI}, is that in the
terminology of ~\cite{pI}  a pINP prevention sequence  is only
required to prevent pINPs when an admissible sequence starting with  a
pINP prevention sequence composes to give a rotationless expanding irreducible train track map.  Lemma~5.5 of \cite{pI} shows that $\mathfrak p = \theta_1,\dots,\theta_6$ is a pINP prevention sequence in this sense.

However, if $\mathfrak t$ is a cyclically admissible sequence beginning with $\mathfrak p = \theta_1,\dots,\theta_6$, then every power of $\mathfrak t$ is an admissible sequence and there exist $k\ge 1$ such that $g_{\mathfrak t^k}$ is rotationless. Since $\mathfrak t^k$ starts with $\mathfrak p$, Lemma 5.5  of \cite{pI} applies to $\mathfrak t^k$ and implies that $g_{\mathfrak t^k}=(g_{\mathfrak t})^k$  has no pINPs. However, by definition, a path in $R_r$ is a pINP for $g_\mathfrak t$ if and only if this path is a pINP for $(g_{\mathfrak t})^k$. Hence $g_\mathfrak t$ has no pINPs as required.
\end{proof}

\begin{rk}
The idea of the proof of \cite[Lemma 5.5]{pI} is as follows. Suppose that $\mathfrak t = \theta_1, \dots, \theta_n$ is a cyclically admissible sequence starting with $\mathfrak p = \theta_1,\dots,\theta_6$ (where $\mathfrak p$ is as in the statement of Lemma~\ref{L:NPKilling}) such that $g_\mathfrak t$ is a rotationless expanding train track map. By replacing $\mathfrak t$ by its power, we may assume that, if any pINPs exist for $g_\mathfrak t$, then they have period 1, and so they are in fact INPs. 
We then need to show that $g_\mathfrak t$ does not in fact have any INPs. 
Suppose, on the contrary, that $g_\mathfrak t$ has an INP.  Then, by Proposition~\ref{P:PINP}, this INP has the form $\alpha=\rho_1^{-1}\rho_2$, where $\rho_1,\rho_2$ are legal paths with the common initial vertex such that the turn between  $\rho_1,\rho_2$ is a nondegenerate illegal turn for $g_\mathfrak t$.  By Theorem~\ref{T:tt} we know that $g_\mathfrak t$ has only one illegal turn, namely the turn $\{x,z\}$ (since $\mathfrak t$ starts with $\theta_1=[z \mapsto xz]$). Thus $\rho_1$ starts with an initial segment of $x$ and $\rho_2$ starts with an initial segment of $z$ (or the other way around). Note that $t(\rho_1),t(\rho_2)$ are fixed points of $g_\mathfrak t$ but they need not be vertices. The fact that $\alpha=\rho_1^{-1}\rho_2$ is an INP for $g_\mathfrak t$ means that for every $p \ge 1$ the path $g_\mathfrak t^p(\rho_1^{-1}) g_\mathfrak t^p(\rho_2)$ reduces to $\rho_1^{-1}\rho_2$. For this reason, for each $0 \leq k \leq n$, the tightened path $g_{k,1}(\alpha)$ cannot be taken by $g_{n, k+1}(\alpha)$ to a legal path for $g_\mathfrak t$.

Roughly speaking, the proof of Lemma~5.5 in \cite{pI} proceeds by showing, using admissibility of $\mathfrak t$ and the specific combinatorics of $\mathfrak p$, that, in fact, for some $k\ge 1$, the path $g_{\theta_1, \dots, \theta_k}(\alpha)$ reduces to a nondegenerate path that is taken by $g_{n, k+1}$ to a legal path for $g_\mathfrak t$.  This contradicts the fact that $\alpha=\rho_1^{-1}\rho_2$ is an INP for $g_\mathfrak t$.
This argument is illustrated in more detail in Lemma~\ref{L:NPKilling3} below.
\end{rk}

In \cite{pII}, the case of rank $3$ is handled separately. While \cite{pII} does produce a pINP prevention sequence for $r=3$, this fact is not stated there explicitly and therefore we provide a sketch of the proof here, following the procedure of \cite[Section 5]{pII}.

\begin{lem}\label{L:NPKilling3} Let $r\ge 3$ and let $\{a,b,c\} \subset A^{\pm 1}$ be a subset of three distinct elements, no two of which are inverses of each other. Let
$$
\theta_1 = [a \mapsto ca],
\theta_2 = [\bar{b} \mapsto a \bar{b}],
\theta_3 = [\bar{b} \mapsto \bar{c} \bar{b}],
\theta_4 = [a \mapsto \bar{b} a],$$
$$\theta_5 = [a \mapsto ca], 
\theta_6 = [a \mapsto ba],
\theta_7 = [a \mapsto ca],
\theta_8 = [a \mapsto ca].
$$ 
Then $\mathfrak p=\theta_1,\dots,\theta_8$ is a pINP prevention sequence.
\end{lem}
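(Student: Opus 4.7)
The plan is to mimic the approach of \cite[Lemma 5.5]{pI} that was sketched in the remark following Lemma~\ref{L:NPKilling}, adapting it to rank three and to the specific sequence $\theta_1,\dots,\theta_8$ given in the statement. Let $\mathfrak t=\theta_1',\dots,\theta_n'$ be an arbitrary cyclically admissible extension of $\mathfrak p$ (so $n\ge 8$ and $\theta_i'=\theta_i$ for $i\le 8$). By Theorem~\ref{T:tt}, $g_\mathfrak t:R_r\to R_r$ is a train track map with a single nondegenerate illegal turn, and since $\mathfrak t$ begins with $\theta_1=[a\mapsto ca]$, this unique illegal turn is $\{a,c\}$. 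As in the remark after Lemma~\ref{L:NPKilling}, after passing to a suitable power we may assume that every pINP has period one, so we need only rule out INPs of $g_\mathfrak t$.

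First I would invoke Proposition~\ref{P:PINP} to write any putative INP as $\alpha=\rho_1^{-1}\rho_2$ where $\rho_1,\rho_2$ are legal paths starting at a common vertex and beginning with the directions $a$ and $c$ respectively. Being an INP means that for every $p\ge 1$, the tightened image $[g_\mathfrak t^{\,p}(\alpha)]$ equals $\alpha$; equivalently, for each $0\le k\le n$, the tightened partial image $[g_{k,1}(\alpha)]$ is a nondegenerate path starting with the illegal turn $\{a,c\}$ and whose remaining image $g_{n,k+1}$ fails to be legal for $g_\mathfrak t$. The strategy is to track the first (in $\rho_1$) and last (in $\rho_2^{-1}$) few letters of $\alpha$ and of its tightened images $[g_{k,1}(\alpha)]$, and to show that this is impossible.

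Second I would carry out the case analysis on the second letter of $\rho_1$ and $\rho_2$, using admissibility of $\mathfrak t$ to control what each subsequent $\theta_i'$ can do. The role of the first four automorphisms $\theta_1=[a\mapsto ca]$, $\theta_2=[\bar b\mapsto a\bar b]$, $\theta_3=[\bar b\mapsto \bar c\bar b]$, $\theta_4=[a\mapsto \bar b a]$ is to force the initial segments $a\cdots$ and $c\cdots$ of $\rho_1,\rho_2$ to develop into one of a small number of possible patterns: each $\theta_i$ either inserts a fixed letter on the $a$--side (or on $\bar b$), or acts trivially, and admissibility forbids the immediate cancellation that could undo it. The subsequent block $\theta_5=[a\mapsto ca]$, $\theta_6=[a\mapsto ba]$, $\theta_7=\theta_8=[a\mapsto ca]$ then repeatedly prepends $c$'s and a $b$ in front of the occurrences of $a$ coming from $\rho_1$ and from the images of $\rho_2$, forcing at some step $k\le 8$ that the tightened path $[g_{k,1}(\alpha)]$ has, on both sides of its illegal turn, an initial $c$- or $b$-segment which becomes legal under every admissible continuation $g_{n,k+1}$. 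Concretely, one checks that the turn between these prefixes is taken by $g_{n,k+1}$ to a turn different from $\{a,c\}$, which by uniqueness of the illegal turn (Theorem~\ref{T:tt}) is legal, contradicting the fact that $[g_n(\alpha)]=\alpha$ is illegal at its midpoint.

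Finally, passing from INPs of $g_\mathfrak t^{\,k}$ back to pINPs of $g_\mathfrak t$ is immediate from Definition~\ref{d:NielsenPaths}, so the conclusion carries through for arbitrary cyclically admissible $\mathfrak t$ extending $\mathfrak p$, showing that $\mathfrak p$ is a pINP prevention sequence. The main obstacle I expect is the case analysis in step two: because $r$ may exceed three, the second letter of $\rho_1$ (respectively $\rho_2$) after $a$ (resp.\ $c$) can, a priori, be any legal direction from the appropriate gate, and one must verify for each such choice, and for each pattern of how admissibility restricts $\theta_{k+1}',\dots,\theta_n'$, that the prepended $c$'s and $b$'s do produce the required legal turn in finitely many steps. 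This is essentially the content of the rank-three calculation carried out in \cite[Section 5]{pII}, which I would follow line-by-line, verifying that the eight chosen automorphisms suffice for every such case.
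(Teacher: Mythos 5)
Your proposal is correct and follows essentially the same route as the paper's own sketch: reduce pINPs to INPs by passing to powers, write the putative INP as $\rho_1^{-1}\rho_2$ over the unique nondegenerate illegal turn $\{a,c\}$, observe that at each intermediate stage the turn at the midpoint of the tightened image must be the illegal turn of the next automorphism (otherwise $Dg_{n,k+1}$ carries it to a nondegenerate turn other than $\{a,c\}$, hence legal, contradicting that the INP keeps cancelling), and then let the specific combinatorics of $\theta_1,\dots,\theta_8$ force the successive letters of $\rho_1,\rho_2$ until this criterion fails. The one imprecision --- asserting that each tightened partial image ``starts with the illegal turn $\{a,c\}$'' rather than with the illegal turn of the next automorphism $\theta_{k+1}'$ --- is implicitly corrected by the criterion you actually apply in your second step, so your plan matches the paper's argument, with the explicit letter-by-letter case analysis deferred (as the paper itself partly does) to the computation of \cite[Section 5]{pII}.
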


\begin{proof}[Sketch of proof] The complete verification process is rather long, and so we just show the longer of the two cases. The other case is similar and also proceeds as in \cite[Section 5]{pII}.

Note that a pINP for $g_{\mathfrak t}$, where $\mathfrak t$ is a cyclically admissible sequence beginning with $\mathfrak p$, is an INP for for $g_{\mathfrak t^m}=(g_\mathfrak t)^m$ for some $m\ge 1$.

Thus it suffices to show that if $\mathfrak t$ is a cyclically admissible sequence beginning with $\mathfrak p$ such that $g_\mathfrak t$ is an expanding irreducible train track map, then $g_\mathfrak t$ has no INPs. Suppose, on the contrary, that $\mathfrak t=\theta_1', \dots,\theta_n'$ is a cyclically admissible sequence beginning with $\mathfrak p$ such that $g_\mathfrak t$ is an expanding irreducible train track map and such that $g_\mathfrak t$ possesses an INP $\rho$. 

Then $\rho$ would have to contain the illegal turn $\{a,c\}$ for $g_{\mathfrak t}$ and (possibly after reversing its orientation) could be written as $\rho=\rho_1^{-1}\rho_2$ where $\rho_1$, $\rho_2$ are nondegenerate legal paths, with the initial direction of $\rho_1$ being $c$ and the initial direction of $\rho_2$ being $a$. Note that the terminal points of one or both of $\rho_1$, $\rho_2$ may be contained in the interiors of edges of the 3-rose $R_3$.  However, there exist legal edge-paths $\rho_1'$, $\rho_2'$ such that $\rho_1'$ begins with $\rho_1$, and $\rho_2'$ begins with $\rho_2$. Thus
$\rho_1'=c\dots$ and $\rho_2'=a\dots$ are $g_\mathfrak t$-legal edge-paths, and $g_\mathfrak t((\rho_1')^{-1}\rho_2')$ tightens to a path $(\rho_1'')^{-1}\rho_2''$, where $\rho_1,\rho_2$ are legal edge-paths, with $\rho_1''$ starting with $a$ and $\rho_2''$ starting with $c$. 

In working with $g_\mathfrak t$ we will use the notations $g_{k,m}$ introduced in Convention~\ref{conv:main}.

We now make the following crucial observation. 

{\bf Claim.} Suppose that for some $1\le k<n$ the tightened form of the path $g_{k,1}((\rho_1')^{-1}\rho_2')$ is $\alpha^{-1}\beta$ where $\alpha$ is a terminal segment of $g_{k,1}(\rho_1')$ and where $\beta$ is a terminal segment of $g_{k,1}(\rho_2')$.  Then both $\alpha$ and $\beta$ are nontrivial (i.e. containing at least one edge each) edge-paths and the turn $\tau$ between them satisfies $\tau=\{x_{k+1},y_{k+1}\}$. 

First note that if one of $\alpha$, $\beta$ is trivial, then $g_{n,k+1}(\alpha^{-1}\beta)$ is contained in either $g_\mathfrak t\left((\rho_1')^{-1}\right)$ or $g_\mathfrak t(\rho_2')$, and hence is a $g_\mathfrak t$-legal path, contrary to the assumption that the tightened form of $g_\mathfrak t((\rho_1')^{-1}\rho_2')$ is the path $(\rho_1'')^{-1}\rho_2''$ containing a $g_\mathfrak t$-illegal turn $\{a,c\}$. Thus $\alpha$ and $\beta$ are nontrivial edge-paths. Since, by definition, the path $\alpha^{-1}\beta$ is tight, the turn $\tau$ between $\alpha$ and $\beta$ is nondegenerate. 

Suppose that $\tau\ne \{x_{k+1},y_{k+1}\}$. We know, by Lemma~\ref{L:reg},  that $Dg_{n,k+1}$ identifies the directions $x_{k+1},y_{k+1}$ and that $Dg_{n,k+1}(A^{\pm 1})=A^{\pm 1}\setminus \{x_n\}$. Thus, if $\tau\ne \{x_{k+1},y_{k+1}\}$ then the $Dg_{n,k+1}$-images of the directions comprising $\tau$ are distinct, so that $Dg_{n,k+1}(\tau)$ is a nondegenerate turn. Moreover, since $x_n\not\in Dg_{n,k+1}(A^{\pm 1})$, and since the pair $(\theta_n,\theta_1)$ is admissible (so that $x_1=x_n$ or $y_1=x_n$), the turn $Dg_{n,k+1}(\tau)$ is not equal to $\{x_1,y_1\}$. This means that  $g_{n,k+1}(\alpha^{-1}\beta)$ is a tight path which is legal for $g_\mathfrak t$.  This contradicts the fact that $g_{n,k+1}(\alpha^{-1}\beta)$ must reduce to $(\rho_1'')^{-1}\rho_2''$. Thus, the claim is verified. 

Now, $g_{\theta_1}(c)=c$ and $g_{\theta_1}(a)=ca$. So $\rho_1'$ has to be of the form $\rho_1'=c e_2...$ for some additional edge $e_2$. Also, since the illegal turn for $g_{\theta_2}$ is $\{a,\bar{b}\}$ and $a$ is not in the image of $Dg_{\theta_1}$, we have that $Dg_{\theta_1}(e_2)=\bar{b}$. So $e_2=\bar{b}$.

Since $g_{2,1}(c\bar{b})=ca\bar{b}$ and $g_{2,1}(a)=ca$, we know that $\rho_2'$ has to be of the form $\rho_2'=ae_2'$  for some  additional edge $e_2'$ with $Dg_{2,1}(e_2')=\bar c$ (since $\{\bar{b},\bar{c}\}$ is the illegal turn for $g_{\theta_3}$ and $\bar{b}$ is not in the image of $Dg_{2,1}$). The only option is $e_2'=\bar{c}$. Since $g_{3,1}(a\bar{c})=ca\bar{c}$ and $g_{3,1}(c\bar b)=ca\bar{c}\bar{b}$, we must have $\rho_2'=a e_2'e_3'..$ for an additional edge $e_3'$  satisfying that $Dg_{3,1}(e_3')=a$ (since the illegal turn for $g_{\theta_4}$ is $\{a,\bar{b}\}$ and $\bar{b}$ is not in the image of $Dg_{3,1}$). The only option is $e_3'=\bar{b}$.

Since $g_{4,1}(a\bar{c}\bar{b})=c\bar{b}a\bar{c}\bar{b}a\bar{c}\bar{b}$ and $g_{4,1}(c\bar b)=c\bar{b}a\bar{c}\bar{b}$, we must have $\rho_1'=c e_2 e_3...$ for an additional edge $e_3$  satisfying  $Dg_{4,1}(e_3)=c$ (since the illegal turn for $g_{\theta_5}$ is $\{a,c\}$ and $a$ is not in the image of $Dg_{4,1}$). So either $e_3=a$ or $e_3=c$. We analyze here the case where $e_3=c$ and leave the case of $e_3=a$ to the reader.

Since $g_{5,1}(a\bar{c}\bar{b}) = c\bar{b}ca\bar{c}\bar{b}c a\bar{c}\bar{b}$ and $g_{5,1}(c\bar bc)=c\bar{b}ca\bar{c}\bar{b}c$, we must have $\rho_1'=c e_2 e_3 e_4...$ for an additional edge $e_4$  satisfying that $Dg_{5,1}(e_4)=b$ (since the illegal turn for $g_{\theta_6}$ is $\{a,b\}$ and $a$ is not in the image of $Dg_{5,1}$). So $e_4=b$.

We have $g_{6,1}(a\bar{c}\bar{b}) = c\bar{b}cba\bar{c}\bar{b}c ba\bar{c}\bar{b}$ and 
$g_{6,1}(c\bar bcb) =c\bar{b}cba\bar{c}\bar{b}cbc\bar{a}\bar{b}\bar{c}b$.
After cancellation, we are left with the turn $\{a,c\}$, which is illegal for $g_{\theta_7}$ and so we can proceed by applying $g_{\theta_7}$.
 
Since $g_{7,1}(a\bar{c}\bar{b}) = 
c\bar{b}cbca\bar{c}\bar{b}c bca\bar{c}\bar{b}$ and 
$g_{7,1}(c\bar bcb) =c\bar{b}cbca\bar{c}\bar{b}cbc\bar{a}\bar{c}\bar{b}\bar{c}b$,
cancellation ends with the turn $\{a,\bar{a}\}$. This is not the illegal turn for $g_{\theta_8}$. Therefore, by the claim above,  we could not have $\rho_1=c\bar bcb\dots$ and $\rho_2=a\bar{c}\bar{b}\dots$.

The remaining case, where $e_3=a$, yields a similar situation, and we conclude that $\mathfrak p$ is a pINP prevention sequence. 
 \qedhere
\end{proof}

Note that, by definition, any admissible sequence that starts with a pINP prevention sequence is also itself a pINP prevention sequence.

We will need the following important fact which is essentially a restatement of the main result of Pfaff~\cite{pII} (it can be noted that a power should have been taken of the map constructed for the main theorem of Pfaff~\cite{pII}, but that the result is otherwise correct): 

\begin{prop}\label{P:CP}
Let $r\ge 3$. Then there exists a cyclically admissible sequence \[\mathfrak s=\theta_1',\dots, \theta_q'\] of standard Nielsen automorphisms of $F_r$ such that for $\Psi=\theta_q'\circ \dots \circ \theta_1'\in \Aut(F_r)$ and for $g_\mathfrak s=g_{\theta_q'}\circ \dots \circ g_{\theta_1}':R_r\to R_r$ the following hold:

\begin{enumerate}
\item The map $g_\mathfrak s:R_r\to R_r$ is an expanding irreducible train track map with no pINPs.
\item We have $M(g_\mathfrak s)>0$.
\item We have $Wh(g_\mathfrak s)=Wh_L(g_\mathfrak
  s)=\Upsilon_r[x_q',y_q']$, where $\theta_q'=[x_q'\mapsto y_q'x_q']$ (and in particular $Wh_L(g_\mathfrak s)$ is
  connected). 
\item The sequence $\mathfrak s$ starts with a pINP prevention sequence $\mathfrak p$, provided by  Lemma~\ref{L:NPKilling} in the case $r\ge 4$ and provided by Lemma~\ref{L:NPKilling3} in the case $r=3$. In particular,  $\mathfrak s$ is a pINP prevention sequence.
\item The  element of $\Out(F_r)$ represented by  $g_\mathfrak s:R_r\to R_r$ is ageometric fully irreducible (and in particular, it is hyperbolic).
\end{enumerate}
\end{prop}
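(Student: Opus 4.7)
The plan is to take the pINP prevention sequence $\mathfrak{p}$ supplied by Lemma~\ref{L:NPKilling} (when $r \geq 4$) or Lemma~\ref{L:NPKilling3} (when $r = 3$), and to extend it one standard Nielsen automorphism at a time, preserving admissibility at every step, into a finite cyclically admissible sequence $\mathfrak{s} = \theta_1', \dots, \theta_q'$ whose terminal segment establishes the remaining properties. At each extension step there are $4r-6 \geq 6$ admissible choices of next move (by Lemma~\ref{lem:4r-6}), which provides the flexibility needed for the construction. Property (4) is then immediate from the construction, since $\mathfrak{s}$ begins with $\mathfrak{p}$.

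First I would drive the limited Whitehead graph to the desired shape using Corollary~\ref{C:Wh}. Each new move $\theta_m = [x_m \mapsto y_m x_m]$ adjoins the edge $\{y_m^{-1}, x_m\}$ to $Wh_L(g_m)$ while pushing the previous graph forward under $\hat D \theta_m$; by choosing admissible continuations whose characteristic pairs $(x_m, y_m)$ successively introduce the missing edges of the complete graph on $A^{\pm 1} \setminus \{x_m\}$, I can arrange that for some index $m$ the unlabelled graph $Wh_L(g_m)$ is isomorphic to $\Upsilon_r$. Part (3) of Corollary~\ref{C:Wh} then identifies this graph as $\Upsilon_r[x_m, y_m]$, and part (4) guarantees that this precise labelled shape is preserved under every subsequent admissible extension.

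Next I would secure positivity of the transition matrix. Each $g_{\theta_i}$ is a homotopy equivalence, hence surjective as a graph map, so by Lemma~\ref{L:>0} the positivity of $M(g_m)$, once achieved, persists for all larger $m$. To reach $M(g_m) > 0$ in the first place, I would interleave further admissible Nielsen moves whose characteristic pairs $(x,y)$ collectively cause every edge of $R_r$ to appear in the image of every other, which can clearly be done within the $4r-6$ admissible continuations available at each step and compatibly with the construction of the previous paragraph. Once both (2) and (3) hold for some $m$, I would extend by one final admissible move $\theta_q'$ chosen so that the pair $(\theta_q', \theta_1')$ is also admissible; this is a constraint only on the characteristic pair $(x_q', y_q')$ and at least one admissible successor of $\theta_{q-1}'$ satisfies it. The resulting $\mathfrak{s}$ is cyclically admissible.

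Given such a sequence $\mathfrak{s}$, all five properties follow cleanly. Theorem~\ref{T:tt} yields that $g_\mathfrak{s}$ is a train track map with a single nondegenerate illegal turn; property (2), combined with Perron-Frobenius theory, forces $g_\mathfrak{s}$ to be expanding and irreducible. Since $\mathfrak{s}$ starts with $\mathfrak{p}$ and is cyclically admissible, the definition of a pINP prevention sequence gives that $g_\mathfrak{s}$ has no pINPs, completing (1). Lemma~\ref{L:complete} promotes $Wh_L(g_\mathfrak{s}) = \Upsilon_r[x_q', y_q']$ to $Wh(g_\mathfrak{s}) = \Upsilon_r[x_q', y_q']$, which is connected, giving (3). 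All hypotheses of Proposition~\ref{P:FIC} are then met, yielding (5). The main obstacle will be property (3): meticulously scheduling the Nielsen moves so that their cumulative effect on the limited Whitehead graph produces the $\Upsilon_r$-shape while never breaking admissibility, and doing so coherently with the simultaneous requirement of forcing $M(g_\mathfrak{s}) > 0$.
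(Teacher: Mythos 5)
Your derivation of conclusions (1), (2), (3), (5) from a suitable sequence is sound and matches the paper's own logic (Theorem~\ref{T:tt} for the train track property, Lemma~\ref{L:>0} and Perron--Frobenius for expansion and irreducibility once $M>0$, Lemma~\ref{L:complete} to upgrade $Wh_L$ to $Wh$, Proposition~\ref{P:FIC} for ageometric full irreducibility). But there is a genuine gap at the crux, namely the existence of the sequence itself. Each move $\theta_m=[x_m\mapsto y_mx_m]$ contributes only the single edge $\{y_m^{-1},x_m\}$ to $Wh_L(g_m)$ while simultaneously pushing the previous graph forward under $\hat D\theta_m$, which sends the vertex $x_m$ to $y_m$ and can therefore merge or destroy the edges already built at $x_m$; moreover admissibility forces $x_m=x_{m-1}$ or $y_m=x_{m-1}$, so you cannot freely choose which edge to add next. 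Whether an admissible schedule exists that starts with the specific prevention sequence $\mathfrak p$ and terminates with $Wh_L$ equal to the complete graph on $2r-1$ vertices plus a pendant edge is exactly the main theorem of \cite{pII} (together with \cite{pI}), i.e.\ a result whose proof occupies a separate paper; your ``I can arrange that \dots'' and ``which can clearly be done'' assert it without argument, and your closing sentence in effect concedes that this is the missing step. The paper does not re-derive it either: its proof of Proposition~\ref{P:CP} quotes Pfaff's construction of a cyclically admissible $\mathfrak s$ beginning with $\mathfrak p$, with $M(g_{\mathfrak s})$ irreducible and $Wh(g_{\mathfrak s})$ graph-isomorphic to $\Upsilon_r$, and then only passes to a power $\mathfrak s^k$ (to obtain $M>0$ and $Wh=Wh_L$) and applies Lemma~\ref{L:complete} together with Corollary~\ref{C:Wh}(4) to pin down the labelled graph as $\Upsilon_r[x_q',y_q']$.

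Two smaller points. First, your method for forcing $M(g_{\mathfrak s})>0$ by interleaving extra moves is again asserted rather than proved; the paper's route---replace the whole cyclically admissible sequence by a power---is both cleaner and automatically compatible with the Whitehead-graph shape, since Corollary~\ref{C:Wh}(4) shows that shape persists under any admissible continuation. Second, your claim that one final admissible move always closes the sequence up cyclically needs care: the successor must satisfy both the admissibility constraint coming from $\theta_{q-1}'$ and the one imposed by $\theta_1'$, and the exclusions $y'\ne y_{q-1}^{-1}$, $x'\ne y_{q-1}^{-1}$, $x_1'\ne (y')^{-1}$ can rule out the obvious single choice in some configurations; allowing two closing moves (or invoking irreducibility of the chain, Lemma~\ref{lem:irr-aper}) fixes this, but as written the one-step claim is not justified.
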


In fact, the cyclically admissible sequence $\mathfrak s$ constructed
in \cite{pI} satisfies (1), (3), (4), (5) above, has $M(g_\mathfrak
s)$ irreducible, and has $Wh(g_\mathfrak s)$ graph-isomorphic to
$\Upsilon_r$. We have $Wh(g_\mathfrak s)=Wh(g_\mathfrak s^k)$ for
every $k\ge 1$, and there is some $k\ge 1$ such that $Wh(g_\mathfrak
s)=Wh(g_\mathfrak s^k)=Wh_L(g_\mathfrak s^k)$. Since $\mathfrak s^k$
is a cyclically reduced admissible sequence, Lemma~\ref{L:complete}
implies that $Wh_L(g_\mathfrak s^k)=\Upsilon_r[x_q',y_q']$. Thus by replacing this $\mathfrak s$ with $\mathfrak s^k$ we obtain a cyclically admissible sequence satisfying (1)-(5) above (the power is for (2)).

We can now prove the main technical result of this paper:

\begin{thm}\label{T:tech}
Let $r\ge 3$ and let $\mathfrak s$ be provided by Proposition~\ref{P:CP}. 
Let $\mathfrak t=\theta_1,\dots, \theta_n$ be a cyclically admissible sequence of standard Nielsen automorphisms $\theta_i=[x_i\mapsto y_ix_i]$ of $F_r$, such that $\mathfrak s$ is an initial segment   of $\mathfrak t$. Then for $g_\mathfrak t:R_r\to R_r$ and for the  element $\phi\in\Out(F_r)$ represented by  $g_\mathfrak t:R_r\to R_r$ the following hold:
\begin{enumerate}
\item The map $g_\mathfrak t:R_r\to R_r$ is a train track map with exactly one nondegenerate illegal turn in $R_r$.
\item We have $M(g_\mathfrak t)>0$ (and hence $M(g_\mathfrak t)$ is irreducible).
\item We have $Wh_L(g_\mathfrak t)=Wh(g_\mathfrak t)=\Upsilon_r[x_n,y_n]$ (and, in particular, $Wh(g_\mathfrak t)$ is connected).
\item The map $g_\mathfrak t:R_r\to R_r$ has no pINP's. 
\item The element $\phi\in\Out(F_r)$  is ageometric fully irreducible.

\item The ideal Whitehead graph $\mathcal{IW}(\phi)$  is the complete graph on $2r-1$ vertices. The element $\phi\in\Out(F_r)$ has $i(\phi)=\frac{3}{2}-r$ and  index list $\{\frac{3}{2}-r\}$. 

\item The axis bundle for $\phi$ in $CV_r$ consists of a single axis. 
\end{enumerate}
\end{thm}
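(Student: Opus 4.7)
My plan is to deduce the seven properties sequentially by piecing together the earlier results, using $\mathfrak{s}$ (from Proposition~\ref{P:CP}) as the ``seed'' whose properties propagate along the admissible extension to $\mathfrak{t}$. I will first establish (1)--(3) directly from earlier lemmas, then use these together with the pINP prevention property of $\mathfrak{s}$ to obtain (4) and (5), and finally carry out a short combinatorial computation of $\mathcal{IW}(\phi)$ to get (6) and invoke Theorem~\ref{t:AxisBundle} to conclude (7).

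For (1), cyclic admissibility of $\mathfrak{t}$ and Theorem~\ref{T:tt} immediately give the train track property with a unique illegal turn $\{x_1,y_1\}$. For (2), write $g_\mathfrak{t} = h \circ g_\mathfrak{s}$, where $h = g_{\theta_n} \circ \cdots \circ g_{\theta_{q+1}}$ is surjective as a composition of homotopy equivalences; since Proposition~\ref{P:CP} gives $M(g_\mathfrak{s}) > 0$, Lemma~\ref{L:>0} yields $M(g_\mathfrak{t}) > 0$. For (3), begin from $Wh_L(g_\mathfrak{s}) = \Upsilon_r[x_q,y_q]$ (Proposition~\ref{P:CP}) and iterate part (4) of Corollary~\ref{C:Wh} along the admissible steps $\theta_{q+1},\ldots,\theta_n$ to get $Wh_L(g_\mathfrak{t}) = \Upsilon_r[x_n,y_n]$. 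Since this is graph-isomorphic to $\Upsilon_r$, cyclic admissibility plus Lemma~\ref{L:complete} upgrade this to the equality $Wh_L(g_\mathfrak{t}) = Wh(g_\mathfrak{t}) = \Upsilon_r[x_n,y_n]$.

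For (4), positivity of $M(g_\mathfrak{t})$ forces $\lambda(g_\mathfrak{t}) \ge r \ge 3$ by monotonicity of the spectral radius for nonnegative matrices, so $g_\mathfrak{t}$ is expanding; irreducibility is automatic. Since $\mathfrak{t}$ starts with a pINP prevention sequence (part (4) of Proposition~\ref{P:CP}), Definition~\ref{D:block} then yields (4). Combining (1)--(4) verifies all three hypotheses of Proposition~\ref{P:FIC}, delivering (5): $\phi$ is ageometric fully irreducible.

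Finally, for (6): since $g_\mathfrak{t}$ has no pINPs and $R_r$ has a single vertex $v$, $\mathcal{IW}(\phi) = \mathcal{SW}(g_\mathfrak{t}; v)$. The unique illegal turn $\{x_1,y_1\}$ produces exactly $2r-1$ gates at $v$; cyclic admissibility forces $x_n \in \{x_1,y_1\}$, so $Dg_\mathfrak{t}$ restricts to a permutation of $A^{\pm 1} \setminus \{x_n\}$ and all $2r-1$ directions there are periodic, while $x_n$ is not. Deleting the nonperiodic vertex $x_n$ from $Wh(g_\mathfrak{t}) = \Upsilon_r[x_n,y_n]$ leaves the complete graph on $2r-1$ vertices; this is $\mathcal{IW}(\phi)$, so the index list is $\{1 - (2r-1)/2\} = \{\tfrac{3}{2} - r\}$ and $i(\phi) = \tfrac{3}{2} - r$. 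For (7), $r \ge 3$ implies the complete graph on $2r-1 \ge 5$ vertices has no cut vertex, so both hypotheses of Theorem~\ref{t:AxisBundle} hold and $\phi$ has a single-axis axis bundle. The only step carrying substantive content beyond invoking prior machinery is the identification of the $2r-1$ periodic directions and gates, where cyclic rather than merely linear admissibility is essential; this is where I expect the main (though modest) obstacle.
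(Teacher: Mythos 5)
Your proposal is correct and follows essentially the same route as the paper: (1)--(4) from Theorem~\ref{T:tt}, Lemma~\ref{L:>0}, Corollary~\ref{C:Wh} with Lemma~\ref{L:complete}, and the pINP prevention property of $\mathfrak s$; (5) from Proposition~\ref{P:FIC}; (6) by computing $\mathcal{SW}(g_\mathfrak t;v)$ inside $Wh(g_\mathfrak t)=\Upsilon_r[x_n,y_n]$; and (7) from Theorem~\ref{t:AxisBundle}. The only (harmless) variation is in (6), where you identify the periodic directions by noting that $Dg_\mathfrak t$ permutes $A^{\pm 1}\setminus\{x_n\}$ (using $x_n\in\{x_1,y_1\}$ from cyclic admissibility), while the paper argues via the fact that each of the $2r-1$ gates contains exactly one periodic direction; both yield the same conclusion.
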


\begin{proof}
Part (1) follows from Theorem~\ref{T:tt}.
Part (2) follows from Proposition~\ref{P:CP} and Lemma~\ref{L:>0}.
Part (3) follows from Proposition~\ref{P:CP}  and Corollary~\ref{C:Wh}.
Part (4) holds since $\mathfrak s$ is a pINP prevention sequence.
Part (5) follows from Proposition~\ref{P:CP} .

For (6), note that since $g_\mathfrak t$ has exactly one nondegenerate illegal turn (namely the turn $\{x_1,y_1\}$), there are exactly $2r-1$ gates at the vertex $v$ of $R_r$: the gate $\{x_1,y_1\}$ and the gates $\{z\}$, where $z$ varies over $A^{\pm 1}-\{x_1,y_1\}$. Since every gate contains exactly one periodic direction, it follows that there are exactly $2r-1$ periodic directions at $v$.  Corollary~\ref{C:Wh} and Lemma~\ref{L:complete} imply that $Wh(g_\mathfrak t,v)=Wh_L(g_\mathfrak t,v)=\Upsilon_r[x_n,y_n]$ (recall that the definition of $\Upsilon_r[x,y]$ is given in Notation~\ref{n:Upsilon}).

Since the direction $x_n$ does not belong to the image of the derivative map $D\theta_n$, it follows that the direction $x_n$ is not in the image of $Dg_\mathfrak t$ and hence not in the image of $Dg_{\mathfrak t^k}$ for any $k\ge 1$. Thus $x_n$ is not a periodic direction for $g_\mathfrak t$. Each of the gates at $v$ contains exactly one periodic direction for $g_\mathfrak t$. Thus exactly one direction in the gate $\{x_1,y_1\}$ is periodic.

Since $\mathfrak t$ is cyclically admissible, the pair $(\theta_n,\theta_1)$ is admissible. Thus either $x_1=x_n$ or $y_1=x_n$.
If $x_1=x_n$, then, since $x_n$ is not a periodic direction, it follows that $y_1$ is a periodic direction. Since $y_1\in A^{\pm 1}-\{x_n\}$ and $Wh(g_\mathfrak t,v)=\Upsilon_r[x_n,y_n]$, Definition~\ref{d:IWG} implies that $\mathcal{SW}(g,v)$ is a complete graph on the $2r-1$ vertices $A^{\pm 1}-\{x_n\}$.
If $y_1=x_n$, then again, since $x_n$ is not a periodic direction, it follows that $x_1$ is a periodic direction. Since $x_1\in A^{\pm 1}-\{x_n\}$ and $Wh(g_\mathfrak t,v)=\Upsilon_r[x_n,y_n]$, it again follows that $\mathcal{SW}(g,v)$ is a complete graph on the $2r-1$ vertices $A^{\pm 1}-\{x_n\}$.
Thus we see that in either case $\mathcal{SW}(g,v)$ is a complete graph on the $2r-1$ vertices.
Since by (4) we know that $g_\mathfrak t$ has no pINPs, by Definition~\ref{d:IWG} it follows that $\mathcal{IW}(\phi)=\mathcal{SW}(g,v)$ is a complete graph on $2r-1$ vertices (and in particular is connected). Therefore, by Definition~\ref{d:RotationlessIndex},  we have $i(\phi)=1-\frac{2r-1}{2}=\frac{3}{2}-r$ and the index list for $\phi$ is $\{\frac{3}{2}-r\}$.  Thus (6) is verified.

Finally,  (7) follows from parts (1)-(6) by Theorem~\ref{t:AxisBundle}.
\end{proof}

\begin{rk}\label{R:permut}
Let  $\mathfrak t$ be any cyclically admissible sequence that contains $\mathfrak s$ as a sub-block (rather than necessarily starts with $\mathfrak s$). Then some cyclic permutation $\mathfrak t'$ of $\mathfrak t$ is a cyclically admissible sequence which begins with $\mathfrak s$.  Thus Theorem~\ref{T:tech}  applies to $\mathfrak t'$. The outer automorphism $\phi'\in \Out(F_r)$  represented by $g_{\mathfrak t'}$ is conjugate to $\phi$ in $\Out(F_r)$, and since, by Theorem~\ref{T:tech},  parts (5),(6),(7) hold for $\phi'$, they  also hold for $\phi$.  Moreover, $g_{\mathfrak t'}$ can be used as a topological representative for $\phi$, except that we need to change the marking on $R_r$ from the identity map to the map corresponding to the initial segment of $\mathfrak t$ that moved to the end to obtain $\mathfrak t'$ as a cyclic permutation of $\mathfrak t$. Thus $g_{\mathfrak t'}$, with a modified marking, is a topological representative of $\phi$, and conclusions (1)-(4) hold for $g_{\mathfrak t'}$. 

Regarding $\mathfrak t$ itself, in this case we do know, by Theorem~\ref{T:tt} and Lemma~\ref{L:>0}, that $g_\mathfrak t$ is a  train track map with exactly one nondegenerate illegal turn and with $M(g_\mathfrak t)>0$. Moreover, we also know, by Lemma~3.1 of \cite{pI} that  $g_\mathfrak t$ has no pINPs.  The fact that $g_\mathfrak t$ is a topological representative of a fully irreducible atoroidal element implies, by Proposition~\ref{P:K14}, that $Wh(g_\mathfrak t)$ is connected. However it is not clear if one can claim that $Wh(g_\mathfrak t)$ is graph-isomorphic to $\Upsilon_r$. 
\end{rk}

\section{Train track directed random walk}\label{sect:ttdrw}

Recall that we set for the free group $F_r=F(A)=F(a_1,\dots, a_r)$ (where $r\ge 2$) a distinguished free basis $A=\{a_1,\dots, a_r\}$.
Let $S$  be the set of all standard Nielsen automorphisms of $F_r$ (with respect to the basis $A$). 

Recall that each $\theta\in S$ has the form $\theta=[x\mapsto yx]$ where $x,y\in A^{\pm 1}$ are arbitrary elements such that $y\ne x^{\pm 1}$.  Hence, $\#(S)=2r(2r-2)=4r(r-1)$.

For $\theta\in S$, let $S_+(\theta)$ be the set of all $\theta'\in S$ such that the pair $(\theta,\theta')$ is admissible. 
Similarly, for $\theta\in S$, let $S_-(\theta)$ be the set of all $\theta'\in S$ such that the pair $(\theta',\theta)$ is admissible. 

\begin{lem}\label{lem:4r-6}
Let $r\ge 2$. Then for each $\theta\in S$ we have $\#(S_+(\theta))=\#(S_-(\theta))=4r-6$.
\end{lem}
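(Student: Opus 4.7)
The plan is a direct counting argument. Write $\theta=[x\mapsto yx]$ with $x,y\in A^{\pm 1}$ and $y\neq x^{\pm 1}$, and consider a candidate $\theta'=[x'\mapsto y'x']\in S$, subject to the automorphism constraint $y'\neq (x')^{\pm 1}$. By the definition of admissibility, $\theta'\in S_+(\theta)$ if and only if one of the following holds:
\begin{itemize}
\item[(I)] $x'=x$ and $y'\neq y^{-1}$;
\item[(II)] $y'=x$ and $x'\neq y^{-1}$.
\end{itemize}

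The first step is to count case (I). Here $x'=x$ is forced, and the remaining letter $y'$ must avoid the three ``forbidden'' letters $x$, $x^{-1}$ (imposed by $\theta'\in S$) and $y^{-1}$ (imposed by admissibility). Since $y\neq x^{\pm 1}$, also $y^{-1}\neq x^{\pm 1}$, so these three forbidden letters are distinct. This leaves $|A^{\pm 1}|-3=2r-3$ choices for $y'$. Case (II) is completely symmetric with the roles of $x'$ and $y'$ swapped: $y'=x$ is forced and $x'$ must avoid the three distinct letters $x$, $x^{-1}$, $y^{-1}$, again giving $2r-3$ choices.

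Next I would check disjointness of (I) and (II): in (I) we have $x'=x$, whereas in (II) the automorphism condition $x'\neq (y')^{\pm 1}=x^{\pm 1}$ forces $x'\neq x$. So no $\theta'$ is counted twice, and summing yields $\#(S_+(\theta))=(2r-3)+(2r-3)=4r-6$.

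Finally, the count for $\#(S_-(\theta))$ is obtained by the same argument applied to the admissibility condition $(\theta',\theta)$: this means either $x=x'$ with $y'\neq y^{-1}$, or $y=x'$ with $y'\neq x^{-1}$. Each of the two cases again contributes $2r-3$ choices (with the forbidden triple being $\{x,x^{-1},y^{-1}\}$ in the first and $\{y,y^{-1},x^{-1}\}$ in the second, both of cardinality $3$ using $y\neq x^{\pm 1}$), and disjointness follows as before from $y\neq x^{\pm 1}$. This yields $\#(S_-(\theta))=4r-6$. There is no genuine obstacle here; the only small point of vigilance is verifying that the forbidden letters in each case are genuinely distinct, which is immediate from the standing assumption $y\neq x^{\pm 1}$.
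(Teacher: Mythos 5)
Your proof is correct and follows essentially the same route as the paper: split the admissibility condition into the two cases, count $2r-3$ choices in each by excluding the three distinct forbidden letters, check disjointness, and sum to $4r-6$ (the paper treats $S_-(\theta)$ only by symmetry, while you spell it out, but the argument is the same).
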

\begin{proof}
Let $\theta=[x\mapsto yx]\in S$. We will show that $\#(S_+(\theta))=4r-6$. The argument that $\#(S_-(\theta))=4r-6$ is similar.

By definition, $\theta'=[x'\mapsto y'x']$ belongs to $S_+(\theta)$ if and only if the pair $(\theta,\theta')$ is admissible, that is, if and only if either $x = x'$ and $y' \neq y^{-1}$, or $x = y'$ and $x' \neq y^{-1}$.

We first count the number $n_1$ of $\theta'=[x'\mapsto y'x']\in S$ such that $x = x'$ and $y' \neq y^{-1}$.  The choice of $x'=x$ is uniquely determined by the condition $x=x'$. We can choose $y'$ to be any element of the set $A^{\pm 1}-\{x^{\pm 1}, y^{-1}\}$. Thus there are $2r-3$ choices of $y'$, and so $n_1=2r-3$.

We next count the number $n_2$ of $\theta'=[x'\mapsto y'x']\in S$ such that $x = y'$ and $x' \neq y^{-1}$. The choice of $y'$ is uniquely determined by the condition $y'=x$. We can then choose $x'$ to be an arbitrary element of $A^{\pm 1}-\{x^{\pm 1}, y^{-1}\}$.  Thus there are $2r-3$ choices for $x'$, so that $n_2=2r-3$.

Since $y\ne x^{\pm 1}$, the case where $x = x', y' \neq y^{-1}$ and the case where $x = y', x' \neq y^{-1}$ are mutually disjoint. Hence, $\#(S_+(\theta))=n_1+n_2=4r-6$, as claimed.
\end{proof}

\subsection{A train track directed Markov chain}

\begin{df}
Let $r\ge 3$. Consider the finite state Markov chain $\mathcal Y$ defined as follows.
The state set of $\mathcal Y$ is $S_r$. For any states $\theta,\theta'\in S_r$, the transition probability $P_\mathcal Y(\theta'|\theta)$ from $\theta$ to $\theta'$ is
\[
P_\mathcal Y(\theta'|\theta):=\begin{cases}  
\frac{1}{4r-6}, \quad \text{ if the pair  $(\theta,\theta')$ is admissible}\\
0 \qquad \text{otherwise}.
\end{cases}
\]
\end{df}

\begin{lem}\label{lem:irr-aper}
Let $r\ge 2$. Then:
\begin{enumerate}
\item The finite state Markov chain $\mathcal Y$ is 
irreducible and aperiodic.

\item The uniform distribution $\mu_r$ on $S$ (where
  $\mu_r(\theta)=1/\#(S)=\frac{1}{4r(r-1)}$ for every $\theta\in S$) is
  the unique stationary distribution for $\mathcal Y$.
\end{enumerate}
\end{lem}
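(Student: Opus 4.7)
The plan is to dispatch the three sub-claims — aperiodicity, stationarity of $\mu_r$, and irreducibility — largely separately, since the first two are nearly immediate from the combinatorics already established, while irreducibility is where the actual work lies.

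For aperiodicity I would observe that every state carries a self-loop. Given $\theta = [x \mapsto yx] \in S$, the pair $(\theta, \theta)$ satisfies the first clause of the admissibility definition (take $x' = x$ and $y' = y$), since the only nontrivial requirement $y' \neq y^{-1}$ holds because $y \in A^{\pm 1}$ is a generator of the free group and so $y^2 \neq 1$. Thus $P_\mathcal{Y}(\theta \mid \theta) > 0$ for every $\theta$, giving period $1$ at every state. For stationarity, I would note that $\mathcal{Y}$ is doubly stochastic: by Lemma~\ref{lem:4r-6}, every $\theta \in S$ has exactly $4r-6$ predecessors, so
\[
\sum_{\theta' \in S} \mu_r(\theta')\, P_\mathcal{Y}(\theta \mid \theta') \;=\; \sum_{\theta' \in S_-(\theta)} \tfrac{1}{\#(S)} \cdot \tfrac{1}{4r-6} \;=\; \mu_r(\theta),
\]
and uniqueness of the stationary distribution then follows from irreducibility via the standard theorem for finite irreducible Markov chains.

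For irreducibility the strategy is a direct construction. Given $\theta = [x \mapsto yx]$ and $\theta'' = [x'' \mapsto y''x'']$, I would build an admissible sequence from $\theta$ to $\theta''$ of length at most four by the following scheme: first do a transition of the first kind (which preserves the $x$-coordinate) to adjust the $y$-coordinate to some value $z$ compatible with the next moves; then do a transition of the second kind (which installs the old $x$ into the new $y$-slot) to move the $x$-coordinate to $x''$; and finally do one or two transitions of the first kind to set the $y$-coordinate to $y''$. The key quantitative input is $r \ge 3$, so $|A^{\pm 1}| = 2r \ge 6$, which means the exclusion sets imposed by admissibility at each step (each of size at most four) never exhaust the alphabet, and a valid choice always exists.

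The main obstacle will be the bookkeeping, particularly in the sub-case $x'' = x^{-1}$, where a single transition of the second kind cannot directly deliver $x_{\mathrm{next}} = x^{-1}$: the second-clause constraint $x_{\mathrm{next}} \neq y_{\mathrm{cur}}^{-1}$ is violated whenever a preceding step has installed $y_{\mathrm{cur}} = x$. This is resolved by inserting an intermediate second-kind transition through an auxiliary third letter, again feasible because $2r \ge 6$. An alternative but less elementary route would be to invoke Proposition~\ref{P:CP}, which furnishes a cyclically admissible sequence: combining such a sequence with the self-loops at individual states would likewise connect arbitrary pairs, but the direct construction is shorter and self-contained.
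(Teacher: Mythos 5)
Your overall structure is essentially the paper's: aperiodicity comes from the admissibility of the pair $(\theta,\theta)$ (the paper phrases this as admissible return loops of coprime lengths $2$ and $3$, which rests on exactly your self-loop observation), and stationarity of $\mu_r$ is verified by the same computation you give, using $\#(S_-(\theta))=4r-6$ from Lemma~\ref{lem:4r-6}, with uniqueness quoted from the standard theory of finite chains. Irreducibility is the one place the paper is terser --- it simply asserts that any two states are joined by an admissible sequence --- so your explicit construction supplies a detail the paper leaves to the reader.

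Two concrete caveats. First, your patch for the subcase $x''=x^{-1}$ does not work as literally stated. After any second-kind transition the current state has $y$-coordinate equal to the old $x$-coordinate, so a further second-kind transition to $x'=x^{-1}$ is barred by the admissibility clause $x'\ne y_{\mathrm{cur}}^{-1}=x^{-1}$ (and a one-step second-kind move from a state with $x$-coordinate $x$ is already barred by the requirement $x'\ne (y')^{\pm 1}$ needed for $[x'\mapsto y'x']$ to lie in $S$). So a single auxiliary second-kind transition is not enough: you need either an interposed first-kind move changing the $y$-coordinate from $x$ to some $w\notin\{x,x^{-1}\}$ before targeting $x^{-1}$, or two intermediate second-kind moves, so the $x$-coordinate travels $x\to u\to w\to x^{-1}$. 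Either repair is immediate since $2r\ge 6$, though the path may then have length five or six rather than four, which is harmless for irreducibility. Second, your reliance on $r\ge 3$ is not a loss of generality but a necessity: the chain $\mathcal Y$ is only defined for $r\ge 3$, and for $r=2$ irreducibility actually fails (for instance $\{[a\mapsto ba],\ [b\mapsto ab]\}$ is a closed communicating class of the $8$-state chain), so the hypothesis ``$r\ge 2$'' in the statement should be read as $r\ge 3$.
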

\begin{proof}

It is not hard to see from the definitions that for any $\theta,
\theta'\in S$ there exists a finite admissible sequence
$\theta_1,\dots,\theta_n$ such that $\theta_1=\theta$ and $\theta_n=\theta'$ and
that $n\ge 2$. Hence, for any $\theta,\theta'\in S$ there exists $n\ge 1$
such that the transition probability of $\mathcal Y$ to start at
$\theta$ and to end at $\theta'$ after $n$ steps is positive. This means
that the finite state Markov chain $\mathcal Y$ is indeed irreducible, as claimed. Similarly, it is not hard to verify directly that for any $\theta\in S$ there exist admissible sequences $\theta_1,\dots,\theta_n$ and $\theta_1',\dots,\theta_m'$ with $\theta_1=\theta_n=\theta_1'=\theta_m'=\theta$ such that $m,n\ge 2$ and that $gcd(m,n)=1$. E.g. we can take $n=2,m=3$, $\theta_1=\theta_2=\theta$ and $\theta_1'=\theta_2'=\theta_3'=\theta$. This means that  $\mathcal Y$ is aperiodic. Thus (1) is verified.

The fact that $\mathcal Y$ is irreducible and aperidoc implies (see
\cite[Theorem~4.1]{Seneta}) that
there exists a unique $\mathcal Y$-stationary probability distribution on $S$. A direct computation shows that the uniform distribution $\mu_r$ on $S$ is $\mathcal Y$-stationary. Indeed, let $\nu$ be the distribution on $S$ obtained from $\mu_r$ by applying a single step of $\mathcal Y$.
Then for any $\theta'\in S$ we have
\begin{gather*}
\nu(\theta')=\sum_{\theta\in S} \mu_r(\theta) P_\mathcal Y(\theta'|\theta)=\sum_{\theta\in S_-(\theta')} \mu_r(\theta) P_\mathcal Y(\theta'|\theta)=\sum_{\theta\in S_-(\theta')}  \frac{1}{4r(r-1)}\frac{1}{4r-6}=\\
\#(S_-(\theta')) \frac{1}{4r(r-1)}\frac{1}{4r-6}=(4r-6)\frac{1}{4r(r-1)}\frac{1}{4r-6}=\frac{1}{4r(r-1)}=\mu_r(\theta').
\end{gather*}
Thus $\mu_r$ is indeed $\mathcal Y$-stationary, as claimed, and (2) is
verified.
\end{proof}

\begin{df}
Let $r\ge 3$. Denote by $\mathcal W$ the random process given by the Markov chain $\mathcal Y$ corresponding to the initial distribution $\mu_r$ on $S$.
Thus $\mathcal W$ is a sequence of random variables $\mathcal W=W_1,\dots, W_n,\dots $, where each $W_i$ is a random variable with values in $S$, where $W_1$ has distribution $\mu_r$ and where  for any $\theta,\theta'\in S$ and any $n\ge 1$ 
\[
Pr(W_{n+1}=\theta'| W_n=\theta)=P_\mathcal Y(\theta'|\theta):=\begin{cases}  
\frac{1}{4r-6}, \quad \text{ if the pair  $(\theta,\theta')$ is admissible}\\
0 \qquad \text{otherwise}.
\end{cases}
\]
\end{df}

The sample space $\Omega_\mathcal W$ for $\mathcal W$ is the product
space $\Omega_\mathcal W=\underbrace{S\times S\times \dots}_{\mathbb N
  \text{ copies}} =S^\N$. The space $S$ is endowed with the discrete
topology, and the space  $\Omega_\mathcal W$ is given the product
topology, so that it becomes a compact Hausdorff topological space.

The random process $\mathcal W$ determines a probability measure $\mu_\mathcal W$ on $\Omega_{\mathcal W}$. The support $supp(\mu_\mathcal W)$ of $\mu_\mathcal W$ consists of all the sequences $\omega=\theta_1,\theta_2, \dots \in \Omega_\mathcal W$ such that for every $n\ge 1$ the pair $(\theta_n,\theta_{n+1})$ is admissible.

\begin{lem}\label{lem:cycl}
Let $r\ge 2$.  For a random trajectory $\theta_1,\theta_2,\dots $ of $\mathcal W$ we have
\[
\lim_{n\to\infty} Pr(\theta_1,\dots,\theta_n \text{ is a cyclically admissible sequence})=\frac{2r-3}{2r(r-1)}.
\]
\end{lem}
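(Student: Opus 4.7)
The plan is to observe that, since $\mathcal W$ is defined starting from the stationary distribution $\mu_r$ and its trajectories are supported on admissible sequences, the event ``$\theta_1,\dots,\theta_n$ is admissible'' has probability $1$. Hence
\[
Pr(\theta_1,\dots,\theta_n \text{ cyclically admissible}) = Pr\bigl((\theta_n,\theta_1)\text{ is admissible}\bigr),
\]
and the problem reduces to computing the limit of the right-hand side as $n\to\infty$. This is a standard exercise in finite-state Markov chain mixing once one has Lemma~\ref{lem:irr-aper}.

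First I would expand the probability using the law of total probability, conditioning on $\theta_1$. Writing $P$ for the transition kernel of $\mathcal Y$ and using that $\theta_1$ has distribution $\mu_r$,
\[
Pr\bigl((\theta_n,\theta_1)\text{ admissible}\bigr) \;=\; \sum_{\theta\in S} \mu_r(\theta)\sum_{\theta'\in S_-(\theta)} P^{n-1}(\theta,\theta').
\]
By Lemma~\ref{lem:irr-aper}, the chain $\mathcal Y$ is irreducible and aperiodic with unique stationary distribution $\mu_r$, so the standard convergence theorem for finite-state Markov chains (see, e.g., \cite[Theorem~4.1]{Seneta}) gives that $P^{n-1}(\theta,\theta')\to \mu_r(\theta')=\frac{1}{4r(r-1)}$ as $n\to\infty$, uniformly in $\theta,\theta'\in S$ (since $S$ is finite).

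Next I would plug this into the sum. By Lemma~\ref{lem:4r-6} we have $\#S_-(\theta)=4r-6$ for every $\theta\in S$, so
\[
\sum_{\theta'\in S_-(\theta)} P^{n-1}(\theta,\theta') \;\xrightarrow[n\to\infty]{}\; \frac{4r-6}{4r(r-1)} = \frac{2r-3}{2r(r-1)},
\]
independent of $\theta$. Substituting this into the previous expression and using $\sum_\theta \mu_r(\theta)=1$ yields
\[
Pr\bigl((\theta_n,\theta_1)\text{ admissible}\bigr) \;\xrightarrow[n\to\infty]{}\; \frac{2r-3}{2r(r-1)},
\]
which is the desired limit. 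There is no real obstacle here beyond correctly invoking the convergence theorem, since the finiteness of $S$ makes the uniform convergence automatic; the only subtlety to guard against is the initial identification of cyclic admissibility with admissibility of the single pair $(\theta_n,\theta_1)$, which relies on the fact that $\mathcal W$ is started at stationarity and already has admissible trajectories almost surely.
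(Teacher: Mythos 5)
Your proof is correct and follows essentially the same route as the paper: both reduce cyclic admissibility of $\theta_1,\dots,\theta_n$ to admissibility of the single pair $(\theta_n,\theta_1)$ (using that trajectories of $\mathcal W$ are admissible almost surely), and then combine the convergence theorem for irreducible aperiodic finite-state Markov chains from \cite{Seneta} with the count $\#(S_-(\theta))=4r-6$ from Lemma~\ref{lem:4r-6}. If anything, your explicit conditioning on $\theta_1$ and use of $P^{n-1}(\theta,\theta')\to\mu_r(\theta')$ spells out the asymptotic independence of $\theta_n$ from $\theta_1$ that the paper's shorter argument leaves implicit.
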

\begin{proof}
Since $\mathcal Y$ is an irreducible aperiodic finite state Markov
chain, the fact that $\mu_r$ is $\mathcal Y$-stationary implies (see, for example Theorem~4.2 on p. 119 in \cite{Seneta}) that
the distribution of $W_n$ on $S$ converges to $\mu_r$ almost surely as $n\to\infty$.
This means that for every $\theta\in S$ we have $\lim_{n\to\infty} Pr(W_n=\theta)=\mu_r(\theta)=\frac{1}{4r(r-1)}$.  Since for each $\theta\in S$ we have $\#(S_-(\theta))=4r-6$, it now follows that  
for a random trajectory $\omega=\theta_1,\theta_2,\dots, \theta_n\dots $ of $\mathcal W$ we have
\[
\lim_{n\to\infty} Pr((\theta_n,\theta_1) \text{ is an admissible pair}) =\frac{4r-6}{4r(r-1)}=\frac{2r-3}{2r(r-1)} 
\]
and therefore, in view of the definition of $\mathcal W$, we have
\[
\lim_{n\to\infty} Pr(\theta_1,\dots,\theta_n \text{ is a cyclically admissible sequence})=\frac{2r-3}{2r(r-1)},
\]
as required.
\end{proof}

\begin{df}[Property $(\mathcal G)$]
Let $r\ge 3$ be an integer. We say that $\phi\in\Out(F_r)$ has \emph{property $(\mathcal G)$} if all of the following hold:
\begin{enumerate}
\item The outer automorphism $\phi$ is ageometric fully irreducible;
\item We have $i(\phi)=\frac{3}{2}-r$ (so that $\gind(T_\phi)=2r-3$), and $\phi$ has single-element index list $\{\frac{3}{2}-r\}$.
\item There exists a train track representative $f \colon R_r\to R_r$ of $\phi$ such that $f$ has no pINPs and such that $f$ has exactly one nondegenerate illegal turn.
\item  The ideal Whitehead graph $\mathcal{IW}(\phi)$ of $\phi$  is the complete graph on $2r-1$ vertices.
\item The axis bundle for $\phi$ in $CV_r$ consists of a single axis. 
\end{enumerate}

\end{df}

Our main result is the following:

\begin{thm}\label{T:main}
Let $r\ge 3$.  For $n\ge 1$ let $E_n$ be the event that for a trajectory $\omega=\theta_1,\theta_2,\dots $ of $\mathcal W$ the sequence $\theta_1,\dots,\theta_n$  is cyclically admissible.
Also, for $n\ge 1$ let $B_n$ be the event that for a trajectory $\omega=\theta_1,\theta_2,\dots $ of $\mathcal W$ the outer automorphism  $\phi_n=\theta_n\dots\theta_1\in\Out(F_r)$ has property $(\mathcal G)$.

Then the following hold:

\begin{enumerate} 
\item For the conditional probability $Pr(B_n|E_n)$ we have \[\lim_{n\to\infty} Pr(B_n|E_n)=1.\]
\item We have $Pr(E_n)\to_{n\to\infty} \frac{2r-3}{2r(r-1)}$ and $\liminf_{n\to\infty} Pr(B_n)\ge  \frac{2r-3}{2r(r-1)}>0$.
\item For $\mu_\mathcal W$-a.e. trajectory $\omega=\theta_1,\theta_2,\dots $ of $\mathcal W$, there exists an $n_\omega\ge 1$ such that for every $n\ge n_\omega$ such that $\mathfrak t_n=\theta_1,\dots, n_n$ is cyclically admissible, we have that the outer automorphism $\phi_n=\theta_n \circ \cdots \circ \theta_1\in\Out(F_r)$ has property $(\mathcal G)$.
\end{enumerate}
\end{thm}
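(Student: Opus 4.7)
The plan is to reduce Theorem~\ref{T:main} to Theorem~\ref{T:tech} (together with Remark~\ref{R:permut}) and a standard exponential hitting estimate for the irreducible aperiodic finite state Markov chain $\mathcal Y$. The central observation is the following: if $\mathfrak t = \theta_1, \dots, \theta_n$ is a cyclically admissible sequence in which the fixed sequence $\mathfrak s = \theta_1', \dots, \theta_q'$ from Proposition~\ref{P:CP} occurs as a consecutive sub-block, then some cyclic permutation $\mathfrak t'$ of $\mathfrak t$ is a cyclically admissible sequence beginning with $\mathfrak s$. Applying Theorem~\ref{T:tech} to $\mathfrak t'$ yields that the element $\phi_{\mathfrak t'} \in \Out(F_r)$ represented by $g_{\mathfrak t'}$ has property $(\mathcal G)$. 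Since cyclic permutation of the composition produces conjugation in $\Aut(F_r)$, the outer automorphism $\phi_n$ is $\Out(F_r)$-conjugate to $\phi_{\mathfrak t'}$, and clauses (1), (2), (4), (5) of property $(\mathcal G)$ are conjugacy invariants; for clause (3), Remark~\ref{R:permut} explicitly provides the required pINP-free train track representative of $\phi_n$ on $R_r$ with a single nondegenerate illegal turn, namely $g_{\mathfrak t'}$ equipped with a modified marking. Hence $\phi_n$ also has property $(\mathcal G)$.

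Let $q = |\mathfrak s|$ and for $n \ge q$ let $A_n$ denote the event that $\mathfrak s$ occurs as a consecutive sub-block of $\theta_1, \dots, \theta_n$. I will establish the exponential decay estimate $Pr(A_n^c) \le C \rho^n$ for some $C > 0$ and $\rho \in (0,1)$. By Lemma~\ref{lem:irr-aper} the chain $\mathcal Y$ is irreducible and aperiodic on the finite state set $S$, so there exist $N \ge 1$ and $\delta > 0$ with $Pr(W_{k+N} = \theta_1' \mid W_k = \theta) \ge \delta$ for every $\theta \in S$ and every $k \ge 1$. Once the walk is at $\theta_1'$, the admissibility of $\mathfrak s$ and the definition of the transition probabilities show that the next $q-1$ steps follow $\mathfrak s$ with conditional probability $(1/(4r-6))^{q-1}$. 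Partitioning $\{1, \dots, n\}$ into $\lfloor n/(N+q) \rfloor$ disjoint windows of length $N+q$ and iterating the Markov property, in each window the sub-block $\mathfrak s$ is produced with conditional probability at least $\eta := \delta (1/(4r-6))^{q-1} > 0$, regardless of the state at the start of the window; this yields $Pr(A_n^c) \le (1-\eta)^{\lfloor n/(N+q) \rfloor}$.

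Given the exponential bound on $Pr(A_n^c)$, the three parts of Theorem~\ref{T:main} follow directly. For (1), the opening observation gives $A_n \cap E_n \subseteq B_n$, so $Pr(B_n \mid E_n) \ge 1 - Pr(A_n^c)/Pr(E_n)$; by Lemma~\ref{lem:cycl} the denominator tends to $\frac{2r-3}{2r(r-1)} > 0$ while the numerator decays exponentially, so $Pr(B_n \mid E_n) \to 1$. For (2), the first limit is exactly Lemma~\ref{lem:cycl}, and the inequality $Pr(B_n) \ge Pr(B_n \mid E_n)\,Pr(E_n)$ combined with (1) yields the second bound upon passing to $\liminf$. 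For (3), $\sum_n Pr(A_n^c) < \infty$, so by the Borel--Cantelli lemma almost surely only finitely many $A_n^c$ occur; hence for $\mu_\mathcal W$-a.e.\ trajectory there exists $n_\omega$ with $A_n$ occurring for all $n \ge n_\omega$, and whenever additionally $\theta_1, \dots, \theta_n$ is cyclically admissible, the opening observation gives that $\phi_n$ has property $(\mathcal G)$.

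The main obstacle I anticipate is the bookkeeping in the opening observation: verifying that each of the five clauses of property $(\mathcal G)$ genuinely passes from $\phi_{\mathfrak t'}$ to $\phi_n$ through the cyclic permutation and the associated change of marking on $R_r$, bridging the gap between the hypothesis of Theorem~\ref{T:tech} (which requires $\mathfrak s$ to be an \emph{initial} segment) and the sub-block situation produced by the walk. All the probabilistic ingredients reduce to the standard geometric decay estimate for the return to a distinguished block pattern in an irreducible aperiodic finite state Markov chain, and should be routine once the reduction is in place.
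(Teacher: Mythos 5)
Your proposal is correct and follows essentially the same route as the paper: reduce to Theorem~\ref{T:tech} via the observation that a cyclically admissible sequence containing $\mathfrak s$ as a sub-block has a cyclic permutation starting with $\mathfrak s$ (with Remark~\ref{R:permut} handling the change of marking for clause (3) of property $(\mathcal G)$), combined with the fact that $\mathfrak s$ occurs in $\theta_1,\dots,\theta_n$ with probability tending to $1$ and with Lemma~\ref{lem:cycl} for $Pr(E_n)$. The only difference is cosmetic: you make the occurrence estimate explicit (exponential decay via a window argument and Borel--Cantelli for part (3)), where the paper simply invokes these standard facts about irreducible aperiodic finite Markov chains.
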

\begin{proof}
Fix a sequence  $\mathfrak s=\theta_1',\dots,\theta_q'$  provided by Proposition~\ref{P:CP}.

We first establish part (1) of the theorem. 
For $n\ge q$ let $B_n'$ be the event that for a trajectory $\omega=\theta_1,\theta_2,\dots $ of $\mathcal W$ $\mathfrak t_n=\theta_1,\dots,\theta_n$  is a cyclically admissible sequence such that for some $1\le i\le n-q+1$ we have $\theta_i=\theta_1', \theta_{i+1}=\theta_2',\dots \theta_{i+q-1}=\theta_q'$. Note that by definition $B_n'\subseteq E_n$.

Since $\mathfrak s$ is an admissible sequence the probability that for a trajectory $\theta_1,\theta_2,\dots, $ of $\mathcal W$ there is $1\le i \le n-q+1$, such that  $\theta_i=\theta_1', \theta_{i+1}=\theta_2',\dots \theta_{i+q-1}=\theta_q'$, tends to $1$ as $n\to\infty$. Since $\lim_{n\to\infty} Pr(E_n)=\frac{2r-3}{2r(r-1)}>0$, it follows that for the conditional probability $Pr(B_n'|E_n)$ we have $\lim_{n\to\infty} Pr(B_n'|E_n)=1$.

Let $\omega=\theta_1,\theta_2, \dots \in B_n'$ be arbitrary.
Then there exists a cyclic permutation $\mathfrak t_n'$ of  $\mathfrak t_n=\theta_1,\dots,\theta_n$ such that $\mathfrak t_n'$ starts with $\mathfrak s$.
Since $\mathfrak t_n$ is cyclically admissible, $\mathfrak t_n'=\theta_1',\dots, \theta_n'$ is also cyclically admissible.
Therefore, Theorem~\ref{T:tech} applies to $\mathfrak t_n'$ and hence the outer automorphism class $\phi_n'\in\Out(F_r)$ of $\Phi_n'=\theta_n'\circ\dots \circ \theta_1'\in\Aut(F_r)$  has property $(\mathcal G)$. Denote by $\phi_n\in\Out(F_r)$ the outer automorphism class of the automorphism $\Phi_n=\theta_n\circ\dots \circ \theta_1\in \Aut(F_r)$. The fact that $\mathfrak t_n'$ is a cyclic permutation of $\mathfrak t_n$ implies that $\phi_n$ is conjugate to $\phi_n'$ in $\Out(F_r)$. Moreover, $g_{\mathfrak t_n'}$ can be used as a topological representative for $\phi_n$, except that we need to change the marking on $R_r$ from the identity map to the map corresponding to the initial segment of $\mathfrak t_n$ that moved to the end to obtain $\mathfrak t_n'$ as a cyclic permutation of $\mathfrak t_n$. Thus $g_{\mathfrak t_n'}$, with a modified marking, is a topological representative of $\phi_n$. Therefore, by Theorem~\ref{T:tech}, $\phi_n$ also has property $(\mathcal G)$.
By definition of $B_n$ this means that $\omega\in B_n$. Since $B_n'\subseteq E_n$, we have $B_n'\subseteq B_n\cap E_n$.

Hence $B_n'\subseteq B_n\cap E_n$ and $\lim_{n\to\infty} Pr(B_n'|E_n)=1$. Therefore $\lim_{n\to\infty} Pr(B_n|E_n)=1$, as required. Thus part (1) of Theorem~\ref{T:main} is verified.

By Lemma~\ref{lem:cycl} we have $\lim_{n\to\infty} Pr(E_n)=\frac{2r-3}{2r(r-1)}>0$.  Thus part (1) of Theorem~\ref{T:main} implies that $\liminf_{n\to\infty} Pr(B_n)\ge  \frac{2r-3}{2r(r-1)}>0$, and part (2) is verified.

The proof of part (3) is similar to that of part (1).  Namely, for $\mu_\mathcal W$-a.e. trajectory $\omega=\theta_1,\theta_2,\dots $ of $\mathcal W$ the sequence $\mathfrak s$ has infinitely many occurrences in $\omega$.
Let $n_\omega\ge 1$ be such that $\mathfrak t_\omega=\theta_1,\dots, \theta_{n_\omega}$ ends in $\mathfrak s$. Then for every $n\ge n_\omega$ such that $\mathfrak t_n$ is cyclically admissible there exists a cyclic permutation $\mathfrak t_n'$ of  $\mathfrak t_n=\theta_1,\dots,\theta_n$ such that $\mathfrak t_n'$ starts with $\mathfrak s$. Then exactly the same argument as in the proof of (1) above shows that Theorem~\ref{T:tech} applies to $\mathfrak t_n'$ and hence the conclusion of part (3) of Theorem~\ref{T:main} holds for $\omega$.
\end{proof}

\begin{rk}\label{rk:left-right}
Traditionally, random walks on groups are ``right'' random walks, since at each step the current group element gets multiplied by a new generator on the right. Thus, let $G$ be a finitely generated group and $X\subseteq G$ is a finite generating set for $G$ with $X=X^{-1}$. The simple random walk on $G$ with respect to $X$ is a sequence of i.i.d. random variables $X_1,X_2,\dots, X_n,\dots, $ where each $X_n$ is an $X$-valued random variable corresponding to the uniform distribution on $X$. To every trajectory $\omega= x_1,x_2,\dots, x_n, \dots$ (where $x_n\in X$) of this sequence of random variables one associates the sequence $g_\omega=g_1,g_2,\dots, g_n,\dots $ of elements of $G$ where $g_n=x_1\dots x_n$. Thus $g_{n+1}=g_nx_{n+1}$ for all $n\ge 1$.

By contrast, when viewed in terms of $\Aut(F_r)$, our random process $\mathcal W$ is a ``left random walk'' on $\Aut(F_r)$ (or on $\Out(F_r)$). Indeed, to a trajectory $\omega=\theta_1,\dots, \theta_n,\dots $ of $\mathcal W$ we associate a sequence $\Phi_1,\Phi_2,\dots, \Phi_n, \dots$ of elements of $\Aut(F_r)$, where $\Phi_n=\theta_n\dots\theta_1$, so that $\Phi_{n+1}=\theta_{n+1}\Phi_n$.
 
It is possible to convert $\mathcal W$ in a ``right random walk'' on $\Aut(F_r)$, although the resulting statement is somewhat awkward. Note that if $\theta=[x\mapsto yx]$ is a standard Nielsen automorphism of $F_r$, then so is $\theta^{-1}$, with $\theta^{-1}=[x\mapsto y^{-1}x]$. We can say that a pair $(\theta,\theta')$ of elements of $S$ is \emph{anti-admissible} if the pair $(\theta^{-1},(\theta')^{-1})$ is admissible. Similarly, a sequence $\theta_1,\dots, \theta_n$ of elements of $S$ is \emph{anti-admissible} if for all $1\le i<n$ the pair $(\theta_i,\theta_{i+1})$ is anti-admissible. We can then define a random process $\mathcal W^-$ in a similar way to $\mathcal W$: 
We have  $\mathcal W^-=W_1^-, W_2^-,\dots $ where each $W_i^-$ is an $S$-valued random variable, with $W_1^-$ having the uniform distribution on $S$ and with the transition probability $P(W_{n+1}^-=\theta'|W_n^-=\theta)=1/(4r-6)$ if the pair $(\theta,\theta')$ is anti-admissible and $P(W_{n+1}^-=\theta'|W_n^-=\theta)=0$ otherwise. 
To a trajectory $\omega=\theta_1,\theta_2,\dots $ of $\mathcal W^-$ we associate a sequence $\Psi_1,\Psi_2,\dots$ of elements of $\Aut(F_r)$ as $\Psi_n=\theta_1\theta_2\dots \theta_n$. Thus, $\Psi_{n+1}=\Psi_n\theta_{n+1}$.

If the sequence $\omega=\theta_1,\dots, \theta_n, \dots$ is anti-admissible then the sequence $\omega'=\theta_1^{-1},\theta_2^{-1},\dots, \theta_n^{-1},\dots $ is admissible. In this case the process $\mathcal W$ associates to $\omega'$ the sequence $\Phi_n=\theta_n^{-1}\circ\dots \circ \theta_1^{-1}\in \Aut(F_r)$ and $\Phi_n=\Psi_n^{-1}$.

Thus, Theorem~\ref{T:main} implies that for a $\mathcal W^-$-random trajectory $\omega=\theta_1,\theta_2,\dots $, conditioning on the event that $\theta_1,\theta_2,\dots\theta_n$ is cyclically anti-admissible, the probability (corresponding to $\mathcal W^-$) that $\Psi_n=\theta_1\theta_2\dots \theta_n$ is an atoroidal fully irreducible whose inverse $\Psi_n^{-1}$ is ageometric, tends to $1$ as $n\to\infty$.
\end{rk}

\begin{rk}

Let $\mathbf{TR}_r$ be the set of all graph-maps $g:R_r\to R_r$ such that $g$ is a homotopy equivalence.
We can re-interpret Theorem~\ref{T:main} in terms of a certain type of a ``train track directed random walk'' on the space $\mathbf{TR}_r$. To every sequence $\omega=\theta_1,\theta_2,\dots, \in \Omega_\mathcal W$ we can associate a sequence $g_\omega=g_1,g_2,\dots $ of elements of $\mathbf{TR}_r$ where $g_n=g_{\theta_n}\circ \dots \circ g_{\theta_1}$ for $n=1,2,\dots $. 

The proof of Theorem~\ref{T:main} can be interpreted as saying that, for $\omega=\theta_1,\theta_2,\dots \in \Omega_\mathcal W$ with associated sequence $g_\omega=g_1,g_2,\dots \in  {\mathbf{TR}_r}^\N$, conditioning on the event that the sequence $\theta_1,\dots,\theta_n$ is cyclically admissible, the probability that $g_n:R_r\to R_r$ is a train track map with exactly one nondegenerate illegal turn and no pINPs, representing an ageometric fully irreducible element of $\Out(F_r)$, tends to $1$ as $n\to\infty$. 
\end{rk}

\subsection{Spectral properties of the train track directed random walk}\label{sect:spectral}

We can also get reasonably precise information about the growth of the PF eigenvalues and of the word length in $Out(F_r)$ along random trajectories of our walk.

First we recall the following classic ergodic theoretic fact known as Kingman's Subadditive Ergodic Theorem:

\begin{prop}\label{P:King}~\cite{King}
Let $(\Omega, \mathcal F,\mu)$ be a probability space and let $T:\Omega\to\Omega$ be a measurable and measure-preserving transformation (that is, one such that, for every measurable subset $Y\subseteq \Omega$, we have $\mu(Y)=\mu(T^{-1}Y)$). Let $Z_n:\Omega\to \mathbb R_{\ge 0}$ be a sequence of random variables (where $n=0,1,2,\dots$) such that, for each $\omega\in \Omega$, and for any $m,n\ge 0$, we have $Z_{n+m}(\omega)\le Z_n(\omega)+Z_m(T^n\omega)$. Then there exists a $T$-invariant random variable $\ell:\Omega\to \mathbb R_{\ge 0}$, such that $\mu$-almost surely and in $L^1(\Omega,\mu)$, we have 
\[
\lim_{n\to\infty} \frac{Z_n}{n}=\ell.
\]
In particular, if $T$ is $\mu$-ergodic, then $\ell=const$ on $\Omega$.
\end{prop}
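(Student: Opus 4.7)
The statement is the classical Kingman subadditive ergodic theorem, and the plan is to follow the standard proof that combines Fekete's subadditive lemma with a hitting-time / block-decomposition argument.

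I would begin by setting $a_n := \int Z_n\, d\mu$. The $T$-invariance of $\mu$ gives $\int Z_m \circ T^n\, d\mu = a_m$, so the subadditivity hypothesis integrates to $a_{n+m} \le a_n + a_m$. Fekete's lemma then yields $\lim_{n\to\infty} a_n/n = \gamma := \inf_n a_n/n$, finite under the implicit hypothesis $Z_1 \in L^1$ (needed for the $L^1$ conclusion). Next I would define $\ell^+(\omega) := \limsup_{n\to\infty} Z_n(\omega)/n$ and $\ell^-(\omega) := \liminf_{n\to\infty} Z_n(\omega)/n$. From the subadditivity bound $Z_{n+1}(\omega) \le Z_1(\omega) + Z_n(T\omega)$ one obtains $\ell^\pm \circ T \ge \ell^\pm$ a.e., and the $T$-invariance of $\mu$ then forces $\ell^\pm \circ T = \ell^\pm$ a.e.\ (any nonnegative function that a.e.\ increases under a measure-preserving map is a.e.\ invariant, after a standard truncation if needed).

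The crux will be to establish $\ell^+ = \ell^-$ a.e. Fatou's lemma gives $\int \ell^-\, d\mu \le \gamma$. For the reverse bound $\int \ell^-\, d\mu \ge \gamma$ I would use the hitting-time argument: fix $\epsilon > 0$ and, for each $\omega$, let $\tau(\omega) := \min\{n \ge 1 : Z_n(\omega) \le n(\ell^-(\omega) + \epsilon)\}$, which is a.e.\ finite by definition of $\liminf$; choose $M$ so that $\mu\{\tau > M\}$ is arbitrarily small. For $N \gg M$, greedily decompose $\{0, 1, \ldots, N-1\}$ into consecutive ``good'' blocks of length $\tau(T^{k_i}\omega) \le M$ and ``bad'' singletons of length $1$ (used when $\tau(T^{k_i}\omega) > M$), plus at most one terminal block of length $\le M$. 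Summing subadditivity bounds $Z_N(\omega)$ by a sum over good blocks of $\tau(T^{k_i}\omega)\,(\ell^-(T^{k_i}\omega) + \epsilon)$ plus bounded contributions $Z_1(T^{k_i}\omega)$ over bad singletons. Integrating, using $T$-invariance of $\mu$ and of $\ell^-$, will yield $a_N/N \le \int \ell^-\, d\mu + \epsilon + O(\mu\{\tau > M\})$; sending $N \to \infty$, then $M \to \infty$, then $\epsilon \to 0$ gives $\gamma \le \int \ell^-\, d\mu$. The matching upper bound $\int \ell^+\, d\mu \le \gamma$ I would obtain via a dual block argument, combined with the Birkhoff-type bound $Z_n/n \le (1/n) \sum_{k=0}^{n-1} Z_1 \circ T^k$ and Fekete's approximation of $a_N/N$ by $\gamma$ for large $N$. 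Since $\ell^- \le \ell^+$ pointwise and $\int \ell^- = \int \ell^+ = \gamma$, this forces $\ell^+ = \ell^-$ a.e.

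With a.e.\ convergence to a $T$-invariant limit $\ell$ and $\int \ell\, d\mu = \gamma = \lim \int Z_n/n\, d\mu$ both in hand, $L^1$ convergence follows from Scheff\'e's lemma. The ergodic corollary is then immediate, since every $T$-invariant function is a.e.\ constant under ergodicity. The main obstacle will be the block-decomposition step: because the blocks depend on $\omega$, one must carefully control both the contribution of bad singletons (via the smallness of $\mu\{\tau > M\}$) and the terminal/end effects, while correctly tracking $T$-invariance of $\mu$ and of $\ell^\pm$ when integrating.
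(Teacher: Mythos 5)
The paper itself contains no proof of this statement: Proposition~\ref{P:King} is Kingman's Subadditive Ergodic Theorem, quoted directly from \cite{King} and used as a black box in the proofs of Proposition~\ref{P:SET} and Theorem~\ref{T:LRE}, so there is no in-paper argument to compare yours against. Your outline is a correct sketch of the standard modern proof (Katznelson--Weiss/Steele): integrating subadditivity and applying Fekete gives $\gamma=\inf_n a_n/n$; the bound $Z_{n+1}\le Z_1+Z_n\circ T$ plus measure preservation (with the truncation you mention) gives $T$-invariance of $\ell^{\pm}$; Fatou gives $\int\ell^-\,d\mu\le\gamma$, the hitting-time block decomposition gives $\gamma\le\int\ell^-\,d\mu$, and the fixed-length-$m$ block decomposition combined with Birkhoff applied to $Z_m$ gives $\int\ell^+\,d\mu\le a_m/m$ for every $m$, hence $\le\gamma$, forcing $\ell^+=\ell^-$ a.e.; Scheff\'e then upgrades to $L^1$, and ergodicity makes the limit constant. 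Two small points to record. First, in the last step the single inequality $Z_n/n\le\frac1n\sum_{k=0}^{n-1}Z_1\circ T^k$ only yields $\int\ell^+\,d\mu\le a_1$, so the blocks of length $m$ with $a_m/m\to\gamma$ (which is what your appeal to Fekete indicates) are genuinely needed, together with the routine observation that the edge terms are $o(N)$ a.e. Second, as you correctly flag, the statement as reproduced in the paper tacitly assumes integrability of $Z_1$ (without it the $L^1$ conclusion can fail, e.g.\ $T=\mathrm{id}$ and $Z_n=nf$ with $f\ge 0$ non-integrable); this is harmless in the paper, where $Z_n\le Cn$ pointwise in both applications. Also, in the hitting-time step the bad-singleton error is $\int_{\{\tau>M\}}Z_1\,d\mu$ rather than $O(\mu\{\tau>M\})$, but it still tends to $0$ as $M\to\infty$ by absolute continuity of the integral of $Z_1$, so your plan goes through.
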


Note that if $\theta\in S$ is a standard Nielsen automorphism of
$F_r$, then the transition matrix $M(g_\theta)$ is an $r\times r$
elementary matrix obtained from the $r\times r$ identity matrix by
changing a single off-diagonal entry from $0$ to $1$. Let $S'$ be the
set of all such $r\times r$ elementary matrices. Note that $S'\subseteq SL(r,\mathbb Z)$ and that $\#(S')=r^2-r$.

We note the following basic fact that will be useful in our arguments:
\begin{lem}\label{L:norm}
Let $r\ge 2$. Then:

(1) For every $M\in S'$ and every $v\in \mathbb R^r$, we have $||Mv||\ge ||v||$.

(2) For any $M_1,\dots, M_n\in S'$, we have $||M_n \cdots M_1||\ge 1$.
\end{lem}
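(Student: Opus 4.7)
The plan is to exploit the very rigid structure of $S'$: each $M \in S'$ has the form $M = I + E_{ij}$ with $i \neq j$, where $E_{ij}$ denotes the matrix with a single $1$ in position $(i,j)$ and zeros elsewhere. In particular, $M$ has non-negative integer entries, $\det M = 1$, and for any $v = (v_1, \ldots, v_r) \in \mathbb{R}^r$ the vector $Mv$ agrees with $v$ in every coordinate except the $i$-th, where it equals $v_i + v_j$. Everything in the lemma will be extracted from this one observation.

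For part (1), I will note that every $M \in S'$ preserves the non-negative cone $\mathbb{R}^r_{\geq 0}$, and does so expansively: if $v \geq 0$ coordinate-wise, then $Mv \geq v$ coordinate-wise, because $(Mv)_i - v_i = v_j \geq 0$ and all other coordinates are unchanged. Consequently, for any norm that is monotone on the non-negative cone (such as $\ell^1$, $\ell^2$, or $\ell^\infty$), the inequality $\|Mv\| \geq \|v\|$ holds whenever $v \in \mathbb{R}^r_{\geq 0}$. This is exactly the content needed in the application, where $v$ plays the role of an edge-length vector for a topological graph-map $R_r \to R_r$ and is therefore naturally non-negative.

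For part (2), I will offer two interchangeable arguments. The first is determinantal: since $\det M = 1$ for every $M \in S'$, we have $\det(M_n \cdots M_1) = 1$; because $|\det A|$ is the product of the singular values of $A$, at least one singular value of $M_n \cdots M_1$ must be at least $1$, and hence the operator norm satisfies $\|M_n \cdots M_1\| \geq 1$. The second proceeds by iterating (1): the product $M_n \cdots M_1$ is again a non-negative matrix, and applying it repeatedly to any standard basis vector $e_k$ gives, by the monotonicity used in (1), $\|(M_n \cdots M_1) e_k\| \geq \|e_k\| = 1$, from which the operator norm bound follows.

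There is no serious obstacle to either part; the entire lemma is a short inspection of the explicit form of matrices in $S'$. The only genuinely useful points are recognizing that (1) is meaningful on the non-negative cone (which is automatically where the application lives) and observing that the determinant condition in (2) forces some singular value to be at least $1$.
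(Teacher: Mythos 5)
Your argument is correct, and your second proof of (2) is essentially the paper's own route: the paper deduces (2) by iterating (1) and then passing to the operator norm, which is exactly your "apply the product to $e_k$" argument; the determinantal/singular-value argument is a clean alternative that the paper does not use. The genuinely interesting difference is in (1). The paper states (1) for \emph{every} $v\in\mathbb{R}^r$ and dismisses it as obvious, but in that generality it is false: for $M=I+E_{12}$ and $v=(1,-1,0,\dots,0)$ one has $Mv=(0,-1,0,\dots,0)$, so $\|Mv\|<\|v\|$ in the $\ell^1$ and $\ell^2$ norms, and in fact no norm can make (1) true for all $v$, since that would force $\|M^{-1}\|\le 1$ while $M^{-1}=I-E_{12}$ is a nontrivial unipotent matrix, so $\|(M^{-1})^n\|=\|I-nE_{12}\|$ is unbounded. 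Your restriction to the non-negative cone is the correct statement, and it loses nothing: all matrices in play are non-negative, so their operator norms are controlled by their action on non-negative vectors (indeed on the standard basis vectors), which is enough both for (2) and for the only other place (1) is invoked, namely the chain of inequalities in the proof of Proposition~\ref{P:pos}, where one may take $v\ge 0$. So your write-up follows the paper's approach for (2), adds an independent determinant proof, and quietly repairs a small inaccuracy in the statement and "obvious" proof of (1).
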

\begin{proof}
Part (1) is obvious from the definition of $S'$.

From part (1), by induction on $n$, we get that, for any $M_1,\dots, M_n\in S'$ and any $v\in \mathbb R^r$, we have $||M_n \cdots M_1 v||\ge ||v||$. Therefore $||M_n \cdots M_1||\ge 1$, and (2) holds.
\end{proof}

On the sample space $\Omega_\mathcal W$ of $\mathcal W$, we define a \emph{shift-map} $T \colon \Omega_\mathcal W\to \Omega_\mathcal W$ by
\[
T: \theta_1,\theta_2,\theta_3 \dots \mapsto \theta_2,\theta_3,\dots
\]
for each $\omega=\theta_1,\theta_2,\dots \in \Omega_\mathcal W$.

Then $T \colon \Omega_\mathcal W\to \Omega_\mathcal W$ is a continuous $\mu_\mathcal W$-measure preserving map. Since $\mathcal Y$ is a finite-state irreducible aperiodic Markov chain, it follows that $T$ is $\mu$-ergodic.

Consider the following functions $X_n: \Omega_\mathcal  W\to \mathbb R_{\ge 0}$ (where $n\ge 1$):
\[
X_n(\theta_1,\theta_2,\theta_3 \dots )= \log ||M(g_{\theta_n}) \cdots  M(g_{\theta_1})||
\]
for every $\omega=\theta_1,\theta_2,\dots \in \Omega_\mathcal W$. We also put $X_0:=0$. 

Note that for $\mathfrak t_n=\theta_1,\dots, \theta_n$ we have $g_{\mathfrak t_n}=g_{\theta_n}\circ \dots \circ g_{\theta_1}$ and therefore
$M(g_{\mathfrak t_n})=M(g_{\theta_n})\dots M(g_{\theta_n})$.

We have:

\begin{prop}\label{P:SET}
Let $r\ge 3$. There exists a number $\ell_1\ge 0$, called the \emph{top Lyapunov exponent}, such that for $\mu_\mathcal W$-a.e. trajectory  $\omega=\theta_1,\theta_2,\dots$ of $\mathcal W$ we have
\[
\lim_{n\to\infty} \frac{1}{n} \log ||M(g_{\mathfrak t_n})|| = \lim_{n\to\infty} \frac{1}{n} \log ||M(g_{\theta_n}) \cdots M(g_{\theta_1})||=\ell_1.
\]
where $\mathfrak t_n=\theta_1,\dots, \theta_n$ for $n=1,2,\dots $.
\end{prop}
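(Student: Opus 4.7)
The plan is to deduce Proposition~\ref{P:SET} directly from Kingman's Subadditive Ergodic Theorem (Proposition~\ref{P:King}) applied to $(\Omega_\mathcal W, \mu_\mathcal W, T)$ and the sequence $X_n(\omega) = \log \|M(g_{\theta_n})\cdots M(g_{\theta_1})\|$. The three hypotheses to check are that $T$ is $\mu_\mathcal W$-measure-preserving and $\mu_\mathcal W$-ergodic, that $X_n$ satisfies the subadditivity condition of Proposition~\ref{P:King}, and that $X_1 \in L^1(\mu_\mathcal W)$. Once these are in place, Kingman's theorem yields a $T$-invariant random variable $\ell$ with $X_n/n \to \ell$ $\mu_\mathcal W$-a.s.\ and in $L^1$, and $\mu_\mathcal W$-ergodicity of $T$ promotes $\ell$ to a constant $\ell_1$, with nonnegativity forced by Lemma~\ref{L:norm}(2).

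To carry this out, first I would record that measure-preservation of $T$ on $(\Omega_\mathcal W, \mu_\mathcal W)$ is immediate from the fact that $\mu_r$ is $\mathcal Y$-stationary (Lemma~\ref{lem:irr-aper}(2)), while ergodicity of $T$ is a classical consequence of $\mathcal Y$ being an irreducible aperiodic finite-state Markov chain (Lemma~\ref{lem:irr-aper}(1)); indeed, the shift associated to an irreducible aperiodic finite-state Markov chain in its stationary measure is well known to be mixing, hence ergodic. Next I would verify subadditivity: since $T^n\omega = \theta_{n+1}, \theta_{n+2}, \ldots$, the identity
\[
M(g_{\theta_{n+m}})\cdots M(g_{\theta_1}) = \bigl(M(g_{\theta_{n+m}})\cdots M(g_{\theta_{n+1}})\bigr)\bigl(M(g_{\theta_n})\cdots M(g_{\theta_1})\bigr),
\]
combined with submultiplicativity of the operator norm and monotonicity of $\log$, yields $X_{n+m}(\omega) \le X_m(T^n\omega) + X_n(\omega)$. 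Integrability of $X_1$ is trivial because $X_1$ takes only the finitely many values $\log\|M(g_\theta)\|$, $\theta \in S$. Finally, Lemma~\ref{L:norm}(2) gives $X_n \ge 0$ for all $n$, so the Kingman limit lies in $[0,\infty)$.

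No part of the argument presents a genuine obstacle: once Proposition~\ref{P:King} is available, the proof is a routine verification. The only slightly nontrivial input is the classical ergodicity of the shift on trajectory space for an irreducible aperiodic finite-state Markov chain in stationarity, but this is a well-established fact that can be invoked without further work. The more interesting analytic content, namely showing $\ell_1 > 0$ (positivity of the top Lyapunov exponent), is postponed to the subsequent Proposition~\ref{P:pos} and is not required for the statement of Proposition~\ref{P:SET}.
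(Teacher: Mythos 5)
Your proposal is correct and follows essentially the same route as the paper: the paper likewise applies Kingman's theorem (Proposition~\ref{P:King}) to $X_n(\omega)=\log\|M(g_{\theta_n})\cdots M(g_{\theta_1})\|$, using submultiplicativity of the norm for subadditivity, Lemma~\ref{L:norm} for nonnegativity, and ergodicity of the shift $T$ coming from the irreducible aperiodic chain $\mathcal Y$ in its stationary measure. Your explicit remark on the integrability of $X_1$ is a harmless extra detail the paper leaves implicit.
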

\begin{proof}

Let $\omega=\theta_1,\theta_2, \dots \in \Omega_\mathcal W$ be arbitrary.
By Lemma~\ref{L:norm}, for every $n\ge 1$ we have $||M(g_{\theta_n})\cdots M(g_{\theta_1})||\ge 1$ and therefore  $X_n(\omega)\ge 0$. Since $X_0=0$, we also have $X_0(\omega)\ge 0$.
For $m,n\ge 1$ we have $X_m(T^n\omega)=\log ||M(g_{\theta_{n+m}})\cdots M(g_{\theta_{n+1}})||$ and $X_n(\omega)=\log ||M(g_{\theta_n}) \cdots  M(g_{\theta_1})||$. 
Since \[||M(g_{\theta_{n+m}})\cdots M(g_{\theta_1})||\le ||M(g_{\theta_{n+m}})\cdots M(g_{\theta_{n+1}})||\cdot ||M(g_{\theta_n}) \cdots  M(g_{\theta_1})||,\] we also have \[\log ||M(g_{\theta_{n+m}})\cdots M(g_{\theta_1})||\le \log ||M(g_{\theta_{n+m}})\cdots M(g_{\theta_{n+1}})|| + \log ||M(g_{\theta_n}) \cdots  M(g_{\theta_1})||,\] that is 
$X_{n+m}(\omega)\le X_n(\omega)+X_m(T^n\omega)$. It is easy to check that $X_{n+m}(\omega)\le X_n(\omega)+X_m(T^n\omega)$ also holds if at least one of $m,n$ is equal to $0$.

Since $T$ is a $\mu_\mathcal W$-ergodic transformation, Proposition~\ref{P:King} (Kingman's Subadditive Ergodic Theorem) now implies that there 
exists a number $\ell_1\ge 0$ such that for $\mu_\mathcal W$-a.e. trajectory  $\omega=\theta_1,\theta_2,\dots$ of $\mathcal W$ we have
\[
\lim_{n\to\infty} \frac{1}{n} \log ||M(g_{\theta_n}) \cdots M(g_{\theta_1})||=\ell_1.
\]
\end{proof}

\begin{prop}\label{P:pos}
Let $r\ge 3$ and let $\ell_1$ be provided by Proposition~\ref{P:SET}. Then $\ell_1>0$.
\end{prop}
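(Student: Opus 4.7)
The plan is a short direct argument exploiting the fact that all matrices in the walk are elementary (so entrywise dominate the identity), together with the existence of the pINP-prevention block $\mathfrak s$ of Proposition~\ref{P:CP} whose transition matrix is strictly positive with PF eigenvalue strictly greater than $1$.

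First I would set $P := M(g_\mathfrak s)$ for the cyclically admissible sequence $\mathfrak s = \theta_1',\dots,\theta_q'$ from Proposition~\ref{P:CP}. By Proposition~\ref{P:CP}(2), $P > 0$, and by Proposition~\ref{P:CP}(1) together with Proposition~\ref{P:stretch}, the PF eigenvalue $\Lambda := \lambda(P)$ satisfies $\Lambda > 1$; standard Perron--Frobenius asymptotics yield a constant $c' > 0$ with $\|P^k\| \geq c' \Lambda^k$ for every $k \geq 1$. The key elementary observation is that every $M \in S'$ has the form $I + E_{ij}$ for some $i \neq j$, so $M \geq I$ entrywise; hence any finite product $Q$ of matrices from $S'$ also satisfies $Q \geq I$ entrywise, and $A Q B \geq A B$ entrywise for any nonnegative $A, B$. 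Now if the block $\mathfrak s$ occurs disjointly inside $\theta_1,\dots,\theta_n$ at positions $i_1 < \dots < i_k$ (so $i_{j+1} \geq i_j + q$), we have
\[
M(g_{\theta_n}) \cdots M(g_{\theta_1}) \;=\; Q_k\, P\, Q_{k-1}\, P\, \cdots\, Q_1\, P\, Q_0,
\]
where each $Q_j$ is a (possibly empty) product of elementary matrices from $S'$, hence $Q_j \geq I$. Replacing each $Q_j$ by $I$ one at a time gives entrywise $M(g_{\theta_n}) \cdots M(g_{\theta_1}) \geq P^k$, and by monotonicity of the operator norm on nonnegative matrices,
\[
\|M(g_{\theta_n}) \cdots M(g_{\theta_1})\| \;\geq\; \|P^k\| \;\geq\; c' \Lambda^{k}.
\]

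Finally, writing $N(n,\omega)$ for the maximal number of disjoint occurrences of the block $\mathfrak s$ inside $\theta_1,\dots,\theta_n$, I would invoke ergodicity of the chain. By Lemma~\ref{lem:irr-aper} the Markov chain $\mathcal Y$ is irreducible, aperiodic, and stationary under $\mu_r$, so the shift $T$ on $\Omega_\mathcal W$ is $\mu_\mathcal W$-ergodic; the stationary probability that $\mathfrak s$ starts at a fixed position is some $\beta > 0$. Birkhoff's ergodic theorem applied to the indicator of ``$\mathfrak s$ begins at position $i$'', combined with a greedy extraction of disjoint occurrences (losing at most a factor $q$), yields $\liminf_n N(n,\omega)/n \geq \beta/q > 0$ for $\mu_\mathcal W$-a.e.\ $\omega$. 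Hence
\[
\frac{1}{n}\log \|M(g_{\theta_n})\cdots M(g_{\theta_1})\| \;\geq\; \frac{N(n,\omega)}{n}\log \Lambda \,+\, \frac{\log c'}{n},
\]
and Proposition~\ref{P:SET} identifies the a.s.\ limit of the left-hand side as $\ell_1$, giving $\ell_1 \geq (\beta/q) \log \Lambda > 0$. The only nontrivial input is the entrywise inequality $M(g_{\theta_n}) \cdots M(g_{\theta_1}) \geq P^k$, which would be the main obstacle if the generators were arbitrary $SL(r,\mathbb Z)$ matrices; here it is essentially immediate from the observation that the transition matrices of the Nielsen generators $[x \mapsto yx]$ are positive elementary matrices, each dominating $I$ entrywise.
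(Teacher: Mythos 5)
Your argument is correct and follows essentially the same route as the paper's proof: use the chain's ergodicity/stationarity to extract a positive asymptotic density of disjoint occurrences of the block $\mathfrak s$ from Proposition~\ref{P:CP}, decompose the matrix product into $\mathfrak s$-blocks and intervening products of elementary matrices, and use the fact that the intervening factors cannot decrease the relevant quantity to force exponential growth of $\|M(g_{\theta_n})\cdots M(g_{\theta_1})\|$. The only cosmetic differences are that the paper first replaces $\mathfrak s$ by a power so that every entry of $M(g_{\mathfrak s})$ is at least $2$ and argues with vector norms via Lemma~\ref{L:norm}, whereas you use the entrywise domination $M(g_{\theta_n})\cdots M(g_{\theta_1})\ge P^{k}$ together with $\|P^{k}\|\ge c'\lambda(P)^{k}$, and you invoke Birkhoff's ergodic theorem where the paper cites the law of large numbers for the chain $\mathcal Y$.
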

\begin{proof}
It is possible to derive the fact that $\ell_1>0$ from a general result of Guivarc'h~\cite{Gui} on the simplicity of the Lyapunov spectrum in the context of the Multiplicative Ergodic Theorem for matrix-valued Markov chains satisfying some natural ``irreducibility'' and ``contractibility'' conditions (which are satisfied in our case).

We provide a direct and more elementary argument for $\ell_1>0$ here.

By Proposition~\ref{P:CP}, there exists a cyclically admissible sequence $\mathfrak s$ such that $M(g_\mathfrak s)>0$. By replacing $\mathfrak s$ by its positive power, we can further assume that every entry of $M(g_\mathfrak s)$ is $\ge 2$.

For a sequence $\mathfrak t=\theta_1,\dots, \theta_n$ denote by $\langle \mathfrak s,\mathfrak t\rangle$ the number of times $\mathfrak s$ occurs as a sub-block of $\mathfrak t$.

By the Law of Large Numbers applied to $\mathcal Y$, there exists $\alpha>0$ such that, for $\mu_\mathcal W$-a.e. trajectory $\omega=\theta_1,\theta_2,\dots $ of $\mathcal W$, we have
\[
\lim_{n\to\infty}\frac{\langle \mathfrak s, \mathfrak t_n\rangle}{n}=\alpha>0,
\]
where $\mathfrak t_n=\theta_1,\dots,\theta_n$.

Let $\omega=\theta_1,\theta_2,\dots $ be a  $\mu_\mathcal W$-random trajectory of $\mathcal W$. Then for $n>>1$ there are $n\alpha+ o(n)$ occurrences of $\mathfrak s$ in $\mathfrak t_n$. Hence, we can find $n\alpha/k+ (1/k)o(n)$ disjoint occurrences of $\mathfrak s$ in $\mathfrak t_n$, where $k$ is the length of $\mathfrak s$.
Thus, we can subdivide $M(g_{\theta_n})\cdots M(g_{\theta_1})$ as a product
\[
M(g_{\theta_n}) \cdots M(g_{\theta_1})= C_qB \cdots C_1BC_0,
\]
where $B=M(g_\mathfrak s)$, where $q=n\alpha/k+ (1/k)o(n) \ge n\alpha/(2k)$ and where each $C_i$ is a product of several consecutive matrices from the product $M(g_{\theta_n})\cdots M(g_{\theta_1})$.
Recall that every entry in $B$ is $\ge 2$. Thus, for every vector $v\in \mathbb R^r$ we have
\begin{gather*}
||M(g_{\theta_n})\cdots M(g_{\theta_1})v||=||C_qBC_{q-1}   \cdots C_1BC_0v||\ge ||BC_{q-1}  \cdots C_1BC_0v||\ge \\
2||C_{q-1} B  \cdots C_1BC_0v||\ge \dots \ge 2^q||C_0v||\ge 2^q||v||\ge  2^{n\alpha/(2k)}||v||.
\end{gather*}
Therefore, $||M(g_{\theta_n})\cdots M(g_{\theta_1})||\ge 2^{n\alpha/(2k)}$ and $\log ||M(g_{\theta_n})\cdots M(g_{\theta_1})||\ge n\alpha/(2k)\log 2$, so that
\[
\liminf_{n\to\infty} \frac{1}{n}\log ||M(g_{\theta_n})\cdots M(g_{\theta_1})||\ge \alpha/(2k)\log 2.
\]
Hence, $\ell_1\ge \alpha/(2k)\log 2>0$.
\end{proof}

The growth of the spectral radius of $M(g_{\theta_n})\cdots M(g_{\theta_1})$ is more important for our purposes than the growth of $||M(g_{\theta_n})\cdots M(g_{\theta_1})||$.
Luckily, in our situation, these two quantities grow roughly at the same rate, as follows from the following general result due to Terence Tao. The proof of this fact was communicated to us by Tao on MathOverflow. Since the statement of Proposition~\ref{P:Tao} does not seem to be available in the literature, we include Tao's proof here.

\begin{prop}\label{P:Tao}
Let $M=(m_{ij})_{ij=1}^r$ be an $r\times r$ matrix with real coefficients such that all $m_{ij}\ge 1$. Then $\lambda(M)\le ||M||\le r \left(\lambda(M)\right)^2$.
\end{prop}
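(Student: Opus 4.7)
The inequality $\lambda(M)\le\|M\|$ is standard for any submultiplicative matrix norm: if $v$ is a unit eigenvector corresponding to an eigenvalue $\mu$ of $M$ with $|\mu|=\lambda(M)$, then $\lambda(M)=\|\mu v\|=\|Mv\|\le\|M\|$. So the real content of the proposition is the upper bound $\|M\|\le r\lambda(M)^2$.

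The plan for this upper bound is to extract information from the Perron–Frobenius eigenvector of $M$. Since every entry satisfies $m_{ij}\ge 1>0$, the matrix $M$ is strictly positive, and Perron–Frobenius theory provides a strictly positive eigenvector $v=(v_1,\ldots,v_r)$ with $Mv=\lambda v$, where $\lambda=\lambda(M)$. Normalize $v$ so that $\max_i v_i=1$. The first key step is to show that $v_i\ge 1/\lambda$ for every $i$. This is immediate from $\lambda v_i=\sum_j m_{ij}v_j\ge\sum_j v_j\ge\max_j v_j=1$, using only $m_{ij}\ge 1$ and the normalization.

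The second key step uses the two-sided control $1/\lambda\le v_i\le 1$ to bound the row sums $R_i:=\sum_j m_{ij}$ of $M$. For each $i$,
\[
\lambda v_i=\sum_j m_{ij}v_j\ge (\min_k v_k)R_i\ge R_i/\lambda,
\]
so $R_i\le\lambda^2 v_i\le\lambda^2$. Applying the same reasoning to $M^T$ (which has entries $\ge 1$, the same spectral radius, and its own positive Perron eigenvector) yields the analogous bound $C_j:=\sum_i m_{ij}\le\lambda^2$ on every column sum.

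The final step converts these row and column sum bounds into a bound on $\|M\|$. For the spectral (operator) norm, the classical inequality $\|M\|^2\le\|M\|_1\|M\|_\infty=(\max_j C_j)(\max_i R_i)\le\lambda^4$ gives $\|M\|\le\lambda^2\le r\lambda^2$; for the Frobenius norm, $\|M\|_F^2=\sum_{ij}m_{ij}^2\le(\max_{ij}m_{ij})(\sum_{ij}m_{ij})\le\lambda^2\cdot r\lambda^2$, so $\|M\|_F\le\sqrt{r}\,\lambda^2\le r\lambda^2$, and the analogous estimate works for any of the standard submultiplicative matrix norms. No step here is genuinely hard; the one observation doing the real work is that the hypothesis $m_{ij}\ge 1$ forces the normalized Perron eigenvector to lie in $[1/\lambda,1]^r$, which is precisely what translates the coarse combinatorial positivity condition into the spectral-radius bound.
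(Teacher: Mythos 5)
Your proof is correct, and it takes a genuinely different route from the one in the paper. The paper's argument (due to Tao) never touches the Perron--Frobenius eigenvector for the upper bound: it works entrywise, using the coordinatewise partial order to observe that $Me_j\ge m_{ij}e_i$ and $Me_i\ge e_j$, hence $M^{2n}e_j\ge m_{ij}^n e_j$, and then invokes the spectral radius formula $\lambda(M)=\lim_n\sqrt[n]{\|M^n\|}$ to conclude $m_{ij}\le \lambda(M)^2$ for every entry, from which $\|M\|\le r\,\lambda(M)^2$ follows. You instead exploit the PF eigenvector directly: normalizing $\max_i v_i=1$, the hypothesis $m_{ij}\ge 1$ forces $v_i\ge 1/\lambda$ for all $i$, and this two-sided pinching of $v$ bounds every row sum (and, via $M^T$, every column sum) by $\lambda^2$, after which $\|M\|_2^2\le\|M\|_1\|M\|_\infty$ finishes the job. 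Your route buys a quantitatively sharper conclusion, $\|M\|_2\le\lambda(M)^2$ with no factor of $r$ (and $\sqrt{r}\,\lambda^2$ for the Frobenius norm), which is more than the proposition asks for; the paper's route buys an elementary, eigenvector-free argument whose only nontrivial input is the Gelfand formula, and which directly produces the entrywise bound $\max_{ij}m_{ij}\le\lambda(M)^2$. One small remark: in your first sentence the eigenvector for an eigenvalue of maximal modulus could a priori be complex, but in this strictly positive setting Perron--Frobenius guarantees $\lambda(M)$ itself is an eigenvalue with a positive eigenvector, so the lower bound is clean exactly as the paper states it.
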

\begin{proof}
Since all entries of $M$ are $>0$, the spectral radius $\lambda(M)>0$ is the Perron-Frobenius eigenvalue of $M$. There exists a nonzero vector $u\in \mathbb R^r$ such that $Mu=\lambda(M)u$ and hence $||Mu||/||u||=\lambda(M)$.  The inequality $\lambda(M)\le ||M||$ is obvious since $||M||=\max_{v\in \mathbb R^r\setminus\{0\}} \frac{||Mv||}{||v||}$.

Recall that, by the Spectral Theorem, $||\lambda(M)||=\lim_{n\to\infty} \sqrt[n]{||M^n||}$.  
Consider the ``product'' partial ordering $\le$ on $\mathbb R^r$ where
$(x_1,\dots ,x_n)\le (y_1,\dots ,y_n)$ whenever $x_i\le y_i$ for $i=1, \dots, r$.

Observe that if $v,u\in \mathbb R^r$ are  vectors with non-negative coordinates and such that $v\le u$, then $||v||\le ||u||$ and
$Mv\le Mu$.

Also notice that $Me_j\ge m_{ij}e_i$ and $Me_i\ge e_j$.
Hence $M^2e_j\ge m_{ij}Me_i\ge m_{ij}e_j$, so that $M^2e_j\ge m_{ij}e_j$.
Iterating this argument we get $M^{2n}e_j\ge m_{ij}^ne_j$ and hence, by taking the norm of both sides, we get
$m_{ij}^n \le ||M^{2n}e_j||\le ||M^{2n}||$.
By taking the $n$-th root and passing to the limit, by the Spectral Theorem we get
$m_{ij}\le \lambda(M)^2$ for all $1\le i,j \le r$.

Hence, $\max m_{ij}\le  \lambda(M)^2$, and therefore $||M||\le r \lambda(M)^2$. 
\end{proof}

\begin{thm}\label{T:growth}
Let $r\ge 3$ and let $\ell_1$ be provided by Proposition~\ref{P:SET} (so that $\ell_1>0$ by Proposition~\ref{P:pos}).

Then, for $\mu_\mathcal W$-a.e. trajectory $\omega=\theta_1,\theta_2,\dots $ of $\mathcal W$, the following hold:

(1)
\[
0<\ell_1/2\le \liminf_{n\to\infty} \frac{1}{n}\log  \lambda(g_{\mathfrak t_n}) \le \limsup_{n\to\infty} \frac{1}{n}\log \lambda(g_{\mathfrak t_n}) \le \ell_1,
\]
where $\mathfrak t_n=\theta_1,\dots, \theta_n$ for $n\ge 1$.

(2) For any strictly increasing sequence of indices $1\le n_1<n_2<n_3 <\dots $ such that for each $i\ge 1$ $\mathfrak t_{n_i}=\theta_1,\dots, \theta_{n_i}$ is cyclically admissible we have
\[
0<\ell_1/2\le \liminf_{i\to\infty} \frac{1}{n_i}\log  \lambda(\phi_{n_i}) \le \limsup_{i\to\infty} \frac{1}{n_i} \log \lambda(\phi_{n_i}) \le \ell_1,
\]
where $\phi_{n_i}\in\Out(F_r)$ is the outer automorphism represented by $g_{\mathfrak t_{n_i}}$.
\end{thm}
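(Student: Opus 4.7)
The plan is to transfer the asymptotic growth of $\|M(g_{\mathfrak t_n})\|$ supplied by Proposition~\ref{P:SET} to the spectral radius $\lambda(g_{\mathfrak t_n}) = \lambda(M(g_{\mathfrak t_n}))$ via the two-sided comparison $\lambda \le \|M\| \le r\lambda^2$ of Proposition~\ref{P:Tao}. Since that comparison requires $M$ to have all entries $\ge 1$, the main preparatory step is to show that, $\mu_\mathcal W$-a.s., the matrix $M(g_{\mathfrak t_n})$ is entry-wise positive for all sufficiently large $n$.

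First I would fix the cyclically admissible block $\mathfrak s = \theta_1',\dots,\theta_q'$ produced by Proposition~\ref{P:CP}, for which $M(g_\mathfrak s) > 0$. Since $\mathcal Y$ is an irreducible aperiodic finite state Markov chain and $\mathfrak s$ is admissible, a routine Borel--Cantelli / ergodicity argument shows that for $\mu_\mathcal W$-a.e.\ $\omega = \theta_1,\theta_2,\dots$ there exists $N_\omega$ such that $\mathfrak s$ occurs as a sub-block of $\mathfrak t_n$ for every $n \ge N_\omega$. Writing $\mathfrak t_n = \mathfrak t''\cdot\mathfrak s\cdot\mathfrak t'$ accordingly gives $g_{\mathfrak t_n} = g_{\mathfrak t''}\circ g_\mathfrak s\circ g_{\mathfrak t'}$. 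Every column of $M(g_{\mathfrak t'})$ is nonzero because $g_{\mathfrak t'}$ sends each edge to a nontrivial edge-path, and since $M(g_\mathfrak s) > 0$ the product $M(g_\mathfrak s)M(g_{\mathfrak t'}) = M(g_\mathfrak s\circ g_{\mathfrak t'})$ is strictly positive. Now $g_{\mathfrak t''}$ is surjective, and Lemma~\ref{L:>0} applied to $f = g_\mathfrak s\circ g_{\mathfrak t'}$ and $g = g_{\mathfrak t''}$ yields $M(g_{\mathfrak t_n}) > 0$, so in particular all entries are positive integers, hence $\ge 1$.

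With positivity in hand, Proposition~\ref{P:Tao} gives, for $n \ge N_\omega$,
\[
\lambda(g_{\mathfrak t_n}) \;\le\; \|M(g_{\mathfrak t_n})\| \;\le\; r\,\lambda(g_{\mathfrak t_n})^2.
\]
Taking $\log$, dividing by $n$, letting $n\to\infty$, and substituting $\tfrac{1}{n}\log\|M(g_{\mathfrak t_n})\|\to\ell_1$ from Proposition~\ref{P:SET}, the left inequality yields $\limsup \tfrac{1}{n}\log\lambda(g_{\mathfrak t_n}) \le \ell_1$ and the right (after absorbing $\tfrac{\log r}{n}\to 0$) yields $\liminf \tfrac{1}{n}\log\lambda(g_{\mathfrak t_n}) \ge \ell_1/2$. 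Strict positivity $\ell_1/2 > 0$ is Proposition~\ref{P:pos}. This completes part (1).

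For part (2), fix a subsequence $n_1 < n_2 < \dots$ along which $\mathfrak t_{n_i}$ is cyclically admissible. Theorem~\ref{T:tt} then makes $g_{\mathfrak t_{n_i}}\colon R_r\to R_r$ a train track representative of $\phi_{n_i}$, and once $n_i \ge N_\omega$ this map is expanding (each edge has image of combinatorial length $\ge r$ because $M > 0$ with integer entries) while $M(g_{\mathfrak t_{n_i}})$ is positive, hence irreducible. Proposition~\ref{P:stretch} then identifies $\lambda(\phi_{n_i}) = \lambda(g_{\mathfrak t_{n_i}})$, so the bounds from part (1) descend along the subsequence $(n_i)$. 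The expected main obstacle is the ``contamination'' step producing $M(g_{\mathfrak t_n}) > 0$: Lemma~\ref{L:>0} is asymmetric (it needs $M(f) > 0$, not mere surjectivity of $f$), so care is required when composing on the right of the positive block $\mathfrak s$, which is handled by the elementary observation that a strictly positive matrix multiplied by a matrix with no zero column remains strictly positive.
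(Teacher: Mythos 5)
Your proposal is correct and follows essentially the same route as the paper: almost-sure occurrence of the positive block $\mathfrak s$ from Proposition~\ref{P:CP} forces $M(g_{\mathfrak t_n})>0$ for all large $n$, Proposition~\ref{P:Tao} together with Propositions~\ref{P:SET} and~\ref{P:pos} gives part (1), and Theorem~\ref{T:tt} plus Proposition~\ref{P:stretch} identifies $\lambda(g_{\mathfrak t_{n_i}})=\lambda(\phi_{n_i})$ along cyclically admissible times for part (2). Your explicit handling of the asymmetry of Lemma~\ref{L:>0} (positive matrix times a no-zero-column matrix, then compose with the surjective tail) is a slightly more careful rendering of a step the paper leaves implicit, but it is not a different argument.
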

\begin{proof}

Let $\mathfrak s$ be the admissible sequence provided by Proposition~\ref{P:CP}, so that $M(g_\mathfrak s)>0$.

Then, for $\mu_\mathcal W$-a.e. trajectory $\omega=\theta_1,\theta_2,\dots $ of $\mathcal W$ the sequence $\mathfrak s$ has infinitely many occurrences in $\omega$.
Let $n(\omega)\ge 1$ be such that $\mathfrak t_\omega=\theta_1,\dots, \theta_{n(\omega)}$ ends in $\mathfrak s$. Then, for every $n\ge n(\omega)$, we have $M(g_{\mathfrak t_n})>0$. Then, by Proposition~\ref{P:Tao}, for every $n\ge n(\omega)$ we have $\sqrt{r} \sqrt{||M(g_{\mathfrak t_n})||} \le \lambda(g_{\mathfrak t_n}) \le ||M(g_{\mathfrak t_n})||$. Hence, by Proposition~\ref{P:SET} and Proposition~\ref{P:pos},
\[
0<\ell_1/2\le \liminf_{n\to\infty} \frac{1}{n} \log
\lambda(g_{\mathfrak t_n}) \le \limsup_{n\to\infty} \frac{1}{n} \log \lambda(g_{\mathfrak t_n}) \le \ell_1,
\]
as required, so that part (1) of the theorem is verified.  Note further that in this situation for every $n\ge n(\omega)$ such that $\mathfrak t_n$ is cyclically admissible, Theorem~\ref{T:tt} implies that $g_{\mathfrak t_n}$ is an expanding irreducible train track representative of $\phi_n$ with $M(g_{\mathfrak t_n})>0$. Therefore, by Proposition~\ref{P:stretch}, we have  $\lambda(g_{\mathfrak t_n})=\lambda(\phi_n)$ for every such $n$.  Hence part (2) of the theorem holds as well.
\end{proof}

For $\Phi\in \Aut(F_r)$, denote $|\Phi|_A:=\max_{a\in A}
|\Phi(a)|_A$. Let $Q=\{\psi_1,\dots,\psi_m\}$ be any finite generating
set of $\Out(F_r)$ such that $Q=Q^{-1}$. For an element $\phi\in
\Out(F_r)$, denote by $|\phi|_Q$ the geodesic word-length of $\phi$
with respect to the generating set $Q$ of $\Out(F_r)$; that is
$|\phi|_Q$ is the smallest $n$ such that $\phi$ can be written as a
product $\phi=\psi_{i_1}\dots \psi_{i_n}$ where $\psi_{i_j}\in Q$.

For each $\psi_i$, choose an automorphism $\Psi_i\in \Aut(F_r)$ in the outer automorphism class $\psi_i$. Finally, put $|Q|_A:=\max_{i=1}^m |\Psi_i|_A$.  

Recall from Definition~\ref{D:stretch} the definition of the stretch factor $\lambda(\phi)\ge 1$ for a $\phi\in \Out(F_r)$.

\begin{lem}\label{L:ST}
Let $r\ge 2$ and let $Q=Q^{-1}$ be a finite generating set of $\Out(F_r)$. Then the following hold:
\begin{enumerate}
\item  For each $\phi\in\Out(F_r)$ and representative $\Phi\in \Aut(F_r)$ in the outer automorphism class $\phi$, we have $\lambda(\phi)\le |\Phi|_A$.
\item If $\phi=\psi_{i_1}\dots \psi_{i_n}$ is a word of length $n$ over $Q$, then $\lambda(\phi)\le (|Q|_A)^n$.  
\item For any $\phi\in \Out(F_r)$, we have $\lambda(\phi)\le (|Q|_A)^{|\phi|_Q}$.
\end{enumerate}
\end{lem}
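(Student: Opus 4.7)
The plan is to reduce everything to the elementary observation that an automorphism $\Phi \in \Aut(F_r)$ multiplies word length in $A$ by at most $|\Phi|_A$, so that iterating and taking $n$-th roots recovers the stretch factor bound in (1); submultiplicativity of $|\cdot|_A$ under composition then yields (2), and (3) is an immediate consequence.

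For part (1), I would fix a representative $\Phi \in \Aut(F_r)$ of $\phi$ and set $L = |\Phi|_A$. For any $w \in F_r \setminus \{1\}$, write $w = b_1 \cdots b_k$ as a freely reduced word over $A^{\pm 1}$ of length $k = |w|_A$. Then $\Phi(w) = \Phi(b_1) \cdots \Phi(b_k)$ has unreduced length at most $kL$, hence $|\Phi(w)|_A \le L \cdot |w|_A$; iterating gives $|\Phi^n(w)|_A \le L^n \cdot |w|_A$ for every $n \ge 1$. Since the cyclically reduced length satisfies $\|u\|_A \le |u|_A$ and only depends on the conjugacy class, we obtain $\|\phi^n(w)\|_A \le L^n \cdot |w|_A$. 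Taking the $n$-th root and passing to the limsup gives $\lambda(\phi, w) \le L$, and then taking the supremum over $w$ yields $\lambda(\phi) \le L = |\Phi|_A$.

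For part (2), given $\phi = \psi_{i_1} \cdots \psi_{i_n}$, the composition $\Phi = \Psi_{i_1} \circ \cdots \circ \Psi_{i_n}$ is a representative of $\phi$ in $\Aut(F_r)$. The key intermediate claim is that $|\Psi \circ \Xi|_A \le |\Psi|_A \cdot |\Xi|_A$ for all $\Psi, \Xi \in \Aut(F_r)$, which follows directly from the length-multiplication bound from part (1) applied to $\Psi$ and the element $\Xi(a) \in F_r$: for each $a \in A$ one has $|\Psi(\Xi(a))|_A \le |\Psi|_A \cdot |\Xi(a)|_A \le |\Psi|_A \cdot |\Xi|_A$. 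An easy induction on $n$ then gives $|\Phi|_A \le \prod_{j=1}^n |\Psi_{i_j}|_A \le (|Q|_A)^n$, and applying (1) to this $\Phi$ produces $\lambda(\phi) \le (|Q|_A)^n$. For part (3), choosing any geodesic expression $\phi = \psi_{i_1} \cdots \psi_{i_{|\phi|_Q}}$ over $Q$ and applying (2) gives $\lambda(\phi) \le (|Q|_A)^{|\phi|_Q}$ at once.

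I do not anticipate any real obstacle here beyond careful bookkeeping; the statement is a routine length-growth estimate. The only mild subtlety is to keep track of the distinction between $|\cdot|_A$ (applied to group elements) and $\|\cdot\|_A$ (applied to conjugacy classes, i.e.\ cyclically reduced length), and to notice that the inequality $\|u\|_A \le |u|_A$ together with the conjugation-invariance of $\|\cdot\|_A$ lets one freely pass between a representative $\Phi$ and its outer class $\phi$ throughout the argument.
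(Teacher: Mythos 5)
Your proof is correct and follows essentially the same route as the paper: the bound $\|\phi^n(w)\|_A \le |\Phi^n(w)|_A \le |w|_A\,|\Phi|_A^n$ followed by $n$-th roots gives (1), submultiplicativity of $|\cdot|_A$ under composition gives (2), and (3) follows by choosing a geodesic word over $Q$. The only difference is that you spell out the submultiplicativity step that the paper leaves implicit, which is harmless.
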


\begin{proof}
Note that, for each $1 \ne w \in F_r$ and each $n \ge 1$, we have $||\phi^n(w)||_A \le |\Phi^n(w)|_A \le |w|_A  |\Phi|_A^n$ and, by taking $n$-th roots and passing to the limit, we get $\lambda(\phi,w)\le  |\Phi|_A$. Therefore, by the definition of the stretch factor of an element of $\Out(F_r)$ (Definition~\ref{D:stretch}), we have that $\lambda(\phi)=\sup_{w\in F_r-\{1\}} \lambda(\phi,w) \le |\Phi|_A$. Thus (1) is verified.

Part (2) follows from part (1) since, if $\phi=\psi_{i_1}\dots \psi_{i_n}$ is a word of length $n$ over $Q$, then $|\Phi|_A \le (|Q|_A)^n$, where $\Phi=\Psi_{i_1}\dots \Psi_{i_n}$.

Part (2) directly implies part (3).
\end{proof}

We can now prove that the random walk $\mathcal W$ has a positive linear rate of escape with respect to the word metric on $\Out(F_r)$:

\begin{thm}\label{T:LRE}
Let $r\ge 3$ and let $Q$ be a finite generating set of $\Out(F_r)$ such that $Q=Q^{-1}$. Then there exists a constant $c>0$ such that, for $\mu_\mathcal W$-a.e. trajectory $\omega=\theta_1,\theta_2,\dots $ of $\mathcal W$,
\[
\lim_{n \to \infty} \frac{1}{n} |\theta_n \dots \theta_1|_Q =c.
\]
\end{thm}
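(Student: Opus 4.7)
The plan is to apply Kingman's subadditive ergodic theorem (Proposition~\ref{P:King}) to the word-length cocycle and then combine Theorem~\ref{T:growth} with Lemma~\ref{L:ST} to show that the resulting limit is strictly positive. Concretely, I set
\[
Y_n(\omega):=|\phi_n|_Q=|\theta_n\circ\cdots\circ\theta_1|_Q,\qquad \omega=\theta_1,\theta_2,\dots\in\Omega_{\mathcal W}.
\]
Writing $\phi_{n+m}=(\theta_{n+m}\circ\cdots\circ\theta_{n+1})\circ\phi_n$ and invoking the triangle inequality for $|\cdot|_Q$ gives the cocycle bound $Y_{n+m}(\omega)\le Y_n(\omega)+Y_m(T^n\omega)$, so $\{Y_n\}$ satisfies the hypotheses of Proposition~\ref{P:King} relative to the shift $T$ on $\Omega_{\mathcal W}$. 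Since $\mathcal Y$ is an irreducible aperiodic finite state Markov chain (Lemma~\ref{lem:irr-aper}), the associated Markov shift $T$ is $\mu_{\mathcal W}$-ergodic by the standard ergodicity theorem for Markov shifts (this ergodicity was already invoked in the proof of Proposition~\ref{P:SET}). Kingman's theorem therefore produces a constant $c\ge 0$ such that $Y_n/n\to c$ for $\mu_{\mathcal W}$-a.e.\ $\omega$.

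The substantive step is to prove $c>0$. First I observe that for $\mu_{\mathcal W}$-a.e.\ trajectory $\omega=\theta_1,\theta_2,\dots$ there is an infinite sequence $n_1<n_2<\cdots$ such that each $\mathfrak t_{n_i}=\theta_1,\dots,\theta_{n_i}$ is cyclically admissible. Indeed, by Lemma~\ref{lem:4r-6} the set $S_-(\theta_1)$ is non-empty, and recurrence of the irreducible finite-state chain $\mathcal Y$ guarantees that conditional on $\theta_1$ the chain almost surely visits $S_-(\theta_1)$ infinitely often; every such visit yields a cyclically admissible prefix. Along the subsequence $\{n_i\}$, Theorem~\ref{T:growth}(2) supplies the exponential lower bound
\[
\liminf_{i\to\infty}\frac{1}{n_i}\log\lambda(\phi_{n_i})\ge \frac{\ell_1}{2}>0,
\]
and Lemma~\ref{L:ST}(3) converts dilatation into word length via $|\phi_{n_i}|_Q\ge \log\lambda(\phi_{n_i})/\log|Q|_A$. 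Because $\Out(F_r)$ is infinite while the subgroup of signed-permutation outer automorphisms is finite, any finite generating set of $\Out(F_r)$ must contain an element of $A$-norm at least $2$, so $\log|Q|_A>0$. Dividing by $n_i$ and letting $i\to\infty$, and using that $Y_n/n\to c$ along every subsequence, yields
\[
c\ge \frac{\ell_1}{2\log|Q|_A}>0,
\]
as needed.

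The main obstacle is genuinely the positivity step, and its content is essentially the train-track/stretch-factor analysis already completed: Theorem~\ref{T:growth} rests on the positive-Lyapunov-exponent estimate of Proposition~\ref{P:pos}, which in turn uses the admissible sequence $\mathfrak s$ produced by Proposition~\ref{P:CP}. Granted those inputs, the remaining work is bookkeeping: checking subadditivity, exhibiting infinitely many cyclically admissible prefixes via Markov recurrence, and applying the elementary ``Lipschitz'' comparison between $\lambda(\phi)$ and $|\phi|_Q$ provided by Lemma~\ref{L:ST}.
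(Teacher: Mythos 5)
Your proposal is correct and follows essentially the same route as the paper: Kingman's subadditive ergodic theorem applied to $Z_n(\omega)=|\theta_n\dots\theta_1|_Q$ gives the a.e.\ limit $c\ge 0$, and positivity comes from combining Theorem~\ref{T:growth} along the (a.s.\ infinite) sequence of cyclically admissible prefixes with the comparison $\lambda(\phi)\le (|Q|_A)^{|\phi|_Q}$ of Lemma~\ref{L:ST}. The only differences are cosmetic refinements: you obtain the explicit bound $c\ge \ell_1/(2\log|Q|_A)$ directly instead of the paper's argument by contradiction, and you spell out two points the paper leaves implicit (Markov recurrence producing infinitely many cyclically admissible prefixes, and $\log|Q|_A>0$), both of which you justify correctly.
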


\begin{proof}
For $n\ge 1$, define $Z_n \colon \Omega_\mathcal W \to \R_{\ge 0}$ as
\[
Z_n(\theta_1, \theta_2,\dots ):=|\theta_n \dots \theta_1|_Q.
\]
Also put $Z_0=0$. Then, for $m,n\ge 1$ and any $\omega = \theta_1, \theta_2 \dots \in \Omega_\mathcal W$, we have $Z_m(T^n\omega)=|\theta_{n+m} \dots \theta_{n+1}|_Q$ and $Z_n(\omega)=|\theta_n \dots \theta_1|_Q$. 
Since $|\theta_{n+m}\dots \theta_1|_Q\le |\theta_{n+m}\dots \theta_{n+1}|_Q+|\theta_n \dots \theta_1|_Q$, it follows that 
$Z_{n+m}(\omega)\le Z_n(\omega)+Z_m(T^n\omega)$. It is easy to check that $Z_{n+m}(\omega)\le Z_n(\omega)+Z_m(T^n\omega)$ also holds if at least one of $m,n$ is equal to $0$. 

Since $T$ is a $\mu_\mathcal W$-ergodic transformation, Proposition~\ref{P:King} now implies that there 
exists a number $c\ge 0$ such that, for $\mu_\mathcal W$-a.e. trajectory $\omega=\theta_1,\theta_2,\dots$ of $\mathcal W$, we have
\[
\lim_{n\to\infty} \frac{1}{n} |\theta_n\dots \theta_1|_Q=c.
\]
It remains to show that $c>0$, that is, to rule out the possibility $c=0$.
Thus, assume that $c=0$. Then for $\mu_\mathcal W$-a.e. trajectory $\omega=\theta_1,\theta_2,\dots $ of $\mathcal W$, the word-length $|\phi_n|_Q$ grows sub-exponentially in $n$, where 
$\phi_n=\theta_n \dots \theta_1\in \Out(F_r)$.

On the other hand, for $\mu_\mathcal W$-a.e. trajectory $\omega = \theta_1,\theta_2, \dots$, there exist infinitely many indices $1\le n_1<n_2<\dots $ such that, for each $i\ge 1$, we have that $\theta_1,\dots,\theta_{n_i}$ is a cyclically admissible sequence.
Theorem~\ref{T:growth} implies that $\lambda(\phi_{n_i})$ grows exponentially fast in $n_i$. Then part (3) of Lemma~\ref{L:ST}, applied to $\phi_{n_i}=\theta_{n_i}\dots \theta_1$, implies that $|\phi_{n_i}|_Q$ must grow at least linearly fast in $n_i$. This contradicts the fact that $|\phi_n|_Q$ grows sub-exponentially in $n$. Thus, the case $c=0$ is impossible, and hence $c>0$, as required.
\end{proof}

\section{Realizability of powers of train track maps with one illegal turn by admissible compositions}\label{s:New}

In this section we show that for each train track map $g \colon R_r\to R_r$ with exactly one illegal turn, some positive power $g^p$ of $g$ can be represented as the composition of a cyclically admissible sequence, so that $g^p$ is reachable by our walk $\mathcal W$; see Theorem~\ref{t:decomp} below for a precise statement.

We assume some familiarity of the reader with Stallings folds, and only briefly recall the basics related to folds here; we refer the reader to \cite{s83,KM02} for details.

\begin{df}[Stallings folds]\label{d:folds}

Let $g \colon \Gamma\to\Gamma'$ be a regular graph-map. We say that a nondegenerate turn $\tau=\{e_1,e_2\}$ is $g$-\emph{smooth} if $Dg(\tau)$ is a nondegenerate turn in $\Gamma'$. We say that a nondegenerate turn $\tau=\{e_2,e_1\}$ is $g$-\emph{foldable} if $Dg(\tau)$ is a degenerate turn in $\Gamma'$.

Suppose that  $\tau=\{e_1,e_2\}$ is a $g$-foldable turn. Then there exist maximal nontrivial initial segments $e_1',e_2'$ of $e_1,e_2$ accordingly such that $g(e_1')=g(e_2')$ as paths. Note that this automatically means that $g$ send terminal points of $e_1',e_2'$ to a vertex of $\Gamma'$.

We consider the equivalence relation on $\Gamma$ generated by identifying $e_1'$ with $e_2'$ according to the map $g$. The quotient object is a graph $\Gamma_1$ and the quotient map $q \colon \Gamma\to\Gamma_1$ is a graph-map called a \emph{Stallings fold}, or just a \emph{fold}; then $q(e_1')=q(e_2')$ is an edge of $\Gamma_1$. There is also a natural graph-map $g' \colon \Gamma_1\to\Gamma'$ such that $g=g'\circ q \colon \Gamma\to\Gamma'$.

The fold $q$ is said to be
\begin{enumerate}
\item a \emph{complete fold} if $e_1'=e_1$, $e_2'=e_2$
\item a \emph{partial fold} if $e_1'\ne e_1$, $e_2'\ne e_2$
\item a \emph{proper full fold} if either $e_1'=e_1, e_2'\ne e_2$ or $e_1'\ne e_1, e_2'=e_2$ (i.e. if, for some $i,j$ such that $\{i,j\}=\{1,2\}$, $q$ identifies a proper initial segment of $e_i$ with the entire edge $e_j$). 
\end{enumerate}
\end{df}

Note that a fold $q$ determined by a $g$-foldable turn $\{e_1,e_2\}$ as above fails to be a homotopy equivalence if and only if $q$ is a complete fold and $t(e_1)=t(e_2)$ in $\Gamma$. 

The following important result is due to Stallings~\cite{s83}:

\begin{prop}\label{p:Sta}
Let $\Gamma,\Gamma'$ be finite connected graphs without any degree-1 vertices. Let $g \colon \Gamma\to\Gamma'$ be a regular graph-map such that $g$ is a homotopy equivalence.
Then there exists a decomposition of $g$ as a composition
\[
\Gamma = \Gamma_0 \xrightarrow{q_1} \Gamma_1 \xrightarrow{q_2} \cdots \xrightarrow{q_{n-1}} \Gamma_{n-1} \xrightarrow{q_n} \Gamma_n = \Gamma' 
\]
such that $q_i$, with $1 \leq i \leq n-1$, is a fold and $q_n$ is a graph-isomorphism (and in particular $q_n$ is a homeomorphism).
Moreover, for $1\le i<n$ the fold $q_i$ a homotopy equivalence. 
\end{prop}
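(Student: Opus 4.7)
The plan is to induct on the combinatorial complexity $c(g):=\sum_{e\in E_+\Gamma}|g(e)|$, the total combinatorial length of the images of all positively-oriented edges of $\Gamma$ under $g$. In the base case, $g$ has no $g$-foldable turn, so the derivative $Dg$ is injective on $Lk_\Gamma(v)$ for every $v\in V\Gamma$. Together with regularity, this says $g$ is locally injective, and the standard base case of Stallings' theorem on folds then gives that such a $g$, being a homotopy equivalence between finite connected graphs with no degree-$1$ vertices, must already be a graph isomorphism. In that case take $n=1$ and $q_1=g$.

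For the inductive step, pick any $g$-foldable turn $\tau=\{e_1,e_2\}$ and perform the Stallings fold it determines, identifying the maximal initial segments $e_1'\subseteq e_1$ and $e_2'\subseteq e_2$ with common image $\alpha:=g(e_1')=g(e_2')$. This yields a graph-map $q_1\colon\Gamma\to\Gamma_1$ and a factorization $g=g_1\circ q_1$ through some graph-map $g_1\colon\Gamma_1\to\Gamma'$. Four things must be verified: that $q_1$ is a homotopy equivalence, that $g_1$ is regular, that $g_1$ is a homotopy equivalence, and that $c(g_1)<c(g)$.

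The first is the only delicate point and is the only step where the homotopy-equivalence hypothesis on $g$ is used. A Stallings fold fails to be a homotopy equivalence precisely when it is a complete fold with $t(e_1)=t(e_2)$, in which case the rank of $\pi_1(\Gamma_1)$ is strictly less than that of $\pi_1(\Gamma)$; but then $g_*=g_{1*}\circ q_{1*}$ factors through the free group $\pi_1(\Gamma_1)$ whose rank is strictly less than that of $\pi_1(\Gamma')$, so $g_*$ cannot be surjective, contradicting that $g$ is a homotopy equivalence. Regularity of $g_1$ is then a chase through the identity $g(\tilde e)=g_1(q_1(\tilde e))$: if some $g_1(f)$ were non-reduced, then any $\tilde e\in E\Gamma$ such that $f$ appears in $q_1(\tilde e)$ would give $g(\tilde e)$ non-reduced, contradicting regularity of $g$ (and since $q_1$ is surjective, every edge of $\Gamma_1$ does arise this way). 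That $g_1$ is a homotopy equivalence then follows at once from the fact that both $g$ and $q_1$ are. For the complexity drop, both $e_1'$ and $e_2'$ contributed $|\alpha|\ge 1$ to $c(g)$, but after folding they merge into a single edge-path contributing $|\alpha|$ only once to $c(g_1)$, while all other contributions are unchanged; hence $c(g_1)\le c(g)-|\alpha|<c(g)$.

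Applying the inductive hypothesis to $g_1$ produces a decomposition $g_1=q_n\circ q_{n-1}\circ\cdots\circ q_2$ meeting the conclusion, and prepending $q_1$ yields the required decomposition of $g$. The only real subtlety is ruling out the ``bad'' complete fold via the $\pi_1$-rank argument; once that is done, the remaining verifications are bookkeeping.
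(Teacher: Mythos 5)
Your overall strategy is the one the paper itself takes (the paper attributes the result to Stallings and sketches precisely this iteration of folds), and most of your verifications are sound: the criterion for when a fold fails to be a homotopy equivalence, the rank argument showing such a ``bad'' complete fold cannot occur when $g$ is a homotopy equivalence, the regularity of the factor map $g_1$, and the use of the complexity $c(g)=\sum_{e\in E_+\Gamma}|g(e)|$ as the quantity that strictly drops. Indeed $c(g)$ is the right termination device (and is what the paper uses in its own Lemma on fold decompositions of rose maps); note that a proper full fold keeps the number of edges constant and a partial fold increases it, so edge count alone would not do.

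There is, however, a concrete gap in the induction bookkeeping: your inductive hypothesis is the proposition itself, whose hypotheses require the domain to have no degree-$1$ vertices, and a single fold can destroy this. If the foldable turn $\{e_1,e_2\}$ is based at a vertex $v$ of degree $2$ whose only two directions are $e_1$ and $e_2$, then after the fold these directions merge and $v$ becomes a degree-$1$ vertex of $\Gamma_1$. This genuinely occurs under all your hypotheses: let $\Gamma$ have vertices $v,u$, edges $e_1,e_2$ from $v$ to $u$ and a loop $a$ at $u$, let $\Gamma'=R_2$ with petals $x,y$, and set $g(e_1)=x$, $g(e_2)=xy$, $g(a)=x$. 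This is a regular graph map and a homotopy equivalence between finite connected graphs without degree-$1$ vertices, the turn $\{e_1,e_2\}$ is foldable, and the resulting proper full fold identifies $e_1$ with the initial segment of $e_2$, leaving $v$ with a single direction, i.e.\ $\deg_{\Gamma_1}(v)=1$. So ``applying the inductive hypothesis to $g_1$'' is not legitimate as written. The repair is easy but must be made explicit: run the induction on the stronger statement in which only the codomain $\Gamma'$ is assumed to have no degree-$1$ vertices, while the domain is an arbitrary finite connected graph. Your base case only ever needs the hypothesis on the codomain (after subdividing the domain so that edges map to edges, the immersion embeds into the cover of $\Gamma'$ corresponding to the image subgroup, which is $\Gamma'$ itself, and a proper $\pi_1$-surjective subgraph would force degree-$1$ vertices of $\Gamma'$), and the inductive step is unchanged. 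With that adjustment, and granting the base-case identification of an immersed homotopy equivalence with a graph isomorphism at the same level of detail at which the paper itself asserts it, your argument goes through.
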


A Stallings fold decomposition of $g\colon \Gamma  \to \Gamma'$ in Proposition~\ref{p:Sta} can be obtained as follows (the maps $h_i$ are depicted in Figure \ref{Fi:Sta}). Put $\Gamma_0=\Gamma$ and $h_0=g \colon \Gamma_0\to\Gamma'$.
If $g$ is not a graph-automorphism already, choose a $g$-foldable turn $\{e_1,e_2\}$ in $\Gamma_0=\Gamma$.  Then take $q_1 \colon \Gamma_0\to\Gamma_1$ to be the fold determined by this turn, so that we also get a homotopy equivalence $h_1 \colon \Gamma_1\to\Gamma'$ such that $h_0=h_1\circ q_1$. Apply the same procedure to the map $h_1 \colon  \Gamma_1\to\Gamma'$ and, proceeding inductively, construct a sequence of folds $q_k \colon \Gamma_{k-1}\to\Gamma_{k}$ and maps $h_k \colon \Gamma_k\to\Gamma'$, for $k=1,2,\dots $, such that $h_k\circ q_k=h_{k-1}$. Each of $h_k,q_k$ is a homotopy equivalence. The process must terminate in a finite number of steps since, by construction, $\Gamma_k$ has fewer edges then $\Gamma_{k-1}$. If the process terminates with the map $h_n \colon \Gamma_n\to\Gamma'$, then every nondegenerate turn in $\Gamma_n$ is $h_n$-smooth, and the map $h_n \colon \Gamma_n\to\Gamma'$ is a graph-isomorphism. See the illustration of this process in Figure~\ref{Fi:Sta}.

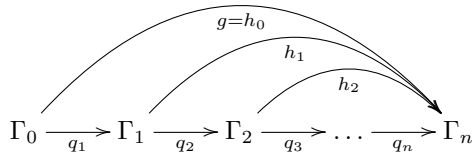
\begin{figure}[here]
\[
\xymatrix{\Gamma_0 \ar[r]_{q_1} \ar@/^4pc/[rrrr]_{g=h_0} & \Gamma_1 \ar[r]_{q_2} \ar@/^3pc/[rrr]_{h_1} & \Gamma_2 \ar[r]_{q_3} \ar@/^2pc/[rr]_{h_2} & \dots \ar[r]_{q_n}  & \Gamma_n \\}
\]
\caption{Constructing a Stallings folds decomposition}\label{Fi:Sta}
\end{figure}

\begin{lem}\label{L:StallingsFoldDecomp}
Let $r\ge 2$ and let $g \colon R_r \to R_r$ be a regular graph map such that $g$ is a homotopy equivalence and such that there is at most one $g$-foldable nondegenerate turn in $R_r$.

Then there exists a decomposition $g=q_n\circ \dots q_1$ such that:

\begin{enumerate}
\item For $i=1,\dots, n$ we have $q_i \colon \Gamma_{i-1}\to \Gamma_i$ is a regular graph map, where $\Gamma_i=\Gamma_{i-1}=R_r$.
\item For $1\le i<n$ the map $q_i$ is a proper full fold on $\Gamma_{i-1}$.
\item  The map $q_n \colon \Gamma_{n-1}\to \Gamma_n$ is a graph-isomorphism (and in particular a homeomorphism).
\end{enumerate} 
\end{lem}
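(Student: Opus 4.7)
My plan is to prove this by induction on the total combinatorial length $N(g) := \sum_{i=1}^r |g(e_i)|$. The base case is $N(g) = r$: each $|g(e_i)| = 1$, and since $g$ is a homotopy equivalence it must send each edge bijectively to an edge of $R_r$, yielding a graph-isomorphism. Set $n = 1$ and $q_1 = g$. For the inductive step I want to peel off a single proper full fold $q_1$ so that $g = h_1 \circ q_1$ for some $h_1 \colon R_r \to R_r$ to which the inductive hypothesis applies.

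Since $g$ is not a graph-isomorphism, the Stallings fold procedure described after Proposition~\ref{p:Sta} (where a regular homotopy equivalence with no foldable turn must already be a graph-isomorphism) implies that $g$ has at least one foldable nondegenerate turn; by hypothesis there is exactly one, say $\tau = \{e_1, e_2\}$. Form the fold $q_1$ of $\tau$ using the maximal initial segments $e_1', e_2'$. I need to rule out that $q_1$ is complete or partial. A complete fold would give $g(e_1) = g(e_2)$ as paths, violating $\pi_1$-injectivity of the homotopy equivalence $g$. For the partial-fold case, $\Gamma_1$ acquires a new vertex $p$ where $t(e_1')$ and $t(e_2')$ are identified, and I will check directly that $h_1$ has no foldable nondegenerate turn: at $v$ the turn $\tau$ becomes degenerate and every surviving turn at $v$ pulls back to a non-$g$-foldable turn (by uniqueness of $\tau$, together with $Dh_1 \circ Dq_1 = Dg$); at $p$ the three nondegenerate turns are distinguished by $Dh_1$ because $g(e_1)$ and $g(e_2)$ are reduced (no backtracks) and $e_1', e_2'$ were chosen maximally. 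Then $h_1$ would have to be a graph-isomorphism, impossible since $\Gamma_1$ has two vertices while $R_r$ has one. Hence $q_1$ is a proper full fold; WLOG $e_1' = e_1$ and $e_2 = e_2' e_2''$ with $e_2' \subsetneq e_2$. The identification collapses $t(e_2')$ to $v$, so $\Gamma_1$ becomes a wedge of $r$ loops at one vertex; identify $\Gamma_1 = R_r$.

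Next I verify that $h_1$ itself satisfies the hypotheses of the lemma. Regularity of $h_1$ is immediate since the $h_1$-image of each edge of $\Gamma_1$ is a sub-edge-path of the reduced paths $g(e_1)$ or $g(e_2)$ (or equals an unchanged $g(e_i)$); $h_1$ is a homotopy equivalence because $q_1$ is; and $N(h_1) = N(g) - |g(e_1)| < N(g)$. The crucial step is showing $h_1$ has at most one foldable nondegenerate turn. I will exploit $Dg = Dh_1 \circ Dq_1$ and the fact that, because $\tau$ is the only $g$-foldable turn, $Dg$ is injective on $A^{\pm 1}\setminus\{e_2\}$ (its one collision is $Dg(e_1) = Dg(e_2)$). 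Since the image of $Dq_1$ on $A^{\pm 1}$ consists of exactly $2r-1$ directions of $\Gamma_1$, injectivity of $Dg$ on $A^{\pm 1}\setminus\{e_2\}$ forces $Dh_1$ to be injective on these $2r-1$ directions, landing in a $(2r-1)$-element subset of $A^{\pm 1}$; the one remaining direction of $\Gamma_1$ (the initial direction of $e_2''$) can collide under $Dh_1$ with at most one of those $2r-1$ images, producing at most one foldable turn. The inductive hypothesis then decomposes $h_1 = q_n \circ \cdots \circ q_2$ with $q_2, \ldots, q_{n-1}$ proper full folds and $q_n$ a graph-isomorphism; prepending $q_1$ gives the desired decomposition of $g$.

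The main obstacles will be the direction-counting in both the partial-fold exclusion and the final verification that $h_1$ has at most one foldable turn. Both require careful bookkeeping of $Dq_1$ on $A^{\pm 1}$, the newly introduced directions at $p$ (for the excluded partial-fold case) or at the single vertex of $\Gamma_1 \cong R_r$ (for the proper full fold case), and exploiting the regularity of $g$ to forbid spurious backtracks or identifications.
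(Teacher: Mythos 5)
Your proposal is correct and follows essentially the same route as the paper's proof: induction on the total image length, exclusion of complete folds (your $\pi_1$-injectivity argument is equivalent to the paper's observation that such a fold is not a homotopy equivalence), exclusion of partial folds by showing the resulting map would have no foldable turn while failing to be a graph-isomorphism, and then checking that the complementary map $h_1$ again has at most one foldable nondegenerate turn. Your direction-counting via injectivity of $Dh_1$ on the $2r-1$ directions in the image of $Dq_1$ is just a compact repackaging of the paper's turn-by-turn smoothness analysis, so no substantive difference remains.
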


\begin{proof}
Recall that we have an orientation on $R_r$ so that $E_+R_r=\{e_1,\dots, e_r\}$ consists of exactly $r$ edges.
For a regular graph map $f \colon R_r\to R_r$ we define the \emph{complexity} $c(f)$ as $c(f):=\sum_{e\in E_+R_r} |f(e)|$.
Note that $c(f)\ge r$, and, assuming that $f$ is also a homotopy equivalence, we have $c(f)=r$ if and only if $f$ is a graph isomorphism.

We prove the statement of the lemma by induction on $c(g)$. 
If $c(g)=r$, then $g$ is a graph isomorphism and the conclusion of the lemma holds with $n=1$ and $q_1=g$.

Suppose now that $c(g)>r$ and that the statement of the lemma has been established for all smaller values of the complexity.
Since $c(g)>r$, there is exactly one nondegenerate $g$-foldable turn $\tau$ in $R_r$. Without loss of generality we may assume that $\tau=\{e_1,e_2\}$.  
Let
\[
R_r = \Gamma_0 \xrightarrow{q_1} \Gamma_1 \xrightarrow{h_1}  R_r,
\]
where $\Gamma_1$ is obtained from $\Gamma_0=R_r$ by applying a fold $q_1$, which is the fold corresponding to the $g$-foldable turn $\{e_1,e_2\}$. 
Thus $g=h_1\circ q_1$.

Note that $q_1 \colon R_r\to\Gamma_1$ cannot be a complete fold since in that case $q_1$ would not be a homotopy equivalence, and $\Gamma_1$ would be an $(r-1)$-rose. Thus $q_1$ is either a proper full fold or a partial fold. 

Suppose first that $q_1$ is a partial fold. Then $\Gamma_1$ would be as in Figure~\ref{Fi:partial} and $\Gamma_1$ would not be homeomorphic to $R_r$.
On the other hand, by construction, every nondegenerate turn in $\Gamma_1$ would be $h_1$-smooth. 
Indeed, the turn $e_1'',e_2''$ as in  Figure~\ref{Fi:partial}  is $h_1$-smooth since the fold $q_1$ identified maximal initial segments of $e_1,e_2$ with the same $g$-image. The turns $\overline e, e_1''$ and $\overline e, e_1''$ are $h_1$-smooth because by assumption the paths $g(e_1)$ and $g(e_2)$ are tight.  Every other nondegenerate turn in $\Gamma_1$ is already present in $\Gamma_0$ and is $g$-smooth there, and hence it is $h_1$-smooth in $\Gamma_1$. Thus there are no folds applicable to $\Gamma_1$, and yet the map $h_1 \colon \Gamma_1\to \Gamma_n$ is not a graph-isomorphism, yielding a contradiction.

\begin{figure}
\includegraphics[width=1in]{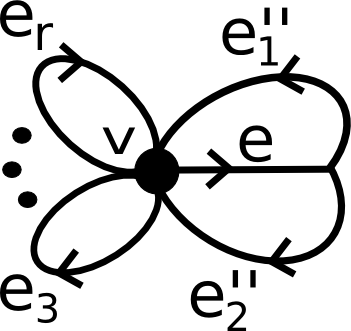}
\caption{Partial fold on the rose}\label{Fi:partial}
\end{figure}

Hence $q_1$ is a proper full fold, so that $\Gamma_1=R_r$.
Let $e_1=e_1'e_1''$, where $e_1'$ is a proper initial segment of $e_1$, and let $q_1$ completely fold $e_1'$ around the edge $e_2$. Thus $\Gamma_1$ is a rose with loop-edges $e_1'',e_2,\dots, e_r$ wedged at a single vertex $v_1$.

Note that by construction any turn formed by any two distinct directions among $\overline{e_1''}, e_2, \overline{e_2},\dots, e_r, \overline{e_r}$ is $h_1$-smooth because these turns were already present in $\Gamma_0$ and they were $h_0$-smooth. Since $h_0(e_1)$ is a tight edge-path and since $e_1'$ has been folded with $e_2$, the turn $\overline{e_2}, e_1$ is $h_1$-smooth. Thus, the only possibility for a nondegenerate $h_1$-foldable turn in $\Gamma_1$ is a turn consisting of $e_1''$ and one of the directions $e_2, e_3,\overline{e_3},\dots, e_r, \overline{e_r}$.
There is at most one among the directions $e_2, e_3,\overline{e_3},\dots, e_r, \overline{e_r}$ which can form a $h_1$-foldable turn together with $e_1''$ since otherwise some two distinct directions among $e_2, e_3,\overline{e_3},\dots, e_r, \overline{e_r}$ would have formed a $g$-foldable turn in $\Gamma_0$, contrary to the assumption that $\{e_1,e_2\}$ was the only nondegenerate $g$-foldable turn in $\Gamma_0$. Thus $\Gamma_1=R_r$ and there is at most one nondegenerate $h_1$-foldable turn in $\Gamma_1$.
Since $c(h_1)<c(g)$, by the inductive hypothesis applied to $h_1$, there exists a decomposition of $h_1$ as $h_1=q_n\circ \dots \circ q_2$
\[
\Gamma_1 \xrightarrow{q_2} \Gamma_2 \xrightarrow{q_3} \cdots \xrightarrow{q_{n-1}} \Gamma_{n-1} \xrightarrow{q_n} \Gamma_n = R_r
\] 
satisfying the requirements of Lemma~\ref{L:StallingsFoldDecomp} for $h_1$.
Then $g=q_n\circ \dots \circ q_2\circ q_1$ is the required decomposition for $g$.
\end{proof}

Recall that $F_r=F(A)$ where $A=\{a_1,\dots, a_r\}$ and that $R_r$ is equipped with the marking identifying $e_i\in E_+R_r$ with $a_i$ for $i=1,\dots, r$. 
We say that $\Psi\in \Aut(F_r)$ is a \emph{permutational} automorphism if there exists a permutation $\sigma\in S_r$ and $\epsilon_1,\dots \epsilon_r\in \{1,-1\}$ such that $\Psi(a_i)=a_{\sigma(i)}^{\epsilon_i}$ for $i=1,\dots, r$.
Recall also that with every $\Phi\in \Aut(F_r)$ we have associated its standard representative $g_\Phi \colon R_r\to R_r$, see Definition~\ref{d:standard-rep}.

The following lemma is an immediate corollary of the definitions:
\begin{lem}\label{lem:perm} Let $r\ge 2$. Then:
\begin{enumerate}
\item A regular graph map $g:R_r\to R_r$ is a graph-isomorphism if and only if $g=g_\Psi$ for some permutational automorphism $\Psi$ of $F_r$.
\item A homotopy equivalence regular graph map $g \colon R_r\to R_r$ is a single proper full fold if and only if $g=g_\theta$ (up to isotopy relative to the vertex of $R_r$) for some elementary Nielsen automorphism $\theta=[x\mapsto yx]$ of $F_r$.
\item If $\Psi$ is a permutational automorphism of $F_r$ and $\theta=[x\mapsto yx]$ is an elementary Nielsen automorphism of $F_r$, then for $\theta'=[\Psi(x)\mapsto \Psi(y)\Psi(x)]$, we have $\Psi\theta=\theta'\Psi$ in $\Aut(F_r)$ and, moreover, $g_\Psi\circ g_\theta=g_{\theta'}\circ g_\Psi$, as maps $R_r\to R_r$.
\end{enumerate}
\end{lem}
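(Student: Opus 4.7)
The plan is to verify each of the three parts by direct unpacking of the definitions; all three statements reduce to routine checks, and the main issue is just keeping track of the identifications between graph maps $R_r\to R_r$ and automorphisms of $F_r=F(a_1,\dots,a_r)$ induced by the marking $e_i\leftrightarrow a_i$.

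For part (1), I would first observe that a regular graph map $g\colon R_r\to R_r$ that is a graph-isomorphism must send each loop-edge $e_i$ to a single (oriented) loop-edge, since $g$ must be a homeomorphism of the underlying cell-complex and hence must permute the topological edges while respecting the unique vertex. Writing $g(e_i)=e_{\sigma(i)}^{\varepsilon_i}$ with $\sigma\in S_r$ and $\varepsilon_i\in\{\pm 1\}$, and reading this off as an action on $\pi_1(R_r,v)=F_r$, gives a permutational automorphism $\Psi$ with $\Psi(a_i)=a_{\sigma(i)}^{\varepsilon_i}$; the standard representative $g_\Psi$ then coincides with $g$ by Definition~\ref{d:standard-rep}. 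The converse direction is immediate from the construction of $g_\Psi$.

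For part (2), recall that by Definition~\ref{d:standard-rep}, the map $g_\theta$ for $\theta=[x\mapsto yx]$ fixes every edge $z\in A^{\pm 1}\setminus\{x^{\pm 1}\}$ and sends the edge $x$ to the combinatorial edge-path $y\,x$ of length 2. One checks directly that this is a proper full fold: the initial half of the edge $x$ (a proper initial segment) folds with the entire loop edge $\overline y$ (equivalently $y$, depending on signs), matching the definition of proper full fold in Definition~\ref{d:folds}. Conversely, if $g\colon R_r\to R_r$ is a single proper full fold which is also a homotopy equivalence, then by definition there is a nondegenerate turn $\{e_1,e_2\}$ in $R_r$ such that $g$ identifies all of $e_2$ with a proper initial segment of $e_1$ (or vice versa), and $g$ is the identity on all other loop-edges up to isotopy. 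Setting $x$ equal to the direction $e_1$ and $y$ equal to the direction folded with it yields $g(x)=yx$ (as a reduced edge-path; here one uses that $g$ is regular and a homotopy equivalence, which forces $y\ne x^{\pm 1}$) and $g(z)=z$ for all other $z$, whence $g$ agrees with $g_\theta$ for $\theta=[x\mapsto yx]$ up to isotopy rel the vertex.

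For part (3), the equality $\Psi\theta=\theta'\Psi$ is a direct computation on the free basis $A=\{a_1,\dots,a_r\}$: for every generator $z\in A^{\pm 1}$ distinct from $x^{\pm 1}$ one has $\theta(z)=z$ and $\theta'(\Psi(z))=\Psi(z)$ since $\Psi(z)\ne \Psi(x)^{\pm 1}$ (using that $\Psi$ is a permutational automorphism, so $\Psi$ permutes $A^{\pm 1}$), and therefore $\Psi\theta(z)=\Psi(z)=\theta'\Psi(z)$; for $z=x$ one has $\Psi\theta(x)=\Psi(yx)=\Psi(y)\Psi(x)=\theta'(\Psi(x))=\theta'\Psi(x)$. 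The graph-level identity $g_\Psi\circ g_\theta=g_{\theta'}\circ g_\Psi$ follows by applying this computation edge-by-edge, using Definition~\ref{d:standard-rep} (since both maps have the same associated combinatorial edge-path on each edge and agree as continuous maps thanks to the permutational nature of $g_\Psi$, which sends edges to edges without any nontrivial subdivision).

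The only possible obstacle is bookkeeping signs and orientations in the rose when passing between $e_i$ and $e_i^{-1}$ — in particular, in part (2) one has to be slightly careful to recognize that the folded direction corresponds to one of $y,\overline y$ and to check that the resulting map agrees with $g_\theta$ up to isotopy rel $\{v\}$ rather than on the nose — but no deeper argument is required.
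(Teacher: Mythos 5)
Your verification is correct and is exactly the intended argument: the paper offers no proof of this lemma (it is stated as "an immediate corollary of the definitions"), and your edge-by-edge unpacking of Definitions~\ref{d:standard-rep} and~\ref{d:folds} — including the observations that regularity rules out $y=\bar x$ and homotopy equivalence rules out $y=x$ in part (2), and that $g_\Psi$ subdivides no edges in part (3) — fills in precisely the routine checks the authors had in mind. The only quibble is the hedged "$\overline y$ (equivalently $y$)" in part (2): the fold identifies the proper initial segment of the direction $x$ with the whole oriented edge $y$, but this is the orientation bookkeeping you already flag and does not affect correctness.
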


\begin{thm}\label{t:decomp}
Let $r\ge 2$ and let $g \colon R_r\to R_r$ be a train track map with exactly one nondegenerate illegal turn representing some $\phi\in \Out(F_r)$. Then there exist $p\ge 1$ and a decomposition
\[
g^p=g_{\theta_n}\circ \dots \circ g_{\theta_1},
\]
where $\theta_1,\dots, \theta_n$ is a cyclically admissible sequence of elementary Nielsen automorphisms of $F_r$.
\end{thm}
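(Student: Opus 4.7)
My approach is to apply a Stallings fold decomposition to $g$ and then take an appropriate power to eliminate the residual graph-isomorphism factor. Since $g$ is a train track map with exactly one nondegenerate illegal turn, $Dg$ identifies exactly the two directions of that turn, so $R_r$ contains exactly one $g$-foldable turn, and Lemma~\ref{L:StallingsFoldDecomp} applies. This yields a decomposition $g = q_n \circ q_{n-1} \circ \cdots \circ q_1$ in which $q_1, \ldots, q_{n-1}$ are proper full folds on $R_r$ and $q_n$ is a graph-isomorphism. By Lemma~\ref{lem:perm}(1) and (2), I may rewrite this as $g = g_\Psi \circ g_{\theta_{n-1}} \circ \cdots \circ g_{\theta_1}$, where each $\theta_i = [x_i \mapsto y_i x_i]$ is an elementary Nielsen automorphism and $\Psi$ is a permutational automorphism.

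Next, let $p$ be any positive multiple of the order of $\Psi$ in the finite group of permutational automorphisms of $F_r$ (of order $2^r\, r!$), so $\Psi^p = \mathrm{id}$ and $p$ depends only on $r$. The commutation $g_\theta \circ g_\Psi = g_\Psi \circ g_{\Psi^{-1}\theta\Psi}$ from Lemma~\ref{lem:perm}(3), applied iteratively, lets me push each occurrence of $g_\Psi$ in the product $g^p = (g_\Psi \circ g_{\theta_{n-1}} \circ \cdots \circ g_{\theta_1})^p$ leftward past the Nielsen factors, at the cost of conjugating them by powers of $\Psi$ (which preserves the class of elementary Nielsen automorphisms). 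After collecting all the $g_\Psi$'s into $g_\Psi^p = \mathrm{id}_{R_r}$ on the left, one obtains
\[
g^p = g_{\sigma_N} \circ \cdots \circ g_{\sigma_1},
\]
where $N = p(n-1)$ and $\sigma_{j(n-1)+i} = \Psi^{-j} \theta_i \Psi^j$ for $0 \le j < p$ and $1 \le i \le n-1$.

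The main obstacle is verifying that $\sigma_1, \ldots, \sigma_N$ is cyclically admissible. Since admissibility is invariant under simultaneous conjugation by $\Psi$ and $\Psi^p = \mathrm{id}$, this reduces to establishing two facts: (a) $(\theta_i, \theta_{i+1})$ is admissible for each $1 \le i < n-1$, and (b) $(\theta_{n-1}, \Psi^{-1}\theta_1\Psi)$ is admissible. For (a), the key observation is that after the fold $q_i = g_{\theta_i}$, the derivative $Dq_i$ identifies $\{x_i, y_i\}$ (both mapping to $y_i$ in the intermediate rose $\Gamma_i = R_r$) and the direction $x_i$ in $\Gamma_i$ (the new remainder edge) becomes the unique direction absent from the image of $Dq_i$. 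Since $g$ has a single foldable turn in its source, any $h_i$-foldable turn $\{x_{i+1},y_{i+1}\}$ in $\Gamma_i$ disjoint from $\{x_i\}$ would pull back via $Dq_i$ to a second $g$-foldable turn in the source, a contradiction; hence $x_i \in \{x_{i+1}, y_{i+1}\}$, giving the first part of admissibility. The nondegeneracy conditions $y_{i+1} \ne y_i^{-1}$ or $x_{i+1} \ne y_i^{-1}$ (in the respective cases) follow from the regularity of the partial composition $g_{\theta_{i+1}} \circ g_{\theta_i}$, which in turn is forced by the regularity of $g$ together with the Stallings fold structure.

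Fact (b) is obtained by applying the same analysis to $g^2$. Since illegal turns of a train track map are preserved under taking powers, $g^2$ is again a train track map with exactly one nondegenerate illegal turn, so Lemma~\ref{L:StallingsFoldDecomp} applies to $g^2$ as well. Uniqueness of the foldable turn at each stage forces the Stallings fold decomposition of $g^2$ to begin with $g_{\theta_1}, \ldots, g_{\theta_{n-1}}$ and, after absorbing the interior $g_\Psi$ via Lemma~\ref{lem:perm}(3), to continue with $g_{\Psi^{-1}\theta_1\Psi}, \ldots, g_{\Psi^{-1}\theta_{n-1}\Psi}$ before concluding with the isomorphism $g_{\Psi^2}$. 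Applying (a) to this decomposition at the transition index gives the admissibility of $(\theta_{n-1}, \Psi^{-1}\theta_1\Psi)$. The cyclic pair $(\sigma_N, \sigma_1) = (\Psi^{-(p-1)}\theta_{n-1}\Psi^{p-1}, \theta_1)$ conjugates via $\Psi^{p-1} = \Psi^{-1}$ to $(\theta_{n-1}, \Psi^{-1}\theta_1\Psi)$ and is therefore also admissible, completing the verification.
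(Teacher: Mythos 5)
Your first half coincides with the paper's argument: apply Lemma~\ref{L:StallingsFoldDecomp} together with Lemma~\ref{lem:perm} to write $g=g_\Psi\circ g_{\theta_{n-1}}\circ\cdots\circ g_{\theta_1}$, take $p$ a multiple of the order of $\Psi$, and use Lemma~\ref{lem:perm}(3) to collect the permutational factors into $g_\Psi^p=\mathrm{Id}$ (you push them left, the paper pushes them right; this only changes which conjugates of the $\theta_i$ appear and is harmless). The divergence is in how cyclic admissibility of the resulting sequence is certified, and that is where your proposal has genuine gaps.

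In step (a) you assert that an $h_i$-foldable turn in $\Gamma_i$ disjoint from $x_i$ ``pulls back via $Dq_i$ to a second $g$-foldable turn in the source.'' For $i>1$ the pullback via $Dq_i$ lives in $\Gamma_{i-1}$ and is foldable only for the intermediate map $h_{i-1}$, not for $g$ on $\Gamma_0$; to reach the source you would have to keep pulling back through $q_{i-1},\dots,q_1$, and this can be blocked when the turn meets the unachieved direction of an earlier fold. What the argument really needs is that each intermediate map $h_{i-1}$ has at most one foldable turn, a fact established inside the proof of Lemma~\ref{L:StallingsFoldDecomp} but not contained in its statement, so this step must be reworked or that uniqueness re-derived. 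More seriously, step (b) — which is exactly where the junction pairs and the cyclic pair are certified — rests on the unproved assertion that uniqueness of the foldable turn forces the Stallings fold decomposition of $g^2$ to begin with $g_{\theta_1},\dots,g_{\theta_{n-1}}$ and then continue with the conjugated block. Knowing which turn is folded does not pin down the fold: one must also show that the maximal initial segments identified at each stage for the map factoring $g^2$ agree with those for $g$ (a longer map could a priori identify longer segments), and that your proposed factorization of $g\circ g_\Psi$ is the canonical continuation; no argument is given. The paper bypasses all of this with a short global argument: $g^p$ is a train track map with exactly one nondegenerate illegal turn, so $Dg^p(A^{\pm 1})$ has exactly $2r-1$ elements; if the sequence had a first inadmissible pair, then (using Lemma~\ref{L:reg} on the admissible prefix) either the next Nielsen map would identify two distinct directions inside that $(2r-1)$-element image, forcing $\#\,Dg^p(A^{\pm 1})\le 2r-2$, or it would create a backtrack in an edge image of $g^p$, both contradictions; cyclic admissibility then follows by running the same count for $g^{2p}$. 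Replacing your steps (a) and (b) by this direction-count argument closes the gaps.
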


\begin{proof}
By Lemma~\ref{L:StallingsFoldDecomp} and Lemma~\ref{lem:perm}, there exist elementary Nielsen automorphisms $\theta_1,\dots, \theta_m$ and a permutational automorphism $\Psi$ of $F_r$ such that
\[
g=g_{\Psi}\circ g_{\theta_m}\circ \dots \circ g_{\theta_1}. 
\]
Let $p$ be the order of $\Psi$ in $\Aut(F_r)$.  Then $g_{\Psi}^p=Id_{R_r}$. 
We have
\[
g^p=(g_{\Psi}\circ g_{\theta_m}\circ \dots \circ g_{\theta_1})\circ \dots \circ (g_{\Psi}\circ g_{\theta_m}\circ \dots \circ g_{\theta_1}),
\]
where the term $g_{\Psi}\circ g_{\theta_m}\circ \dots \circ g_{\theta_1}$ is repeated $p$ times. 
By applying part (3) of Lemma~\ref{lem:perm}, we can move all the occurrences of $g_\Psi$ in the above expression to the right and obtain a decomposition of $g^p$ as
\[
g^p=g_{\theta_{pm}'}\circ \dots \circ g_{\theta_1'} \circ  g_{\Psi}^p=g_{\theta_{pm}'}\circ \dots \circ g_{\theta_1'} 
\]
for some elementary Nielsen automorphisms $\theta_1',\dots \theta_m'$ of $F_r$, where $\theta_i'=[x_i'\mapsto y_i'x_i']$.

We claim that the composition $g_{\theta_{pm}'}\circ \dots \circ g_{\theta_1'}$ is admissible. 
Indeed, $g^p \colon R_r\to R_r$ is a train track map with exactly one non-degenerate illegal turn. Therefore $Dg^p(A^{\pm 1})$ consists of $2r-1$ distinct directions. 

Suppose that the sequence $\theta_1',\dots, \theta_{pm}'$ is not admissible. Let $i\ge 1$ be the smallest index such that the pair $(\theta_i'=[x_i'\mapsto y_i'x_i'],\theta_{i+1}'=[x_{i+1}'\mapsto y_{i+1}'x_{i+1}'])$ is not admissible.
Then for $g_i=g_{\theta_i'}\circ \dots \circ g_{\theta_1'}$, by Lemma~\ref{L:reg}, we have $Dg_i=A^{\pm 1}-\{x_i'\}$, with $Dg_i(x_1')=Dg_i(y_1')$.
The only illegal turn for $g_{\theta_{i+1}'}$ is $\{x_{i+1}', y_{i+1}'\}$, and $Dg_{\theta_{i+1}'}(x_{i+1}') =Dg_{\theta_{i+1}'}(y_{i+1}')$. The fact that the pair $(\theta_i',\theta_{i+1}')$ is not admissible means that $x_{i+1}'\ne x_i'$ and  $y_{i+1}'\ne x_i'$, which means that $Dg_{\theta_{i+1}'}$ identifies two distinct directions in  $A^{\pm 1}-\{x_i'\}$. It follows that $Dg^p(A^{\pm 1})$ consists of $\le 2r-2$ directions, yielding a contradiction.

Since $g^{2p} \colon R_r\to R_r$ is also a train track map with exactly one nondegenerate illegal turn, the same argument implies that the composition $g_{\theta_{pm}'}\circ \dots \circ g_{\theta_1'}\circ g_{\theta_{pm}'}\circ \dots \circ g_{\theta_1'}$ is also admissible. Hence the composition $g^p=g_{\theta_{pm}'}\circ \dots \circ g_{\theta_1'}$ is cyclically admissible, as required.

\end{proof}
Note that, as the above proof shows, the power $p\ge 1$ in the conclusion of Theorem~\ref{t:decomp} can be chosen independent of the choice of $g$. In particular, if $p_0$ is the least common multiple of the orders of all the elements in the symmetric group $S_r$, then $p=2p_0$ works for all $g$ as in Theorem~\ref{t:decomp}, since for every permutational $\Psi\in\Aut(F_r)$ we have $\Psi^p=1$ in $\Aut(F_r)$ and $g_{\Psi}^p=Id_{R_r}$.

\bibliographystyle{amsalpha}

\providecommand{\bysame}{\leavevmode\hbox to3em{\hrulefill}\thinspace}
\providecommand{\MR}{\relax\ifhmode\unskip\space\fi MR }
\providecommand{\MRhref}[2]{%
  \href{http://www.ams.org/mathscinet-getitem?mr=#1}{#2}
}

\providecommand{\href}[2]{#2}

\end{document}